\providecommand*{\dashv}{%
  \mathrel{%
    \mathpalette\@dashv\vdash
  }%
}
\newcommand*{\@dashv}[2]{%
  \reflectbox{$\m@th#1#2$}%
}
\theoremstyle{plain}
\newtheorem{theorem}{Theorem}[section]
\newtheorem{mainthm}[theorem]{Main Theorem}
\newtheorem{corollary}[theorem]{Corollary}
\newtheorem{lemma}[theorem]{Lemma}
\newtheorem{proposition}[theorem]{Proposition}
\theoremstyle{definition}
\newtheorem{example}[theorem]{Example}
\newtheorem{fact}[theorem]{Fact}
\newtheorem{definition}[theorem]{Definition}
\newtheorem{remark}[theorem]{Remark}
\newtheorem{assum}[theorem]{Assumption}
\newtheorem{ques}[theorem]{Question}
\newtheorem{obs}[theorem]{Observation}
\newtheorem*{theorem*}{Theorem}
\newcommand{\lk}{\leq_{\bf K}}
\newcommand{\lkk}{\leq_{\bf K_1}}
\newcommand{\mn}{\mathfrak{C}}
\newcommand{\oop}{\operatorname}
\newcommand{\gtp}{\oop{gtp}}
\newcommand{\gs}{\oop{gS}}
\newcommand{\cf}{\oop{cf}}
\newcommand{\ls}{\oop{LS}({\bf K})}
\newcommand{\llk}{\oop{L}({\bf K})}
\newcommand{\defeq}{\vcentcolon=}
\def\fork{\mathrel{\raise0.2ex\hbox{\ooalign{\hidewidth$\vert$\hidewidth\cr\raise-0.9ex\hbox{$\smile$}}}}}
\newcommand{\fk}[3]{#1 \underset{#2}{\fork} #3}
\newcommand{\fkc}[3]{#1 \underset{#2}{\bar{\fork}} #3}
\newcommand{\nr}[1]{\lVert #1 \rVert}
\newcommand{\al}{{\aleph_0}}
\DeclareMathOperator{\tp}{tp}
\newcommand{\mylabel}[2]
    {\protected@write\@auxout{}{\string\newlabel{#1}{{#2}{\thepage}%
      {\@currentlabelname}{\@currentHref}{}}}}}%
\newcommand{\mylabel}[2]
    {\protected@write\@auxout{}{\string\newlabel{#1}{{#2}{\thepage}}}}}
\begin{document}

\pagenumbering{roman}
\setcounter{page}{0}
\newpage
\pagenumbering{arabic}
\setcounter{page}{1}
\parindent=.35in
\begin{center}
        \begin{center}%
         {\Large\bfseries\scshape Stability results assuming tameness,\\monster model and continuity of nonsplitting}\\\vspace{1em}{\scshape Samson Leung}\\      
        \end{center}%
\end{center}
{\let\thefootnote\relax\footnote{Date: \today\\
AMS 2010 Subject Classification: Primary 03C48. Secondary: 03C45, 03C55.
Key words and phrases. Abstract elementary classes; Good frame; Limit models; Continuity of nonsplitting; Superstability.
}} 
\begin{abstract}
Assuming the existence of a monster model, tameness and continuity of nonsplitting in an abstract elementary class (AEC), we extend known superstability results: let $\mu>\ls$ be a regular stability cardinal and let $\chi$ be the local character of $\mu$-nonsplitting. The following holds:
\begin{enumerate}
\item When $\mu$-nonforking is restricted to $(\mu,\geq\chi)$-limit models ordered by universal extensions, it enjoys invariance, monotonicity, uniqueness, existence, extension and continuity. It also has local character $\chi$. This generalizes Vasey's result \cite[Corollary 13.16]{snote} which assumed $\mu$-superstability to obtain same properties but with local character $\aleph_0$.
\item There is $\lambda\in[\mu,h(\mu))$ such that if ${\bf K}$ is stable in every cardinal between $\mu$ and $\lambda$, then ${\bf K}$ has $\mu$-symmetry while $\mu$-nonforking in (1) has symmetry. In this case
\begin{enumerate}
\item ${\bf K}$ has the uniqueness of $(\mu,\geq\chi)$-limit models: if $M_1,M_2$ are both $(\mu,\geq\chi)$-limit over some $M_0\in K_\mu$, then $M_1\cong_{M_0}M_2$;
\item any increasing chain of $\mu^+$-saturated models of length $\geq\chi$ has a $\mu^+$-saturated union. These generalize \cite{vv} and remove the symmetry assumption in \cite{bvulm,s19} .
\end{enumerate}\end{enumerate}
Under $(<\mu)$-tameness, the conclusions of (1), (2)(a)(b) are equivalent to ${\bf K}$ having the $\chi$-local character of $\mu$-nonsplitting. 

Grossberg and Vasey \cite{GV,s19} gave eventual superstability criteria for tame AECs with a monster model. We remove the high cardinal threshold and reduce the cardinal jump between equivalent superstability criteria. We also add two new superstability criteria to the list: a weaker version of solvability and the boundedness of the $U$-rank.
\end{abstract}
\vspace{1em}
\begin{center}
{\bfseries TABLE OF CONTENTS}
\end{center}
\tableofcontents
\vspace{2em}
 
\section{Introduction}\label{ufsec1}
Good frames in abstract elementary classes (AECs) were constructed in \cite[IV Theorem 4.10]{shh}, assuming categoricity and non-ZFC axioms. Later Boney and Grossberg \cite{BG} built a good frame from coheir with the assumption of tameness and extension property of coheir in ZFC. Vasey \cite[Section 5]{s5} further developed on coheir and \cite{s6} managed to construct a good frame at a high categoricity cardinal (categoricity can be replaced by superstability and type locality, but the initial cardinal of the good frame is still high). 

Another approach to building a good frame is via nonsplitting. It is in general not clear whether uniqueness or transitivity hold for nonsplitting (where models are ordered by universal extensions). To resolve this problem, Vasey \cite{s3} constructed nonforking from nonsplitting, which has nicer properties: assuming superstability in $K_\mu$, tameness and a monster model, nonforking gives rise to a good frame over the limit models in $K_{\mu^+}$ \cite[Corollary 6.14]{vv}. Later it was found that uniqueness of nonforking also holds for limit models in $K_\mu$ \cite{s22}. 

We will generalize the nonforking results by replacing the superstability assumption by continuity of nonsplitting. A key observation is that the extension property of nonforking still holds if we have continuity of nonsplitting and stability. This allows us to replicate extension, uniqueness and transitivity properties. Since the assumption of continuity of nonsplitting applies to universal extensions only, we only get continuity and local character for universal extensions. Hence we can build an approximation of a good frame which is over the skeleton (see \ref{skedef}) of long enough limit models ordered by universal extensions. We state the known result and our result for comparison:

\begin{theorem}
Let $\mu\geq\ls$, ${\bf K}$ have a monster model, be $\mu$-tame and stable in $\mu$. Let $\chi$ be the local character of $\mu$-nonsplitting.  
\begin{enumerate}
\item \cite[Corollary 13.16]{snote} If ${\bf K}$ is $\mu$-superstable, then there exists a good frame over the skeleton of limit models in $K_\mu$ ordered by $\leq_u$, except for symmetry;
\item (\ref{gfcor}) If $\mu$ is regular and ${\bf K}$ has continuity of $\mu$-nonsplitting, then there exists a good $\mu$-frame over the skeleton of $(\mu,\geq\chi)$-limit models ordered by $\leq_u$, except for symmetry. The local character is $\chi$ in place of $\al$. 
\end{enumerate}
\end{theorem}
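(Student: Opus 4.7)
The plan is to extend Vasey's construction of a good frame from nonsplitting \cite{s3,vv,snote} by replacing the $\aleph_0$-local character coming from $\mu$-superstability with the $\chi$-local character provided by continuity of $\mu$-nonsplitting. Following Vasey, I would declare $p\in\gs(N)$ to not $\mu$-fork over a $(\mu,\geq\chi)$-limit submodel $M$ iff there exists $M_0\leq M$ such that $M$ is $(\mu,\geq\chi)$-limit over $M_0$ and $p$ does not $\mu$-split over $M_0$. This definition is set up so that invariance and monotonicity are immediate consequences of the corresponding properties of $\mu$-nonsplitting.

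Existence together with local character $\chi$ should follow directly: given $p\in\gs(N)$ with $N$ a $(\mu,\geq\chi)$-limit resolved as $\langle N_i:i<\chi\rangle$, the $\chi$-local character of $\mu$-nonsplitting provides some $i_0<\chi$ with $p$ not $\mu$-splitting over $N_{i_0}$; a sufficiently long initial segment $N_j$ with $j\in[i_0,\chi)$ then witnesses that $p$ does not $\mu$-fork over it. Uniqueness would follow from $\mu$-tameness together with the standard uniqueness of non-$\mu$-splitting extensions over universal models: once two nonforking candidates agree on all small submodels, tameness propagates the agreement up. Continuity of $\mu$-nonforking is inherited from continuity of $\mu$-nonsplitting applied to the fixed witness $M_0$.

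The main obstacle I expect is extension. Given $p\in\gs(N)$ not $\mu$-forking over $M$ witnessed by some $M_0$, and an enlargement $N\leq_u N'$ with $N'$ also $(\mu,\geq\chi)$-limit, I plan to construct $p'\in\gs(N')$ extending $p$ by building a continuous tower of intermediate extensions, using $\mu$-stability to realize types at each successor step and ensuring inductively that no intermediate extension $\mu$-splits over $M_0$. At limit stages of cofinality $<\chi$, continuity of $\mu$-nonsplitting is the key tool: it guarantees that the union of non-$\mu$-splitting types over $M_0$ remains non-$\mu$-splitting over $M_0$, something that was automatic under superstability via local character $\aleph_0$ but which now genuinely uses the continuity hypothesis. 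With extension in hand, the remaining good-frame axioms fall into place, yielding the desired good $\mu$-frame over the skeleton of $(\mu,\geq\chi)$-limit models ordered by $\leq_u$, except for symmetry.
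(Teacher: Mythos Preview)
Your overall architecture matches the paper's: define $\mu$-nonforking via a witness $M_0$ over which the type does not $\mu$-split, then verify the frame axioms one by one. Invariance, monotonicity, existence, local character $\chi$, and extension go essentially as you describe.

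The genuine gap is uniqueness. You write that it ``would follow from $\mu$-tameness together with the standard uniqueness of non-$\mu$-splitting extensions over universal models.'' That standard result (weak uniqueness, \ref{eeuprop}(2) in the paper) requires both types to not $\mu$-split over the \emph{same} witness $M_0<_u M$. But two types $p_0,p_1\in\gs(N)$ that do not $\mu$-fork over $M$ may come with distinct witnesses $N_0,N_1<_u M$, and there is no reason either type fails to $\mu$-split over the other's witness. Passing by monotonicity to a common larger model (say $M$ itself) destroys the strict universality $<_u$ needed to invoke weak uniqueness. The paper's proof of uniqueness (\ref{gfu}) is accordingly nontrivial: assuming $p_0\neq p_1$, it builds a coherent binary tree $\langle p_\eta:\eta\in 2^{\leq\mu}\rangle$ of pairwise distinct nonforking extensions, using extension and continuity at successor and limit stages, and obtains $2^\mu$ types over a model of size $\mu$, contradicting stability. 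This is the one place where real new work is needed beyond Vasey's superstable argument, and your sketch does not supply it.

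A smaller point: your continuity argument (``applied to the fixed witness $M_0$'') hides a step. The restrictions $p\restriction M_i$ may have different witnesses $M^i<_u M_0$; one first uses monotonicity to see each $p\restriction M_i$ does not $\mu$-split over $M_0$ itself, then continuity of nonsplitting gives that $p$ does not $\mu$-split over $M_0$, and finally weak transitivity (\ref{eeuprop}(4)) pushes the base back down to $M^1<_u M_0$ so that $p$ genuinely does not $\mu$-fork over $M_0$.
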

We assumed that $\mu$ is regular to guarantee that $\chi\leq\mu$. In the superstable case, $\chi=\al\leq\mu$ by the definition of $\mu$-superstability.

To obtain symmetry for our frame, we look at the argument in \cite{vv}. In \cite{van16a,van16b}, VanDieren defined a stronger version of symmetry called $\mu$-symmetry and proved its equivalence with the continuity of reduced towers. \cite[Lemma 4.6]{vv} noticed that a weaker version of symmetry is sufficient in one direction and deduced the weaker version of symmetry via superstability. To generalize these arguments, in Section \ref{ufsec5} we replace superstability by continuity of nonsplitting and stability in a range of cardinals (the range depends on the no-order-property of ${\bf K}$, see \ref{sympf}). Then we can obtain a local version of $\mu$-symmetry, which implies symmetry of our frame for long enough limit models. Notice that in the superstable case, $\chi=\al$ while $(\mu,\chi)$-symmetry is the same as $\mu$-symmetry.
\begin{theorem}
Let $\mu\geq\ls$, ${\bf K}$ be $\mu$-tame and stable in $\mu$. Let $\chi$ be the local character of $\mu$-nonsplitting.  
\begin{enumerate}
\item \cite[Corollary 6.9]{vv} If ${\bf K}$ is $\mu$-superstable, then it has $\mu$-symmetry;
\item (\ref{symcor2}) If $\mu$ is regular and ${\bf K}$ has continuity of $\mu$-nonsplitting. There is $\lambda<h(\mu)$ such that if ${\bf K}$ is stable in every cardinal between $\mu$ and $\lambda$, then ${\bf K}$ has $(\mu,\chi)$-symmetry.
\end{enumerate}
\end{theorem}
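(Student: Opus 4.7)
The plan is to adapt the symmetry derivation of Vasey--VanDieren \cite[Corollary 6.9]{vv}, replacing their $\mu$-superstability with the present hypotheses (continuity of $\mu$-nonsplitting together with stability in a suitably chosen interval $[\mu,\lambda]$), and replacing global $\mu$-symmetry by its $\chi$-local analogue $(\mu,\chi)$-symmetry. The good $\mu$-frame over the skeleton of $(\mu,\geq\chi)$-limit models ordered by $\leq_u$, produced in \ref{gfcor} except for symmetry, will serve as the technical backbone: its invariance, monotonicity, uniqueness, extension and $\chi$-continuity properties are exactly what the superstable argument used $\aleph_0$-continuity for.

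First I would isolate the weaker form of symmetry that was extracted in \cite[Lemma 4.6]{vv}: it suffices to verify symmetry after a single universal chain of length $\chi$ above the base, rather than symmetry for arbitrary one-element extensions. This reduction is essentially bookkeeping inside the frame of \ref{gfcor} --- one uses monotonicity and extension to reduce the ``$(\mu,\chi)$'' statement to a statement over $(\mu,\geq\chi)$-limit bases, then applies uniqueness of nonforking to move the witness to a canonical position. The point of passing to this weaker form is that its failure will be made to produce a concrete combinatorial object --- an order configuration --- rather than an iterated chain of length $\mu^+$.

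Second, I would derive the weaker symmetry from the stability hypothesis by the usual order-property route. Supposing the weaker symmetry fails, use extension and continuity of nonsplitting to build an increasing sequence $\langle M_i : i < \lambda\rangle$ of models in $[\mu,\lambda]$ together with elements and nonforking types whose asymmetric pattern encodes the order property of length $\lambda$ in $\mathbf{K}$; the $\chi$-continuity of nonsplitting is invoked precisely at limit stages of cofinality $\geq\chi$ (replacing the $\aleph_0$-local character used in the superstable proof). Choose $\lambda < h(\mu)$ as the Hanf-type threshold associated with the order property in this setting. Under stability in every cardinal of $[\mu,\lambda]$, the existence of such a sequence contradicts the Shelah correspondence between the order property of length $\lambda$ and instability at some cardinal $\leq\lambda$, so the weak symmetry holds, and with it $(\mu,\chi)$-symmetry follows from the first step.

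The main obstacle is calibrating $\lambda$ honestly below $h(\mu)$. In the superstable case one only has to track $\aleph_0$-local behaviour, so the construction stays well short of $h(\mu)$ automatically; here the natural length of the order-encoding sequence grows with $\chi$, and one must check that the limit-stage applications of continuity of nonsplitting --- at cofinalities $\geq\chi$ --- can be iterated enough times to force an order property at some length $<h(\mu)$ without already needing stability past $h(\mu)$. This is the step where one must be most careful to reuse exactly the frame properties established in (1), and where the ``stability in a range'' hypothesis (as opposed to a single cardinal) is genuinely needed.
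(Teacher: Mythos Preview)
Your overall architecture matches the paper's (Proposition~\ref{sympf} combined with Fact~\ref{symfact}), but you have the roles of the two stability hypotheses reversed, and your stated ``main obstacle'' is a red herring.

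The bound $\lambda<h(\mu)$ is fixed \emph{before} any construction, using only stability and tameness in $\mu$: these give stability in $2^\mu$, hence $\mu$-stability in $2^\mu$, hence no $\mu$-order property, hence (by the Hanf-number argument in \ref{symfact}) some $\lambda<h(\mu)$ such that ${\bf K}$ has no $\mu$-order property of length $\lambda$. You do not invoke a ``Shelah correspondence between the order property of length $\lambda$ and instability at some cardinal $\leq\lambda$''; that direction is not available here, and is not how the contradiction is obtained.

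The assumption of stability in $[\mu,\lambda)$ is used \emph{inside} the construction of the order-witnessing sequence $\langle a_\alpha,b_\alpha,N_\alpha,N_\alpha':\alpha<\lambda\rangle$: the models $N_\alpha,N_\alpha'$ have size $\mu+|\alpha|$, and one needs stability at each such size both to produce universal extensions $N_\alpha<_uN_\alpha'<_uN_{\alpha+1}$ and to apply the extension property of nonforking (\ref{gfe}, \ref{extnsp}) to obtain the nonforking types $\gtp(a_\alpha/N_\alpha)$ and $\gtp(b_\alpha/N_\alpha')$. Continuity of $\mu$-nonsplitting enters only indirectly, through the proof of extension; there are no ``limit-stage applications of continuity at cofinalities $\geq\chi$'' in the $\alpha$-indexed construction itself, so the calibration worry you raise does not arise. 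Once the sequence is built, it directly witnesses the $\mu$-order property of length $\lambda$, contradicting the choice of $\lambda$. The reduction from $(\mu,\chi)$-symmetry to the weak uniform form (your first step) is handled in the paper by the equivalences in \ref{symcor}, not by Lemma~4.6 of \cite{vv} alone.
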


Continuity of nonsplitting and the localization of symmetry were already exploited in \cite[Theorem 20]{bvulm} to obtain the uniqueness of long enough limit models (see \ref{ulmfact}). They simply assumed the local symmetry while we used the argument in \cite{vv} to deduce it from extra stability and continuity of nonsplitting (\ref{unicor}). On the other hand, \cite[Section 11]{s19} used continuity of nonsplitting to deduce that a long enough chain of saturated models of the same cardinality is saturated. There he assumed saturation of limit models and managed to satisfy this assumption using his earlier result with Boney \cite{bv2}, which has a high cardinal threshold. Since we already have local symmetry under continuity of nonsplitting and extra stability, we immediately have uniqueness of long limit models, and hence Vasey's argument can be applied to obtain the above result of saturated models (see \ref{s19prop}; a comparison table of the approaches can be found in \ref{comparermk}(2)).

Vasey \cite[Lemma 11.6]{s19} observed that a localization of VanDieren's result \cite{van16a} can give: if the union of a long enough chain of $\mu^+$-saturated models is $\mu^+$-saturated, then local symmetry is satisfied. Assuming more tameness, we use this observation to obtain converses of our results (see \ref{maint1}(4)$\Rightarrow$(3)). In particular local symmetry will lead to uniqueness of long limit models, which implies local character of nonsplitting (\ref{maint1}(3)$\Rightarrow$(1)). Despite the important observation by Vasey, he did not derive these corollaries.

\begin{theorem}
Let $\mu>\ls$, $\delta\leq\mu$ be regular, ${\bf K}$ have a monster model, be $(<\mu)$-tame, stable in $\mu$ and has continuity of $\mu$-nonsplitting. If any increasing chain of $\mu^+$-saturated models of cofinality $\geq\delta$ has a $\mu^+$-saturated union, then ${\bf K}$ has $\delta$-local character of $\mu$-nonsplitting.
\end{theorem}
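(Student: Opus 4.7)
The plan is to realize this theorem as the composition $(4)\Rightarrow(3)\Rightarrow(2)\Rightarrow(1)$ of the main theorem \ref{maint1}: the hypothesis on $\mu^+$-saturated chains gives a local form of symmetry, local symmetry yields uniqueness of $(\mu,\geq\delta)$-limit models, and uniqueness of such limit models forces $\delta$-local character of $\mu$-nonsplitting. The first link is Vasey's \cite[Lemma 11.6]{s19}, a localization of VanDieren \cite{van16a}: under stability in $\mu$ and continuity of $\mu$-nonsplitting, the assumption that every increasing chain of $\mu^+$-saturated models of cofinality $\geq\delta$ has $\mu^+$-saturated union yields $(\mu,\delta)$-symmetry of $\mu$-nonsplitting. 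The second link is the forward direction of the uniqueness analysis already carried out in Section \ref{ufsec5} (cf.\ \ref{ulmfact}), run with the parameter $\chi$ replaced by $\delta$: from $(\mu,\delta)$-symmetry together with stability and continuity of nonsplitting, any two $(\mu,\geq\delta)$-limit models over a common base $M_0\in K_\mu$ are isomorphic over $M_0$.

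For the last link I would argue by contrapositive. Suppose $\delta$-local character of $\mu$-nonsplitting fails: there is an increasing chain $\langle M_i : i<\delta\rangle$ in $K_\mu$ with union $M_\delta$ and some $p\in\gs(M_\delta)$ that splits over every $M_i$. Using the splitting witnesses together with $(<\mu)$-tameness, iteratively pick automorphisms of the monster model that push the splitting outside any prescribed $M_i$ and assemble them into two $(\mu,\geq\delta)$-limit towers over a common base $M_0$ whose unions realize incompatible restrictions of $p$. Because $(<\mu)$-tameness detects the incompatibility on a $(<\mu)$-sized submodel, no $M_0$-isomorphism can identify the two resulting limit models, contradicting the second link.

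The main obstacle is this third step: one must convert a failure of $\delta$-local character — a statement purely about types and splitting — into a concrete non-isomorphism of $(\mu,\geq\delta)$-limit models over a common base. The first two steps are essentially applications of \cite[Lemma 11.6]{s19} and of the uniqueness argument already packaged in Section \ref{ufsec5}; the new content is precisely the corollary Vasey ``did not derive'' in \cite{s19}, and the subtlety lies in arranging that the bad chain witnessing non-local-character actually sits inside limit models of the required length, so that $(<\mu)$-tameness can be applied to separate them up to $M_0$-isomorphism.
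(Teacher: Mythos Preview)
Your route $(4)\Rightarrow(3)$ is correct and matches the paper: the hypothesis on chains of $\mu^+$-saturated models gives $(\mu,\delta)$-symmetry via \ref{satsymfact} (the localization of VanDieren), and then \ref{ulmfact} with $\chi$ replaced by $\delta$ yields uniqueness of $(\mu,\geq\delta)$-limit models. This is exactly \ref{uufact}.

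The gap is in your $(3)\Rightarrow(1)$. Your proposed contrapositive does not work: from a failure of $\delta$-local character you want to build two $(\mu,\geq\delta)$-limit models over a common $M_0$ that are not isomorphic over $M_0$, and you propose to distinguish them by having them ``realize incompatible restrictions of $p$''. But an $M_0$-isomorphism only preserves types over $M_0$; it need not match up realizations of types over larger submodels. So even if your two towers realize different fragments of $p$, nothing prevents them from being $M_0$-isomorphic. The phrase ``push the splitting outside any prescribed $M_i$'' is also too vague to carry weight; it is unclear what invariant of a limit model you are producing that would obstruct isomorphism over $M_0$.

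The paper's argument for $(3)\Rightarrow(1)$ is different and avoids this obstacle entirely. From uniqueness of $(\mu,\geq\delta)$-limit models one first deduces (via the proof of \ref{unicor}(2)) that every $(\mu,\geq\delta)$-limit model is saturated. Now take any u-increasing continuous chain $\langle M_i:i\leq\delta'\rangle\subseteq K_\mu$ with $\delta'\in[\delta,\mu^+)$ regular and $p\in\gs(M_{\delta'})$; the case $\delta'=\mu$ is handled by \ref{394prop}, so assume $\delta'<\mu$. If $p$ $\mu$-split over every $M_i$, then by $(<\mu)$-tameness the splitting witnesses $M_i^1,M_i^2$ may be taken of size $<\mu$, so all of them fit inside a single $N\leq M_{\delta'}$ with $\nr{N}<\mu$ (here regularity of $\mu$ is used). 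Since $M_{\delta'}$ is saturated, some $b\in|M_{\delta'}|$ realizes $p\restriction N$; then $b\in|M_i|$ for some $i$, and since $f_i$ fixes $M_i$ it fixes $b$, forcing $f_i(p\restriction M_i^1)=p\restriction M_i^2$, a contradiction. This is the adaptation of \ref{localfact}(2) that the paper invokes. The essential idea you are missing is that uniqueness gives \emph{saturation}, and saturation is the property that converts the splitting witnesses into a contradiction; you never need to manufacture non-isomorphic limits.
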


The equivalent properties of a stable AEC with continuity of nonsplitting can be specialized to a superstable AEC, because superstability implies stability and continuity of nonsplitting. In \cite{GV}, equivalent superstability properties were listed using the machinery of averages, leading to a high cardinal threshold for the equivalences to take place, and a high cardinal jump when moving from one property to another. In comparison, the equivalent properties we obtained in \ref{maint1} and \ref{maint2} do not require a high cardinal threshold (simply $\mu>\ls$ to make sense of saturated models) but we do need extra stability assumptions above $\mu$. Such stability assumption can be replaced by a smaller range of stability plus more no-order-property. Except for transferring stability in a cardinal to superstability, all other properties are equivalent to each other up to a jump to the successor cardinal.

In the original list inside \cite{GV}, $(\lambda,\xi)$-solvability was considered for $\lambda>\xi$, which they showed to be an equivalent definition of superstability, with a huge jump of cardinal from no long splitting chains to solvability. Further developments in \cite{s17} indicate that such solvability has downward transfer properties which seem too strong to be called superstability. We propose a variation where $\lambda=\xi$ and will prove its equivalence with no long splitting chains in the same cardinal above $\mu^+$ (under continuity of nonsplitting and stability). At $K_\mu$, we demand $(<\mu)$-tameness for the equivalence to hold, up to a jump to the successor cardinal.
\begin{theorem}
Let $\mu>\ls$, ${\bf K}$ have a monster model, be $(<\mu)$-tame, stable in $\mu$.
\begin{enumerate}
\item \cite{sv99} If there is $\lambda>\mu$ such that ${\bf K}$ is $(\lambda,\mu)$-solvable, then it is $\mu$-superstable;
\item \cite[Corollary 5.5]{GV} If $\mu$ is high enough and ${\bf K}$ is $\mu$-superstable, then there is some $\lambda\geq\mu$ and some $\lambda'<\lambda$ such that ${\bf K}$ is $(\lambda,\lambda')$-solvable;
\item (\ref{supsolv}) If ${\bf K}$ has continuity of $\mu$-nonsplitting, then it is $\mu$-superstable iff it is $(\mu^+,\mu^+)$-solvable.
\end{enumerate}
\end{theorem}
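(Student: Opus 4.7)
Only part (3) requires proof, since (1) and (2) are cited. I will argue each direction of the equivalence, keeping the standing hypotheses: $\mu>\ls$, monster model, $(<\mu)$-tameness, stability in $\mu$, and continuity of $\mu$-nonsplitting.

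For $(\mu^+,\mu^+)$-solvable $\Rightarrow$ $\mu$-superstable, the plan is to feed the superlimit supplied by solvability into the third displayed theorem of the introduction (saturated unions imply $\delta$-local character of nonsplitting) with $\delta=\al$. Concretely, solvability furnishes an EM blueprint $\Phi$ of size $\mu^+$ whose EM reducts on orders of size $\mu^+$ are superlimit in $K_{\mu^+}$. A standard no-order-property argument upgrades this to stability in $\mu^+$, after which the equivalences of \ref{maint1} lifted to $\mu^+$ identify the superlimit with the unique saturated $M^*\in K_{\mu^+}$. Being superlimit, $M^*$ is closed under increasing unions of copies of itself of length $<\mu^{++}$, so any chain of $\mu^+$-saturated models (each isomorphic to $M^*$) of countable cofinality has $\mu^+$-saturated union. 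The third displayed theorem with $\delta=\al$ then yields $\al$-local character of $\mu$-nonsplitting, i.e.\ $\mu$-superstability.

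For $\mu$-superstable $\Rightarrow$ $(\mu^+,\mu^+)$-solvable, I would first transfer superstability upward: $\mu$-superstability together with $(<\mu)$-tameness gives stability in every cardinal $\geq\mu$, and \ref{gfcor} provides a good frame (minus symmetry) over the skeleton of limit models at $\mu^+$. Consequently the saturated $M^*\in K_{\mu^+}$ exists and is a superlimit (chains of $\mu^+$-saturated models have $\mu^+$-saturated unions, a specialization of the second displayed theorem in the superstable case). It remains to realize $M^*$ as $\oop{EM}_{\tau({\bf K})}(I,\Phi)$ with $|I|=\mu^+$ and $|\Phi|=\mu^+$. The failure of the $\mu^+$-order property (from stability in $\mu^+$) yields a Galois-indiscernible sequence of length $\mu^+$ inside $M^*$, and the good frame at $\mu^+$ supplies enough Skolem-type functions to close it off into a template $\Phi$ of size $\mu^+$. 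Tameness and uniqueness of saturated models then force every $\oop{EM}_{\tau({\bf K})}(I,\Phi)$ with $|I|=\mu^+$ to be $\mu^+$-saturated, hence isomorphic to $M^*$, hence superlimit.

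The main obstacle is the template construction in the forward direction: the blueprint must have size exactly $\mu^+$, which rules out appealing to the Hanf-number machinery that underlies \cite[Corollary 5.5]{GV}. The leverage comes from \ref{gfcor}, which supplies a working $\mu^+$-nonforking relation on the skeleton; this keeps the requisite Skolem expansion of size $\mu^+$. I would close the loop with a back-and-forth between $M^*$ and the EM model, realizing every Galois type over a $\mu$-sized submodel along the way via tameness and stability in $\mu^+$.
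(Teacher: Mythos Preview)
Your backward direction is essentially the paper's argument: solvability gives a superlimit in $K_{\mu^+}$, which by \ref{supsat} is saturated, hence chains of saturated models in $K_{\mu^+}$ have saturated unions (\ref{chainprop}), and then \ref{uufact} plus \ref{localcor} (with $(<\mu)$-tameness) yield $\mu$-superstability. The detour you sketch through ``stability in $\mu^+$ via no-order-property'' is unnecessary but harmless.

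The forward direction has a genuine gap. You correctly get that the saturated $M^*\in K_{\mu^+}$ is a superlimit, but then you try to build the EM blueprint $\Phi$ of size $\mu^+$ by hand, invoking ``Skolem-type functions'' supplied by the good frame. A good frame does not supply Skolem functions, and the indiscernible-plus-closure argument you outline is not a known construction; it is unclear how you would make the resulting EM models land on $M^*$ (or even be in ${\bf K}$) without passing through the Hanf-number presentation-theorem machinery you explicitly want to avoid.

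The paper sidesteps this entirely by using the equivalent reformulation of solvability in \ref{solvfact} (i.e.\ \cite[Lemma 2.19]{GV}): $(\mu^+,\mu^+)$-solvability holds iff there is an AEC ${\bf K'}$ in an expanded language with $\oop{LS}({\bf K'})\leq\mu^+$, arbitrarily large models, and every $M\in K'_{\mu^+}$ restricting to a superlimit in ${\bf K}$. One then takes ${\bf K'}$ to be the class of $\mu^+$-saturated models of ${\bf K}$ in the same language; \ref{chaincor}(2) says this is an AEC with L\"owenheim--Skolem number $\mu^+$, and every model of size $\mu^+$ in it is saturated, hence a superlimit by the first paragraph. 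No blueprint needs to be built explicitly. This is the missing idea in your forward direction.
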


Meanwhile, \cite[Corollary 4.24]{s19} showed that stability in a tail is also an equivalent definition of superstability, but the starting cardinal of superstability $(\lambda'({\bf K}))^++\chi_1$ is only bounded by the Hanf number of $\mu$. Since we assume continuity of nonsplitting, we can obtain $\mu^{ }$-superstability by assuming stability in unboundedly many cardinals below $\mu^{ }$, and enough stability above $\mu^{ }$. 

\begin{theorem}
Let $\mu>\ls$ with cofinality $\al$, ${\bf K}$ have a monster model, be $\mu$-tame, stable in both $\mu$ and unboundedly many cardinals below $\mu$.
\begin{enumerate}
\item \cite[Corollary 4.14]{s19} If $\mu\geq(\lambda'({\bf K}))^++\chi_1$, then ${\bf K}$ is $\mu$-superstable;
\item (\ref{cfcal}) If ${\bf K}$ has continuity of $\mu$-nonsplitting, then there is $\lambda<h(\mu)$ such that if ${\bf K}$ is stable in $[\mu,\lambda)$, then it is $\mu$-superstable.
\end{enumerate}
\end{theorem}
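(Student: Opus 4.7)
The plan is to combine the paper's main equivalence theorem, invoked at $\mu$, with a cofinal-$\omega$-sequence argument that exploits both $\cf(\mu)=\al$ and stability in unboundedly many cardinals below $\mu$, in order to force the local character $\chi$ of $\mu$-nonsplitting down to $\al$. Since stability in $\mu$ is given, $\al$-local character of $\mu$-nonsplitting is exactly what is missing for $\mu$-superstability.

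First I would apply (\ref{symcor2}) to fix $\lambda<h(\mu)$ so that stability in $[\mu,\lambda)$ together with continuity of $\mu$-nonsplitting yields $(\mu,\chi)$-symmetry, where $\chi$ is the local character of $\mu$-nonsplitting. Under $\mu$-tameness, Theorem \ref{maint1} then produces the associated structural consequences at $\mu$: uniqueness of $(\mu,\geq\chi)$-limit models and $\mu^+$-saturation of $\geq\chi$-chains of $\mu^+$-saturated models. What remains is to show $\chi\leq\al$.

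Next, using stability in unboundedly many cardinals below $\mu$, I would fix an increasing cofinal sequence $(\mu_n)_{n<\omega}$ of stability cardinals with $\sup_n\mu_n=\mu$; $\mu_n$-tameness is inherited from $\mu$-tameness. To verify $\al$-local character, take an increasing chain $(M_i)_{i<\omega}$ in $K_\mu$ with union $M$ and a type $p\in\gs(M)$; decompose $M_i=\bigcup_n M_i^n$ with $M_i^n\in K_{\mu_n}$ and set $M^n:=\bigcup_i M_i^n$. At each level $\mu_n$, stability and tameness provide a witness $j_n<\omega$ so that the restriction of $p$ does not $\mu_n$-split over $M_{j_n}^n$. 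Via $\mu$-tameness these lift to $\mu$-nonsplitting witnesses, and continuity of $\mu$-nonsplitting along the $\omega$-chain $(M_i)_i$ lets the diagonal $j:=\sup_n j_n<\omega$ (finite because $\cf(\mu)=\al$ keeps the index set countable) serve as a global witness: $p$ does not $\mu$-split over $M_j$. This is exactly $\al$-local character of $\mu$-nonsplitting, hence $\mu$-superstability.

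The hard part will be this final assembly: gluing countably many $\mu_n$-level witnesses into a single $\mu$-level witness requires a careful interplay between $\mu$-tameness (to align types across cardinal levels) and continuity of $\mu$-nonsplitting (to pass to colimits along the $\omega$-chain $(M_i)_i$). The hypothesis $\cf(\mu)=\al$ is crucial here, since only a countable cofinal sequence $(\mu_n)_{n<\omega}$ keeps the diagonal $j$ inside $\omega$ and hence a valid witness for $\al$-local character.
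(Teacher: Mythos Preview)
Your proposal has a genuine gap in the final ``diagonal'' step, and the overall strategy diverges substantially from the paper's argument.

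The fatal error is the claim that $j:=\sup_n j_n<\omega$. Having a countable index set does not make a supremum of natural numbers finite; in general $\sup_n j_n=\omega$, which gives no witness inside the chain. Even setting this aside, the gluing is unjustified: knowing that $p\restriction M^n$ does not $\mu_n$-split over $M_{j_n}^n$ for each $n$ does not yield $\mu$-nonsplitting of $p$ over any $M_j$. Tameness in $\mu$ lets you detect \emph{splitting} via small pieces, not manufacture nonsplitting from level-by-level information, and continuity of $\mu$-nonsplitting only helps along a u-increasing chain once you already have $\mu$-nonsplitting of the restrictions over a fixed base. You also never established any local character at the levels $\mu_n$ (nothing in the hypotheses gives continuity of $\mu_n$-nonsplitting or a bound on the $\mu_n$-local character), so the existence of the $j_n$ is itself unsupported. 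Finally, local character of $\mu$-nonsplitting is a statement about \emph{u-increasing} chains, so you should be assuming $M_{i+1}>_u M_i$, not merely $M_i$ increasing.

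The paper's route is different and avoids all of this diagonalization. From stability in $[\mu,\lambda)$ and continuity of $\mu$-nonsplitting one first gets (via \ref{symcor2} and \ref{unicor}(2)) that every $(\mu,\geq\chi)$-limit is saturated. Using $\cf(\mu)=\al$ and stability in unboundedly many cardinals below $\mu$, one builds a u-increasing chain $\langle M_n:n<\omega\rangle$ with $M_n\in K_{\mu_n}$ whose union is the given saturated model; a back-and-forth against a genuine $(\mu,\omega)$-limit then shows that any $(\mu,\omega)$-limit is saturated, hence in particular $(\mu_0)^+$-saturated. Now \ref{localfact}(2) applies directly with $\xi=\mu$ and $\delta=\al$: if $M_\delta$ is $(\mu+\delta)^+$-saturated, any $p\in\gs(M_\delta)$ fails to $\mu$-split over some $M_i$. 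This yields $\al$-local character of $\mu$-nonsplitting outright, without any attempt to patch together witnesses across different cardinal levels.
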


As the final item of the list, we prove that the boundedness of the $U$-rank (with respect to $\mu$-nonforking for limit models in $K_\mu$ ordered by universal extensions) is equivalent to $\mu$-superstability (\ref{urankcor}). We will need to extend our nonforking to longer types, using results from \cite{bv}. Then we can quote a lot of known results from \cite{BG}, \cite{BGKV} and \cite{GM}. Our strategy of extending frames contrasts with \cite{s6} which used a complicated axiomatic framework and drew technical results from \cite[III]{shh}. Here we directly construct a type-full good $\mu$-frame from nonforking and the known results apply (which are independent of the technical ones in \cite{s6,shh}).

\begin{theorem}
Let $\mu\geq\ls$ be regular, ${\bf K}$ have a monster model, be $\mu$-tame, stable in $\mu$ and have continuity of $\mu$-nonsplitting. Let $U(\cdot)$ be the $U$-rank induced by $\mu$-nonforking restricted to limit models in $K_\mu$ ordered by $\leq_u$. The following are equivalent:
\begin{enumerate}
\item ${\bf K}$ is $\mu$-superstable;
\item $U(p)<\infty$ for all $p\in\gs(M)$ and limit model $M\in K_\mu$.
\end{enumerate}
\end{theorem}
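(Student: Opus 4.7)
The plan is to reduce the equivalence to standard $U$-rank theory for good frames, after extending the nonforking of Theorem 1.2(2) from 1-types to types of longer length. First I will invoke the finite-character extension machinery of \cite{bv} to extend $\mu$-nonforking to types of length $<\mu^+$ over the skeleton of $(\mu,\geq\chi)$-limit models, inheriting invariance, monotonicity, uniqueness, existence, extension, continuity and local character $\chi$ from the $1$-ary frame. The output is, modulo symmetry, a type-full good $\mu$-frame on the skeleton; this is what the introduction alludes to when it says that nonforking is directly promoted to such a frame without the heavy axiomatic framework of \cite{s6,shh}.

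For (1) $\Rightarrow$ (2), under $\mu$-superstability we have $\chi=\al$, and Theorem 1.2(1) supplies $\mu$-symmetry, so the first step already delivers a genuine type-full good $\mu$-frame on the skeleton. At this point the classical $U$-rank analysis of \cite{BG,BGKV,GM} applies verbatim: from local character $\al$ together with extension and continuity one shows by ordinal induction on the rank that every $p\in\gs(M)$ with $M$ a limit model in $K_\mu$ receives an ordinal rank, i.e.\ $U(p)<\infty$.

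For (2) $\Rightarrow$ (1), I argue the contrapositive. Suppose $\mu$-nonsplitting has local character $\chi>\al$. Then there is an increasing continuous $\leq_u$-chain of $(\mu,\geq\chi)$-limit models $(M_n)_{n<\omega}$ in $K_\mu$ and a type $p\in\gs(\bigcup_n M_n)$ such that $p$ $\mu$-splits, and hence $\mu$-forks in the sense of our frame, over each $M_n$. Setting $p_n\defeq p\restriction M_n$, the sequence $(p_n)_{n<\omega}$ is an $\omega$-chain of forking extensions. But in any frame satisfying the properties of Theorem 1.2(2), a forking extension strictly decreases $U$-rank whenever the latter is an ordinal, so $U(p_0)>U(p_1)>\cdots$ would give an infinite descending sequence of ordinals. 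Hence some $p_n$ satisfies $U(p_n)=\infty$, contradicting (2).

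The main obstacle is the first step. The extension in \cite{bv} is normally presented inside a full good frame with symmetry and local character $\al$, so one must verify that it runs over the skeleton of long enough limit models ordered by $\leq_u$ with only local character $\chi$ and without symmetry, and that the induced $1$-ary rank agrees with the $U$-rank of the statement. Once this is established, both implications reduce to the standard ordinal bookkeeping of \cite{GM}, and the proof is short.
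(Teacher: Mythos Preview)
Your overall strategy for (2)$\Rightarrow$(1) is close to the paper's, but the proposal takes an unnecessary detour and contains a real gap.

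\textbf{The detour.} Extending $\mu$-nonforking to longer types via \cite{bv} is not needed here. The paper observes that \cite[Theorem 7.9]{BG} (the equivalence $U(p)=\infty$ iff there is an $\omega$-chain of forking extensions of $p$) requires only invariance and monotonicity for an independence relation on \emph{$1$-types}; this is explicitly emphasized before the statement of that fact. The whole corollary is then a one-line reduction: $\mu$-superstability is equivalent to $\mu$-nonforking having local character $\aleph_0$ (forward by the local character proposition for nonforking; backward by extension, uniqueness, and the lemma that a frame with local character $\delta$ forces $\delta$-local character of nonsplitting), and local character $\aleph_0$ is equivalent to the nonexistence of an $\omega$-forking chain. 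Your ``main obstacle'' simply does not arise. Moreover, the extension of \cite{bv} you invoke needs a good $(\geq\mu)$-frame, which in this paper is only available \emph{after} assuming superstability (see the proposition that actually carries out that extension), so it cannot serve as a preliminary step common to both directions.

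\textbf{The gap in (2)$\Rightarrow$(1).} From the failure of $\aleph_0$-local character you get $p\in\gs(M_\omega)$ that $\mu$-splits (hence $\mu$-forks) over each $M_n$. But setting $p_n\defeq p\restriction M_n$ does \emph{not} immediately give that $p_{n+1}$ forks over $M_n$: the witnesses to $p$ splitting over $M_n$ live in $M_\omega$ and need not lie in $M_{n+1}$. The paper fixes this exactly as in the proof of the lemma relating weak and full local character: using continuity of $\mu$-nonforking, one passes to a subsequence $\langle i_k:k<\omega\rangle$ so that $p\restriction M_{i_{k+1}}$ forks over $M_{i_k}$ (if no such $i_{k+1}$ existed, continuity would force $p$ not to fork over $M_{i_k}$). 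That subsequence is the required $\omega$-chain of forking extensions. Once you insert this step, your argument and the paper's coincide; you do not need symmetry, the longer-type frame, or the ordinal bookkeeping of \cite{GM}.
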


In Section \ref{ufsec2}, we will state our global assumptions; define limit models, skeletons and good frames. In Section \ref{ufsec3}, we will review useful properties of nonsplitting with miscellaneous improvements. In Section \ref{ufsec4}, we will use $\mu$-nonforking to construct our good frame over the skeleton of $(\mu,\geq\chi)$-limit models ordered by $\leq_u$, except for two changes: the local character of the frame will be $\chi$ in place of $\al$, while symmetry properties will be proven in Section \ref{ufsec5} under extra stability assumptions. In Section \ref{ufsec6}, we will generalize known superstability results using the symmetry properties. In particular we guarantee that the union of $\mu^+$-saturated models is saturated, provided that we have extra stability, continuity of nonsplitting and the chain being long enough. In Section \ref{ufsec7}, we will consider two characterizations of superstability, stability in a tail and the boundedness of the $U$-rank. We will prove the main theorems in Section \ref{ufsec8} and state two applications there.

This paper was written while the author was working on a Ph.D. under the direction of Rami Grossberg at Carnegie Mellon University and we would like to thank Prof. Grossberg for his guidance and assistance in my research in general and in this work in particular. We also thank John Baldwin and Marcos Mazari-Armida for useful comments. 

\section{Preliminaries}\label{ufsec2}
Throughout this paper, we assume the following:
\begin{assum}
\begin{enumerate}
\item ${\bf K}$ is an AEC with $AP$, $JEP$ and $NMM$.\mylabel{assum1}{Assumption \thetheorem}
\item ${\bf K}$ is stable in some $\mu\geq\ls$.
\item ${\bf K}$ is $\mu$-tame.
\item ${\bf K}$ satisfies continuity of $\mu$-nonsplitting.
\item $\chi\leq\mu$ where $\chi$ is the minimum local character cardinal of $\mu$-nonsplitting (see \ref{chi}).
\end{enumerate}
\end{assum}
$AP$ stands for amalgamation property, $JEP$ for joint embedding property and $NMM$ for no maximal model. They allow the construction of a monster model. Given a model $M\in K$, we write $\gs(M)$ the set of Galois types over $M$ (the ambient model does not matter because of $AP$). 
\begin{definition} Let $\lambda$ be an infinite cardinal.
\begin{enumerate}
\item $\alpha\geq2$ be an ordinal, ${\bf K}$ is \emph{$(<\alpha)$-stable in $\lambda$} if for any $\nr{M}=\lambda$, $|\gs^{<\alpha}(M)|\leq\lambda$. We omit $\alpha$ if $\alpha=2$. 
\item ${\bf K}$ is \emph{$\lambda$-tame} if for any $N\in K$, any $p\neq q\in\gs(N)$, there is $M\leq N$ of size $\lambda$ such that $p\restriction M\neq q\restriction M$. 
\end{enumerate}
\end{definition}
We will define continuity of $\mu$-nonsplitting in \ref{condef}.
\begin{definition}
Let $\lambda\geq\ls$ be a cardinal and $\alpha,\beta<\lambda^+$ be regular. Let $M\leq N$ and $\nr{M}=\lambda$.
\begin{enumerate}
\item $N$ is \emph{universal over $M$} ($M<_uN$) if $M<N$ and for any $\nr{N'}=\nr{N}$, there is $f:N'\xrightarrow[M]{}N$.
\item $N$ is \emph{$(\lambda,\alpha)$-limit over $M$} if $\nr{N}=\lambda$ and there exists $\langle M_i:i\leq\alpha\rangle\subseteq K_\lambda$ increasing and continuous such that $M_0=M$, $M_\alpha=N$ and $M_{i+1}$ is universal over $M_i$ for $i<\alpha$. We call $\alpha$ the \emph{length} of $N$.
\item $N$ is \emph{$(\lambda,\alpha)$-limit} if there exists $\nr{M'}=\lambda$ such that $N$ is $(\lambda,\alpha)$-limit over $M'$. 
\item $N$ is \emph{$(\lambda,\geq\beta)$-limit} (\emph{over $M$}) if there exists $\alpha\geq\beta$ such that (2) (resp. (3)) holds.
\item $N$ is \emph{$(\lambda,\lambda^+)$-limit (over $M$)} if $\nr{N}=\lambda^+$ and we replace $\alpha$ by $\lambda^+$ in (2) (resp. (3)).
\item Let $\lambda_1\leq\lambda_2$, then $N$ is \emph{$([\lambda_1,\lambda_2],\geq\beta)$-limit} (\emph{over $M$}) if there exists $\lambda\in[\lambda_1,\lambda_2]$ such that $N$ is {$(\lambda,\geq\beta)$-limit} ({over $M$}).
\item If $\lambda>\ls$, we say $M$ is \emph{$\lambda$-saturated} if for any $M'\leq M$, $\nr{M'}<\lambda$, $M\vDash\gs(M')$.
\item $M$ is \emph{saturated} if it is $\nr{M}$-saturated.
\end{enumerate}
\end{definition}
In general, we do not know limit models or saturated models are closed under chains, so they do not necessary form an AEC. We adapt \cite[Definition 5.3]{s6} to capture such behaviours. 
\begin{definition}\mylabel{skedef}{Definition \thetheorem}
An abstract class ${\bf K_1}$ is a \emph{$\mu$-skeleton} of ${\bf K}$ if the following is satisfied:
\begin{enumerate}
\item ${\bf K_1}$ is a sub-AC of ${\bf K_\mu}$: $K_1\subseteq K_\mu$ and for any $M,N\in K_1$, $M\lkk N$ implies $M\lk N$. 
\item For any $M\in K_\mu$, there is $M'\in K_1$ such that $M\lk M'$.
\item Let $\alpha$ be an ordinal and $\langle M_i:i<\alpha\rangle$ be $\lk$-increasing in $K_1$. There exists $N\in K_1$ such that for all $i<\alpha$, $M_i\lkk N$ (the original definition requires strict inequality but it is immaterial under $NMM$). 
\end{enumerate}
We say ${\bf K_1}$ is a \emph{$(\geq\mu)$-skeleton} of ${\bf K}$ if the above items hold for $K_{\geq\mu}$ in place of $K_\mu$.
\end{definition}
By \cite[II Claim 1.16]{shh}
, limit models in $\mu$ with $\lk$ form a $\mu$-skeleton of ${\bf K}$. Similarly let $\alpha<\mu^+$ be regular, then $(\geq\mu,\geq\alpha)$-limits form a $(\geq\mu)$-skeleton of ${\bf K}$. 

On the other hand, good frames were developed by Shelah \cite{shh} for AECs in a range of cardinals. \cite{s6} defined good frames over a coherent abstract class. We specialize the abstract class to a skeleton of an AEC. 
\begin{definition}
Let ${\bf K}$ be an AEC and ${\bf K_1}$ be a $\mu$-skeleton of ${\bf K}$. We say a nonforking relation is a \emph{good $\mu$-frame over the skeleton of ${\bf K_1}$} if the following holds:
\begin{enumerate}
\item The nonforking relation is a binary relation between a type $p\in\gs(N)$ and a model $M\leq_{\bf K_1}N$. We say $p$ does not fork over $M$ if the relation holds between $p$ and $M$. Otherwise we say $p$ forks over $M$. 
\item Invariance: if $f\in\oop{Aut}(\mn)$ and $p$ does not fork over $M$, then $f(p)$ does not fork over $f[M]$.
\item Monotonicity: if $p\in\gs(N)$ does not fork over $M$ and $M\leq_{\bf K_1}M'\leq_{\bf K_1}N$ for some $M'\in K_1$, then $p\restriction M'$ does not fork over $M$ while $p$ itself does not fork over $M'$.
\item Existence: if $M\in K_1$ and $p\in\gs(M)$, then $p$ does not fork over $M$.
\item Extension: if $M\leq_{\bf K_1}N\leq_{\bf K_1}N'$ and $p\in\gs(N)$ does not fork over $M$, then there is $q\in\gs(N')$ such that $q\supseteq p$ and $q$ does not fork over $M$.
\item Uniqueness: if $p,q\in\gs(N)$ do not fork over $M$ and $p\restriction M=q\restriction M$, then $p=q$.
\item Transitivity: if $M_0\leq_{\bf K_1}M_1\leq_{\bf K_1}M_2$, $p\in\gs(M_2)$ does not fork over $M_1$, $p\restriction M_1$ does not fork over $M_0$, then $p$ does not fork over $M_0$. 
\item Local character $\al$: if $\delta$ is an ordinal of cofinality $\geq\al$, $\langle M_i:i\leq\delta\rangle$ is $\leq_{\bf K_1}$-increasing and continuous, then there is $i<\delta$ such that $p$ does not fork over $M_i$.
\item Continuity: Let $\delta$ is a limit ordinal and $\langle M_i:i\leq\delta\rangle$ be $\leq_{\bf K_1}$-increasing and continuous. If for all $1\leq i<\delta$, $p_i\in\gs(M_i)$ does not fork over $M_0$ and $p_{i+1}\supseteq p_i$, then $p_\delta$ does not fork over $M_0$.
\item Symmetry: let $M\leq_{\bf K_1}N$, $b\in |N|$, $\gtp(b/M)$ do not fork over $M$, $\gtp(a/N)$ do not fork over $M$. There is $N_a\geq_{\bf K_1}M$ such that $\gtp(b/N_a)$ do not fork over $M$. 
\end{enumerate}
If the above holds for a $(\geq\mu)$-skeleton ${\bf K_1}$, then we say the nonforking relation is a \emph{good $(\geq\mu)$-frame over the skeleton ${\bf K_1}$}.
If ${\bf K_1}$ is itself an AEC (in $\mu$), then we omit ``skeleton''. Let $\alpha<\mu^+$ be regular. We say a nonforking relation has local character $\alpha$ if we replace ``$\al$'' in item (8) by $\alpha$.
\end{definition}
\begin{remark}
\begin{enumerate}
\item In this paper, ${\bf K_1}$ will be the $(\mu,\geq\alpha)$-limit models for some $\alpha<\mu^+$, with $\leq_{\bf K_1}=\leq_u$ (the latter is in ${\bf K})$. 
\item In \ref{BGvarfact}, we will draw results of a good frame over longer types, where we allow the types in the above definition to be of arbitrary length. Extension property will have an extra clause that allows extension of a shorter type to a longer one that still does not fork over the same base.
\item Some of the properties of a good frame imply or simply one another. Instead of using a minimalistic formulation (for example in \cite[Definition 17.1]{snote}), we keep all the properties because sometimes it is easier to deduce a certain property first.
\end{enumerate}
\end{remark}
\section{Properties of nonsplitting}\label{ufsec3}

Let $p\in\gs(N)$, $f:N\rightarrow N'$, we write $f(p)\defeq\gtp(f^+(d)/f(N))$ where $f^+$ extends $f$ to include some $d\vDash p$ in its domain. 
\begin{proposition}\mylabel{gsplnotice}{Proposition \thetheorem}
Such $f^+$ exists by $AP$ and $f(p)$ is independent of the choice of $f^+$. 
\end{proposition}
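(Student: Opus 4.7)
The plan is to work inside the monster model $\mn$, whose existence is guaranteed by Assumption~\ref{assum1}(1). A standard consequence of the monster-model construction (following from $AP$, $JEP$ and homogeneity) is that any ${\bf K}$-embedding between small submodels of $\mn$ extends to an automorphism of $\mn$; so we may fix some $\tilde f \in \oop{Aut}(\mn)$ extending $f$.

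For existence, choose any $d \in |\mn|$ with $d \vDash p$; such a $d$ exists since $p \in \gs(N)$. Then $\tilde f$ itself, viewed as a partial map with $N \cup \{d\}$ in its domain, serves as $f^+$. Alternatively, without invoking the monster one can use $AP$ directly: take $d \vDash p$ inside some $M_1 \supseteq N$ and amalgamate $f : N \to N'$ with the inclusion $N \hookrightarrow M_1$ to obtain a common target $M^*$ with embeddings $h_1 : M_1 \to M^*$ and $h_2 : N' \to M^*$ satisfying $h_2 \circ f = h_1 \restriction N$, then read off $f(p)$ as $\gtp(h_1(d)/h_2[f[N]]; M^*)$ after identifying $f[N]$ with $h_2[f[N]]$ via $h_2$.

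For independence of choice, suppose $f_1^+, f_2^+$ are two extensions of $f$ with realizations $d_1, d_2 \vDash p$ in their respective domains. Since $\gtp(d_1/N) = p = \gtp(d_2/N)$, by the definition of Galois type there is $\sigma \in \oop{Aut}(\mn)$ fixing $N$ pointwise with $\sigma(d_1) = d_2$. Extend $f_1^+, f_2^+$ to automorphisms $\tilde f_1, \tilde f_2 \in \oop{Aut}(\mn)$ and set $\tau := \tilde f_2 \circ \sigma \circ \tilde f_1^{-1}$. A direct computation shows that $\tau$ fixes $f[N]$ pointwise: for $y \in |N|$ we have $\tilde f_1^{-1}(f(y)) = y$, $\sigma(y) = y$, and $\tilde f_2(y) = f(y)$. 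Moreover $\tau(f_1^+(d_1)) = \tilde f_2(\sigma(d_1)) = \tilde f_2(d_2) = f_2^+(d_2)$. Hence $\gtp(f_1^+(d_1)/f[N]) = \gtp(f_2^+(d_2)/f[N])$, so $f(p)$ is well defined.

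There is no substantial obstacle; the only point requiring care is to verify that the composite $\tau$ indeed fixes $f[N]$ pointwise, which amounts to unwinding that each $\tilde f_i$ agrees with $f$ on $N$ and that $\sigma$ was chosen to fix $N$ pointwise. If one wishes to avoid the monster model entirely, the same argument can be run at the level of finite spans of embeddings and amalgams, using repeated applications of $AP$ to produce a common target in which the two candidates for $f^+(d_i)$ are conjugate over $f[N]$.
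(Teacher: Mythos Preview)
Your proof is correct. Both arguments produce a map over $f[N]$ sending one candidate image of $d$ to the other; the difference is in the machinery. The paper works directly with $AP$: it amalgamates the two realizations $a,b$ of $p$ over $N$ to get $g:a\mapsto b$, then amalgamates again to push $g$ through $f_1^+$, producing an explicit composite $h=f_1^{++}\circ g^{-1}\circ(f_2^+)^{-1}$ over $f(N)$. You instead pass immediately to the monster model, extend everything to automorphisms of $\mn$, and read off $\tau=\tilde f_2\circ\sigma\circ\tilde f_1^{-1}$. Your route is shorter and avoids the diagram chase, at the cost of invoking the full homogeneity of $\mn$ rather than raw $AP$; since Assumption~\ref{assum1}(1) already grants a monster model, that cost is zero here. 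Your closing remark that the argument can be rerun with bare $AP$ is exactly what the paper does.
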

\begin{proof}
Pick $a\in N_1\geq$ realizing $p$, use $AP$ to obtain $f_1^+:a\mapsto c$ extending $f$ (enlarge $N_1$ if necessary so that $f_1^+(N_1)$ contains $f(N)$). 

\begin{center}
\begin{tikzcd}
                                                              & b\in N_3                                                & b\in N_2 \arrow[rr, "f_2^+"] \arrow[l]   &                           & d\in f^+_2(N_2)             \\
a\in N_1' \arrow[ru, "\cong"] \arrow[rrr, "f_1^{++}", dotted] &                                                         &                                          & c\in f_1^{++}(N_1')       &                             \\
                                                              & a\in N_1 \arrow[rr, "f_1^+\quad\;"] \arrow[uu, "g"] \arrow[lu] &                                          & c\in f^+_1(N_1) \arrow[u] &                             \\
                                                              &                                                         & N \arrow[lu] \arrow[uuu] \arrow[rr, "f"] &                           & f(N) \arrow[uuu] \arrow[lu]
\end{tikzcd}
\end{center}
Suppose $b\in N_2$ realizes $p$ and there is $f_2^+:b\mapsto d$ extending $f$. Extend $N_2$ so that $f_2^+$ is an isomorphism. We need to find $h:d\mapsto c$ which fixes $f(N)$. Since $a,b\vDash p$, by $AP$ there is $N_3\ni b$ and $g:N_1\xrightarrow[N]{}N_3$ that maps $a$ to $b$. Extend $g$ to an isomorphism $N_1'\cong_N N_3\geq N_2$. By $AP$ again, obtain $f_1^{++}$ of domain $N_1'$ extending $f_1^+$. Therefore, $d\in f(N_2^+)$ and $f_1^{++}\circ g^{-1}\circ\oop{id}_{N_2}\circ(f_2^+)^{-1}(d)=c$. Hence we can take $h\defeq f_1^{++}\circ g^{-1}\circ\oop{id}_{N_2}\circ(f_2^+)^{-1}:f_2^+(N_2)\xrightarrow[f(N)]{}f_1^{++}(N_1') $.
\end{proof}
\begin{definition}\mylabel{spldef}{Definition \thetheorem}
Let $M,N\in K$, $p\in\gs(N)$. $p$ \emph{$\mu$-splits over $M$} if there exists $N_1,N_2$ of size $\mu$ such that $M\leq N_1,N_2\leq N$ and $f:N_1\cong_M{}N_2$ such that $f(p)\restriction N_2\neq p\restriction N_2$. 
\end{definition}
\begin{proposition}[Monotonicity of nonsplitting]\mylabel{monspl}{Proposition \thetheorem}
Let $M,N\in K_\mu$, $p\in\gs(N)$ do not $\mu$-split over $M$. For any $M_1,N_1$ with $M\leq M_1\leq N_1\leq N$, we have $p\restriction N_1$ does not $\mu$-split over $M_1$.
\end{proposition}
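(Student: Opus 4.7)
The strategy is to prove the contrapositive: assume $p\restriction N_1$ $\mu$-splits over $M_1$, and show that $p$ itself $\mu$-splits over $M$. Unpacking \ref{spldef}, the hypothesis furnishes $N_1^*,N_2^*\in K_\mu$ with $M_1\leq N_1^*,N_2^*\leq N_1$, together with $f:N_1^*\cong_{M_1}N_2^*$ satisfying $f(p\restriction N_1)\restriction N_2^*\neq (p\restriction N_1)\restriction N_2^*$. I propose that the same triple $(N_1^*,N_2^*,f)$ is a splitting witness for $p$ over $M$.

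Two ingredients must be checked. The structural conditions on the witnesses are immediate: $M\leq M_1\leq N_1^*,N_2^*\leq N_1\leq N$ supplies the required containment, and $f$ fixing $M_1$ pointwise automatically fixes $M$ pointwise. The only nontrivial part is the transfer of the splitting inequality. Using \ref{gsplnotice} to extend $f$ to an automorphism $\hat f$ of $\mn$, both $f(p)$ and $f(p\restriction N_1)$ are realized by $\hat f(d)$ for a chosen $d\vDash p$; since $N_2^*=\hat f(N_1^*)\subseteq\hat f(N_1)\subseteq\hat f(N)$, restricting each to $N_2^*$ yields the common type $\gtp(\hat f(d)/N_2^*)$, so $f(p)\restriction N_2^*=f(p\restriction N_1)\restriction N_2^*$. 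An analogous but simpler application of transitivity of restriction (with no $f$ involved) gives $p\restriction N_2^*=(p\restriction N_1)\restriction N_2^*$. The hypothesized inequality therefore reads $f(p)\restriction N_2^*\neq p\restriction N_2^*$, which is precisely a witness that $p$ $\mu$-splits over $M$, as required.

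The only (mild) obstacle is verifying commutativity of restriction with the extension of $f$; this dissolves once we view $f$ on Galois types via an automorphism of $\mn$, which \ref{gsplnotice} ensures is well defined independent of the chosen extension. Edge cases such as $\nr{N_1}<\mu$ are vacuous, since then no $\mu$-sized witnesses inside $N_1$ can exist and $p\restriction N_1$ $\mu$-splits over nothing.
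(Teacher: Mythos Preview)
Your argument is correct and is exactly the standard contrapositive verification; the paper states this proposition without proof, treating it as routine, so there is nothing to compare against beyond noting that your write-up supplies the omitted details. One minor remark: since $M,N\in K_\mu$ and $M\leq N_1\leq N$, necessarily $\nr{N_1}=\mu$, so the edge case you mention cannot arise.
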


\begin{proposition}\mylabel{tamespl}{Proposition \thetheorem}
Let $M,N\in K$, $M\in K_\mu$ and $p\in\gs(N)$. $p$ $\mu$-splits over $M$ iff $p$ $(\geq\mu)$-splits over $M$ (the witnesses $N_1,N_2$ can be in $K_{\geq\mu}$).
\end{proposition}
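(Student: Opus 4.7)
My plan is to observe that one direction is immediate and reduce the other to $\mu$-tameness together with the L\"owenheim--Skolem axiom. The forward direction is clear: any witness $(N_1,N_2,f)$ of $\mu$-splitting is a fortiori a witness of $(\geq\mu)$-splitting since $K_\mu\subseteq K_{\geq\mu}$.

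For the converse, I start with witnesses $N_1,N_2\in K_{\geq\mu}$ with $M\leq N_1,N_2\leq N$ and $f:N_1\cong_M N_2$ satisfying $f(p)\restriction N_2\neq p\restriction N_2$. Using $AP$ inside the monster model, I extend $f$ to some $\bar f\in\oop{Aut}(\mn/M)$ and set $q:=\bar f(p)$. Invoking Proposition \ref{gsplnotice} to see that the image type only depends on the action of $f$ on $N_1$, I obtain $q\restriction N_2=f(p)\restriction N_2$, hence $q\neq p\restriction N_2$ as elements of $\gs(N_2)$.

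Next I apply $\mu$-tameness to the distinct pair $q,\,p\restriction N_2\in\gs(N_2)$, producing a $\leq$-submodel of $N_2$ of size $\mu$ on which they disagree. Using $LS$, I enlarge this submodel to $N_2'\leq N_2$ of size $\mu$ with $|M|\subseteq|N_2'|$; restriction monotonicity ensures $q\restriction N_2'\neq p\restriction N_2'$ survives this enlargement. I then define $N_1':=\bar f^{-1}[N_2']$, which is a submodel of $N_1$ (hence of $N$), has size $\mu$, and contains $M$ because $\bar f$ fixes $M$ pointwise. Setting $f':=\bar f\restriction N_1':N_1'\cong_M N_2'$ produces the desired $\mu$-splitting witness, since $f'(p)\restriction N_2'=q\restriction N_2'\neq p\restriction N_2'$.

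The one mildly delicate point I anticipate is arranging that the size-$\mu$ witnesses both contain $M$: $\mu$-tameness alone yields a small separating model but gives no control over what it contains, which is why I combine it with the $LS$ axiom on the $N_2$-side and with the $M$-fixing property of $\bar f$ on the $N_1$-side (pulling $N_2'$ back through $\bar f$ automatically includes $M$). Everything else is a direct unpacking of definitions.
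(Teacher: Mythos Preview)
Your proof is correct and follows essentially the same approach as the paper's sketch: use $\mu$-tameness to find a small separating submodel of $N_2$, enlarge via the L\"owenheim--Skolem axiom to absorb $M$, and pull back along the isomorphism to obtain $N_1'$. The only difference is that you route through an extension $\bar f\in\oop{Aut}(\mn/M)$, which is convenient for bookkeeping but not essential---one can work directly with $f$ since $f(p\restriction N_1)\restriction N_2'=f(p\restriction N_1')$ already holds.
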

\begin{proof}
We sketch the backward direction: pick $N_1,N_2\in K_{\geq\mu}$ witnessing $p$ $(\geq\mu)$-splits over $M$. By $\mu$-tameness and L\"{o}wenheim-Skolem axiom, we may assume $N_1,N_2\in K_\mu$. 
\end{proof}
\begin{definition}\mylabel{condef}{Definition \thetheorem}
Let $\chi$ be a regular cardinal.
\begin{enumerate}
\item A chain $\langle M_i:i\leq\delta\rangle$ is \emph{u-increasing} if $M_{i+1}>_uM_i$ for all $i<\delta$.
\item ${\bf K}$ satisfies \emph{continuity of $\mu$-nonsplitting} if for any limit ordinal $\delta$, $\langle M_i:i\leq\delta\rangle\subseteq K_\mu$ u-increasing and continuous, $p\in\gs(M_\delta)$, 
$$p\restriction M_i\text{ does not $\mu$-split over $M_0$ for $i<\delta$}\Rightarrow p\text{ does not $\mu$-split over $M_0$}.$$
\item ${\bf K}$ has \emph{$\chi$-weak local character of $\mu$-nonsplitting} if for any limit ordinal $\delta\geq\chi$, $\langle M_i:i\leq\delta\rangle\subseteq K_\mu$ u-increasing and continuous, $p\in\gs(M_\delta)$, there is $i<\delta$ such that $p\restriction M_{i+1}$ does not $\mu$-split over $M_i$.
\item ${\bf K}$ has \emph{$\chi$-local character of $\mu$-nonsplitting} if the conclusion in (3) becomes: $p$ does not $\mu$-split over $M_i$. 
\end{enumerate}
We call any $\delta$ that satisfies (3) or (4) a \emph{(weak) local character cardinal}.
\end{definition}
\begin{remark}
When defining the continuity of nonsplitting, we can weaken the statement by removing the assumption that $p$ exists and replacing $p\restriction M_i$ by $p_i$ increasing. This is because we can use \cite[Proposition 5.2]{bonext} to recover $p$. In details, we can use the weaker version of continuity and weak uniqueness (\ref{eeuprop}) to argue that the $p_i$'s form a coherent sequence. $p$ can be defined as the direct limit of the $p_i$'s.
\end{remark}
The following lemma connects the three properties of $\mu$-nonsplitting:
\begin{lemma}\emph{\cite[Lemma 11(1)]{BGVV}}\mylabel{bgvvlem}{Lemma \thetheorem}
If $\mu$ is regular, ${\bf K}$ satisfies continuity of $\mu$-nonsplitting and has $\chi$-weak local character of $\mu$-nonsplitting, then it has $\chi$-local character of $\mu$-nonsplitting.
\end{lemma}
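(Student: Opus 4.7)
I would argue by contradiction, supposing that $p$ $\mu$-splits over $M_i$ for every $i<\delta$. For each such $i$, the contrapositive of continuity of $\mu$-nonsplitting applied to the re-indexed tail $\langle M_j:i\le j\le\delta\rangle$ yields some $f(i)\in(i,\delta)$ with $p\restriction M_{f(i)}$ $\mu$-splitting over $M_i$. Splitting is preserved when the ambient model grows, since any splitting witnesses $N_1,N_2$ and iso $h:N_1\cong_{M_i}N_2$ inside $M_{f(i)}$ remain witnesses inside any $M_j$ with $j\ge f(i)$; so I may take $f$ strictly increasing.

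Using regularity of $\chi$ (together with $\chi\le\mu$ to control cardinalities of unions), I would build a subchain $\langle M_{j_\alpha}:\alpha\le\chi\rangle\subseteq K_\mu$ inside the original chain by setting $j_0=0$, $j_{\alpha+1}=f(j_\alpha)$, and $j_\lambda=\sup_{\alpha<\lambda}j_\alpha$ at limits. For $a<b$ in the original u-increasing continuous chain one has $M_a\le_u M_{a+1}\le M_b$, hence $M_a\le_u M_b$, so the subchain is u-increasing; continuity holds since a union of $\le\chi\le\mu$ models of size $\mu$ still has size $\mu$. When $\cf(\delta)\ge\chi$, the partial suprema remain strictly below $\delta$ automatically, so the construction sits inside $\delta$.

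Then applying $\chi$-weak local character of $\mu$-nonsplitting to this length-$\chi$ subchain together with the type $p\restriction M_{j_\chi}$ produces some $\alpha_0<\chi$ with $p\restriction M_{j_{\alpha_0+1}}$ not $\mu$-splitting over $M_{j_{\alpha_0}}$, directly contradicting $j_{\alpha_0+1}=f(j_{\alpha_0})$.

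The main obstacle I anticipate is the case $\cf(\delta)<\chi$ (which forces $\delta>\chi$), where a naive recursion could escape $\delta$ at some limit $\lambda\le\cf(\delta)<\chi$. My plan for that case is to first run the main argument on the initial segment $\langle M_i:i\le\chi\rangle$ (which has cofinality $\chi$ by regularity) together with the restricted type $p\restriction M_\chi$, obtaining some $i^*<\chi$ with $p\restriction M_\chi$ not $\mu$-splitting over $M_{i^*}$, and then to propagate nonsplitting from $M_\chi$ up to $p$ via continuity of $\mu$-nonsplitting applied along a cofinal subchain of $\delta$ of length $\cf(\delta)$, using monotonicity of nonsplitting (\ref{monspl}) to manage the choice of a common base across the intermediate restrictions.
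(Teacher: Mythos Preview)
Your main argument for the case $\cf(\delta)\geq\chi$ is correct and is essentially the paper's proof: both iterate a ``next splitting index'' map (obtained from the contrapositive of continuity) to produce a u-increasing continuous subchain along which consecutive restrictions of $p$ split, and then invoke $\chi$-weak local character for the contradiction. The only cosmetic difference is that you build a subchain of length $\chi$ whereas the paper builds one of length $\delta$ (after tacitly reducing to regular $\delta$); either works.

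Your handling of the case $\cf(\delta)<\chi$, however, has a real gap. From the initial segment you correctly extract $i^*<\chi$ with $p\restriction M_\chi$ not $\mu$-splitting over $M_{i^*}$, but the proposed ``propagation'' step does not go through: continuity of $\mu$-nonsplitting lets you conclude that $p$ does not $\mu$-split over $M_{i^*}$ only once you already know $p\restriction M_j$ does not $\mu$-split over $M_{i^*}$ for cofinally many $j<\delta$, and you have this only at $j=\chi$. Monotonicity (\ref{monspl}) moves nonsplitting \emph{downward} in the domain, not upward, so it cannot supply the missing intermediate stages. Note that the paper's own proof also restricts to $\cf(\delta)\geq\chi$ and does not address this case; in all applications the lemma is invoked only with regular $\delta\geq\chi$ (see e.g.\ \ref{gfl} and the remark after \ref{chi}), so the clause ``limit ordinal $\delta\geq\chi$'' in \ref{condef} should effectively be read as a condition on cofinality.
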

\begin{proof}
Let $\delta$ be a limit ordinal of cofinality $\geq\chi$, $\langle M_i:i\leq\delta\rangle$ u-increasing and continuous. Suppose $p\in\gs(M_\delta)$ splits over $M_i$ for all $i<\delta$. Define $i_0\defeq0$. By $\delta$ regular and continuity of $\mu$-nonsplitting, build an increasing and continuous sequence of indices $\langle i_k:k<\delta\rangle$ such that $p\restriction M_{i_{k+1}}$ $\mu$-splits over $M_{i_k}$. Notice that $M_{i_{k+1}}>_uM_{i_k}$. Then applying $\chi$-weak local character to $\langle M_{i_k}:k<\delta\rangle$ yields a contradiction.
\end{proof}
From stability (even without continuity of nonsplitting), it is always possible to obtain weak local character of nonsplitting. Shelah sketched the proof and alluded to the first-order analog, so we give details here. 
\begin{lemma}\emph{\cite[Claim 3.3(2)]{sh394}}\mylabel{394lem}{Lemma \thetheorem}
If ${\bf K}$ is stable in $\mu$ (which is in \ref{assum1}), then for some $\chi\leq\mu$, it has weak $\chi$-local character of $\mu$-nonsplitting.
\end{lemma}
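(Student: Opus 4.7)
The plan is to argue by contradiction in analogy with the first-order bound $\kappa(T)\leq\card{T}^+$: I will manufacture a tree of Galois types over a fixed model in $K_\mu$ whose leaves then violate stability of ${\bf K}$ in $\mu$.

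Suppose for contradiction that for every regular $\chi\leq\mu$, weak $\chi$-local character of $\mu$-nonsplitting fails. Taking $\chi$ to be a sufficiently large regular cardinal $\leq\mu$ and, if necessary, passing to a cofinal sub-chain, the negation produces a u-increasing continuous chain $\langle M_i:i\leq\delta\rangle\subseteq K_\mu$ of regular length $\delta\leq\mu$ together with a type $p\in\gs(M_\delta)$ such that $p\restriction M_{i+1}$ $\mu$-splits over $M_i$ for every $i<\delta$. At each level $i<\delta$, \ref{spldef} supplies models $N_0^i,N_1^i\leq M_{i+1}$ of size $\mu$ and an isomorphism $h_i:N_0^i\cong_{M_i}N_1^i$ that witnesses the splitting, i.e.\ sends $p\restriction N_0^i$ to a type over $N_1^i$ incompatible with $p\restriction N_1^i$.

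The main step is to use universality of each $M_{i+1}$ over $M_i$ to build, by induction on nodes $\eta\in 2^{\leq\delta}$, a coherent tree of $K$-embeddings $f_\eta:M_{\ell(\eta)}\to M^*$ landing inside a single $M^*\in K_\mu$, arranged so that $f_{\eta\frown\langle 0\rangle}$ and $f_{\eta\frown\langle 1\rangle}$ differ by the twin swap induced by $h_{\ell(\eta)}$. Continuity of the original chain and weak uniqueness of nonsplitting extensions (as in the remark after \ref{condef}) allow the construction to pass through limit levels, while the iterated amalgamation is made feasible because splitting is already witnessed by size-$\mu$ submodels (\ref{tamespl}). Each branch $\eta\in 2^\delta$ then yields a type $f_\eta(p)\in\gs(M^*)$, and the twin swaps guarantee that distinct branches produce distinct types; this gives $2^{\card{\delta}}$ Galois types over $M^*$.

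Choosing $\delta$ large enough that $2^{\card{\delta}}>\mu$ violates stability of ${\bf K}$ in $\mu$: when $\mu$ is regular one takes $\delta=\mu$ and uses $2^\mu>\mu$; when $\mu$ is singular, failure of weak $\chi$-local character at arbitrarily large regular $\chi\leq\mu$ lets one concatenate bad chains until $\card{\delta}=\mu$ and again use $2^\mu>\mu$. The main obstacle I anticipate is the bookkeeping inside the inductive tree construction: keeping every image $f_\eta[M_{\ell(\eta)}]$ inside the fixed $M^*\in K_\mu$, preserving coherence across all limit levels so that each branch defines a well-defined Galois type on $M^*$, and ensuring that the twin swaps translate into genuine disagreement of Galois types rather than just of their representatives.
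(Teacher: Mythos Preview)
Your overall plan---build a binary tree of embeddings branching at each level via the two splitting witnesses, land everything in one model of size $\mu$, and read off $2^{|\delta|}$ distinct Galois types---is exactly the paper's argument. Two points deserve correction.

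First, and this is a genuine gap: your cardinal bookkeeping for singular $\mu$ does not work. The ``concatenate bad chains'' idea fails because each bad chain you obtain from the failure of weak $\chi_n$-local character comes with its \emph{own} type $p_n$, and there is no mechanism to merge these into a single $p$ over the concatenated chain that still splits at every successor stage. If $\mu$ is a singular strong limit, no regular $\delta<\mu$ satisfies $2^{\delta}>\mu$, so your insistence on regular $\delta$ leaves you stuck. The paper's fix is clean: simply take $\chi$ to be the \emph{least} cardinal with $2^{\chi}>\mu$ (regularity is irrelevant), assume weak $\chi$-local character fails, get a bad chain of some length $\delta\geq\chi$, and restrict to its first $\chi$ levels. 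The tree then has height $\chi$ and $2^{\chi}>\mu$ branches, contradicting stability regardless of whether $\mu$ or $\chi$ is regular.

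Second, a minor point: you invoke ``weak uniqueness of nonsplitting extensions'' to pass through limit levels of the tree. This is unnecessary. Along each branch the embeddings $f_{\eta\restriction i}$ form an increasing chain of $K$-embeddings with increasing domains $M_i$, so at a limit level you just take the union $f_{\eta\restriction\alpha}=\bigcup_{i<\alpha}f_{\eta\restriction i}$; no type-theoretic uniqueness is needed. The only delicate bookkeeping (which you correctly flag) is arranging the \emph{targets} to cohere into a single $M^{*}\in K_\mu$; the paper does this by building an auxiliary increasing continuous chain $\langle M_i':i\leq\chi\rangle$ alongside the tree and amalgamating at each successor step.
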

\begin{proof}
Pick $\chi\leq\mu$ minimum such that $2^\chi>\mu$. Suppose we have $\langle M_i:i\leq\chi\rangle$ u-increasing and continuous and $d\vDash p\in\gs(M_\chi)$ such that for all $i<\chi$, $p\restriction M_{i+1}$ $\mu$-splits over $p\restriction M_i$. Then for $i<\chi$, we have $N_i^1$ and $N_i^2$ of size $\mu$, $M_i\leq N_i^1,N_i^2\leq M_{i+1}$, $f_i:N_i^1\cong_{M_i}N_i^2$ and $f_i(p)\restriction N_i^2\neq p\restriction N_i^2$. We build $\langle M_i':i\leq\chi \rangle$ and $\langle h_\eta:M_{l(\eta)}\xrightarrow[M_0]{}M_{l(\eta)}'\,|\,\eta\in2^{\leq\chi}\rangle$ both increasing and continuous with the following requirements:
\begin{enumerate}
\item $h_{\langle\rangle}\defeq\oop{id}_{M_0}$ and $M_0'\defeq M_0$.
\item For $\eta\in2^{<\chi}$, $h_{\eta^\frown0}\restriction N_{l(\eta)}^2=h_{\eta^\frown1}\restriction N_{l(\eta)}^2$.
\end{enumerate}
\begin{center}
\begin{tikzcd}
                                                         & M_{hf_i} \arrow[rr, "g_1"]                     &  & M_{i+1}'                   \\
M_{i+1} \arrow[ru, "g_0"] \arrow[rrr, "h_{\nu^\frown0}"] &                                              &  & M^{**} \arrow[u]           \\
N_i^1 \arrow[r, "f_i","\cong"'] \arrow[u]                           & N_i^2 \arrow[rr, "h"] \arrow[lu]             &  & M^* \arrow[lluu] \arrow[u] \\
                                                         & M_i \arrow[rr, "h_\nu"] \arrow[lu] \arrow[u] &  & M_i' \arrow[u]            
\end{tikzcd}
\end{center}

We specify the successor step: suppose $l(\nu)=i$ and $h_\nu$ has been constructed. By $AP$, obtain
\begin{enumerate}
\item $h:N_i^2\rightarrow M^*\geq M_i'$ with $h\supseteq h_\nu$.
\item $h_{\nu^\frown0}:M_{i+1}\rightarrow M^{**}\geq M^*$ with $h_{\nu^\frown0}\supseteq h$.
\item $g_0:M_{i+1}\rightarrow M_{hf_i}\geq M^*$ with $g_0\supseteq h\circ f_i$. 
\item $g_1:M_{hf_i}\rightarrow M_{i+1}'\geq M^{**}$ with $g_1\circ g_0=h_{\nu^\frown0}$.
\end{enumerate}
Define $h_{\nu^\frown1}\defeq g_1\circ g_0:M_{i+1}\rightarrow M_{i+1}'$. By diagram chasing, $h_{\nu^\frown1}\restriction M_i=g_1\circ g_0\restriction M_i=g_1\circ h\circ f_i\restriction M_i=g_1\circ h\restriction M_i=h\restriction M_i=h_\nu\restriction M_i$. On the other hand, $h_{\nu^\frown0}\restriction M_i=h\restriction M_i=h_\nu\restriction M_i$. Therefore the maps are increasing. Now $h_{\nu^\frown1}\restriction N_i^2=g_1\circ g_0\restriction N_i^2=h_{\nu^\frown0}\restriction N_i^2$ by item (4) in our construction. 

For $\eta\in2^\chi$, extend $h_\eta$ so that its range includes $M_\chi'$ and its domain includes $d$. We show that $\{\gtp(h_\eta(d)/M_{\chi}'):\eta\in2^\chi\}$ are pairwise distinct. For any $\eta\neq\nu\in2^\chi$, pick the minimum $i<\chi$ such that $\eta[i]\neq\nu[i]$. Without loss of generality, assume $\eta[i]=0$, $\nu[i]=1$. Using the diagram above (see the comment before \ref{gsplnotice}),
\begin{align*}
\gtp(h_\eta(d)/M_{\chi}')&\supseteq\gtp(h_{\eta}(d)/h(N_i^2))\\
&=h(\gtp(d/N_i^2))\\
&\neq h\circ f_i(\gtp(d/N_i^1))\\
&=g_1\circ h\circ f_i(\gtp(d/N_i^1))\\
&\subseteq\gtp(h_\nu(d)/M_{\chi}')
\end{align*}
This contradicts the stability in $\mu$.
\end{proof}
\begin{proposition}\mylabel{394prop}{Proposition \thetheorem}
If $\mu$ is regular, then for some $\chi\leq\mu$, ${\bf K}$ has the $\chi$-local character of $\mu$-nonsplitting.
\end{proposition}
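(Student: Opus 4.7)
The plan is to chain together the two preceding lemmas. By Assumption \ref{assum1}(2), ${\bf K}$ is stable in $\mu$, so Lemma \ref{394lem} supplies some $\chi \leq \mu$ such that ${\bf K}$ has $\chi$-weak local character of $\mu$-nonsplitting. Now $\mu$ is regular by hypothesis, and Assumption \ref{assum1}(4) gives continuity of $\mu$-nonsplitting, so the hypotheses of Lemma \ref{bgvvlem} are satisfied for this $\chi$.

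Applying Lemma \ref{bgvvlem} upgrades the weak $\chi$-local character to full $\chi$-local character of $\mu$-nonsplitting, which is exactly the conclusion of the proposition. No further combinatorics should be needed; the proof is essentially a one-line composition of the two lemmas.

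The only subtle point to flag is that Lemma \ref{394lem} produces $\chi$ as the least cardinal with $2^{\chi} > \mu$, whereas Lemma \ref{bgvvlem} preserves whatever $\chi$ is fed into it, so the same $\chi \leq \mu$ witnesses both the hypothesis and the conclusion. Since nothing in the argument depends on choosing $\chi$ minimally (Assumption \ref{assum1}(5) already fixes $\chi$ as the minimum local character cardinal, and by what we just proved this minimum exists and is $\leq \mu$), there is no obstacle. This is purely a cleanup statement that packages the two preceding results in the form that will be cited elsewhere in the paper.
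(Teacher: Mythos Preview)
Your proposal is correct and matches the paper's approach: both proofs compose Lemma \ref{394lem} with Lemma \ref{bgvvlem}, using stability in $\mu$ and continuity of $\mu$-nonsplitting from \ref{assum1}. The only cosmetic difference is that the paper first weakens the $\chi$-weak local character from \ref{394lem} to $\mu$-weak local character (which is immediate since weak local character closes upward) and then applies \ref{bgvvlem} at $\mu$, whereas you apply \ref{bgvvlem} directly at the $\chi$ produced by \ref{394lem}; either way the conclusion follows.
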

\begin{proof}
By \ref{394lem}, ${\bf K}$ has $\mu$-weak local character of $\mu$-nonsplitting. By \ref{bgvvlem} (together with continuity of $\mu$-nonsplitting in \ref{assum1}), ${\bf K}$ has $\mu$-local character of $\mu$-nonsplitting. Hence $\chi$ exists and $\chi\leq\mu$.
\end{proof}
From now on, we fix
\begin{definition}\mylabel{chi}{Definition \thetheorem}
$\chi$ is the minimum local character cardinal of $\mu$-nonsplitting. $\chi\leq\mu$ if either $\mu$ is regular (by the previous proposition), or $\mu$ is greater than some regular stability cardinal $\xi$ where ${\bf K}$ has continuity of $\xi$-nonsplitting and is $\xi$-tame (by \ref{chitrans}). 
\end{definition}
\begin{remark}
Without continuity of nonsplitting, it is not clear whether there can be gaps between the local character cardinals: \ref{condef}(4) might hold for $\delta=\aleph_0$ and $\delta=\aleph_2$ but not $\delta=\aleph_1$. In that case defining $\chi$ as the \emph{minimum} local character cardinal might not be useful. Similar obstacles form when we only know a particular $\lambda$ is a local character cardinal but not necessary those above $\lambda$. 

Meanwhile, weak local character cardinals close upwards and we can eliminate the above situation by assuming continuity of nonsplitting: if we know $\chi$ is the minimum local character cardinal, then it is also a weak local character cardinal, so are all regular cardinals between $[\chi,\mu^+)$. By the proof of  \ref{bgvvlem}, the regular cardinals between $[\chi,\mu^+)$ are all local character cardinals.
\end{remark}

We now state the existence, extension, weak uniqueness and weak transitivity properties of $\mu$-nonsplitting. The original proof for weak uniqueness assumes $\nr{M}=\mu$ but it is not necessary; while that for extension and for weak transitivity assume all models are in $K_\mu$; but under tameness we can just require $\nr{M}=\nr{N}$.

\begin{proposition}\mylabel{eeuprop}{Proposition \thetheorem}
Let $M_0<_u M\leq N$ where $\nr{M_0}=\mu$.
\begin{enumerate}
\item \emph{\cite[Claim 3.3(1)]{sh394}} (Existence) If $p\in\gs(N)$, there is $N_0\leq N$ of size $\mu$ such that $p$ does not $\mu$-split over $N_0$.
\item \emph{\cite[Theorem 6.2]{GV06b}} (Weak uniqueness) If $p,q\in\gs(N)$ both do not $\mu$-split over $M_0$, and $p\restriction M=q\restriction M$, then $p=q$.
\item \emph{\cite[Theorem 6.1]{GV06b}} (Extension) Suppose $\nr{M}=\nr{N}$. For any $p\in\gs(M)$ that does not $\mu$-split over $M_0$, there is $q\in\gs(N)$ extending $p$ such that $q$ does not $\mu$-split over $M_0$.
\item \emph{\cite[Proposition 3.7]{s3}} (Weak transitivity) Suppose $\nr{M}=\nr{N}$. Let $M^*\leq M_0$ and $p\in\gs(N)$. If $p$ does not $\mu$-split over $M_0$ while $p\restriction M$ does not $\mu$-split over $M^*$, then $p$ does not $\mu$-split over $M^*$. 
\end{enumerate}
\end{proposition}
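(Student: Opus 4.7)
All four parts are standard results (cited above); the substantive task is only to relax the hypotheses as noted between the statement and the original references. The unifying tool is Proposition \ref{tamespl}: under $\mu$-tameness, $\mu$-splitting is detected by $\mu$-sized witnesses, so questions about types over arbitrarily large $N$ reduce to the $K_\mu$ setting via continuous filtrations of $N$ by $\mu$-sized submodels.

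For part (1), I would replay Shelah's argument from \cite{sh394}: assuming for contradiction that $p$ $\mu$-splits over every $\mu$-sized $N_0\leq N$, extract a $u$-increasing witnessing chain inside $N$ of length $\chi$ (with $\chi$ as in Lemma \ref{394lem}), and rerun the $2^\chi$-tree coding from that lemma to produce more than $\mu$ distinct types over a $\mu$-sized model, contradicting stability in $\mu$.

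For part (2), by $\mu$-tameness it suffices to prove $p\restriction N'=q\restriction N'$ for every $N'\leq N$ with $\|N'\|=\mu$ and $M_0\leq N'$. Universality of $M$ over $M_0$ gives $f:N'\xrightarrow[M_0]{}M$, hence an isomorphism $f:N'\cong_{M_0}f(N')$ with both $N',f(N')\leq N$ (because $f(N')\leq M\leq N$). Non-splitting of $p$ and $q$ over $M_0$ then yields
\[
p\restriction N' \;=\; f^{-1}\bigl(p\restriction f(N')\bigr) \;=\; f^{-1}\bigl(q\restriction f(N')\bigr) \;=\; q\restriction N',
\]
where the middle equality uses $f(N')\leq M$ and $p\restriction M=q\restriction M$. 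This argument nowhere uses $\|M\|=\mu$.

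For parts (3) and (4), I would reduce to the $K_\mu$ versions by a continuous filtration $M=\bigcup_{i<\|N\|}M_i$, $N=\bigcup_{i<\|N\|}N_i$ of $\mu$-sized submodels with $M_0\leq M_i\leq N_i$, arranging $M_0<_uM_i$ via stability and amalgamation. For (3), build coherent $q_i\in\gs(N_i)$ extending $p\restriction M_i$ and non-splitting over $M_0$: at each successor apply the $K_\mu$-extension to the previously constructed $q_{i-1}$ and then use part (2) (already established) to identify its restriction to $M_i$ with $p\restriction M_i$; at limits invoke continuity of $\mu$-nonsplitting (Assumption \ref{assum1}(4)). Take $q=\bigcup_iq_i\in\gs(N)$. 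For (4), monotonicity (\ref{monspl}) gives that each $p\restriction N_i$ non-splits over $M_0$ and each $p\restriction M_i$ non-splits over $M^*$; apply $K_\mu$ weak transitivity level by level to conclude $p\restriction N_i$ non-splits over $M^*$, and then observe that any $\mu$-sized splitting witness in $N$ lies in some $N_i$ to transfer the conclusion to $p$ itself. The main obstacle is the bookkeeping for coherence in (3); weak uniqueness from part (2) is the glue that makes the inductive step go through.
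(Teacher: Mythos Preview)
Your treatments of (1) and (2) are fine and match the paper's. The substantive divergence is in (3) and (4), where you reach for a filtration argument that is both unnecessary and somewhat fragile.

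For (3), you are missing the one-line observation that makes the hypothesis $\nr{M}=\nr{N}$ do all the work: since $M_0<_u M$ and $\nr{N}=\nr{M}$, universality gives directly an embedding $f:N\xrightarrow[M_0]{}M$. Then $q\defeq f^{-1}(p\restriction f(N))\in\gs(N)$ does not $\mu$-split over $M_0$ by invariance, and one checks $q\restriction M=p$ using nonsplitting of $p$ (for each $\mu$-sized $M'\leq M$ containing $M_0$, the map $f\restriction M'$ witnesses $q\restriction M'=f^{-1}(p\restriction f(M'))=p\restriction M'$; now apply tameness). No filtration, no continuity of nonsplitting. Your approach instead builds $q$ as a direct limit along a filtration of $N$ and invokes continuity at limit stages; but continuity of $\mu$-nonsplitting (\ref{condef}) is only stated for \emph{u-increasing} chains, and there is no reason an arbitrary $N\in K_{\geq\mu}$ contains a u-increasing $\mu$-filtration. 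So as written your limit step has a gap; it is repairable with more care but entirely avoidable.

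For (4), the paper again avoids filtration: use (3) (with $M^*$ in the role of $M_0$; note $M^*<_u M$ follows from $M^*\leq M_0<_u M$ and amalgamation) to extend $p\restriction M$ to $q'\in\gs(N)$ non-$\mu$-splitting over $M^*$, then apply (2) to $p$ and $q'$ (both non-$\mu$-split over $M_0$, agreeing on $M$) to get $p=q'$. Your level-by-level argument does work here, since the final ``any $\mu$-sized witness lies in some $N_i$'' step needs neither continuity nor u-increasingness, but it is more bookkeeping than the two-line reduction to (2) and (3).
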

\begin{proof}
\begin{enumerate}
\item We skip the proof, which has the same spirit as that of \ref{394lem}. 
\item By stability in $\mu$
, we may assume that $\nr{M}=\mu$. Suppose $p\neq q$, by tameness in $\mu$ we may find $M'\in K_\mu$ such that $M\leq M'\leq N$ and $p\restriction M'\neq q\restriction M'$. By $M_0<_uM$ and $M_0<N$, we can find $f:M'\xrightarrow[M_0]{}M$. Using nonsplitting twice, we have $p\restriction f(M')=f(p)$ and $q\restriction f(M')=f(q)$. But $f(M')\leq M$ implies $p\restriction f(M')=q\restriction f(M')$. Hence $f(p)=f(q)$ and $p=q$. 
\item By universality of $M$, find $f:N\xrightarrow[M_0]{}M$. We can set $q\defeq f^{-1}(p\restriction f(N))$.
\item Let $q\defeq p\restriction M$. By extension, obtain $q'\supseteq q$ in $\gs(N)$ such that $q'$ does not $\mu$-split over $M^*$. Now $p\restriction M=q\restriction M=q'\restriction M$ and both $p,q'$ do not $\mu$-split over $M_0$ (for $q'$ use monotonicity, see \ref{monspl}). By weak uniqueness, $p=q'$ and the latter does not $\mu$-split over $M^*$. 
\end{enumerate}
\end{proof}

Transitivity does not hold in general for $\mu$-nonsplitting. The following example is sketched in \cite[Example 19.3]{bal}.

\begin{example}\mylabel{balex}{Example \thetheorem}
Let $T$ be the first-order theory of a single equivalence relation $E$ with infinitely many equivalence classes and each class is infinite. Let $M\leq N$ where $N$ contains (representatives of) two more classes than $M$. Let $d$ be an element. Then $\tp(d/N)$ splits over $M$ iff $dEa$ for some element $a\in N$ but $\neg dEb$ for any $b\in M$. Meanwhile, suppose $M_0\leq M$ both of size $\mu$, then $M_0<_u M$ iff $M$ contains $\mu$-many new classes and each class extends $\mu$ many elements. Now require $M_0<_u M$ while $N$ contains only an extra class than $M$, say witnessed by $d$, then $\tp(d/N)$ cannot split over $M$. Also $\tp(d/M)$ does not split over $M_0$ because $d$ is not equivalent to any elements from $M$. Finally $\tp(d/N)$ splits over $M_0$ because it contains two more classes than $M_0$ (one must be from $M$). 
\end{example}
The same argument does not work if also $M<_u N$ because $N$ would contain two more classes than $M$ and they will witness $\tp(d/N)$ splits over $M$. Baldwin originally assigned it as \cite[Exercise 12.9]{bal} but later \cite{baler} retracted the claim. 
\begin{ques}
When models are ordered by $\leq_u$, 
\begin{enumerate}
\item does uniqueness of $\mu$-nonsplitting hold? Namely, let $M<_uN$ both in $K_\mu$, $p,q\in\gs(N)$ both do not $\mu$-split over $M$, $p\restriction M=q\restriction M$, then $p=q$.
\item does transitivity of $\mu$-nonsplitting hold? Namely, let $M_0<_uM<_uN$ all in $K_\mu$, $p\in\gs(N)$ does not $\mu$-split over $M$ and $p\restriction M$ does not $\mu$-split over $M_0$, then $p$ does not $\mu$-split over $M_0$.
\end{enumerate}
\end{ques}

In \ref{assum1}, we assumed continuity of $\mu$-nonsplitting. One way to obtain it is to assume superstability which is stronger. Another way is to assume $\omega$-type locality.

\begin{definition}
\begin{enumerate}
\item \cite[Definition 7.12]{Gclass} Let $\lambda\geq\ls$, ${\bf K}$ is \emph{$\lambda$-superstable} if it is stable in $\lambda$ and has $\al$-local character of $\lambda$-nonsplitting.
\item \cite[Definition 11.4]{bal} Types in ${\bf K}$ are \emph{$\omega$-local} if: for any limit ordinal $\alpha$, $\langle M_i:i\leq\alpha\rangle$ increasing and continuous, $p,q\in\gs(M_\alpha)$ and $p\restriction M_i=q\restriction M_i$ for all $i<\alpha$, then $p=q$.
\end{enumerate}
\end{definition}
\begin{proposition}\mylabel{weaker}{Proposition \thetheorem}
Let ${\bf K}$ satisfy \ref{assum1} except for the continuity of $\mu$-nonsplitting. It will satisfy the continuity of $\mu$-nonsplitting if either
\begin{enumerate}
\item ${\bf K}$ is $\mu$-superstable; or
\item Types in ${\bf K}$ are $\omega$-local.
\end{enumerate}
\end{proposition}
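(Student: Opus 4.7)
The plan is to deduce continuity of $\mu$-nonsplitting in each case from the nonsplitting machinery already collected in Proposition~\ref{eeuprop} (weak uniqueness, extension, weak transitivity), using the extra assumption only to close the argument at the limit. Fix a limit ordinal $\delta$, a u-increasing and continuous $\langle M_i : i \leq \delta\rangle \subseteq K_\mu$, and $p \in \gs(M_\delta)$ with $p \restriction M_i$ not $\mu$-splitting over $M_0$ for every $i < \delta$; the goal is that $p$ does not $\mu$-split over $M_0$.

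For (1), $\mu$-superstability gives $\al$-local character of $\mu$-nonsplitting by definition, which applied to this chain produces some $j < \delta$ such that $p$ does not $\mu$-split over $M_j$. If $j = 0$ the conclusion is immediate; otherwise pick $j+1 < \delta$ and apply weak transitivity (\ref{eeuprop}(4)) with $M^* := M_0$, with the role of ``$M_0$'' in the statement played by $M_j$ and the role of ``$M$'' by $M_{j+1}$. The u-increasing hypothesis gives $M_j <_u M_{j+1} \leq M_\delta$; $p$ does not $\mu$-split over $M_j$ by choice of $j$; and $p \restriction M_{j+1}$ does not $\mu$-split over $M_0$ by hypothesis. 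Hence weak transitivity yields $p$ not $\mu$-splitting over $M_0$, as required.

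For (2), I first invoke extension (\ref{eeuprop}(3)) with $M_0 <_u M_1 \leq M_\delta$ applied to $p \restriction M_1 \in \gs(M_1)$ (which does not $\mu$-split over $M_0$) to obtain $q \in \gs(M_\delta)$ extending $p \restriction M_1$ and not $\mu$-splitting over $M_0$. I then show $p = q$. For every $i$ with $1 \leq i < \delta$, both $p \restriction M_i$ and $q \restriction M_i$ fail to $\mu$-split over $M_0$ (the former by hypothesis, the latter by monotonicity \ref{monspl}), and they agree on $M_1$; weak uniqueness (\ref{eeuprop}(2)), applied with $M_0 <_u M_1 \leq M_i$, therefore forces $p \restriction M_i = q \restriction M_i$. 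This equality also holds trivially at $i = 0$ since $q \supseteq p \restriction M_0$, so $p$ and $q$ agree on every proper initial segment of the chain, and $\omega$-type locality promotes this to $p = q \in \gs(M_\delta)$. Thus $p$ inherits non-$\mu$-splitting over $M_0$ from $q$.

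The main conceptual point, and the only non-routine step, is in (2): rather than trying to take a limit of nonsplitting witnesses directly, one produces a single non-splitting global extension $q$ of the short type $p \restriction M_1$ and uses weak uniqueness at each level below $\delta$ to pin $q$ to $p$, then lifts equality to $M_\delta$ via $\omega$-type locality. All cardinality side-conditions on extension, weak uniqueness and weak transitivity are met automatically because every model in the chain lies in $K_\mu$, so no further bookkeeping is required.
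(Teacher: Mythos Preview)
Your proof is correct and follows essentially the same approach as the paper: for (1) you use $\aleph_0$-local character to locate some $M_j$ over which $p$ does not split and then invoke weak transitivity with the intermediate $M_{j+1}$, and for (2) you extend $p\restriction M_1$ to a nonsplitting $q\in\gs(M_\delta)$, match $q$ with $p$ level-by-level via weak uniqueness, and close with $\omega$-locality. The paper's proof is identical in structure and in the lemmas invoked.
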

\begin{proof}
For (1), it suffices to prove that for any regular $\lambda\geq\al$, $\lambda$-local character implies continuity of $\mu$-nonsplitting over chains of cofinality $\geq\lambda$. Let $\langle M_i:i\leq\lambda\rangle$ be u-increasing and continuous. Suppose $p\in\gs(M_\lambda)$ satisfies $p\restriction M_i$ does not $\mu$-split over $M_0$ for all $i<\lambda$. By $\lambda$-local character, $p$ does not $\mu$-split over some $M_i$. If $i=0$ we are done. Otherwise, we have $M_0<_uM_i<_uM_{i+1}<_uM_\lambda$. By assmption, $p\restriction M_{i+1}$ does not $\mu$-split over $M_0$. By weak transitivity (\ref{eeuprop}), $p$ does not $\mu$-split over $M_0$ as desired.

For (2), let $\langle M_i:i\leq\lambda\rangle$ and $p$ as above. By assumption $p\restriction M_1$ does not $\mu$-split over $M_0$ and $M_1>_uM_0$. By extension (\ref{eeuprop}), there is $q\supseteq p\restriction M_1$ in $\gs(M_\lambda)$ such that $q$ does not $\mu$-split over $M_0$. By monotonicity, for $2\leq i<\lambda$, $q\restriction M_i$ does not $\mu$-split over $M_0$. Now $(q\restriction M_i)\restriction M_1=p\restriction M_1=(p\restriction M_i)\restriction M_1$, we can use weak uniqueness (\ref{eeuprop}) to inductively show that $q\restriction M_i=p\restriction M_i$ for all $i<\lambda$. By $\omega$-locality, $p=q$ and the latter does not $\mu$-split over $M_0$ as desired.
\end{proof}

Once we have continuity of $\mu$-nonsplitting in $K_\mu$, it automatically works for $K_{\geq\mu}$:
\begin{proposition}\mylabel{congeqmu}{Proposition \thetheorem}
Let $\delta$ be a limit ordinal, $\langle M_i:i\leq\delta\rangle\subseteq K_{\geq\mu}$ be u-increasing and continuous, $p\in\gs(M_\delta)$. If for all $i<\delta$, $p\restriction M_i$ does not $\mu$-split over $M_0$, then $p$ also does not $\mu$-split over $M_0$.
\end{proposition}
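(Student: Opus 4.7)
The plan is to reduce to continuity of $\mu$-nonsplitting in $K_\mu$ (Assumption \ref{assum1}(4)). First observe that the statement is vacuous unless $M_0 \in K_\mu$, since Definition \ref{spldef} requires witnesses of size $\mu$ containing $M_0$; so we may assume $|M_0| = \mu$. If moreover $|M_\delta| = \mu$, then the whole chain lies in $K_\mu$ and Assumption \ref{assum1}(4) applies directly. So assume $|M_\delta| > \mu$ and argue by contradiction: fix witnesses $N_1, N_2 \in K_\mu$ with $M_0 \leq N_1, N_2 \leq M_\delta$ and $f : N_1 \cong_{M_0} N_2$ such that $f(p \restriction N_1) \neq p \restriction N_2$.

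If $\cf(\delta) > \mu$, then $|N_1 \cup N_2| \leq \mu < \cf(\delta)$ forces $N_1, N_2 \leq M_j$ for some $j < \delta$, so $p \restriction M_j$ $\mu$-splits over $M_0$, contradicting the hypothesis. Otherwise set $\kappa \defeq \cf(\delta) \leq \mu$, fix a continuous strictly increasing cofinal $\langle j_\alpha : \alpha < \kappa \rangle$ in $\delta$, and partition $N_1 \cup N_2 = \bigsqcup_{\alpha < \kappa} A_\alpha$ so that $A_\alpha \subseteq M_{j_{\alpha+1}}$ for each $\alpha$ (possible since each element of $N_1 \cup N_2 \subseteq \bigcup_\alpha M_{j_\alpha}$ first appears at some stage). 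I then build a u-increasing continuous chain $\langle M^*_\alpha : \alpha \leq \kappa \rangle \subseteq K_\mu$ with $M^*_0 = M_0$, $M^*_\alpha \leq M_{j_\alpha}$ for $\alpha < \kappa$, $M^*_\kappa \leq M_\delta$, and $A_\alpha \subseteq M^*_{\alpha+1}$. At a successor step, take a universal extension $P$ of $M^*_\alpha$ in $K_\mu$ (which exists by stability in $\mu$); amalgamate $P$ with $M_{j_\alpha}$ over $M^*_\alpha$ and use $M_{j_\alpha} <_u M_{j_\alpha+1}$ to embed $P$ into $M_{j_\alpha+1} \leq M_{j_{\alpha+1}}$ over $M^*_\alpha$; then by the L\"{o}wenheim--Skolem axiom pick $M^*_{\alpha+1} \leq M_{j_{\alpha+1}}$ in $K_\mu$ containing both the image of $P$ and $A_\alpha$. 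Since $M^*_\alpha <_u P \leq M^*_{\alpha+1}$, also $M^*_\alpha <_u M^*_{\alpha+1}$. Limit steps are handled by unions, which remain in $K_\mu$ because $\kappa \leq \mu$.

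The resulting $M^*_\kappa \in K_\mu$ contains $N_1$ and $N_2$ and lies in $M_\delta$. By hypothesis and monotonicity (\ref{monspl}), each $p \restriction M^*_\alpha$ does not $\mu$-split over $M_0$, so applying continuity of $\mu$-nonsplitting to the $K_\mu$-chain $\langle M^*_\alpha \rangle$ yields that $p \restriction M^*_\kappa$ does not $\mu$-split over $M_0$, contradicting that $N_1, N_2, f$ inside $M^*_\kappa$ witness splitting. The main obstacle is the successor step of the construction when $\cf(\delta) \leq \mu$: one must simultaneously arrange (i) $M^*_\alpha <_u M^*_{\alpha+1}$ in $K_\mu$, (ii) $M^*_{\alpha+1} \leq M_{j_{\alpha+1}}$, and (iii) $A_\alpha \subseteq M^*_{\alpha+1}$. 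This is overcome by combining stability-based $K_\mu$-universal extensions with the given u-universality $M_{j_\alpha} <_u M_{j_\alpha+1}$ via amalgamation, then enlarging via L\"{o}wenheim--Skolem to absorb the prescribed elements of $N_1 \cup N_2$.
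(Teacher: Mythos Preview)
Your proof is correct and follows essentially the same approach as the paper: reduce to the $K_\mu$ continuity assumption by building a u-increasing continuous chain in $K_\mu$ inside the given chain that absorbs the splitting witnesses. The paper's proof is terser (it simply asserts, ``using stability, define another u-increasing and continuous chain $\langle N_i:i\leq\delta\rangle\subseteq K_\mu$\ldots''), while you spell out the successor step via amalgamation and the universality of $M_{j_\alpha}<_u M_{j_\alpha+1}$; but the strategy and the case split on $\cf(\delta)$ are the same.
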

\begin{proof}
The statement is vacuous when $M_0\in K_{>\mu}$ so we assume $M_0\in K_\mu$. By cofinality argument we may also assume $\cf(\delta)\leq\mu$. Suppose $p$ $\mu$-splits over $M_0$ and pick witnesses $N^a$ and $N^b$ of size $\mu$. Using stability, define another u-increasing and continuous chain $\langle N_i:i\leq\delta\rangle\subseteq K_\mu$ such that:
\begin{enumerate}
\item For $i\leq\delta$, $N_i\leq M_i$. 
\item $N_\delta$ contains $N^a$ and $N^b$.
\item $N_0\defeq M_0$.
\item For $i\leq\delta$, $|N_i|\supseteq |M_i|\cap(|N^a|\cup|N^b|)$.
\end{enumerate}
By assumption each $p\restriction M_i$ does not $\mu$-split over $M_0$, so by monotonicity $p\restriction N_i$ does not $\mu$-split over $N_0=M_0$. By continuity of $\mu$-nonsplitting, $p\restriction N_\delta$ does not $\mu$-split over $N_0$, contradicting item (2) above. 
\end{proof}

\section{Good frame over $(\geq\chi)$-limit models except symmetry}\label{ufsec4}

As seen in \ref{eeuprop}, $\mu$-nonsplitting only satisfies weak transitivity but not transitivity, which is a key property of a good frame. We will adapt \cite[Definitions 3.8, 4.2]{s3} to define nonforking from nonsplitting to solve this problem.

\begin{definition}\mylabel{nfdef}{Definition \thetheorem}
Let $M\leq N$ in $K_{\geq\mu}$ and $p\in\gs(N)$.
\begin{enumerate}
\item \emph{$p$ (explicitly) does not $\mu$-fork over $(M_0,M)$} if $M_0\in K_\mu$, $M_0<_uM$ and $p$ does not $\mu$-split over $M_0$.
\item \emph{$p$ does not $\mu$-fork over $M$} if there exists $M_0$ satisfying (1).
\end{enumerate}
We call $M_0$ the \emph{witness} to $\mu$-nonforking over $M$.
\end{definition}

The main difficulty of the above definition is that different $\mu$-nonforkings over $M$ may have different witnesses. For extension, the original approach in \cite{s3} was to work in $\mu^+$-saturated models. Later \cite[Proposition 5.1]{vv} replaced it by superstability in an interval, which works for $K_{\geq\mu}$. We weaken the assumption to stability in an interval and continuity of $\mu$-nonsplitting, and use a direct limit argument similar to that of \cite[Theorem 5.3]{bonext}.

\begin{proposition}[Extension]\mylabel{gfe}{Proposition \thetheorem}
Let $M\leq N\leq N'$ in $K_{\geq\mu}$. If ${\bf K}$ is stable in $[\nr{N},\nr{N'}]$ and $p\in\gs(N)$ does not $\mu$-fork over $M$, then there is $q\supseteq p$ in $\gs(N')$ such that $q$ does not $\mu$-fork over $M$.
\end{proposition}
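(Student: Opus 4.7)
The plan is to construct $q$ by a direct-limit argument, keeping a fixed $\mu$-size model as the nonsplitting witness. Fix $M_0 \in K_\mu$ with $M_0 <_u M$ such that $p$ does not $\mu$-split over $M_0$ (obtained from the hypothesis that $p$ does not $\mu$-fork over $M$); this same $M_0$ will witness that the desired $q$ does not $\mu$-fork over $M$.

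The key same-cardinality case $\nr{N} = \nr{N'}$, assuming $N$ is universal over $M_0$ of that cardinality, is handled directly by Proposition \ref{eeuprop}(3). In the general case, using stability in $[\nr{N}, \nr{N'}]$, I would build a $\leq$-increasing continuous chain $\langle N_i : i \leq \alpha \rangle \subseteq K_{\geq\mu}$ with $N_0 = N$ and $N_\alpha \supseteq N'$, alongside coherent types $p_i \in \gs(N_i)$, $p_0 = p$, each $p_i$ not $\mu$-splitting over $M_0$. At successor stages (arranged so that $\nr{N_i} = \nr{N_{i+1}}$ and $N_{i+1}$ is a universal extension of $N_i$) Proposition \ref{eeuprop}(3) extends $p_i$ to $p_{i+1}$. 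At limit stages the chain is made u-increasing in $K_{\geq\mu}$, so that Proposition \ref{congeqmu} shows the limit of the $p_i$'s does not $\mu$-split over $M_0$, while Proposition \ref{eeuprop}(2) (weak uniqueness) ensures this limit type is well-defined and coherent with the previous $p_i$'s. Finally, $q \defeq p_\alpha \restriction N'$ extends $p$ and does not $\mu$-split over $M_0$ by monotonicity (Proposition \ref{monspl}), hence does not $\mu$-fork over $M$.

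The main obstacle is the cardinality bookkeeping along the chain: universal extensions preserve cardinality, so any growth from $\nr{N}$ toward $\nr{N'}$ must occur only at limit stages, forcing a careful interleaving of same-cardinality u-steps with enlargements that swallow a fixed resolution of $N'$. Stability in every intermediate cardinality, combined with $\mu$-tameness (used via Proposition \ref{tamespl}, which ensures that $\mu$-splitting witnesses remain effective across cardinalities), is precisely what allows this interleaving while keeping the chain u-increasing at every limit stage, so that both Proposition \ref{eeuprop}(3) and Proposition \ref{congeqmu} remain available throughout the construction. Verifying coherence of the $p_i$'s along the chain (using weak uniqueness and the shared witness $M_0$) is the other delicate step, but once the chain is properly organized it reduces to a routine transfinite induction.
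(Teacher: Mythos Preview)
Your proposal is correct and follows essentially the same approach as the paper: fix a witness $M_0\in K_\mu$ with $M_0<_uM$, handle the same-cardinality case via \ref{eeuprop}(3), and in general build a u-increasing continuous chain starting at $N$ together with a coherent sequence of nonsplitting extensions, using \ref{congeqmu} at limit stages and then restricting to $N'$.

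The only organizational difference is that the paper avoids your ``interleaving'' concern by a cleaner decomposition: it first treats the special case where $N'$ itself is the top of a u-increasing continuous chain over $N$ (so the chain is given, not constructed), and then reduces the general case to this one by enlarging $N'$ to some $N''$ of the same size that contains a limit model over $N$ built through the intermediate cardinalities (this is where stability in $[\nr{N},\nr{N'}]$ is used). Your single-chain construction with cardinality bookkeeping would also work, but the paper's reduction sidesteps the delicate interleaving you flagged as the main obstacle. Your mention of \ref{tamespl} is not actually needed here; tameness plays no direct role in this proof beyond what is already inside \ref{eeuprop} and the ambient assumptions.
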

\begin{proof}
Since $p$ does not $\mu$-fork over $M$, we can find witness $M_0\in K_\mu$ such that $M_0<_uM$ and $p$ does not $\mu$-split over $M_0$. If $\nr{N}=\nr{N'}$, we can use extension of nonsplitting (\ref{eeuprop}) to obtain (the unique) $q\in\gs(N')$ extending $p$ which does not $\mu$-split over $M_0$. By definition $q$ does not $\mu$-fork over $M$.

If $\nr{N}<\nr{N'}$, first we assume $N'=\bigcup\{N_i:i\leq\alpha\}$ u-increasing and continuous where $N_0=N$, $N_{\alpha}=N'$ for some $\alpha$. We will define a coherent sequence $\langle p_i:i\leq\alpha\rangle$ such that $p_i$ is a nonsplitting extension of $p$ in $\gs(N_i)$. The first paragraph gives the successor step. For limit step $\delta\leq\alpha$, we take the direct limit 
to obtain an extension $p_\delta$ of $\langle p_i:i<\delta\rangle$. Since all previous $p_i$ does not $\mu$-split over $M_0$, by \ref{congeqmu}, $p_\delta$ also does not $\mu$-split over $M_0$. After the construction has finished, we obtain $q\defeq p_{\alpha}$ a nonsplitting extension of $p$ in $\gs(N')$. Since $M_0<_uM\leq N'$, we still have $q$ does not $\mu$-fork over $M$. 

In the general case where $N\leq N'$, extend $N'\leq N''$ so that $\nr{N''}=\nr{N'}$ and $N''$ contains a limit model over $N$ of size $\nr{N'}$. The construction is possible by stability in $[\nr{N},\nr{N'}]$. Then we can extend $p$ to a nonforking $q''\in\gs(N'')$ and use monotonicity to obtain the desired $q$.
\end{proof}
\begin{corollary}\mylabel{extnsp}{Corollary \thetheorem}
Let $M_0<_u M\leq N'$ with $M_0\in K_\mu$. If ${\bf K}$ is stable in $[\nr{M},\nr{N'}]$ and $p\in\gs(M)$ does not $\mu$-split over $M_0$, then there is $q\supseteq p$ in $\gs(N')$ such that $q$ does not $\mu$-split over $M_0$.
\end{corollary}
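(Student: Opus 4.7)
The plan is to reduce this directly to Proposition~\ref{gfe}. The hypothesis that $M_0 \in K_\mu$, $M_0 <_u M$, and $p \in \gs(M)$ does not $\mu$-split over $M_0$ is precisely the statement that $p$ explicitly does not $\mu$-fork over $(M_0, M)$ in the sense of Definition~\ref{nfdef}. Setting $N \defeq M$, we can apply \ref{gfe} to the chain $M \leq N \leq N'$ (the stability hypothesis on $[\nr{M}, \nr{N'}] = [\nr{N}, \nr{N'}]$ matches exactly), obtaining $q \supseteq p$ in $\gs(N')$ that does not $\mu$-fork over $M$.

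The one subtlety is that, by definition of nonforking, the conclusion of \ref{gfe} on its own only guarantees \emph{some} witness $M_0'$, not the original $M_0$. However, inspection of the proof of \ref{gfe} shows that the constructed extension does not $\mu$-split over the specific $M_0$ that was chosen as witness at the start of the argument: the successor-stage step uses extension of nonsplitting (\ref{eeuprop}(3)) with base $M_0$, and the limit-stage direct limit preserves non-splitting over $M_0$ via \ref{congeqmu}. Thus the $q$ produced by \ref{gfe}, when we feed in the witness $M_0$ from our hypothesis, automatically satisfies the stronger conclusion that $q$ does not $\mu$-split over $M_0$.

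If one prefers a self-contained argument that avoids appealing to the internal details of \ref{gfe}, the plan is to mimic its proof directly in the nonsplitting language. Split into two cases. If $\nr{M} = \nr{N'}$, apply \ref{eeuprop}(3) to $M_0 <_u M \leq N'$ to get the extension in one step. If $\nr{M} < \nr{N'}$, use stability in $[\nr{M}, \nr{N'}]$ to enlarge $N'$ if necessary and express it inside a $\leq_u$-increasing and continuous chain $\langle N_i : i \leq \alpha \rangle$ with $N_0 = M$; then build a coherent sequence $\langle p_i : i \leq \alpha\rangle$ with $p_i \in \gs(N_i)$, each $p_i$ not $\mu$-splitting over $M_0$, using \ref{eeuprop}(3) at successors and \ref{congeqmu} to handle limits. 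Restricting $p_\alpha$ to $N'$ yields the desired $q$.

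The main potential obstacle is purely bookkeeping: ensuring that the u-increasing chain $\langle N_i \rangle$ actually reaches (or contains) $N'$, which is where stability in the interval $[\nr{M}, \nr{N'}]$ is essential, since it provides universal extensions of the required cardinalities at each successor stage. Once this is arranged, everything else is a mechanical transcription of the argument already given for \ref{gfe}.
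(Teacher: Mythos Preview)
Your proposal is correct and matches the paper's approach exactly: the paper's proof is the single sentence ``Run through the exact same proof as in \ref{gfe}, where $M=N$ and $M_0$ is given in the hypothesis.'' You have essentially unpacked this, correctly identifying the one subtlety (that the conclusion of \ref{gfe} as stated only gives \emph{some} witness, but its proof actually preserves the given $M_0$) and also sketching the self-contained version.
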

\begin{proof}
Run through the exact same proof as in \ref{gfe}, where $M=N$ and $M_0$ is given in the hypothesis.
\end{proof}
For continuity, the original approach in \cite[Lemma 4.12]{s3} was to deduce it from superstability (which we do not assume) and transitivity. Transitivity there was obtained from extension and uniqueness, and uniqueness was proved in \cite[Lemma 5.3]{s3} for $\mu^+$-saturated models only (or assuming superstability in \cite[Lemma 2.12]{s22}). Our new argument uses weak transitivity and continuity of $\mu$-nonsplitting to show that continuity of $\mu$-nonforking holds for a universally increasing chain in $K_\mu$. The case in $K_{\geq\mu}$ will be proved after we have developed transitivity and local character of nonforking.

\begin{proposition}[Continuity 1]\mylabel{gfc}{Proposition \thetheorem}
Let $\delta<\mu^+$ be a limit ordinal and $\langle M_i:i\leq\delta\rangle\subseteq K_{\mu}$ be u-increasing and continuous. Let $p\in\gs(M_\delta)$ satisfy $p\restriction M_i$ does not $\mu$-fork over $M_0$ for all $1\leq i<\delta$. Then $p$ also does not $\mu$-fork over $M_0$.
\end{proposition}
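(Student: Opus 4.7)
The plan is to consolidate all the different nonsplitting witnesses (one for each $p\restriction M_i$) into a single witness and then appeal to the continuity of $\mu$-nonsplitting from \ref{assum1}. Since $p\restriction M_1$ does not $\mu$-fork over $M_0$, fix a witness $N_1\in K_\mu$ with $N_1<_u M_0$ and $p\restriction M_1$ does not $\mu$-split over $N_1$. The goal is to show by transfinite induction that $p\restriction M_i$ does not $\mu$-split over $N_1$ for every $1\leq i\leq\delta$; specializing to $i=\delta$ then yields $p$ does not $\mu$-split over $N_1$, and because $N_1\in K_\mu$ with $N_1<_u M_0$, this is precisely $p$ does not $\mu$-fork over $M_0$.

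For the successor step, assume $p\restriction M_j$ does not $\mu$-split over $N_1$, where $j\geq 1$. Because $p\restriction M_{j+1}$ does not $\mu$-fork over $M_0$, pick some $N_{j+1}\in K_\mu$ with $N_{j+1}<_u M_0$ such that $p\restriction M_{j+1}$ does not $\mu$-split over $N_{j+1}$. By monotonicity of nonsplitting (\ref{monspl}), since $N_{j+1}\leq M_0\leq M_{j+1}$, the type $p\restriction M_{j+1}$ does not $\mu$-split over $M_0$. Now apply weak transitivity (\ref{eeuprop}(4)) with $M^*=N_1$, ``$M_0$''$=M_0$, ``$M$''$=M_j$, ``$N$''$=M_{j+1}$: the hypotheses $N_1\leq M_0<_u M_j\leq M_{j+1}$ all of size $\mu$ are satisfied (note $M_0<_u M_j$ because $j\geq 1$ implies $M_0<_u M_1\leq M_j$), $p\restriction M_{j+1}$ does not $\mu$-split over $M_0$, and its restriction to $M_j$ does not $\mu$-split over $N_1$ by induction. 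The conclusion is that $p\restriction M_{j+1}$ does not $\mu$-split over $N_1$, as required.

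For a limit ordinal $\lambda\leq\delta$, prepend $N_1$ to the chain $\langle M_i:i\leq\lambda\rangle$: the resulting chain $\langle N_1,M_0,M_1,\ldots,M_\lambda\rangle$ is u-increasing (since $N_1<_u M_0$ and the original chain is u-increasing) and continuous (prepending at the start does not disturb continuity, as $N_1\leq M_0$). Monotonicity gives $p\restriction M_0$ does not $\mu$-split over $N_1$ (from the base step), and the inductive hypothesis supplies nonsplitting of $p\restriction M_i$ over $N_1$ for $1\leq i<\lambda$. Applying continuity of $\mu$-nonsplitting (\ref{assum1}(4)) to this chain with the type $p\restriction M_\lambda$ yields the claim at stage $\lambda$.

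The main conceptual obstacle is that the witnesses $N_i$ for nonforking at different levels $i$ need not sit in any linear relation with one another, so we cannot directly invoke continuity of $\mu$-nonsplitting to a chain of witnesses; the device is to fix the earliest witness $N_1$ and use weak transitivity at each successor to promote the local witness $N_{j+1}$ to $N_1$ via the intermediate model $M_0$. All other issues (continuity at limits, size hypotheses) are routine once this promotion is set up, and the assumption $\delta<\mu^+$ is only needed to keep the chain inside $K_\mu$.
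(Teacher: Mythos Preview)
Your proof is correct, using the same three ingredients as the paper (monotonicity, continuity of $\mu$-nonsplitting, weak transitivity), but you organize them into a transfinite induction whereas the paper applies each tool exactly once. The paper observes directly that monotonicity gives $p\restriction M_i$ does not $\mu$-split over $M_0$ for every $i<\delta$ (from the witness $M^i\leq M_0$), so a single application of continuity of $\mu$-nonsplitting to the chain $\langle M_i:i\leq\delta\rangle$ yields that $p$ itself does not $\mu$-split over $M_0$; then one invocation of weak transitivity (\ref{eeuprop}(4)) with $M^*=M^1$, ``$M_0$''$=M_0$, ``$M$''$=M_1$, ``$N$''$=M_\delta$ pushes this down to $M^1$, and you are done. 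Your induction accomplishes the same thing but re-applies weak transitivity at every successor and continuity at every limit, which is unnecessary overhead: the point you describe as ``the main conceptual obstacle'' (consolidating different witnesses) dissolves once you notice that $M_0$ itself is a common nonsplitting base for all restrictions, and only the final step needs to descend below $M_0$ to recover nonforking.
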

\begin{proof}
For $1\leq i<\delta$, since $p\restriction M_i$ does not $\mu$-fork over $M_0$, we can find $M^i<_uM_0$ of size $\mu$ such that $p\restriction M_i$ does not $\mu$-split over $M^i$. By monotonicity of nonsplitting, $p\restriction M_i$ does not $\mu$-split over $M_0$. By continuity of $\mu$-nonsplitting, $p$ does not $\mu$-split over $M_0$. Since $M^1<_u M_0<_u M_1<_u M_\delta$, by weak transitivity (\ref{eeuprop}) $p$ does not $\mu$-split over $M^1$. (By a similar argument, it does not $\mu$-split over other $M^i$.) By definition $p$ does not $\mu$-fork over $M_0$. 
\end{proof}

We now show uniqueness of nonforking in $K_\mu$, by generalizing the argument in \cite{s22}. Instead of superstability, we stick to our \ref{assum1}. Fact 2.9 in that paper will be replaced by our \ref{gfe}. The requirement that $M_0,M_1$ be limit models is removed.

\begin{proposition}[Uniqueness 1]\mylabel{gfu}{Proposition \thetheorem}
Let $M_0\leq M_1$ in $K_\mu$ and $p_0,p_1\in\gs(M_1)$ both do not $\mu$-fork over $M_0$. If in addition $p_{\langle\rangle}\defeq p_0\restriction M_0=p_1\restriction M_0$, then $p_0=p_1$. 
\end{proposition}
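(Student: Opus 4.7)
The plan is to reduce the two different $\mu$-nonsplitting witnesses coming from the $\mu$-nonforking hypothesis to a common one, and then to conclude by the weak uniqueness of $\mu$-nonsplitting (\ref{eeuprop}(2)). By \ref{nfdef}, fix $M_0^0,M_0^1\in K_\mu$ with $M_0^i<_uM_0$ and with $p_i$ not $\mu$-splitting over $M_0^i$ for $i=0,1$. Note that by monotonicity (\ref{monspl}), $p_{\langle\rangle}=p_0\restriction M_0=p_1\restriction M_0$ does not $\mu$-split over either of the two witnesses. If we can produce a single $M^*\in K_\mu$ with $M^*<_uM_0$ over which both $p_0$ and $p_1$ do not $\mu$-split, then weak uniqueness applied with $M^*<_uM_0\leq M_1$ and the equality $p_0\restriction M_0=p_1\restriction M_0$ forces $p_0=p_1$.

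First, I would enlarge the ambient model. By stability in $\mu$ (\ref{assum1}(2)) and amalgamation, build $N\in K_\mu$ with $M_1\leq N$ and $M_0<_u N$; then apply \ref{gfe} (this is exactly where Fact~2.9 of \cite{s22} is replaced: we use stability plus continuity of $\mu$-nonsplitting rather than superstability) to extend $p_0,p_1$ to $q_0,q_1\in\gs(N)$ still not $\mu$-forking over $M_0$, witnessed by the same $M_0^0,M_0^1$ via the proof through \ref{extnsp}. It will suffice to prove $q_0=q_1$.

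Second, I would construct the common witness inside $M_0$. By L\"owenheim--Skolem inside $M_0$, pick $\tilde N\in K_\mu$ with $M_0^0\cup M_0^1\subseteq\tilde N\leq M_0$. By stability in $\mu$, pick a universal extension $\tilde N<_u\tilde N'$ in $K_\mu$. Since $M_0^0\leq\tilde N'$ and $M_0$ is universal over $M_0^0$, choose $f:\tilde N'\to M_0$ fixing $M_0^0$, and set $M^*\defeq f(\tilde N)$. Then $M^*<_u f(\tilde N')\leq M_0$, and composing universal embeddings shows $M^*<_u M_0$. Moreover $M_0^0=f(M_0^0)\leq M^*$, so by monotonicity $q_0$ does not $\mu$-split over $M^*$. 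The delicate point is that $M^*$ contains $f(M_0^1)$ rather than $M_0^1$ itself; to handle $q_1$, invoke invariance of $\mu$-nonsplitting to transfer the nonsplitting of the image of $q_1$ from $M_0^1$ over to $f(M_0^1)\leq M^*$, then use \ref{extnsp} together with weak uniqueness applied inside a universal extension to identify that image type back with $q_1$ itself. This is the adaptation of the symmetry-like step of \cite{s22}, and its validity in our setting rests precisely on \ref{gfe} (hence on \ref{assum1}(2),(4)).

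Finally, with $q_0,q_1\in\gs(N)$ both not $\mu$-splitting over the common $M^*<_u M_0\leq N$ and agreeing on $M_0$, weak uniqueness (\ref{eeuprop}(2)) gives $q_0=q_1$, and restriction to $M_1$ yields $p_0=p_1$. The main obstacle in this plan is precisely the asymmetric Step~2: a single application of universality of $M_0$ over $M_0^0$ fixes $M_0^0$ but moves $M_0^1$, so one cannot directly import the monotonicity argument for $q_1$. Resolving this via the invariance-plus-extension transfer, in place of the superstability hypothesis used in \cite{s22}, is the only nontrivial input beyond the already established properties of $\mu$-nonsplitting listed in \ref{eeuprop}.
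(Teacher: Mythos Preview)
Your plan hinges entirely on Step~2, and that step does not go through. You correctly identify the obstacle: the map $f$ fixes $M_0^0$ but only carries $M_0^1$ to $f(M_0^1)$, so monotonicity gives you nothing for $q_1$ over $M^*$. Your proposed fix --- ``invoke invariance to transfer the nonsplitting of the image of $q_1$ from $M_0^1$ to $f(M_0^1)$, then use extension and weak uniqueness to identify that image type back with $q_1$'' --- is not an argument but a list of tools. Concretely: if $\hat f$ extends $f$ to an automorphism of the monster, then $\hat f(q_1)$ does not $\mu$-split over $f(M_0^1)\leq M^*$, but $\hat f(q_1)$ is a type over $\hat f(N)$, not over $N$. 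To ``identify it back with $q_1$'' you would need to know $\hat f(q_1)\restriction M_0=q_1\restriction M_0$, and for that you would want $q_1$ not to $\mu$-split over the model $\hat f$ fixes, namely $M_0^0$ --- which is exactly what you do not have. Weak uniqueness cannot rescue this, because every application of weak uniqueness already presupposes a common nonsplitting base, and that is precisely the missing ingredient. There is also no ``symmetry-like step'' in \cite{s22} doing what you describe; that paper uses the same tree construction the present paper generalizes.

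The paper takes a completely different route and avoids the common-witness problem altogether. It argues by contradiction: assuming $p_0\neq p_1$, it builds a u-increasing continuous chain $\langle M_i:i\leq\mu\rangle$ and a coherent tree $\langle p_\eta:\eta\in2^{\leq\mu}\rangle$ with $p_\eta\in\gs(M_{l(\eta)})$ not $\mu$-forking over $M_0$, where each $p_{\nu^\frown0}\neq p_{\nu^\frown1}$. The successor step uses uniqueness of $(\mu,\omega)$-limits of the same length to produce an isomorphism $f:M_{i+1}'\cong_{N_\eta'}N^*$ (with $N^*\leq M_0$ a $(\mu,\omega)$-limit), then weak uniqueness shows $f(p_\eta^+)=p_\eta\restriction N^*$, so the original counterexample $p_0\neq p_1$ over $M_1\geq N^*$ can be copied forward to give two distinct nonforking extensions of $p_\eta$. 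The limit step uses \ref{gfc}. The resulting $2^\mu$ distinct types over $M_\mu\in K_\mu$ contradict stability in $\mu$. The point is that the tree argument never needs a single common witness for two given types; it only ever compares one type at a time with a fixed reference via an isomorphism, which is exactly what weak uniqueness can handle.
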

\begin{proof}
Suppose the proposition is false. Let $N_0<_u M_0$ and $N_1<_uM_0$ such that $p_0$ does not $\mu$-split over $N_0$ while $p_1$ does not $\mu$-split over $N_1$ (necessarily $N_0\neq N_1$ by weak uniqueness of nonsplitting). We will build a u-increasing and continuous $\langle M_i:i\leq\mu\rangle\subseteq K_\mu$ and a coherent $\langle p_\eta\in\gs(M_{l(\eta)}):\eta\in2^{\leq\mu}\rangle$ such that for all $\nu\in2^{<\mu}$, $p_{\nu^\frown 0}$ and $p_{\nu^\frown 1}$ are distinct nonforking extensions of $p_{\nu}$. If done $\{p_\eta:\eta\in2^\mu\}$ will contradict stability in $\mu$. 

The base case is given by the assumption. For successor case, suppose $M_i$ and $\{p_\eta:\eta\in2^i\}$ have been constructed for some $1\leq i<\mu$. Define $M_{i+1}'$ to be a $(\mu,\omega)$-limit over $M_i$. Fix $\eta\in2^i$, we will define $p_{\eta^\frown0},p_{\eta^\frown1}\in\gs(M_{i+1}')$ nonforking extensions of $p_\eta$ (nonsplitting will be witnessed by different models; otherwise weak uniqueness of nonsplitting applies). Since $p_\eta$ does not $\mu$-fork over $M_0$, we can find $N_{\eta}<_uM_0$ such that $p_\eta$ does not $\mu$-split over $N_\eta$. Pick $p_\eta^+\in\gs(M_{i+1}')$ a nonsplitting extension of $p_\eta$. On the other hand, obtain $N_\eta'<_uN^*<_uM_0$ such that $N^*$ is a $(\mu,\omega)$-limit over $N_\eta'$ and $N_\eta'>_uN_\eta$. By uniqueness of limit models over $N_\eta$ of the same length, 
there is $f:M_{i+1}'\cong_{N_\eta'}N^*$. 

\begin{center}
\begin{tikzcd}
                  &    &                                 &                    & p_0                                                       &        &                                 &                    & p_{\eta^\frown0}                                                                                    \\
N_\eta\arrow[r,"u"]&N_\eta' \arrow[r, "{(\mu,\omega)}"]  & N^*\arrow[r, "u"] & M_0  \arrow[r]&  M_1 \arrow[d, no head, dotted] \arrow[u, no head, dotted] & \cdots & M_i \arrow[r, "{(\mu,\omega)}"] & M_{i+1}' \arrow[r] & M_{i+1} \arrow[u, no head, dotted] \arrow[d, no head, dotted] \arrow[llllll, "f", dashed, bend left] \\
                   &   &                                 &                    & p_1                                                       &        &                                 &                    & p_{\eta^\frown1}                                                                                   
\end{tikzcd}
\end{center}

By invariance of nonsplitting, $f(p_\eta^+)$ does not $\mu$-split over $N_\eta$. By monotonicity of nonsplitting, $p_\eta$, and hence $p_\eta\restriction N^*$ does not $\mu$-split over $N_\eta$. $f(p_\eta^+)\restriction N_\eta'=p_\eta^+\restriction N_\eta'=(p_\eta\restriction N^*)\restriction N_\eta'$. By weak uniqueness of $\mu$-nonsplitting, $f(p_\eta^+)=p_\eta\restriction N^*$. Since $p_\eta\restriction N^*$ has two nonforking extensions $p_0\neq p_1\in\gs(M_1)$ where $M_1>_uN^*$, we can obtain their isomorphic copies $p_{\eta^\frown0}\neq p_{\eta^\frown1}\in\gs(M_{i+1})$ for some $M_{i+1}>_uM_{i+1}'$. They still do not $\mu$-fork over $M_0$ because $M_0$ is fixed (actually $p_{\eta^\frown i}$ does not $\mu$-split over $N_i<_uM_0$). Ensure coherence at the end.

For limit case, let $\eta\in2^\delta$ for some limit ordinal $\delta\leq\mu$. Define $p_\eta\in\gs(M_\delta)$ to be the direct limit of $\langle p_{\eta\restriction i}:i<\delta\rangle$. By \ref{gfc}, $p_\eta$ does not $\mu$-fork over $M_0$. 

\end{proof}
\begin{corollary}[Uniqueness 2]\mylabel{gfuup}{Corollary \thetheorem}
Let $M\leq N$ in $K_{\geq\mu}$ and $p,q\in\gs(N)$ both do not $\mu$-fork over $M$. If in addition $p\restriction M=q\restriction M$, then $p=q$. 
\end{corollary}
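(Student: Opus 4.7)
The plan is to reduce to Proposition~\ref{gfu} via $\mu$-tameness. Assume for contradiction that $p\neq q$; by $\mu$-tameness there is $N_0\leq N$ of size $\mu$ with $p\restriction N_0\neq q\restriction N_0$. Let $M_0^p,M_0^q\in K_\mu$ be witnesses of the non-$\mu$-forking hypothesis: $M_0^p,M_0^q<_u M$, and $p$ (resp.\ $q$) does not $\mu$-split over $M_0^p$ (resp.\ $M_0^q$). The target is to exhibit $M_0\in K_\mu$ with $M_0\leq M$, $|M_0|\supseteq|M_0^p|\cup|M_0^q|$, and both $M_0^p<_u M_0$ and $M_0^q<_u M_0$.

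Granted such an $M_0$, I would apply downward L\"owenheim--Skolem on $N$ to obtain $M_1\leq N$ in $K_\mu$ with $M_0,N_0\leq M_1$. Monotonicity of nonsplitting (\ref{monspl}) then gives that $p\restriction M_1$ and $q\restriction M_1$ both fail to $\mu$-fork over $M_0$, with respective witnesses $M_0^p$ and $M_0^q$. Since $M_0\leq M$, the hypothesis $p\restriction M=q\restriction M$ forces $p\restriction M_0=q\restriction M_0$. Proposition~\ref{gfu} applied to $M_0\leq M_1$ then yields $p\restriction M_1=q\restriction M_1$, contradicting $p\restriction N_0\neq q\restriction N_0$ since $N_0\leq M_1$.

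The delicate step is the construction of $M_0$ inside $M$: the data available is only $M_0^p<_u M$ and $M_0^q<_u M$ separately, but $M$ is not a~priori universal over a common extension of them. My approach is to pick $\tilde M\leq M$ of size $\mu$ containing $M_0^p\cup M_0^q$ by L\"owenheim--Skolem, form a $(\mu,\omega)$-limit $\hat L$ over $\tilde M$ externally, and embed $\hat L$ into $M$ over $M_0^p$ using the universality $M_0^p<_u M$. The image $M_0\leq M$ is then a $(\mu,\omega)$-limit over the image of $\tilde M$, and since universality of a limit over its base descends to every contained submodel, $M_0^p<_u M_0$ follows at once. The obstacle is that the embedding only fixes $M_0^p$, sending $M_0^q$ to a Galois-type-equivalent (over $M_0^p$) but a priori distinct submodel inside $M$; securing $M_0^q<_u M_0$ requires a realignment, either by an automorphism of the monster fixing $M_0^p$ mapping the image copy back to $M_0^q$, or by invoking invariance of nonsplitting together with the characterization of $\mu$-splitting via $K_\mu$-witnesses (\ref{tamespl}) to transport the universality to the original $M_0^q$.
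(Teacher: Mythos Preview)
Your overall reduction to Proposition~\ref{gfu} via tameness is exactly the paper's strategy, and the second paragraph of your argument is correct once a suitable $M_0$ is in hand. The problem is the construction of $M_0$: the obstacle you flag is real, and neither of your proposed fixes closes it. If you extend the embedding $g:\hat L\to M$ (which fixes only $M_0^p$) to an automorphism $\sigma$ of the monster sending $g(M_0^q)$ back to $M_0^q$, there is no reason $\sigma$ should fix $M$, so $\sigma[M_0]$ need not sit inside $M$ and you have lost the containment $M_0\leq M$ that the rest of the argument uses. The invariance route has the same defect: invariance of nonsplitting would tell you that $\bar g(q)$ does not $\mu$-split over $g(M_0^q)$ for an automorphism $\bar g$ extending $g$, but $\bar g(q)$ is not $q$, so you cannot conclude that $q$ does not $\mu$-split over $g(M_0^q)$.

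The paper avoids this by not trying to build a single limit over a common extension of the two witnesses. It splits into cases on $\nr{M}$. If $M\in K_\mu$, one simply takes $M_0=M$: the hypothesis already gives $M_0^p<_uM$ and $M_0^q<_uM$, and then your second paragraph applies verbatim with $M_1=N'$ any $\mu$-sized model between $M$ and $N$ containing the tameness witness. If $M\in K_{>\mu}$, one embeds a $(\mu,\omega)$-limit over $M_0^p$ into $M$ \emph{fixing $M_0^p$} (possible since $M_0^p<_uM$), obtaining $M^p\leq M$ of size $\mu$ with $M_0^p<_uM^p$; one does the same independently for $M_0^q$ to get $M^q\leq M$; and then L\"owenheim--Skolem yields $M'\leq M$ of size $\mu$ containing $M^p\cup M^q$. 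Since $M^p\leq M'$ and $M^p$ is universal over $M_0^p$, so is $M'$; likewise for $M_0^q$. The point is that the two embeddings are decoupled, so each only needs to fix its own witness, and the obstacle you encountered never arises.
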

\begin{proof}
\ref{gfu} takes care of the case $M,N\in K_{\mu}$. Suppose the corollary is false, then $p\neq q$ and there exist $N^p,N^q<_uM$ such that $p$ does not $\mu$-fork over $N^p$ and $q$ does not $\mu$-fork over $N^q$. We have two cases:
\begin{enumerate}
\item Suppose $M\in K_\mu$ but $N\in K_{>\mu}$. By tameness obtain $N'\in K_\mu$ such that $M\leq N'\leq N$ and $p\restriction N'\neq q\restriction N'$. Together with $p\restriction M=q\restriction M$, it contradicts \ref{gfu}.
\item Suppose $M\in K_{>\mu}$. Obtain $M^p,M^q\leq M$ of size $\mu$ that are universal over $N^p$ and $N^q$ respectively. By L\"{o}wenheim-Skolem axiom, pick $M'\leq M$ of size $\mu$ containing $M^p$ and $M^q$. Thus $M'$ is universal over both $N^p$ and $N^q$, and $p\restriction M'=q\restriction M'$. Since $p\neq q$, tameness gives some $N'\in K_\mu$, $M'\leq N'\leq N$ such that $p\restriction N'\neq q\restriction N'$, which contradicts \ref{gfu}.
\end{enumerate}
\end{proof}
\begin{remark}
The strategy of case (2) cannot be applied to \ref{gfu} because $M'$ might coincide with $M$ and we do not have enough room to invoke weak uniqueness of nonsplitting. This calls for a specific proof in \ref{gfu}. Similarly, we cannot simply invoke weak uniqueness of nonsplitting to prove case (2) because we do not know if $M$ is also universal over $M'$. 
\end{remark}
\begin{corollary}[Transitivity]\mylabel{gft}{Corollary \thetheorem}
Let $M_0\leq M_1\leq M_2$ be in $K_{\geq\mu}$, $p\in\gs(M_2)$. If ${\bf K}$ is stable in $[\nr{M_1},\nr{M_2}]$, $p$ does not $\mu$-fork over $M_1$ and $p\restriction M_1$ does not $\mu$-fork over $M_0$, then $p$ does not $\mu$-fork over $M_0$.
\end{corollary}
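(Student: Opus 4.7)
My plan is to prove transitivity by the standard ``extension + uniqueness forces coincidence'' strategy, now made possible because the preceding propositions have supplied both ingredients for $\mu$-nonforking.

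First, since $p \restriction M_1 \in \gs(M_1)$ does not $\mu$-fork over $M_0$ and ${\bf K}$ is stable in $[\nr{M_1},\nr{M_2}]$, apply the extension property \ref{gfe} to obtain some $q \in \gs(M_2)$ with $q \supseteq p \restriction M_1$ and $q$ does not $\mu$-fork over $M_0$. The goal will be to show $p = q$, which immediately yields the conclusion since $q$ already does not $\mu$-fork over $M_0$.

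Second, I want to upgrade the basepoint of $q$ from $M_0$ to $M_1$, i.e.\ show that $q$ does not $\mu$-fork over $M_1$. Unwinding \ref{nfdef}, nonforking of $q$ over $M_0$ is witnessed by some $N^{*} \in K_\mu$ with $N^{*} <_u M_0$ such that $q$ does not $\mu$-split over $N^{*}$. Since $M_0 \leq M_1$, any model of size $\mu$ extending $N^{*}$ embeds over $N^{*}$ into $M_0$ (by $N^{*} <_u M_0$) and hence into $M_1$; therefore $N^{*} <_u M_1$ as well, so $N^{*}$ witnesses that $q$ does not $\mu$-fork over $M_1$. This monotonicity step is essentially bookkeeping and should be the only place where some care is needed; the same argument on witnesses should not require more than what has already been proved.

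Third, we now have two types $p, q \in \gs(M_2)$ both not $\mu$-forking over $M_1$, and their restrictions to $M_1$ agree by construction of $q$. By the uniqueness corollary \ref{gfuup} (which already handles models of any size in $K_{\geq\mu}$), $p = q$. Since $q$ does not $\mu$-fork over $M_0$, neither does $p$, completing the proof. The main conceptual point is that, unlike $\mu$-nonsplitting (which lacks full transitivity, as illustrated in \ref{balex}), $\mu$-nonforking inherits transitivity from the interplay between its built-in universal-extension witness and the uniqueness statement, once stability is available to guarantee extension.
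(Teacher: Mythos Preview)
Your proof is correct and follows exactly the same strategy as the paper: apply extension (\ref{gfe}) to $p\restriction M_1$ to get $q\in\gs(M_2)$ nonforking over $M_0$, observe by monotonicity of the witness that $q$ also does not $\mu$-fork over $M_1$, and then invoke uniqueness (\ref{gfuup}) to conclude $p=q$. The paper's version is simply terser, asserting the monotonicity step (``both $q$ and $p$ do not fork over $M_1$'') without the justification you spell out.
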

\begin{proof}
By \ref{gfe}, obtain $q\supseteq p\restriction M_1$ a nonforking extension in $\gs(M_2)$. Both $q$ and $p$ do not fork over $M_1$ and $q\restriction M_1=p\restriction M_1$. By \ref{gfuup}, $p=q$, but $q$ does not $\mu$-fork over $M_0$. 
\end{proof}

For local character, we imitate \cite[Lemma 4.11]{s3} which handled the case of $\mu^+$-saturated models ordered by $\lk$ instead of $<_u$. That proof originates from \cite[II Claim 2.11(5)]{shh}.
\begin{proposition}[Local character]\mylabel{gfl}{Proposition \thetheorem}
Let $\delta\geq\chi$ be regular, $\langle M_i:i\leq\delta\rangle\subseteq K_{\geq\mu}$ u-increasing and continuous, $p\in\gs(M_\delta)$. There is $i<\delta$ such that $p$ does not $\mu$-fork over $M_i$. 
\end{proposition}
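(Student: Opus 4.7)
The plan is to reduce the $K_{\geq\mu}$ chain to a u-increasing continuous sub-chain in $K_\mu$, apply the $\chi$-local character of $\mu$-nonsplitting available there by \ref{chi} and \ref{394prop}, and then lift the nonsplitting witness back to produce a $\mu$-nonforking witness for $p$ over some $M_i$. First, using stability in $\mu$, I would construct (mimicking the template of \ref{congeqmu}) a $\leq$-increasing continuous sub-chain $\langle N_i : i \leq \delta \rangle \subseteq K_\mu$ with $N_i \leq M_i$ that is also u-increasing inside $K_\mu$. At each stage $i$ I would further arrange that $N_i$ absorbs a size-$\mu$ witness $N_i^* \leq M_i$ (produced by existence \ref{eeuprop}(1) applied to $p \restriction M_i$) over which $p \restriction M_i$ does not $\mu$-split; monotonicity (\ref{monspl}) then guarantees that $p \restriction M_i$ does not $\mu$-split over $N_i$.

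Next I would apply the $\chi$-local character of $\mu$-nonsplitting to the sub-chain and $p \restriction N_\delta \in \gs(N_\delta)$. Since $\delta \geq \chi$ is regular, there is $j < \delta$ with $p \restriction N_\delta$ not $\mu$-splitting over $N_j$. Via monotonicity, $p \restriction N_i$ does not $\mu$-split over $N_j$ for $i \in [j, \delta]$. Combining this with the nonsplitting of $p \restriction M_i$ over $N_i$ via weak transitivity (\ref{eeuprop}(4)) along $N_j \leq N_i <_u N_{i+1}$, I would upgrade to: $p \restriction M_i$ does not $\mu$-split over $N_j$ for all $i \in [j, \delta)$. Continuity of $\mu$-nonsplitting in $K_{\geq\mu}$ (\ref{congeqmu}) applied to the tail $\langle M_i : j \leq i \leq \delta \rangle$ then concludes that $p$ does not $\mu$-split over $N_j$.

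To finish, it suffices to verify that $N_j$ is a valid nonforking witness: by a standard amalgamation argument, $M_{j+1} <_u M_{j+2}$ together with $N_j \leq N_{j+1} \leq M_{j+1}$ imply $N_j <_u M_{j+2}$ (any $\nr{M_{j+2}}$-sized extension of $N_j$ amalgamates with $M_{j+1}$ over $N_j$ and then embeds into $M_{j+2}$ via the universality $M_{j+1} <_u M_{j+2}$). Hence $p$ does not $\mu$-fork over $M_{j+2}$ with witness $N_j$.

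The main obstacle is the weak-transitivity step when $\nr{M_i} > \mu$: \ref{eeuprop}(4) requires an intermediate model $M$ with $N_i <_u M \leq M_i$ and $\nr{M} = \nr{M_i}$, whereas the sub-chain only supplies $N_{i+1}$ of size $\mu$. Bridging this gap either requires absorbing auxiliary universal extensions of $N_i$ of size $\nr{M_i}$ inside each $M_i$ during the sub-chain construction (using stability and L\"{o}wenheim-Skolem), or a case split by $\cf(\delta)$ versus $\mu$: when $\cf(\delta) > \mu$, the existence witness $N^* \leq M_\delta$ from \ref{eeuprop}(1) is contained in some $M_{i_0}$ by pigeonhole, and then an $L\ddot{o}wenheim$-Skolem-plus-amalgamation argument produces $M_0' \in K_\mu$ with $N^* \leq M_0' \leq M_{i_0+1}$ and $M_0' <_u M_{i_0+2}$, yielding immediately that $p$ does not $\mu$-fork over $M_{i_0+2}$ with witness $M_0'$.
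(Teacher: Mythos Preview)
Your case split for $\delta\geq\mu^+$ is correct and is exactly what the paper does in that regime: the existence witness $N^*\in K_\mu$ from \ref{eeuprop}(1) lands in some $M_{i_0}$ by regularity of $\delta$, and $N^*\leq M_{i_0}<_uM_{i_0+1}$ gives $N^*<_uM_{i_0+1}$, so $p$ does not $\mu$-fork over $M_{i_0+1}$.

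For $\chi\leq\delta\leq\mu$, however, your direct approach is stuck at the weak-transitivity step and neither suggested fix closes the gap. To run \ref{eeuprop}(4) with target $p\restriction M_{i+1}$ and base $N_j$, you need an intermediate $M$ with $N_{i+1}<_uM\leq M_{i+1}$, $\nr{M}=\nr{M_{i+1}}$, and $p\restriction M$ not $\mu$-splitting over $N_j$. The only models on which you control nonsplitting over $N_j$ are the $N_k$'s, which have size $\mu$ (ruling them out when $\nr{M_{i+1}}>\mu$) and in any case sit inside the \emph{wrong} $M_k$: $N_{i+2}\leq M_{i+2}$, not $M_{i+1}$. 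Your Fix~1 does not help: an ``auxiliary universal extension of $N_i$ of size $\nr{M_i}$ inside $M_i$'' is essentially $M_i$ itself (since $N_i\leq M_{i-1}<_uM_i$ already gives $N_i<_uM_i$), which is circular; and in any case stability in $\nr{M_i}$ is not among the hypotheses, so you cannot manufacture such extensions freely. A secondary issue: at limit stages $i$, continuity forces $N_i=\bigcup_{k<i}N_k$, so you cannot simply absorb a fresh witness $N_i^*$ there; this is repairable by bookkeeping but you did not address it.

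The paper avoids all of this by arguing by contradiction in the range $\chi\leq\delta\leq\mu$. Assume $p$ $\mu$-forks over every $M_i$. Build $\langle N_i:i\leq\delta\rangle\subseteq K_\mu$ u-increasing continuous with $N_i\leq M_i$, together with auxiliary $N_i'\in K_\mu$ with $N_i\leq N_i'\leq M_\delta$. The key observation is that $N_{i-1}\leq M_{i-1}<_uM_i$ yields $N_{i-1}<_uM_i$, so $(N_{i-1},M_i)$ would witness nonforking over $M_i$ unless $p$ $\mu$-splits over $N_{i-1}$; collect the $\mu$-sized splitting witnesses into $N_i'$. A bookkeeping condition ensures each $N_i'\leq N_\delta$, so $p\restriction N_\delta$ $\mu$-splits over every $N_{i-1}$, contradicting $\chi$-local character of $\mu$-nonsplitting for the chain $\langle N_i\rangle$. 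This sidesteps weak transitivity entirely: rather than pushing nonsplitting \emph{up} from the sub-chain to the $M_i$'s, the contradiction argument pulls splitting \emph{down} from the $M_i$'s into a $K_\mu$-chain.
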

\begin{proof}
If $\delta\geq\mu^+$, then by existence of nonsplitting (\ref{eeuprop}) and monotonicity, there is $j<\delta$ such thtat $p$ does not $\mu$-split over $M_j$. As $M_{j+1}$ is universal over $M_j$, $p$ does not $\mu$-fork over $M_{j+1}$. 

If $\chi\leq\delta\leq\mu$ and suppose the conclusion fails, then we can build 
\begin{enumerate}
\item $\langle N_i:i\leq\delta\rangle\subseteq K_{\mu}$ u-increasing and continuous;
\item $\langle N_i':i\leq\delta\rangle\subseteq K_{\mu}$ increasing and continuous;
\item $N_0=N_0'\leq M_0$ be any model in $K_\mu$;
\item For all $i<\delta$, $N_i\leq M_i$ and $N_i\leq N_i'\leq M_\delta$.
\item For all $i<\delta$, $\bigcup_{j\leq i}(|N_j'|\cap |M_{i+1}|)\subseteq|N_{i+1}|$
\item For all $j<\delta$, $p\restriction N_{j+1}'$ $\mu$-splits over $N_j$.
\end{enumerate}
We specify the successor step of $N_i'$: suppose $N_i$ has been constructed. Since $p$ $\mu$-forks over $M_i$, hence over $N_i$. Thus $(N_{i-1},N_i)$ cannot witness nonforking, so there is $N_i'\in K_\mu$ with $N_i\leq N_i'\leq M_\delta$ such that $p\restriction N_i'$ $\mu$-splits over $N_{i-1}$. After the construction, by monotonicity $p\restriction N_\delta\supseteq p\restriction N_i'$ $\mu$-splits over $N_{i-1}$ for each successor $i$, contradicting $\chi$-local character of $\mu$-nonsplitting.
\end{proof}
In Section 6, we will need the original form of \cite[Lemma 4.11]{s3}, whose proof is similar to \ref{gfl}. We write the statement here for comparison.
\begin{fact}\mylabel{gflfact}{Fact \thetheorem}
Let $\delta\geq\chi$ be regular, $\langle M_i:i\leq\delta\rangle$ be an increasing and continuous chain of $\mu^+$-saturated models, $p\in\gs(M_\delta)$. There is $i<\delta$ such that $p$ does not $\mu$-fork over $M_i$. 
\end{fact}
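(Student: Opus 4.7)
The plan is to reproduce the argument of Proposition \ref{gfl} almost verbatim, with one change: since the outer chain $\langle M_i\rangle$ is only $\leq_{\bf K}$-increasing rather than u-increasing in $K_{\geq\mu}$, the u-increasing chain of $\mu$-sized witnesses has to be built inside the $M_i$'s using $\mu^+$-saturation. The key auxiliary fact is that, under stability in $\mu$ and $AP$, every $\mu^+$-saturated $N$ is $\mu^+$-model-homogeneous: given $M_0\leq N$ with $\nr{M_0}=\mu$, every $\mu$-sized extension of $M_0$ embeds into $N$ over $M_0$, hence $M_0<_u N$. Verifying this auxiliary fact under the paper's conventions for $<_u$ is the main obstacle; once it is in hand, the rest is bookkeeping.

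For $\delta\geq\mu^+$: apply existence of $\mu$-nonsplitting (\ref{eeuprop}(1)) to obtain $N_0\leq M_\delta$ of size $\mu$ with $p$ not $\mu$-splitting over $N_0$. Since $\cf(\delta)=\delta\geq\mu^+>\mu$ and the chain is continuous, $N_0\leq M_j$ for some $j<\delta$; the auxiliary fact applied inside $M_{j+1}$ yields $N_0<_u M_{j+1}$, so $(N_0,M_{j+1})$ witnesses that $p$ does not $\mu$-fork over $M_{j+1}$.

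For $\chi\leq\delta\leq\mu$: assume toward contradiction that $p$ $\mu$-forks over every $M_i$, and proceed as in \ref{gfl}. Build $\langle N_i:i\leq\delta\rangle\subseteq K_\mu$ u-increasing and continuous with $N_i\leq M_i$, and $\langle N_i':i\leq\delta\rangle\subseteq K_\mu$ increasing and continuous with $N_i\leq N_i'\leq M_\delta$, such that $p\restriction N_i'$ $\mu$-splits over $N_{i-1}$ for each successor $i<\delta$. At a successor $i+1$, use stability in $\mu$ to fix a universal extension $N^*$ of $N_i$ of size $\mu$ and then use $\mu^+$-model-homogeneity of $M_{i+1}$ to embed $N^*$ into $M_{i+1}$ over $N_i$; take $N_{i+1}$ to be its image. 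At a limit $j$ set $N_j=\bigcup_{i<j}N_i$, which is $\leq M_j$ by continuity of the outer chain. For the $N_i'$: the auxiliary fact promotes any candidate witness $M'<_u N_i$ in $K_\mu$ to $M'<_u M_i$, so $p$ $\mu$-forking over $M_i$ forces $p$ to $\mu$-fork over $N_i$; thus $(N_{i-1},N_i)$ does not witness nonforking, producing $N_i'\leq M_\delta$ of size $\mu$ with $N_i\leq N_i'$ and $p\restriction N_i'$ $\mu$-splitting over $N_{i-1}$. By monotonicity, $p\restriction N_\delta$ then $\mu$-splits over $N_{i-1}$ for each successor $i<\delta$, contradicting the $\chi$-local character of $\mu$-nonsplitting applied to the u-increasing chain $\langle N_i\rangle$.
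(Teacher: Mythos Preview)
Your proposal is correct and matches the paper's own treatment: the paper does not spell out a proof of \ref{gflfact}, saying only that its proof is similar to that of \ref{gfl}, and your adaptation---replacing the u-increasing hypothesis by $\mu^+$-model-homogeneity of the $M_i$'s---is exactly the required change (the same auxiliary fact is already invoked in \ref{gfexi}).

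One small omission worth flagging: in the case $\chi\leq\delta\leq\mu$, when you build $N_{i+1}$ as (the image of) a universal extension of $N_i$ inside $M_{i+1}$, you must also arrange $\bigcup_{j\leq i}(|N_j'|\cap|M_{i+1}|)\subseteq|N_{i+1}|$, as in item (5) of the proof of \ref{gfl}. Without this bookkeeping there is no reason for $N_i'\leq N_\delta$, and your final line ``by monotonicity $p\restriction N_\delta$ $\mu$-splits over $N_{i-1}$'' does not follow. This is routine to incorporate---first enlarge $N_i$ inside $M_{i+1}$ to contain the required $\leq\mu$-many elements, then use $\mu^+$-model-homogeneity to find a universal extension inside $M_{i+1}$---but it should be stated, since your explicit description of the successor step currently omits it.
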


We now show the promised continuity of nonforking. In \cite[Lemma 4.12]{s3}, the chain must be of length $\geq\chi$. We do not have the restriction here because we have continuity of nonsplitting in \ref{assum1}.
\begin{proposition}[Continuity 2]\mylabel{gfcup}{Proposition \thetheorem}
Let $\delta<\mu^+$ be regular, $\langle M_i:i\leq\delta\rangle\subseteq K_{\geq\mu}$ u-increasing and continuous, and ${\bf K}$ is stable in $[\nr{M_1},\nr{M_\delta})$. Let $p\in\gs(M_\delta)$ satisfy $p\restriction M_i$ does not $\mu$-fork over $M_0$ for all $1\leq i<\delta$. Then $p$ also does not $\mu$-fork over $M_0$.
\end{proposition}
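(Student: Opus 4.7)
Write $\lambda \defeq \nr{M_0}$ and, for simplicity, assume every $M_i$ has size $\lambda$ (the stability hypothesis is what lets the argument absorb any cardinality jumps along the chain). If $\lambda = \mu$, the chain lies in $K_\mu$ and Proposition \ref{gfc} applies directly, so assume $\lambda > \mu$. For each $1 \leq i < \delta$, fix a witness $M^i \in K_\mu$ with $M^i <_u M_0$ and $p \restriction M_i$ not $\mu$-splitting over $M^i$, and set $M^* \defeq M^1$. The plan is to show that $p$ itself does not $\mu$-split over $M^*$; together with $M^* <_u M_0$ this immediately gives that $p$ does not $\mu$-fork over $M_0$ by \ref{nfdef}.

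\textbf{Key claim: $p \restriction M_i$ does not $\mu$-split over $M^*$ for every $1 \leq i < \delta$.} By the extension property of nonsplitting \ref{eeuprop}(3) (or Corollary \ref{extnsp} when the chain cardinality jumps, invoking the stability hypothesis), extend $p \restriction M_1$ to $q_i \in \gs(M_i)$ still not $\mu$-splitting over $M^*$; this is legal because $M^* <_u M_1$, which follows from $M_0 <_u M_1$ by an amalgamation argument that transfers $\nr{M_1}$-universality of $M_1$ over $M_0$ downwards to any $\mu$-sized submodel of $M_0$. Pick $M^{**} \in K_\mu$ with $M^*, M^i \leq M^{**} \leq M_0$ by L\"owenheim–Skolem; the same amalgamation trick gives $M^{**} <_u M_1$. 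Monotonicity then forces both $q_i$ and $p \restriction M_i$ to not $\mu$-split over $M^{**}$, and they agree on $M_1$. Weak uniqueness of nonsplitting \ref{eeuprop}(2) forces $q_i = p \restriction M_i$, proving the claim.

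\textbf{From the key claim to the conclusion.} Suppose for contradiction that $p$ $\mu$-splits over $M^*$, witnessed by $N^a, N^b \in K_\mu$ with $M^* \leq N^a, N^b \leq M_\delta$ and $f : N^a \cong_{M^*} N^b$ with $f(p \restriction N^a) \neq p \restriction N^b$. Mimicking the shadow-chain argument in the proof of \ref{congeqmu}, build a u-increasing continuous chain $\langle N_i : i \leq \delta\rangle \subseteq K_\mu$ with $N_0 \defeq M^*$, $N_i \leq M_i$, and $(|N^a| \cup |N^b|) \cap |M_{i+1}| \subseteq |N_{i+1}|$ for each $i < \delta$; at each successor step the $\nr{M_{i+1}}$-universality of $M_{i+1}$ over $M_i$, combined with stability in $\mu$, realizes a $(\mu, \omega)$-limit over $N_i$ (containing the prescribed elements) inside $M_{i+1}$. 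Then $N^a, N^b \leq N_\delta$ by coherence. The key claim and monotonicity give that $p \restriction N_i$ does not $\mu$-split over $N_0 = M^*$ for each $1 \leq i < \delta$, so continuity of $\mu$-nonsplitting in Assumption \ref{assum1} forces $p \restriction N_\delta$ not to $\mu$-split over $M^*$; but $(N^a, N^b, f)$ witnesses the opposite, a contradiction. The most delicate step is the shadow-chain construction—arranging $N_{i+1}$ to be a $\mu$-universal extension of $N_i$ inside $M_{i+1}$ while also capturing the prescribed elements of $N^a \cup N^b$—which turns on packaging the desired $(\mu, \omega)$-limit and prescribed elements into a $\nr{M_{i+1}}$-extension of $M_i$, then pulling it back into $M_{i+1}$ via its $\nr{M_{i+1}}$-universality over $M_i$.
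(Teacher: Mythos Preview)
Your argument is correct, but it takes a different route from the paper's and is more laborious than necessary in two places.

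The paper's proof splits on $\delta\geq\chi$ versus $\delta<\chi$. For $\delta\geq\chi$ it simply invokes local character (\ref{gfl}) to find $i<\delta$ with $p$ not $\mu$-forking over $M_i$, then transitivity (\ref{gft}) pushes the base down to $M_0$. For $\delta<\chi$ it finds, by L\"owenheim--Skolem, a single $N\in K_\mu$ with all witnesses $M^i\leq N\leq M_0$ (this uses $\delta\leq\mu$); monotonicity alone then gives that each $p\restriction M_i$ does not $\mu$-split over $N$, after which \ref{congeqmu} (applied to the chain with $N$ prepended) yields that $p$ does not $\mu$-split over $N$, hence does not $\mu$-fork over $M_1$, and transitivity finishes. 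Compared to this, your key claim reaches the same conclusion (a common $\mu$-sized nonsplitting base) via extension plus weak uniqueness rather than the one-line L\"owenheim--Skolem plus monotonicity; and once the key claim is in hand, you could cite \ref{congeqmu} directly on the chain $M^*, M_1,\ldots,M_\delta$ rather than rebuilding its shadow-chain argument by hand. What your approach buys is self-containment: you never appeal to local character, uniqueness of nonforking, or transitivity, so your argument would survive being moved earlier in the paper before those were established.

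One small inaccuracy: your opening case ``if $\lambda=\mu$, the chain lies in $K_\mu$'' is not literally true when the chain jumps in cardinality. This is harmless, since your main argument for $\lambda>\mu$ makes no essential use of $\lambda>\mu$ and covers that case too, but you should either drop the case split or note that the general argument subsumes it.
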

\begin{proof}
If $\delta\geq\chi$, by \ref{gfl} there is $i<\delta$ such that $p$ does not $\mu$-fork over $M_i$. By \ref{gft}, $p$ does not $\mu$-fork over $M_0$. 

If $\delta<\chi\leq\mu$, we have two cases: (1) $M_0\in K_\mu$: then for $1\leq i<\delta$, $p\restriction M_i$ does not $\mu$-split over $M_0$. By \ref{congeqmu}, $p$ does not $\mu$-split over $M_0$, so $p$ does not $\mu$-fork over $M_1$. By \ref{gft}, $p$ does not $\mu$-fork over $M_0$. (2) $M_0\in K_{>\mu}$: for $1\leq i<\delta$, let $N_i<_u M_0$ witness $p\restriction M_i$ does not $\mu$-fork over $M_0$. By  L\"{o}wenheim-Skolem axiom, there is $N\in K_\mu$ (here we need $\delta\leq\mu$) such that $N_i<_uN\leq M_0$ for all $i$. Apply case (1) with $N$ replacing $M_0$.
\end{proof}
Existence is more tricky because nonforking requires the base to be universal over the witness of nonsplitting. The second part of the proof is based on \cite[Lemma 4.9]{s3}.
\begin{proposition}[Existence]\mylabel{gfexi}{Proposition \thetheorem}
Let $M$ be a $(\geq\mu,\geq\chi)$-limit model, $p\in\gs(M)$. Then $p$ does not $\mu$-fork over $M$. Alternatively $M$ can be a $\mu^+$-saturated model.
\end{proposition}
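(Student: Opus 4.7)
The plan is to exhibit, in each of the two cases, a witness $N_0 \in K_\mu$ satisfying $N_0 <_u M$ together with ``$p$ does not $\mu$-split over $N_0$'', which is precisely what \ref{nfdef}(2) demands.

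For the limit model case, suppose $M$ is $(\lambda, \alpha)$-limit for some $\lambda \geq \mu$ and regular $\alpha \geq \chi$; write $M = \bigcup_{i \leq \alpha} M_i$ for the corresponding $u$-increasing continuous chain in $K_\lambda$. Apply local character of $\mu$-nonforking (\ref{gfl}) to this chain together with $p \in \gs(M_\alpha)$: since $\alpha \geq \chi$ is regular, there exists $i < \alpha$ such that $p$ does not $\mu$-fork over $M_i$. Unravelling \ref{nfdef}, fix a witness $N_0 \in K_\mu$ with $N_0 <_u M_i$ and $p$ not $\mu$-splitting over $N_0$. Since $M_i \leq M$, the universal-extension relation $N_0 <_u M_i$ transfers upward to $N_0 <_u M$ by composing embeddings over $N_0$ with the inclusion $M_i \hookrightarrow M$. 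Thus $N_0$ witnesses that $p$ does not $\mu$-fork over $M$.

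For the $\mu^+$-saturated case, apply existence of nonsplitting (\ref{eeuprop}(1)) to $p \in \gs(M)$ to obtain some $N_0 \leq M$ of size $\mu$ such that $p$ does not $\mu$-split over $N_0$. The remaining task is to verify $N_0 <_u M$. This is a standard consequence of $\mu^+$-saturation: any $\mu$-sized extension of $N_0$ realizes only Galois types over $N_0$, each of which is realized in $M$ by saturation, yielding the required amalgam over $N_0$. Equivalently, $M$ contains a $(\mu, \mu^+)$-limit over $N_0$, into which every such extension embeds over $N_0$, so we may appeal to the limit-model argument above after passing to that submodel and invoking monotonicity of nonsplitting (\ref{monspl}). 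Hence $N_0$ is a valid witness and $p$ does not $\mu$-fork over $M$.

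The main obstacle is ensuring that the nonsplitting witness $N_0$ is universally extended by the full model $M$ rather than by some proper submodel, since the definition of nonforking hardcodes the universal-extension relation into the base. In the limit model case this is automatic from the chain structure, while in the saturated case it is absorbed into the standard universality properties of $\mu^+$-saturated models. No additional machinery beyond \ref{gfl} and \ref{eeuprop}(1) is required, so the proposition should follow essentially by careful unpacking of definitions.
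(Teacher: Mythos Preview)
Your argument is correct and matches the paper's own proof: the limit case follows from local character (\ref{gfl}) plus the observation that the witness $N_0<_u M_i\leq M$ gives $N_0<_u M$, while the $\mu^+$-saturated case uses existence of nonsplitting (\ref{eeuprop}(1)) together with the fact that a $\mu^+$-saturated model is universal over any $K_\mu$-substructure (the paper phrases this as model-homogeneity). Your write-up is just a more explicit unpacking of what the paper states tersely.
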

\begin{proof}
The first part is immediate from \ref{gfl}. For the second part, apply existence of nonsplitting \ref{eeuprop} to obtain $N\in K_\mu$, $N\leq M$ such that $p$ does not $\mu$-split over $N$. By model-homogeneity, $M$ is universal over $N$, hence $p$ does not $\mu$-fork over $M$.
\end{proof}
\begin{corollary}\mylabel{gfcor}{Corollary \thetheorem}
There exists a good $\mu$-frame over the $\mu$-skeleton of $(\mu,\geq\chi)$-limit models ordered by $\leq_u$, except for symmetry and local character $\chi$ in place of $\al$. 
\end{corollary}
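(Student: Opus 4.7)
\medskip
\noindent\textbf{Proof proposal.} The plan is to assemble the frame from the propositions already proved in this section, with ${\bf K_1}$ taken to be the class of $(\mu,\geq\chi)$-limit models ordered by $\leq_u$. First I would verify that ${\bf K_1}$ is a $\mu$-skeleton of ${\bf K}$ in the sense of \ref{skedef}. Clause (1) is immediate since $\leq_u$ refines $\lk$; clause (2) follows from stability in $\mu$, which permits building a $(\mu,\geq\chi)$-limit $\lk$-above any $M\in K_\mu$; and clause (3), the existence of a common $(\mu,\geq\chi)$-limit above every $\lk$-chain in $K_1$, is the content of the remark after \ref{skedef} invoking \cite[II Claim 1.16]{shh}.

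Next I would verify the nine axioms of a good $\mu$-frame in turn. Invariance is immediate from the definition of $\mu$-nonforking (\ref{nfdef}), since both $<_u$ and non-$\mu$-splitting are automorphism-invariant. Monotonicity combines \ref{monspl} with transitivity of $<_u$: a witness $M_0<_u M$ to nonforking of $p$ over $M$ remains a valid witness for $p\restriction M'$ and for $p$ itself whenever $M\leq_u M'\leq_u N$ inside ${\bf K_1}$. The remaining axioms are direct citations: existence is \ref{gfexi}, extension is \ref{gfe} (the stability hypothesis there reduces to stability in $\mu$, which is in \ref{assum1}), uniqueness is \ref{gfu}, transitivity is \ref{gft}, local character $\chi$ is \ref{gfl} (noting that the required chains are $\leq_u$-increasing, which matches the u-increasing hypothesis of \ref{gfl}), and continuity is \ref{gfc}. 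Symmetry is explicitly excluded from the statement.

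Because the substantive work has been done upstream, no new obstacle arises at the level of this corollary; the only point worth double-checking is that the continuity axiom of a good frame — which involves the union $M_\delta$ of a $\leq_u$-chain of skeleton models — stays inside $K_1$. This holds because a $\leq_u$-chain of $(\mu,\geq\chi)$-limits of limit length $\delta$ has as union a $(\mu,\geq\max(\chi,\delta))$-limit, obtained by concatenating a chain witnessing that $M_0$ is a $(\mu,\geq\chi)$-limit with the given $\leq_u$-chain itself; the successor case is similar since any $\leq_u$-extension of a $(\mu,\geq\chi)$-limit is again a $(\mu,\geq\chi)$-limit.
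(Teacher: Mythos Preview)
Your proposal is correct and follows the same approach as the paper: the paper's proof simply cites \ref{gfexi}, \ref{gfl}, \ref{gfe}, \ref{gfu}, and \ref{gfc} for the respective axioms, exactly as you do, declaring invariance and monotonicity immediate.

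One small remark: your final paragraph is unnecessary and not quite right. The continuity axiom of a good frame is stated for chains $\langle M_i:i\leq\delta\rangle$ that are already \emph{assumed} to lie in $K_1$ and to be $\leq_{\bf K_1}$-continuous; there is nothing to verify about $M_\delta$ belonging to $K_1$. Your concatenation argument would in any case fail when $\cf(\delta)<\chi$, since the concatenated chain then has cofinality $\cf(\delta)<\chi$ and so only witnesses a $(\mu,\cf(\delta))$-limit. This does not affect the corollary, precisely because no such closure check is required.
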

\begin{proof}
Define nonforking as in \ref{nfdef}(2). Invariance and monotonicity are immediate. Existence is by \ref{gfexi}, $\chi$-local character is by \ref{gfl}, extension is by \ref{gfe}, uniqueness is by \ref{gfu}, continuity is by \ref{gfc}.
\end{proof}
\begin{remark}\mylabel{gfcorrmk}{Remark \thetheorem}
\begin{enumerate}
\item We do not expect $\aleph_0$-local character because there are strictly stable AECs. For the same reason we restrict models to be $(\mu,\geq\chi)$-limit to guarantee existence property. 
\item Let $\lambda\geq\mu$. Our frame extends to $([\mu,\lambda],\geq\chi)$-limit models if we assume stability in $[\mu,\lambda]$. However \cite{s3} has already developed $\mu$-nonforking for $\mu^+$-saturated models \emph{ordered by $\leq$}, and we will see in \ref{unicor}(2) that under extra stability assumptions, $(>\mu,\geq\chi)$-limit models are automatically $\mu^+$-saturated, so the interesting part is $K_\mu$ here.
\item We will see in \ref{symcor2}(2) that symmetry also holds if we have enough stability.
\end{enumerate}
\end{remark}
Since we have built an approximation of a good frame in \ref{gfcor}, one might ask if it is canonical. We first observe the following fact (\ref{assum1} is not needed):  
\begin{fact}\cite[Theorem 14.1]{snote}\mylabel{scanon}{Fact \thetheorem}
Let $\lambda\geq\ls$. Suppose ${\bf K}$ is $\lambda$-superstable and there is an independence relation over the limit models (ordered by $\leq$) in $K_\lambda$, satisfying invariance, monotonicity, universal local character, uniqueness and extension. Let $M\leq N$ be limit models in $K_\lambda$ and $p\in\gs(N)$. Then $p$ is independent over $M$ iff $p$ does not $\lambda$-fork over $M$.  
\end{fact}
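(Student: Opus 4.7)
The plan is to combine the paper's own construction of $\lambda$-nonforking with the abstract axioms assumed of the given independence relation $\smile$, and invoke a canonicity argument showing that two relations satisfying the same axioms over the same class of bases must coincide.

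First I would note that the paper's $\lambda$-nonforking (Definition \ref{nfdef}) itself satisfies the same axioms required of $\smile$ when ${\bf K}$ is $\lambda$-superstable. Invariance and monotonicity are immediate from the definition together with monotonicity of $\lambda$-nonsplitting (\ref{monspl}). Extension is Proposition \ref{gfe} (which needs stability, guaranteed by superstability). Uniqueness is Proposition \ref{gfu}. Universal local character is $\aleph_0$-local character along universal chains, which holds under $\lambda$-superstability by combining $\aleph_0$-local character of $\lambda$-nonsplitting with Proposition \ref{gfl}. Thus both $\smile$ and $\lambda$-nonforking are candidates satisfying the same list of axioms over the limit models of $K_\lambda$, and the task reduces to proving any two such relations coincide.

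For the canonicity argument, fix $p \in \gs(N)$ and $M \leq N$ limit models in $K_\lambda$. I would show that if $p$ does not $\lambda$-fork over $M$ then $p$ is $\smile$-independent over $M$; the reverse direction is symmetric upon interchanging the roles of the two relations. By $\smile$-extension, let $q \in \gs(N)$ be $\smile$-independent over $M$ with $q \restriction M = p \restriction M$; it suffices to prove $p = q$. Applying universal local character of each relation to $p \restriction M$ along a universal chain cofinal in $M$, and then lifting independence of the restricted type up to the full type on $N$ via extension-plus-uniqueness (in the manner used in \ref{gfu} and \ref{gfuup}), one descends to a common small limit model $M_0 <_u M$ such that $p$ does not $\lambda$-fork over $M_0$, $q$ is $\smile$-independent over $M_0$, and $p \restriction M_0 = q \restriction M_0$.

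Unpacking $\lambda$-nonforking, there is $M_0^{*} <_u M_0$ such that $p$ does not $\lambda$-split over $M_0^{*}$. The crux is to show $q$ also does not $\lambda$-split over $M_0^{*}$: any witness $f : N_1 \cong_{M_0^{*}} N_2$ to splitting of $q$ would, via $\smile$-invariance applied to an embedding of the configuration over a common extension of $M_0$, produce two distinct $\smile$-independent extensions of a single base type $p \restriction M_0^{*}$, contradicting $\smile$-uniqueness on the larger top model. Once $q$ is known not to $\lambda$-split over $M_0^{*}$, both $p$ and $q$ are $\lambda$-nonsplitting extensions of the common restriction to $M_0$ (which is universal over $M_0^{*}$), so weak uniqueness of $\lambda$-nonsplitting (\ref{eeuprop}(2)) yields $p = q$; hence $p$ itself is $\smile$-independent over $M$.

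The main obstacle is the crux step above, namely bridging from the purely abstract axioms of $\smile$ to the concrete syntactic witness of $\lambda$-nonsplitting. Executed carefully, it uses only $\smile$-invariance coupled with $\smile$-uniqueness to exclude the hypothetical isomorphism witness, but it is the step that requires the most delicate bookkeeping of base models; once it is in hand, weak uniqueness of nonsplitting closes the loop and delivers canonicity.
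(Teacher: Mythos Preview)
Your overall plan—verify that $\lambda$-nonforking satisfies the same list of axioms and then run a symmetric canonicity argument—is sound, and the reduction to proving $p=q$ after choosing a $\smile$-independent extension $q$ of $p\restriction M$ is the right shape. The gap is in the crux step.

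You have $q$ $\smile$-independent over $M_0$ and you want $q$ not to $\lambda$-split over the strictly smaller $M_0^{*}$. A putative splitting witness $f:N_1\cong_{M_0^{*}}N_2$ fixes $M_0^{*}$ but has no reason to fix $M_0$; hence $\smile$-invariance only tells you that (an extension of) $f$ carries $q$ to a type $\smile$-independent over $f[M_0]$, not over $M_0$. You therefore never produce two $\smile$-independent extensions over a \emph{common} base, and $\smile$-uniqueness cannot be invoked. The phrase ``embedding of the configuration over a common extension of $M_0$'' does not repair this: moving the witnesses around does not resolve the mismatch between $M_0$ and $f[M_0]$, and $M_0^{*}$ is merely a nonforking witness, not a base over which $q$ is known to be $\smile$-independent.

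The paper isolates exactly this obstacle as \ref{bgkvlem}: invariance, monotonicity and uniqueness of $\smile$ give that $\smile$-independence over a model implies $\lambda$-nonsplitting over the \emph{same} model, because then the isomorphism $f$ does fix the $\smile$-base. This is why the paper (and the argument it cites, mirrored in the proof of \ref{canon}) runs the direction $\smile\Rightarrow$nonforking first: resolve $M$ as a $u$-chain, use $\smile$-local character plus $\smile$-transitivity to push $\smile$-independence of $p$ down to some $M_i<_u M$, apply \ref{bgkvlem} at $M_i$ to get that $p$ does not $\lambda$-split over $M_i$, and conclude that $p$ does not $\lambda$-fork over $M$. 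The reverse direction then follows from $\smile$-extension together with \ref{gfu}, just as you outline. Your argument can be repaired by replacing the crux with this: use \ref{bgkvlem} to get $q$ nonsplitting over $M_0$, then run one more round of $\smile$-local character inside $M_0$ to manufacture a universal witness below $M_0$, so that $q$ does not $\lambda$-fork over $M_0$ and \ref{gfu} applies. As written, however, the crux does not go through.
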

Its proof has the advantage that it does not require the independence relation to be for longer types as in \cite[Corollary 5.19]{BGKV}. However, it still uses the following lemma from \cite[Lemma 4.2]{BGKV}:
\begin{lemma}\mylabel{bgkvlem}{Lemma \thetheorem}
Suppose there is an independence relation over models in $K_\mu$ ordered by $\leq$. If it satisfies invariance, monotonicity and uniqueness, then the relation is extended by $\mu$-nonsplitting.
\end{lemma}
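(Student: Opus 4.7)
The plan is to argue by contradiction. Let $R$ denote the given independence relation, and suppose $p\in\gs(N)$ is $R$-independent over $M$. Assume toward a contradiction that $p$ nevertheless $\mu$-splits over $M$, and fix witnesses $N_1,N_2\in K_\mu$ with $M\leq N_1,N_2\leq N$ together with an isomorphism $f\colon N_1\cong_M N_2$ such that $f(p\restriction N_1)\neq p\restriction N_2$ (using the convention of \ref{gsplnotice} for the action of $f$ on Galois types). Since $f$ fixes $M$ pointwise and both $N_i$ embed into the monster, the amalgamation property extends $f$ to an automorphism $\hat{f}\in\oop{Aut}(\mn)$ with $\hat{f}\restriction M=\oop{id}_M$.

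Next I would apply monotonicity of $R$ twice to deduce that $p\restriction N_1$ and $p\restriction N_2$ are each $R$-independent over $M$. Invariance of $R$ applied to $\hat{f}$ then gives that $\hat{f}(p\restriction N_1)=f(p\restriction N_1)\in\gs(N_2)$ is $R$-independent over $\hat{f}[M]=M$. Hence $f(p\restriction N_1)$ and $p\restriction N_2$ are two types in $\gs(N_2)$ which are both $R$-independent over $M$. They also agree on $M$: indeed $(p\restriction N_2)\restriction M=p\restriction M$, while
\[
f(p\restriction N_1)\restriction M \;=\; f\bigl((p\restriction N_1)\restriction M\bigr) \;=\; f(p\restriction M) \;=\; p\restriction M,
\]
since $f\restriction M$ is the identity. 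Uniqueness of $R$ then forces $f(p\restriction N_1)=p\restriction N_2$, contradicting the choice of witnesses. Therefore $p$ does not $\mu$-split over $M$, which is exactly the assertion that $\mu$-nonsplitting extends $R$.

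The argument is short and I do not anticipate a serious obstacle. The only points requiring care are the interpretation of $f(p\restriction N_1)$ via \ref{gsplnotice} and the passage from the partial isomorphism $f\colon N_1\cong_M N_2$ to a global automorphism of $\mn$ so that the invariance clause (which is phrased for $\oop{Aut}(\mn)$) can be applied; both are immediate from the monster-model assumption in \ref{assum1}.
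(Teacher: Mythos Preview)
Your proof is correct and follows essentially the same approach as the paper's: apply monotonicity to get $p\restriction N_1$ and $p\restriction N_2$ independent over $M$, apply invariance to get $f(p\restriction N_1)$ independent over $M$, then apply uniqueness together with the fact that $f$ fixes $M$ to conclude $f(p\restriction N_1)=p\restriction N_2$. The only cosmetic differences are that you frame it as a contradiction and that you spell out the extension of $f$ to an automorphism of $\mn$ before invoking invariance; the paper does it directly and leaves that extension implicit.
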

\begin{proof}
Suppose $M\leq N$ in $K_\mu$, $p\in\gs(N)$ is independent over $M$. For any $N_1,N_2\in K_\mu$ with $M\leq N_1,N_2\leq N$, and any $f:N_1\cong_MN_2$. We need to show that $f(p)\restriction N_2=p\restriction N_2$. By monotonicity, $p\restriction N_1$ and $p\restriction N_2$ do not depend on $M$. By invariance, $f(p)\restriction N_2$ is independent over $M$. By uniqueness and the fact that $f$ fixes $M$, we have $f(p)\restriction N_2=p\restriction N_2$.
\end{proof}
In the above proof, it utilizes the assumption that the independence relation is for models ordered by $\leq$, so it makes sense to talk about $p\restriction N_i$ is independent over $M$ for $i=1,2$. To generalize \ref{scanon} to our frame in \ref{gfcor}, one way is to assume the independence relation to be for models ordered by $\leq$, and with universal local character $\chi$. But since we defined our frame to be for models ordered by $\leq_u$, we want to keep the weaker assumption that the arbitrary independence relation is also for models ordered by $\leq_u$. Thus we cannot directly invoke \ref{bgkvlem}, where the $N_i$'s are not necessarily universal over $M$. To circumvent this, we adapt the lemma by allowing more room:
\begin{lemma}\mylabel{bgkvlem2}{Lemma \thetheorem}
Let $M<_uN<_uN'$ all in $K_\mu$, $p\in\gs(N')$. If $p\restriction N$ $\mu$-splits over $M$, then $p$ also $\mu$-splits over $M$ with witnesses universal over $M$. Namely, there are $N_1',N_2'\leq N'$ such that $N_1'>_uM$, $N_2'>_uM$ and there is $f':N_1'\cong_MN_2'$ with $f(p)\restriction N_2'\neq p\restriction N_2'$.
\end{lemma}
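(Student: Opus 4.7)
The plan is to enlarge the splitting witnesses $N_1,N_2\leq N$ to models $N_1',N_2'\leq N'$ each universal over $M$, while extending $f$ to an isomorphism $f':N_1'\cong_M N_2'$ that still separates $p$ from its image over $N_2'$. The most economical choice is $N_1'\defeq N$ itself, which is already universal over $M$ by hypothesis; the remaining work is to produce an isomorphic copy $N_2'$ of $N$ over $M$ that sits inside $N'$ and contains $N_2$.

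First I would apply $AP$ to the two embeddings $N_1\hookrightarrow N$ (inclusion) and $N_1\xrightarrow{f}N_2\hookrightarrow N$ to obtain a model $C\in K_\mu$ together with an embedding $g:N\to C$ and an inclusion $N\leq C$ such that $g\restriction N_1=f$; in particular $g$ fixes $M$. Next I would invoke $N<_u N'$ to amalgamate $C$ into $N'$ over $N$, so that without loss of generality $C\leq N'$ and hence $g:N\to N'$. Setting $N_2'\defeq g(N)\leq N'$ and $f'\defeq g$, the map $f':N\cong_M N_2'$ extends $f$, and invariance of universality under an isomorphism fixing $M$ gives $N_2'>_u M$ from $N>_u M$.

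Finally I would verify that $(N_1',N_2',f')$ witnesses $\mu$-splitting of $p$ over $M$. Since $f'\supseteq f$ and $f'(N_1)=N_2\leq N_2'$, unpacking the pushforward via Proposition~\ref{gsplnotice} yields
\[
f'(p\restriction N)\restriction N_2 \;=\; f(p\restriction N_1)\quad\text{and}\quad (p\restriction N_2')\restriction N_2 \;=\; p\restriction N_2.
\]
The hypothesis reads precisely $f(p\restriction N_1)\neq p\restriction N_2$ in $\gs(N_2)$, so restricting to $N_2$ separates $f'(p\restriction N)$ from $p\restriction N_2'$ in $\gs(N_2')$, as required. The only substantive obstacle is coordinating the two applications of amalgamation—first freely to extend $f$ onto all of $N$, then using $N<_u N'$ to pull the amalgam back inside $N'$—so that the resulting isomorphism lands in $N'$ while still separating the two types over $N_2$.
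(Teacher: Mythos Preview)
Your proof is correct and follows essentially the same strategy as the paper's: both take $N$ itself as one of the two universal witnesses and extend $f$ to an $M$-isomorphism with $N$ as one end, using $N<_uN'$ to place the other end inside $N'$. The only cosmetic difference is that you set $N_1'=N$ and build $N_2'=g(N)$ via amalgamation, whereas the paper sets $N_2'=N$ and builds $N_1'=N^{**}$ by first extending $f$ to an isomorphism $\tilde f:N^*\cong N$ and then embedding $N^*$ into $N'$ over $N_1$; the final map $f'$ is your $g$ in one case and $\tilde f\circ g^{-1}$ in the other.
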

\begin{proof}
By assumption, there are $N_1,N_2\in K_\mu$ such that $M\leq N_1,N_2\leq N$ and there is $f:N_1\cong_MN_2$ such that $f(p\restriction N)\restriction N_2\neq p\restriction N_2$. Extend $f$ to an isomorphism $\tilde{f}$ of codomain $N$, and let $N^*\geq N_1$ be the domain of $\tilde{f}$. Since $N>_uM$, by invariance $N^*>_uM$. On the other hand, $N'>_uN$, then $N'>_uN_1$ and there is $g:N^*\xrightarrow[N_1]{}N'$. Let the image of $g$ be $N^{**}$

In the diagram below, we use dashed arrows to indicate isomorphisms. Solid arrows indicate $\leq$.
\begin{center}
\begin{tikzcd}
                       &                                                             & N^* \arrow[dd, near start,"\tilde{f}", dashed] \arrow[rd, "g", dashed] &                                                               &    \\
                       & N_1 \arrow[ru] \arrow[d, "f", dashed] \arrow[rd] \arrow[rr] &                                                             & N^{**} \arrow[rd] \arrow[ld, "\tilde{f}\circ g^{-1}", dashed] &    \\
M \arrow[ru] \arrow[r] & N_2 \arrow[r]                                               & N \arrow[rr]                                                &                                                               & N'
\end{tikzcd}
\end{center}
Since $\tilde{f}\circ g^{-1}:N^{**}\cong_{M}N$ and $M<_u N^{**},N\leq N'$, we consider $\tilde{f}\circ g^{-1}(p)\restriction N$ and $p\restriction N$. 
\begin{align*}
\tilde{f}\circ g^{-1}(p)\restriction N&\geq[\tilde{f}\circ g^{-1}(p)]\restriction N_2\\
&=\tilde{f}([g^{-1}(p)]\restriction N_1)\restriction N_2\quad\text{as $\tilde{f}^{-1}[N_2]=N_1$}\\
&=\tilde{f}(p\restriction N_1)\restriction N_2\quad\text{as $g$ fixes $N_1$}\\
&=f(p\restriction N_1)\restriction N_2\quad\text{as $\tilde{f}$ extends $f$}\\
&=f(p\restriction N)\restriction N_2\quad\text{as $f^{-1}[N_2]=N_1\leq N$}\\
p\restriction N&\geq p\restriction N_2
\end{align*}
Since $f(p\restriction N)\restriction N_2\neq p\restriction N_2$, $\tilde{f}\circ g^{-1}(p)\restriction N\neq p\restriction N$ and we can take $N_1'\defeq N^{**}$, $N_2'\defeq N$, $f'\defeq\tilde{f}\circ g^{-1}$ in the statement of the lemma. 
\end{proof}
Now we can prove a canonicity result for our frame. In order to apply \ref{bgkvlem2}, we will need to enlarge $N$ to a universal extension in order to have more room. This procedure is absent in the original forward direction of \ref{scanon} but is similar to the backward direction (to get $q$ below). 
\begin{proposition}\mylabel{canon}{Proposition \thetheorem}
Suppose there is an independence relation over the $(\mu,\geq\chi)$-limit models ordered by $\leq_u$ satisfying invariance, monotonicity, local character $\chi$, uniqueness and extension. Let $M<_u N$ be $(\mu,\geq\chi)$-limit models and $p\in\gs(N)$. Then $p$ is independent over $M$ iff $p$ does not $\mu$-fork over $M$.  
\end{proposition}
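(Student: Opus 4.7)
The two implications are proved in sequence: the forward direction does the substantive work, and the backward direction reduces to it via Corollary \ref{gfuup}. The overall strategy mirrors the canonicity argument sketched for Fact \ref{scanon}, but the key bookkeeping now involves the $\leq_u$-ordering and the fact that the abstract relation's domain is restricted to $(\mu,\geq\chi)$-limit models, so Lemma \ref{bgkvlem2} has to be combined with a promotion of splitting witnesses.

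For the forward direction, suppose $p\in\gs(N)$ is independent over $M$. Using that $M$ is $(\mu,\geq\chi)$-limit (if necessary, first replacing $M$ by a long enough universal extension via monotonicity), extract a $\leq_u$-increasing continuous chain $\langle M_j:j\leq\chi\rangle$ of $(\mu,\geq\chi)$-limit models with $M_\chi=M$. Local character $\chi$ of the abstract relation provides $j<\chi$ with $p$ independent over $M^*\defeq M_j$. Since $M^*<_u M$ and $M^*\in K_\mu$, it suffices to show $p$ does not $\mu$-split over $M^*$. Suppose for contradiction it does. Choose an intermediate $N_{\text{mid}}\in K_\mu$ with $M^*<_u N_{\text{mid}}<_u N$ containing a pair of splitting witnesses, so $p\restriction N_{\text{mid}}$ $\mu$-splits over $M^*$. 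Lemma \ref{bgkvlem2} then yields $N_1',N_2'\leq N$ universal over $M^*$ with an isomorphism $f':N_1'\cong_{M^*}N_2'$ and $f'(p)\restriction N_2'\neq p\restriction N_2'$. Using stability in $\mu$ and amalgamation, enlarge $N_1',N_2'$ inside the monster to $(\mu,\geq\chi)$-limit models $\tilde N_1,\tilde N_2$ over $M^*$, and extend $f'$ to $\tilde f:\tilde N_1\cong_{M^*}\tilde N_2$ preserving $\tilde f(p)\restriction \tilde N_2\neq p\restriction \tilde N_2$. By invariance, $\tilde f(p)$ is independent over $M^*$; by monotonicity (applicable since $\tilde N_1,\tilde N_2\geq_u M^*$ now lie in the domain), both $p\restriction \tilde N_2$ and $\tilde f(p)\restriction \tilde N_2$ are independent over $M^*$; they agree on $M^*$ (which $\tilde f$ fixes); so uniqueness of the abstract relation forces them equal, a contradiction.

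For the backward direction, assume $p$ does not $\mu$-fork over $M$. An analogous chain construction and local character of the abstract relation give $M^*<_u M$ a $(\mu,\geq\chi)$-limit such that $p\restriction M$ is independent over $M^*$. Extension of the abstract relation produces $q\in\gs(N)$ extending $p\restriction M$ and independent over $M^*$, hence independent over $M$ by monotonicity. The forward direction applied to $q$ shows $q$ does not $\mu$-fork over $M$. Since $p,q\in\gs(N)$ both do not $\mu$-fork over $M$ and $q\restriction M=p\restriction M$, Corollary \ref{gfuup} forces $p=q$, so $p$ is independent over $M$.

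The principal obstacle is the two-step promotion of splitting witnesses in the forward direction: first from arbitrary witnesses to universal-over-$M^*$ witnesses via Lemma \ref{bgkvlem2}, then further to $(\mu,\geq\chi)$-limit models over $M^*$ so the abstract relation can be invoked. Maintaining the isomorphism $f'$ and the type-inequality simultaneously during this second promotion is the technical heart of the argument, since only on such models does monotonicity of the abstract relation apply and allow the uniqueness step to yield the desired contradiction. A secondary technicality is the construction of the chain of $(\mu,\geq\chi)$-limit models ending at $M$, which is handled by ambient stability, possibly after first replacing $M$ by a longer universal extension.
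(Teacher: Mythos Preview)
Your overall structure matches the paper's, but the forward direction has a genuine gap in what you call the ``second promotion.'' Once you enlarge $N_1',N_2'$ to $(\mu,\geq\chi)$-limits $\tilde N_1,\tilde N_2$ \emph{inside the monster}, there is no reason for $\tilde N_1,\tilde N_2$ to sit below $N$, so neither $p\restriction\tilde N_2$ nor $\tilde f(p)\restriction\tilde N_2$ is defined; and even if they were, monotonicity of the abstract relation needs $M^*\leq_u\tilde N_i\leq_u N$, which you have not arranged. You correctly flag this as the technical heart but offer no mechanism for it. There is also a smaller omission: local character applied to your chain with top $M$ yields only that $p\restriction M$ is independent over $M^*$, not $p$ itself; one needs transitivity of the abstract relation (which follows from extension and uniqueness, as in the proof of \ref{gft}) to upgrade this to $p$ independent over $M^*$.

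The paper avoids the promotion problem by reversing the order of operations. After obtaining $M_i<_u M$ with $p$ independent over $M_i$ (via local character and then transitivity), it first uses extension of the abstract relation to pass to some $p'\in\gs(N')$ independent over $M_i$ with $N'>_u N$ a $(\mu,\geq\chi)$-limit, and only then applies Lemma~\ref{bgkvlem2} to $M_i<_u N<_u N'$ and $p'$. The point is that the witnesses produced by that lemma's proof are $N_2'=N$ and $N_1'=N^{**}\cong N$; since $N$ is already a $(\mu,\geq\chi)$-limit by hypothesis, so is $N^{**}$, and both lie $\leq_u N'$. Thus the argument of Lemma~\ref{bgkvlem} (with $\leq$ replaced by $\leq_u$) applies directly and no after-the-fact promotion is needed. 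Your backward direction is essentially identical to the paper's.
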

\begin{proof}
Suppose $p$ is independent over $M$. By assumption $M$ is a $(\mu,\delta)$-limit for some regular $\delta\in[\chi,\mu^+)$. Resolve $M=\bigcup_{i<\delta}M_i$ such that all $M_i$ are also $(\mu,\delta)$-limit. By local character, $p\restriction M$ is independent over $M_i$ for some $i<\delta$. Since the independence relation satisfies uniqueness and extension, by the proof of \ref{gft} it also satisfies transitivity. Therefore $p$ is independent over $M_i$. Let $N'>_uN$. By extension, there is $p'\in\gs(N')$ independent over $M_i$ and $p'\supseteq p$. Now suppose $p$ $\mu$-splits over $M_i$, by \ref{bgkvlem2} $p'$ $\mu$-splits over $M_i$ with universal witnesses, contradicting \ref{bgkvlem} (where $\leq$ is replaced by $<_u$ where). As a result, $p$ does not $\mu$-split over $M_i$. Since $M_i<_u M$, $p$ does not $\mu$-fork over $M$. 

Conversely suppose $p$ does not $\mu$-fork over $M$. By local character and monotonicity, $p\restriction M$ is independent over $M$. By extension, obtain $q\in\gs(N)$ independent over $M$ and $q\supseteq p$. From the forward direction, $q$ does not $\mu$-fork over $M$. By \ref{gfu}, $p=q$ so invariance gives $q$ independent over $M$.  
\end{proof}
To conclude this section, we show that the existence of a frame similar to \ref{gfcor} is sufficient to obtain local character of nonsplitting. Continuity of $\mu$-nonsplitting and $\mu$-tameness in \ref{assum1} are not needed.
\begin{proposition}\mylabel{getloc}{Proposition \thetheorem}
Let $\delta<\mu^+$ be regular. Suppose there is an independence relation over the $(\mu,\geq\delta)$-limit models ordered by $\leq_u$ satisfying invariance, monotonicity, local character $\delta$, uniqueness and extension. Then ${\bf K}$ has $\delta$-local character of $\mu$-nonsplitting.
\end{proposition}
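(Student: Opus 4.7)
The plan is to prove $\delta$-local character of $\mu$-nonsplitting by lifting a suspected splitting configuration into the abstract independence relation, applying its local character $\delta$ there, and transferring the resulting nonforking back to nonsplitting via an $\leq_u$-adaptation of the canonicity argument in \ref{canon}. Given a limit ordinal $\alpha\geq\delta$ with $\cf(\alpha)\geq\delta$, a $\leq_u$-increasing continuous $\langle M_i:i\leq\alpha\rangle\subseteq K_\mu$, and $p\in\gs(M_\alpha)$, first pick a strictly increasing continuous cofinal $k:\alpha\to\alpha$ with $k(0)\geq\delta$ and set $N_j\defeq M_{k(j)}$. Each $N_j$ is a $(\mu,k(j))$-limit with $k(j)\geq\delta$, so the chain $\langle N_j:j\leq\alpha\rangle$ is $\leq_u$-increasing continuous in the frame of $(\mu,\geq\delta)$-limit models with $N_\alpha=M_\alpha$. (For the degenerate case $\alpha=\delta$, first pad the original chain on the right by a $(\mu,\delta)$-block using stability, and extend $p$ using existence and extension of the independence relation, so that a suitable $k$ exists below the new length.) Applying local character of the independence relation to $p$ along $\langle N_j:j\leq\alpha\rangle$ then yields $j_0<\alpha$ with $p$ independent over $N_{j_0}$.

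The key technical step is to deduce that $p$ does not $\mu$-split over $N_{j_0}$. Suppose for contradiction it does. Using extension of the independence relation, extend $p$ to $q\in\gs(N')$ where $N'$ is a $(\mu,\delta)$-limit over $M_\alpha$ and $q$ is independent over $N_{j_0}$. Apply \ref{bgkvlem2} with $N_{j_0}<_uM_\alpha<_uN'$ to produce splitting witnesses $N'_1,N'_2\leq N'$ that are universal over $N_{j_0}$, together with an iso $f':N'_1\cong_{N_{j_0}}N'_2$ satisfying $f'(q)\restriction N'_2\neq q\restriction N'_2$. In a further $(\mu,\delta)$-limit extension, enlarge $N'_1,N'_2$ to $(\mu,\geq\delta)$-limits $\bar N_1,\bar N_2$, extend $f'$ to an iso $\bar f:\bar N_1\cong_{N_{j_0}}\bar N_2$, and extend $q$ to $q''$ independent over $N_{j_0}$. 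Both $q''\restriction\bar N_1$ and $q''\restriction\bar N_2$ are independent over $N_{j_0}$ by monotonicity; by invariance $\bar f(q''\restriction\bar N_1)$ is independent over $N_{j_0}$ and agrees with $q''\restriction\bar N_2$ on $N_{j_0}$, so uniqueness of the independence relation forces $\bar f(q''\restriction\bar N_1)=q''\restriction\bar N_2$. Restricting to $N'_2$ yields $f'(q)\restriction N'_2=q\restriction N'_2$, contradicting the choice of splitting witnesses. Hence $p$ does not $\mu$-split over $N_{j_0}=M_{k(j_0)}$, and since $k(j_0)<\alpha$ this model lies in the original chain.

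The main obstacle is the $\mu$-splitting step: one needs both universal witnesses (supplied by \ref{bgkvlem2}) and witnesses that are long-limit (obtained by enlarging inside a common $(\mu,\geq\delta)$-limit extension where $q$ still has a nonforking extension) in order to invoke the independence relation's axioms; this is precisely the $\leq_u$-analog of \ref{bgkvlem} used in the proof of \ref{canon}, and the bookkeeping must be carried out carefully. A secondary subtlety is the edge case $\alpha=\delta$, where $k(0)\geq\delta$ cannot be chosen strictly below $\alpha$; padding the chain using stability, combined with existence and extension of the independence relation and existence of $\mu$-nonsplitting (\ref{eeuprop}(1)) to locate a witness inside the original chain, resolves it.
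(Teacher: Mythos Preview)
Your approach is essentially the paper's: apply local character of the abstract independence relation to the chain, then transfer the resulting independence to $\mu$-nonsplitting via \ref{bgkvlem2} and uniqueness. The paper compresses the second step into a one-line citation of ``the forward direction of \ref{canon}'', whereas you unpack that argument explicitly (enlarging the universal splitting witnesses from \ref{bgkvlem2} to $(\mu,\geq\delta)$-limits so that monotonicity, invariance and uniqueness of the frame apply); this is exactly what the parenthetical in \ref{canon} about ``\ref{bgkvlem} where $\leq$ is replaced by $<_u$'' is pointing at. So the substance is the same, only the level of detail differs.

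One slip worth flagging: your claim that $N_j=M_{k(j)}$ is a $(\mu,\geq\delta)$-limit requires $\cf(k(j))\geq\delta$, not merely $k(j)\geq\delta$. Since $k$ is continuous, at a limit $j$ with $\cf(j)<\delta$ you get $\cf(k(j))=\cf(j)<\delta$, so the re-indexed chain need not lie entirely in the skeleton of $(\mu,\geq\delta)$-limits, and the frame's local character does not literally apply. The paper's three-sentence proof (and the ``resolve $M$ so that all $M_i$ are $(\mu,\delta)$-limit'' step inside \ref{canon}) glosses over exactly the same point, so you are not worse off than the paper, but neither argument is fully airtight on this bookkeeping. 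Your padding manoeuvre for $\alpha=\delta$ is more elaborate than the paper's treatment (which does not single out the case) and invokes existence for the frame, which is not among the stated hypotheses --- though a weak form of existence does follow from local character plus monotonicity, so this is harmless.
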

\begin{proof}
Let $\langle M_i:i\leq\delta\rangle$ be u-increasing and continuous, $p\in\gs(M_\delta)$. There is $i<\delta$ such that $p$ is independent over $M_i$. By the forward direction of \ref{canon} (local character of nonsplitting is not used), $p$ does not $\mu$-split over $M_i$.
\end{proof}
\section{Local symmetry}\label{ufsec5}
Tower analysis was used in \cite[Theorem 3]{van16a} to connect a notion of $\mu$-symmetry and reduced towers. Combining with \cite{GVV}, superstability and $\mu$-symmetry imply the uniqueness of limit models. \cite[Lemma 4.6]{vv} observed that a weaker form of $\mu$-symmetry is sufficient to deduce one direction of \cite[Theorem 3]{van16a}, and enough superstability implies the weaker form of $\mu$-symmetry. Therefore enough superstability already implies the uniqueness of limit models \cite[Corollary 1.4]{vv}. Meanwhile, \cite{bvulm} localized the notion of $\mu$-symmetry and deduced the uniqueness of limit models of length $\geq\chi$. We will imitate the above argument and replace the hypothesis of local symmetry by sufficient stability. As a corollary we will obtain symmetry property of nonforking. The uniqueness of limit models will be discussed in the next section.

The following is based on \cite[Definition 10]{bvulm}. They restricted $M_0$ to be exactly $(\mu,\delta)$-limit over $N$ but they should mean $(\mu,\geq\delta)$ for the proofs to go through. We will use $\delta\defeq\chi$ in this paper.
\begin{definition}\mylabel{musymdef}{Definition \thetheorem}
Let $\delta<\mu^+$ be a limit ordinal. ${\bf K}$ has \emph{$(\mu,\delta)$-symmetry for $\mu$-nonsplitting} if for any $M,M_0,N\in K_\mu$, elements $a,b$ with
\begin{enumerate}
\item $a\in M-M_0$;
\item $M_0<_uM$ and $M_0$ is $(\mu,\geq\delta)$-limit over $N$;
\item $\gtp(a/M_0)$ does not $\mu$-split over $N$;
\item $\gtp(b/M)$ does not $\mu$-split over $M_0$,
\end{enumerate}
then there is $M^b\in K_\mu$ universal over $M_0$ and containing $b$ such that $\gtp(a/M^b)$ does not $\mu$-split over $N$. We will abbreviate $(\mu,\delta)$-symmetry for $\mu$-nonsplitting as \emph{$(\mu,\delta)$-symmetry}.
\end{definition}
Now we localize the hierarchy of symmetry properties in \cite[Definition 4.3]{vv}. The first two items will be important in our improvement of \cite{bvulm}.
\begin{definition}\mylabel{symhier}{Definition \thetheorem}
Let $\delta<\mu^+$ be a limit ordinal. In the following items, we always let $a\in M-M_0$, $M_0<_uM$, $M_0$ be $(\mu,\geq\delta)$-limit over $N$ and $b$ be an element. In the conclusion, $M^b\in K_\mu$ universal over $M_0$ and containing $b$ is guaranteed to exist.
\begin{enumerate}
\item ${\bf K}$ has \emph{uniform $(\mu,\delta)$-symmetry}: If $\gtp(b/M)$ does not $\mu$-split over $M_0$, $\gtp(a/M_0)$ does not $\mu$-fork over $(N,M_0)$, then $\gtp(a/M^b)$ does not $\mu$-fork over $(N,M_0)$.
\item ${\bf K}$ has \emph{weak uniform $(\mu,\delta)$-symmetry}: If $\gtp(b/M)$ does not $\mu$-fork over $M_0$, $\gtp(a/M_0)$ does not $\mu$-fork over $(N,M_0)$, then $\gtp(a/M^b)$ does not $\mu$-fork over $(N,M_0)$.
\item ${\bf K}$ has \emph{nonuniform $(\mu,\delta)$-symmetry}: If $\gtp(b/M)$ does not $\mu$-split over $M_0$, $\gtp(a/M_0)$ does not $\mu$-fork over $M_0$, then $\gtp(a/M^b)$ does not $\mu$-fork over $M_0$.
\item ${\bf K}$ has \emph{weak nonuniform $(\mu,\delta)$-symmetry}: If $\gtp(b/M)$ does not $\mu$-fork over $M_0$, $\gtp(a/M_0)$ does not $\mu$-fork over $M_0$, then $\gtp(a/M^b)$ does not $\mu$-fork over $M_0$.
\end{enumerate}
\end{definition}
The following results generalize \cite[Section 4]{vv} which assumes superstability and works with full symmetry properties. 
\begin{proposition}\mylabel{unisymeq}{Proposition \thetheorem}
Let $\delta<\mu^+$ be a limit ordinal. $(\mu,\delta)$-symmetry is equivalent to uniform $(\mu,\delta)$-symmetry. Both imply nonuniform $(\mu,\delta)$-symmetry and weak uniform $(\mu,\delta)$-symmetry. Nonuniform $(\mu,\delta)$-symmetry implies weak nonuniform $(\mu,\delta)$-symmetry.
\end{proposition}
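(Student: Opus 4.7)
My plan is to handle the four implications separately; three are direct unfoldings or applications of monotonicity, and the last requires a modest rebasing of the witness.

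For the equivalence $(\mu,\delta)$-symmetry $\Leftrightarrow$ uniform $(\mu,\delta)$-symmetry, I would unfold \ref{nfdef}(1). Since the setup of \ref{symhier} forces $N\in K_\mu$ and $N<_uM_0$, the clause ``$\gtp(a/M_0)$ does not $\mu$-fork over $(N,M_0)$'' is literally ``$\gtp(a/M_0)$ does not $\mu$-split over $N$'', and the same simplification applies to the conclusion using $M_0\leq M^b$; the two items then state the same thing. The implications uniform $\Rightarrow$ weak uniform and nonuniform $\Rightarrow$ weak nonuniform both follow from monotonicity of $\mu$-nonsplitting (\ref{monspl}): if $\gtp(b/M)$ does not $\mu$-fork over $M_0$, there is $M^*<_uM_0$ with $\gtp(b/M)$ not $\mu$-splitting over $M^*$, and monotonicity along $M^*\leq M_0\leq M$ upgrades this to $\gtp(b/M)$ not $\mu$-splitting over $M_0$, which is precisely the hypothesis of the stronger (non-weak) variant. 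The clauses for $a$ and the conclusions being identical, the implications follow.

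For uniform $\Rightarrow$ nonuniform, I would assume the nonuniform hypothesis and fix $N^*<_uM_0$ in $K_\mu$ witnessing $\gtp(a/M_0)$ does not $\mu$-fork over $M_0$ (so $\gtp(a/M_0)$ does not $\mu$-split over $N^*$), then re-instantiate uniform $(\mu,\delta)$-symmetry with $N^*$ playing the role of $N$. The output $M^b\in K_\mu$ containing $b$ and universal over $M_0$ will satisfy $\gtp(a/M^b)$ not $\mu$-splitting over $N^*$; combined with $N^*<_uM_0$ in $K_\mu$, this is exactly $\gtp(a/M^b)$ not $\mu$-forking over $M_0$, the nonuniform conclusion.

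The hard part is justifying this re-instantiation: one must show that $M_0$ is $(\mu,\geq\delta)$-limit over $N^*$ and not merely over the distinguished base $N$. Since $\cf(\delta)\leq\mu$, the given witnessing chain from $N$ need not place $N^*$ in any initial segment, so a direct splicing does not suffice. I would appeal to the skeleton structure of limit models (\ref{skedef} together with \cite[II Claim 1.16]{shh}) and uniqueness of limit models of the same length over the same base: freely build a $(\mu,\delta)$-limit $L$ over $N^*$, use universality of $M_0$ over $N^*$ to embed $L$ into $M_0$, and invoke uniqueness to transfer the limit structure from $L$ to $M_0$ over $N^*$.
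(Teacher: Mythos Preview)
Your treatment of the equivalence $(\mu,\delta)$-symmetry $\Leftrightarrow$ uniform $(\mu,\delta)$-symmetry and of the two implications toward the weak variants is correct and matches the paper exactly: unfold \ref{nfdef}(1) for the first, and use monotonicity of nonsplitting for the others. For uniform $\Rightarrow$ nonuniform you actually go further than the paper, which simply asserts that ``the conclusion is weaker'' without addressing the point you raise: re-instantiating uniform symmetry at the nonforking witness $N^*$ requires $M_0$ to be $(\mu,\geq\delta)$-limit over $N^*$, not merely over the original $N$.

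Your proposed fix for that step, however, does not close the gap. Embedding a freely built $(\mu,\delta)$-limit $L$ over $N^*$ into $M_0$ (via $M_0>_uN^*$) only produces a limit \emph{submodel} of $M_0$ over $N^*$; it does not show that $M_0$ itself is such a limit. Invoking uniqueness of limit models of the same length over the same base is circular here, since that principle compares two models already known to be limit over the \emph{same} base, and $M_0$ is so far only known to be limit over $N$. A route that does work (and suffices for the paper's applications, where $\delta=\chi$) avoids $N^*$ altogether: resolve $M_0$ by its witnessing u-chain $\langle Q_i:i\leq\alpha\rangle$ with regular $\alpha\geq\delta\geq\chi$, apply $\chi$-local character of $\mu$-nonsplitting (available under \ref{assum1}) to find $i<\alpha$ with $\gtp(a/M_0)$ not $\mu$-splitting over $Q_i$, and re-instantiate uniform symmetry with $N:=Q_i$; since $\alpha$ is regular, the tail $\langle Q_j:i\leq j\leq\alpha\rangle$ witnesses that $M_0$ is $(\mu,\alpha)$-limit over $Q_i$, so the setup is legitimately satisfied.
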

\begin{proof}
In the definition of the symmetry properties, we always have $N<_uM_0$, so the following are equivalent:
\begin{itemize}\item $\gtp(a/M_0)$ does not $\mu$-fork over $(N,M_0)$;
\item $\gtp(a/M_0)$ does not $\mu$-split over $N$.
\end{itemize}
Similarly, the following are equivalent:
\begin{itemize}\item $\gtp(a/M^b)$ does not $\mu$-fork over $(N,M_0)$;
\item $\gtp(a/M^b)$ does not $\mu$-split over $N$.
\end{itemize}
Therefore, $(\mu,\delta)$-symmetry is equivalent to uniform $(\mu,\delta)$-symmetry.

Uniform $(\mu,\delta)$-symmetry implies weak uniform $(\mu,\delta)$-symmetry because nonforking over $M_0$ is a stronger assumption than nonsplitting over $M_0$. Uniform $(\mu,\delta)$-symmetry implies nonuniform $(\mu,\delta)$-symmetry because the latter does not require the witness to nonforking be the same, so its conclusion is weaker. Nonuniform $(\mu,\delta)$-symmetry implies weak nonuniform $(\mu,\delta)$-symmetry because nonforking over $M_0$ is a stronger assumption than nonsplitting over $M_0$.
\end{proof}
The following result modifies the proof of \cite{bvulm} which involves a lot of tower analysis. We will only mention the modifications and refer the readers to the original proof.
\begin{proposition}\mylabel{uniwkeq}{Proposition \thetheorem}
Let $\delta<\mu^+$ be a limit ordinal. If $\delta\geq\chi$, then weak uniform $(\mu,\delta)$-symmetry implies uniform $(\mu,\delta)$-symmetry.
\end{proposition}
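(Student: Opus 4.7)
The plan is to reduce uniform $(\mu,\delta)$-symmetry to weak uniform $(\mu,\delta)$-symmetry by replacing $b$ with an auxiliary element $b^*$ realizing the same type over $M_0$ but with a stronger forking character, applying weak uniform symmetry to $b^*$, and transferring the conclusion back to $b$ by a carefully chosen automorphism. Let $a,b,M,M_0,N$ satisfy the hypotheses of uniform $(\mu,\delta)$-symmetry (Definition \ref{symhier}(1)); the task is to produce $M^b\in K_\mu$ universal over $M_0$, containing $b$, with $\gtp(a/M^b)$ not $\mu$-splitting over $N$.

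Since $M_0$ is a $(\mu,\geq\delta)$-limit over $N$ with $\delta\geq\chi$, the existence property \ref{gfexi} gives that $\gtp(b/M_0)$ does not $\mu$-fork over $M_0$; then by the extension property of $\mu$-nonforking \ref{gfe} (valid in $K_\mu$ since ${\bf K}$ is stable in $\mu$ by \ref{assum1}), there is $q^*\in\gs(M)$ extending $\gtp(b/M_0)$ that does not $\mu$-fork over $M_0$. Realizing $q^*$ by an element $b^*$ yields $\gtp(b^*/M_0)=\gtp(b/M_0)$ while $\gtp(b^*/M)$ does not $\mu$-fork over $M_0$. Weak uniform $(\mu,\delta)$-symmetry, applied to $(N,M_0,M,a,b^*)$, then produces $M^{b^*}\in K_\mu$ universal over $M_0$, containing $b^*$, with $\gtp(a/M^{b^*})$ not $\mu$-forking over $(N,M_0)$; equivalently (see the proof of \ref{unisymeq}), $\gtp(a/M^{b^*})$ does not $\mu$-split over $N$.

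To transfer back to $b$, pick $f\in\oop{Aut}(\mn/M_0)$ with $f(b^*)=b$, available because $\gtp(b/M_0)=\gtp(b^*/M_0)$, and set $M^b:=f[M^{b^*}]$. Then $M^b$ is universal over $M_0$, contains $b=f(b^*)$, and by invariance $\gtp(f(a)/M^b)$ does not $\mu$-split over $N$. The main obstacle is that $f(a)\neq a$ in general, so the nonsplitting of $\gtp(f(a)/M^b)$ does not immediately give the nonsplitting of $\gtp(a/M^b)$ over $N$; closing this gap is the hardest step of the argument. The plan is to follow the tower-theoretic framework of \cite{bvulm}: build a reduced tower of length $\geq\chi$ extending the configuration, applying weak uniform $(\mu,\delta)$-symmetry at each successor stage and using continuity of $\mu$-nonsplitting together with $\chi$-local character (both from \ref{assum1}) at limit stages. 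The length $\geq\chi$ is precisely what forces the reduced tower to be continuous, and continuity at the top limit stage provides an $M^b$ for which $\gtp(a/M^b)$ itself, for the original $a$, does not $\mu$-split over $N$; this finishes the verification of uniform $(\mu,\delta)$-symmetry.
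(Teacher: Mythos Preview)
Your first two paragraphs set up a direct reduction, and you honestly flag its failure: after applying $f\in\oop{Aut}(\mn/M_0)$ you only control $\gtp(f(a)/M^b)$, not $\gtp(a/M^b)$. That gap is real and cannot be closed by a simple automorphism or uniqueness argument, since nothing in the hypotheses pins down $\gtp(ab/M_0)$ versus $\gtp(ab^*/M_0)$ (the available tool, \ref{symlem}, would require $\gtp(a/\cdot)$ to be nonforking over $M_0$ on a model containing both $b$ and $b^*$, which you do not have).

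The problem is your final paragraph. Saying ``follow the tower-theoretic framework of \cite{bvulm}, applying weak uniform $(\mu,\delta)$-symmetry at each successor stage'' is precisely the step that needs an argument, not a citation: the tower proof in \cite[Theorem 18]{bvulm} invokes full $(\mu,\delta)$-symmetry, and the entire content of this proposition is to explain why weak uniform suffices. The paper does this by an index shift: whenever \cite{bvulm} obtains an $i^*$ with $\gtp(b/M_\delta^\delta)$ not $\mu$-\emph{splitting} over $M_{i^*}^{i^*}$, one passes to $j^*=i^*+1$; since $M_{j^*}^{j^*}$ is $(\mu,\geq\delta)$-limit over $M_{i^*}^{i^*}$, the same type is now $\mu$-\emph{nonforking} over $M_{j^*}^{j^*}$, which is exactly the stronger hypothesis weak uniform symmetry demands. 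This trick must be checked at each of the three places symmetry is used in \cite[Theorem 18]{bvulm}. Your description also misframes the logical structure: one does not build a single tower to produce $M^b$; rather, one shows weak uniform symmetry $\Rightarrow$ continuity of reduced towers at $\geq\delta$ $\Rightarrow$ $(\mu,\delta)$-symmetry (by \cite[Proposition 19]{bvulm}) $\Rightarrow$ uniform $(\mu,\delta)$-symmetry (by \ref{unisymeq}). Without the index-shift observation and this routing through tower continuity, the proposal does not constitute a proof.
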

\begin{proof}[Proof sketch]
\cite[Theorems 18, Proposition 19]{bvulm} establish that $(\mu,\delta)$-symmetry is equivalent to continuity of reduced towers at $\geq\delta$. We will show that the backward direction only requires weak uniform $(\mu,\delta)$-symmetry. Then using the equivalence twice we deduce that weak uniform $(\mu,\delta)$-symmetry implies $(\mu,\delta)$-symmetry. By the previous proposition, it is equivalent to uniform $(\mu,\delta)$-symmetry.

There are three places in \cite[Theorems 18]{bvulm} which use $(\mu,\delta)$-symmetry. In the first two paragraphs of page 11:
\begin{enumerate}
\item By $\chi$-local character, there is a successor $i^*<\delta$ such that $\gtp(b/M_\delta^\delta)$ does not $\mu$-split over $M_{i^*}^{i^*}$. 
\item For any $j<\delta$, $M_\delta^\delta$ is universal over $M_{j}^{j}$.
\item For any $j<\delta$, $\gtp(a_{j}/M_{j}^{j})$ does not $\mu$-split over $N_{j}$. 
\item For any successor $j<\delta$, $M_j^j$ is $(\mu,\geq\delta)$-limit over $M_{j-1}^{j-1}$ and over $N_j$. 
\end{enumerate}
Let $j^*\defeq i^*+1$ which is still a successor ordinal less than $\delta$. Combining (1) and (4), we have $\gtp(b/M_\delta^\delta)$ does not $\mu$-fork over $M_{j^*}^{j^*}$. Combining (3) and (4), $\gtp(a_{j^*}/M_{j^*}^{j^*})$ does not $\mu$-fork over $M_{j^*}^{j^*})$. Moreover, (2) gives $M_\delta^\delta$ is universal over $M_{j^*}^{j^*}$. Together with (4) and weak uniform $(\mu,\delta)$-symmetry, we can find $M^b$ $(\mu,\geq\delta)$-limit over $M_{j^*}^{j^*}$ and containing $b$ such that $\gtp(a/M^b)$ does not $\mu$-fork over $(N_{j^*},M_{j^*}^{j^*})$. In other words, $\gtp(a/M^b)$ does not $\mu$-split over $N_{j^*}$ and so the original argument goes through with $i^*$ replaced by $j^*$.

In ``Case 2'' on page 12:
\renewcommand{\theenumi}{\alph{enumi}}
\begin{enumerate}
\item $\gtp(b/\bigcup_{l<\alpha}M_l^l)$ does not $\mu$-split over $M_{i^*}^{i^*}$.
\item $i^*+2\leq k<\alpha$ and $\gtp(a_k/M_k^{k+1})$ does not $\mu$-split over $N_k$.
\item  $M_k^{k+1}$ is universal over $M_{i^*}^{i^*}$.
\item $\bigcup_{l<\alpha}M_l^l$ is universal over $M_k^{k+1}$. $M_k^{k+1}$ is $(\mu,\geq\delta)$-limit over $N_k$.
\end{enumerate}
Combining (a) and (c), $\gtp(b/\bigcup_{l<\alpha}M_l^l)$ does not $\mu$-fork over $M_k^{k+1}$. (b) gives $\gtp(a_k/M_k^{k+1})$ does not $\mu$-fork over $(N_k,M_k^{k+1})$. Together with (d) and weak uniform $(\mu,\delta)$-symmetry, we can find $M^b_k$ $(\mu,\geq\delta)$-limit over $M_{k}^{k+1}$ and containing $b$ such that $\gtp(a_k/M^b_k)$ does not $\mu$-fork over $(N_{k},M_{k}^{k+1})$ so the proof goes through (we do not change index this time).

Before ``Case 1'' on page 11, they refer the successor case to the original proof of \cite[Theorem 3]{van16a} which also uses $(\mu,\delta)$-symmetry. But the idea from the previous case applies equally.
\end{proof}
In \cite[Corollary 2.18]{s22}, it was shown that under superstability, weak nonuniform $\mu$-symmetry implies weak uniform $\mu$-symmetry. We generalize this as:
\begin{proposition}\mylabel{wknonwkeq}{Proposition \thetheorem}
Let $\delta<\mu^+$ be a limit ordinal. Weak nonuniform $(\mu,\delta)$-symmetry implies weak uniform $(\mu,\delta)$-symmetry. 
\end{proposition}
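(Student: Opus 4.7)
The plan is to reduce the uniform statement to the nonuniform one via the uniqueness of $\mu$-nonforking (\ref{gfu}), observing that ``not $\mu$-fork over $(N,M_0)$'' is simply ``not $\mu$-fork over $M_0$'' together with the extra information that the witness can be taken to be $N$. Suppose the hypotheses of weak uniform $(\mu,\delta)$-symmetry hold: $a\in M-M_0$, $M_0<_uM$, $M_0$ is $(\mu,\geq\delta)$-limit over $N$, $\gtp(b/M)$ does not $\mu$-fork over $M_0$, and $\gtp(a/M_0)$ does not $\mu$-fork over $(N,M_0)$. The last condition unpacks as $N\in K_\mu$, $N<_uM_0$, and $\gtp(a/M_0)$ does not $\mu$-split over $N$; in particular it witnesses that $\gtp(a/M_0)$ does not $\mu$-fork over $M_0$.

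First I would apply weak nonuniform $(\mu,\delta)$-symmetry to the same data. This produces $M^b\in K_\mu$ universal over $M_0$, containing $b$, with $\gtp(a/M^b)$ not $\mu$-forking over $M_0$. Call this type $q$. The remaining task is to upgrade the witness of nonforking for $q$ from some arbitrary $N'<_uM_0$ to the specific $N$ given in the hypothesis.

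For this, I would invoke extension of $\mu$-nonsplitting (\ref{eeuprop}(3)): since $N<_uM_0\leq M^b$ with all three in $K_\mu$ and $\gtp(a/M_0)$ does not $\mu$-split over $N$, there exists $q'\in\gs(M^b)$ extending $\gtp(a/M_0)$ with $q'$ not $\mu$-splitting over $N$. In particular $q'$ does not $\mu$-fork over $(N,M_0)$, hence not over $M_0$. Now both $q$ and $q'$ lie in $\gs(M^b)$, both do not $\mu$-fork over $M_0$, and both restrict to $\gtp(a/M_0)$ on $M_0$. By uniqueness of $\mu$-nonforking in $K_\mu$ (\ref{gfu}), $q=q'$. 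Hence $\gtp(a/M^b)$ does not $\mu$-split over $N$, so it does not $\mu$-fork over $(N,M_0)$, which is exactly the conclusion of weak uniform $(\mu,\delta)$-symmetry.

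There is no serious obstacle here; the only thing to watch is that the $M^b$ one would like to use is not a free choice but is handed to us by weak nonuniform symmetry, so extension of nonsplitting must be applied to this particular $M^b$. Since $M^b\in K_\mu$ is universal over $M_0$ and $N<_uM_0$, extension of nonsplitting is available without change, and the uniqueness step then matches the two extensions and identifies $N$ as a common nonsplitting witness.
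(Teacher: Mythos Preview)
Your proof is correct and follows essentially the same approach as the paper: apply weak nonuniform symmetry to obtain $M^b$, use extension of nonsplitting to produce a second type over $M^b$ with witness $N$, and then invoke uniqueness of $\mu$-nonforking (\ref{gfu}) to identify the two. The only cosmetic difference is that the paper phrases the extension step via an element $a'$ realizing the extended type rather than naming the type $q'$ directly.
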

\begin{proof}
Using the notation in \ref{symhier}, we assume $\gtp(b/M)$ does not $\mu$-fork over $M_0$ and $\gtp(a/M_0)$ does not $\mu$-fork over $(N,M_0)$. By  weak nonuniform $(\mu,\delta)$-symmetry, we can find $M^b$ such that $\gtp(a/M^b)$ does not $\mu$-fork over $M_0$. Since $\gtp(a/M_0)$ does not $\mu$-fork over $(N,M_0)$, by extension of nonsplitting (\ref{eeuprop}), there is $a'$ such that $\gtp(a/M_0)=\gtp(a'/M_0)$ and $\gtp(a'/M^b)$ does not $\mu$-split over $N$. Now both $\gtp(a/M^b)$ and $\gtp(a'/M^b)$ do not $\mu$-fork over $M_0$ and they agree on the restriction of $M_0$. By uniqueness of nonforking (\ref{gfu}), $\gtp(a/M^b)=\gtp(a'/M^b)$ and hence $\gtp(a/M^b)$ does not $\mu$-split over $N$. In other words, it does not $\mu$-fork over $(N,M_0)$ as desired.
\end{proof}
\begin{corollary}\mylabel{symcor}{Corollary \thetheorem}
The following are equivalent:
\begin{enumerate}\setcounter{enumi}{-1}
\item $(\mu,\chi)$-symmetry for $\mu$-nonsplitting;
\item Uniform $(\mu,\chi)$-symmetry;
\item Weak uniform $(\mu,\chi)$-symmetry;
\item Nonuniform $(\mu,\chi)$-symmetry;
\item Weak nonuniform $(\mu,\chi)$-symmetry.
\end{enumerate}
\end{corollary}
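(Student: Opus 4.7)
The plan is to chain together the three previous propositions \ref{unisymeq}, \ref{uniwkeq}, and \ref{wknonwkeq}, specialized to $\delta = \chi$, to close a cycle of implications among (0)--(4). The equivalence (0) $\Leftrightarrow$ (1) is immediate from \ref{unisymeq}, since that proposition already observes that $(\mu,\delta)$-symmetry and uniform $(\mu,\delta)$-symmetry say the same thing (the hypothesis $N <_u M_0$ built into \ref{musymdef} and \ref{symhier} forces nonforking over $(N,M_0)$ to coincide with nonsplitting over $N$). So it suffices to establish (1) $\Leftrightarrow$ (2) $\Leftrightarrow$ (3) $\Leftrightarrow$ (4).

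To get the cycle $(1) \Rightarrow (3) \Rightarrow (4) \Rightarrow (2) \Rightarrow (1)$, I would argue as follows. The implication $(1) \Rightarrow (3)$ is the uniform-implies-nonuniform part of \ref{unisymeq}: the conclusion of (3) is weaker than that of (1) because it only asks for nonforking over $M_0$ rather than over the fixed witness $N$. The implication $(3) \Rightarrow (4)$ is again from \ref{unisymeq}: strengthening the hypothesis from nonsplitting over $M_0$ to nonforking over $M_0$ yields the weaker version. Next, $(4) \Rightarrow (2)$ is precisely \ref{wknonwkeq}, which uses extension of nonsplitting (\ref{eeuprop}) together with uniqueness of nonforking (\ref{gfu}) to upgrade a witness from the nonuniform to the uniform form. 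Finally $(2) \Rightarrow (1)$ is \ref{uniwkeq}, which is the only step that genuinely uses the assumption $\delta \geq \chi$: the tower-analysis route of \cite{bvulm}, once one observes that only weak uniform symmetry is needed in the backward direction of the equivalence with continuity of reduced towers.

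The main obstacle is really none, since all the work has been done; the lemma is a bookkeeping result. If any step deserves care, it is verifying that \ref{uniwkeq} applies with $\delta = \chi$, i.e., that the hypotheses of the tower-continuity machinery are met at length exactly $\chi$; this is the role of the assumption $\chi \leq \mu$ in \ref{assum1}, which ensures that the $(\mu,\geq\chi)$-limit models used as bases actually exist in $K_\mu$ and that the local character argument in \cite[Theorem 18]{bvulm} (applied in the proof of \ref{uniwkeq}) runs with $i^* < \chi$.
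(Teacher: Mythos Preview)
Your proposal is correct and follows essentially the same approach as the paper: the cycle $(1)\Rightarrow(3)\Rightarrow(4)\Rightarrow(2)\Rightarrow(1)$ via \ref{unisymeq}, \ref{wknonwkeq}, and \ref{uniwkeq}, together with $(0)\Leftrightarrow(1)$ from \ref{unisymeq}, is exactly what the paper does. Your identification of \ref{uniwkeq} as the one step requiring $\delta\geq\chi$ is also on target; the remark about $\chi\leq\mu$ is slightly tangential (what \ref{uniwkeq} needs is $\delta\geq\chi$, which is trivially met when $\delta=\chi$), but this does not affect correctness.
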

\begin{proof}
By \ref{unisymeq}, (0) and (1) are equivalent, (1) implies (2) and (3) while (3) implies (4). By \ref{uniwkeq} (this is where we need $\chi$ instead of a general $\delta$), (2) implies (1). By \ref{wknonwkeq}, (4) implies (2). 
\end{proof}

The following adapts \cite[Lemma 5.6]{vv} and fills in some gaps. In particular we need $\mu$-tameness (in \ref{assum1}) and stability in $\nr{N_\alpha}$ for the proof to go through. It is not clear how to remove $\mu$-tameness which they do not assume. 
\begin{lemma}\mylabel{symlem}{Lemma \thetheorem}
Let $M_0\in K_\mu$, $N_\alpha\in K_{\geq\mu}$ with $M_0\leq N_\alpha$, $b,b_\beta\in |N_\alpha|$, $a_\alpha$ be an element. If ${\bf K}$ is stable in $\nr{N_\alpha}$, $\gtp(a_\alpha/N_\alpha)$ does not $\mu$-fork over $M_0$ and $\gtp(b/M_0)=\gtp(b_\beta/M_0)$, then $\gtp(a_\alpha b/M_0)=\gtp(a_\alpha b_\beta/M_0)$.
\end{lemma}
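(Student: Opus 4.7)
The plan is to exploit the hypothesis $\gtp(b/M_0)=\gtp(b_\beta/M_0)$ via an ambient automorphism $f\in\oop{Aut}(\mn/M_0)$ with $f(b)=b_\beta$. Since $f$ fixes $M_0$, applying $f$ to any tuple $c$ gives $\gtp(c/M_0)=\gtp(f(c)/M_0)$; in particular $\gtp(a_\alpha b/M_0)=\gtp(f(a_\alpha)b_\beta/M_0)$. So the target equation reduces to showing $\gtp(a_\alpha b_\beta/M_0)=\gtp(f(a_\alpha)b_\beta/M_0)$, and for that it suffices to produce a single automorphism of $\mn$ fixing $M_0$ and $b_\beta$ that sends $a_\alpha$ to $f(a_\alpha)$.

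To produce such an automorphism, I will identify a model containing both $M_0$ and $b_\beta$ over which $a_\alpha$ and $f(a_\alpha)$ realize the same Galois type. The natural candidate is $f(N_\alpha)$: it contains $M_0=f(M_0)$ and, because $b\in|N_\alpha|$, it contains $b_\beta=f(b)$. First I use the L\"{o}wenheim--Skolem axiom together with coherence to amalgamate $N_\alpha$ and $f(N_\alpha)$ inside $\mn$ into some $N'\leq\mn$ of cardinality $\nr{N_\alpha}$ with $N_\alpha\leq N'$ and $f(N_\alpha)\leq N'$. Using stability in $\nr{N_\alpha}$, the extension property of $\mu$-nonforking (\ref{gfe}) lifts $p\defeq\gtp(a_\alpha/N_\alpha)$ to some $p'\in\gs(N')$ that does not $\mu$-fork over $M_0$ and is still realized by $a_\alpha$. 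Meanwhile, invariance applied to $f$ gives that $q\defeq\gtp(f(a_\alpha)/f(N_\alpha))$ does not $\mu$-fork over $M_0$.

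It remains to match $p'\restriction f(N_\alpha)$ with $q$ by uniqueness of $\mu$-nonforking (\ref{gfuup}). Both lie in $\gs(f(N_\alpha))$, both do not $\mu$-fork over $M_0\leq f(N_\alpha)$, and they agree on $M_0$: monotonicity yields $p'\restriction M_0=p\restriction M_0$, while $f$ fixing $M_0$ gives $q\restriction M_0=f(p)\restriction M_0=p\restriction M_0$. Uniqueness then forces $p'\restriction f(N_\alpha)=q$, so $a_\alpha$ and $f(a_\alpha)$ realize the same Galois type over $f(N_\alpha)$. Any automorphism of $\mn$ fixing $f(N_\alpha)$ and sending $a_\alpha$ to $f(a_\alpha)$ then fixes $M_0$ and $b_\beta$, delivering the required equation.

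There is no genuinely hard step; the delicate point is simply that $N_\alpha\in K_{\geq\mu}$ rather than $K_\mu$, so I must invoke the upgraded extension and uniqueness results (\ref{gfe}, \ref{gfuup}) that permit arbitrary size above $\mu$. That is precisely where the stability hypothesis in $\nr{N_\alpha}$ (for \ref{gfe}) and the $\mu$-tameness from \ref{assum1} (implicit in \ref{gfuup}) are consumed.
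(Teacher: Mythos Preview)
Your overall strategy is sound and close to the paper's, but there is one unjustified step. You write that extension (\ref{gfe}) ``lifts $p\defeq\gtp(a_\alpha/N_\alpha)$ to some $p'\in\gs(N')$ that does not $\mu$-fork over $M_0$ and is still realized by $a_\alpha$.'' Extension only produces a nonforking type $p'\supseteq p$; it says nothing about which element realizes it. Since you fixed $f$ (and hence $f(N_\alpha)$ and $N'$) \emph{before} invoking extension, you have no control over $\gtp(a_\alpha/N')$, and there is no reason it should equal $p'$ or even be nonforking over $M_0$. This is the only gap, and it is easily repaired: pick any $a'\vDash p'$. Then $\gtp(a'/N_\alpha)=p=\gtp(a_\alpha/N_\alpha)$, and since $b_\beta\in|N_\alpha|$ and $M_0\leq N_\alpha$, this already gives $\gtp(a'b_\beta/M_0)=\gtp(a_\alpha b_\beta/M_0)$. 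Your uniqueness argument then matches $\gtp(a'/f(N_\alpha))=p'\restriction f(N_\alpha)$ with $q=\gtp(f(a_\alpha)/f(N_\alpha))$, and the chain $\gtp(a_\alpha b_\beta/M_0)=\gtp(a'b_\beta/M_0)=\gtp(f(a_\alpha)b_\beta/M_0)=\gtp(a_\alpha b/M_0)$ finishes the proof.

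For comparison, the paper reverses the order of operations: it first builds $N^*>_uN_\alpha$ so that $\gtp(a_\alpha/N^*)$ does not $\mu$-split over the witness $M^*<_uM_0$ (this is where the freedom to choose $N^*$ absorbs the conjugation you would need), and only \emph{then} uses universality of $N^*$ to find $f:N_\alpha\xrightarrow[M_0]{}N^*$ with $f(b)=b_\beta$. At that point the definition of nonsplitting (via \ref{tamespl}) immediately gives $\gtp(a_\alpha/f(N_\alpha))=\gtp(f(a_\alpha)/f(N_\alpha))$, bypassing the appeal to uniqueness of nonforking (\ref{gfuup}). Your route through uniqueness is a legitimate alternative and arguably more frame-theoretic; the paper's route is slightly more elementary since it only unwinds the definition of splitting.
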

\begin{proof}
Let $M^*<_u M_0$ witness that $\gtp(a_\alpha/N_\alpha)$ does not $\mu$-fork over $(M^*,M_0)$. By extension (\ref{extnsp}) and weak uniqueness of nonsplitting (\ref{eeuprop}(2)), we can extend $N_\alpha$ to $N^*>_u N_\alpha$ such that $\gtp(a_\alpha/N^*)$ does not $\mu$-split over $M^*$. As $\gtp(b/M_0)=\gtp(b_\beta/M_0)$ and $N^*>_uN_\alpha$, there is $f:N_\alpha\xrightarrow[M_0]{}{N^*}$ such that $f(b)=b_\beta$. As $\gtp(a_\alpha/N^*)$ does not $\mu$-split over $M^*$, by \ref{tamespl} $\gtp(f(a_\alpha)/f(N_\alpha))=\gtp(a_\alpha/f(N_\alpha))$. Hence there is $g\in\oop{Aut}_{f(N_\alpha)}(\mn)$ such that $g(f(a_\alpha))=a_\alpha$. Then
$$\gtp(a_\alpha b/M_0)=\gtp(f(a_\alpha)f(b)/M_0)=\gtp(g(f(a_\alpha))f(b)/M_0)=\gtp(a_\alpha b_\beta/M_0).$$
\end{proof}
\begin{remark}\mylabel{symlemrmk}{Remark \thetheorem}
By swapping the dummy variables, we have the following formulation: Let $M_0\in K_\mu$, $N_\beta'\in K_{\geq\mu}$ with $M_0\leq N_\beta'$, $a,a_\alpha\in |N_\beta'|$, $b_\beta$ be an element. If ${\bf K}$ is stable in $\nr{N_\beta'}$, $\gtp(b_\beta/N_\beta')$ does not $\mu$-fork over $M_0$ and $\gtp(a/M_0)=\gtp(a_\alpha/M_0)$, then $\gtp(ab_\beta/M_0)=\gtp(a_\alpha b_\beta/M_0)$.
\end{remark}
The following adapts \cite[Lemma 5.7]{vv} which assumes superstability in $[\mu,\lambda)$. When we write the \emph{$\mu$-order property}, we mean tuples that witness order property have length $\mu$.
\begin{proposition}\mylabel{sympf}{Proposition \thetheorem}
Let $\lambda\geq\mu$ be a cardinal. If ${\bf K}$ is stable in $[\mu,\lambda)$ and fails $(\mu,\chi)$-symmetry, then it has the $\mu$-order property of length $\lambda$.
\end{proposition}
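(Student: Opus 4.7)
The plan is to adapt the argument of \cite[Lemma 5.7]{vv} by replacing its use of superstability in $[\mu,\lambda)$ with the weaker hypothesis of stability in $[\mu,\lambda)$ together with Corollary \ref{symcor}, and by invoking the extension property of $\mu$-nonforking (\ref{gfe}), whose proof only needs stability. First I would unpack the failure of symmetry: by Corollary \ref{symcor}, failure of $(\mu,\chi)$-symmetry is equivalent to failure of \emph{weak nonuniform} $(\mu,\chi)$-symmetry, so we may fix witnesses $N<M_0<_u M$ in $K_\mu$ with $M_0$ a $(\mu,\geq\chi)$-limit over $N$, together with $a\in M-M_0$ and an element $b$ such that $\gtp(b/M)$ does not $\mu$-fork over $M_0$, $\gtp(a/M_0)$ does not $\mu$-fork over $M_0$, yet for every $M^b\in K_\mu$ universal over $M_0$ with $b\in|M^b|$, the type $\gtp(a/M^b)$ $\mu$-forks over $M_0$.

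Second, I would construct by induction on $\alpha<\lambda$ a $\leq_u$-increasing and continuous chain $\langle N_\alpha:\alpha\leq\lambda\rangle$ with $N_0=M_0$ and $\nr{N_\alpha}=\mu+|\alpha|<\lambda$, together with elements $a_\alpha,b_\alpha$ realizing $\gtp(a/M_0)$ and $\gtp(b/M_0)$ respectively. At stage $\alpha+1$: (i) use \ref{gfe} (applicable since ${\bf K}$ is stable in $\nr{N_\alpha}$) to pick $b_\alpha$ realizing a nonforking extension of $\gtp(b/M_0)$ to $N_\alpha$; (ii) enlarge to $N_\alpha^*\geq_u N_\alpha$ with $b_\alpha\in|N_\alpha^*|$ and $\nr{N_\alpha^*}<\lambda$; (iii) apply \ref{gfe} again to pick $a_\alpha$ realizing a nonforking extension of $\gtp(a/M_0)$ to $N_\alpha^*$; (iv) let $N_{\alpha+1}\geq_u N_\alpha^*$ be a universal extension containing $a_\alpha$. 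Limit stages are handled by unions, whose sizes remain $<\lambda$ thanks to stability in $[\mu,\lambda)$.

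Third, I would read off the $\mu$-order property from the sequence $\langle\bar c_\alpha:\alpha<\lambda\rangle$, where $\bar c_\alpha$ is the tuple of length $\mu$ obtained by concatenating a fixed enumeration of $M_0$ with $(a_\alpha,b_\alpha)$. For $\alpha<\beta$, the element $b_\alpha\in|N_\beta^*|$ and $\gtp(a_\beta/N_\beta^*)$ does not $\mu$-fork over $M_0$; applying \ref{symlem} (with $a_\beta$ in place of $a_\alpha$ and $b_\alpha$ in place of $b_\beta$) and $\gtp(b_\alpha/M_0)=\gtp(b/M_0)$ yields $\gtp(a_\beta b_\alpha/M_0)=\gtp(a_\beta b/M_0)$; then \ref{symlemrmk} together with $\gtp(a_\beta/M_0)=\gtp(a/M_0)$ gives $\gtp(a_\beta b/M_0)=\gtp(ab/M_0)$. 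Thus the ``increasing'' pair types agree with $\gtp(ab/M_0)$. For $\alpha\geq\beta$, if the same equality held, an automorphism fixing $M_0$ and sending $a_\alpha$ to $a$ would map $N_\alpha^*$ (which is universal over $M_0$ and contains $b_\beta$) to some $M^b$ universal over $M_0$ containing $b$, and transport the nonforking of $\gtp(a_\alpha/N_\alpha^*)$ over $M_0$ to nonforking of $\gtp(a/M^b)$ over $M_0$, contradicting the choice of witnesses. Hence the ``reversed'' pair types are distinct, and the $\bar c_\alpha$ witness the $\mu$-order property of length $\lambda$.

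The main obstacle is the final alignment step: one must ensure that $N_\alpha^*$, or its image under an automorphism sending $a_\alpha\mapsto a$ over $M_0$, can genuinely serve as an $M^b$ in the definition of weak nonuniform $(\mu,\chi)$-symmetry — this requires that the image be universal over $M_0$ and contain $b$ itself (not merely $b_\beta$). Careful bookkeeping of the universal extensions at step (ii) above, combined with the fact that $\gtp(b_\beta/M_0)=\gtp(b/M_0)$ so an automorphism over $M_0$ can be arranged to move $b_\beta$ to $b$, is what makes this final contradiction go through.
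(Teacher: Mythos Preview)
Your overall strategy matches the paper's: unpack the failure of a local symmetry via \ref{symcor}, build a long $\leq_u$-chain using \ref{gfe} in place of superstability, and read off the $\mu$-order property from cross-types $\gtp(a_i b_j/M_0)$ using \ref{symlem} and \ref{symlemrmk}. The paper chooses weak \emph{uniform} $(\mu,\chi)$-symmetry and picks $a_\alpha$ before $b_\alpha$, while you choose weak \emph{nonuniform} and reverse the order; by \ref{symcor} either choice is legitimate, and the reversal is harmless provided the bookkeeping is done correctly.

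However, there are two concrete gaps. First, you set $N_0=M_0$, so neither $a$ nor $b$ is ever placed inside the chain. But \ref{symlem} requires \emph{both} $b$ and $b_\alpha$ to lie in the ambient model $N_\beta^*$, and your contradiction in the ``reversed'' case needs $b_\beta\in N_\alpha^*$ to be carried to $b$ by an automorphism whose image still lies inside a model of size $\mu$. The paper starts with $N_0$ universal over $M$ and containing $b$; you must do likewise (and then, by uniqueness of nonforking \ref{gfu}, your $b_\alpha$ will automatically satisfy $\gtp(b_\alpha/M)=\gtp(b/M)$, which is what actually gives $\gtp(ab_\alpha/M_0)=\gtp(ab/M_0)$ for free since $a\in M$).

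Second, and more seriously, your two cases do not partition the index pairs. In your ``increasing'' case ($\alpha<\beta$) you analyse the pair $(a_\beta,b_\alpha)$, and in your ``reversed'' case ($\alpha\geq\beta$) you analyse $(a_\alpha,b_\beta)$; in both cases the $a$-index is at least the $b$-index, so you are proving the same inequality twice and never treat the genuine ``equal'' side. Indeed, with your construction order (pick $b_\gamma$ first, then $a_\gamma$), the failure-of-symmetry contradiction you describe shows $\gtp(a_i b_j/M_0)\neq\gtp(ab/M_0)$ whenever $i\geq j$, which \emph{includes} your ``increasing'' pairs. The equality $\gtp(a_i b_j/M_0)=\gtp(ab/M_0)$ holds precisely when $i<j$: there $a_i\in N_j$ and $\gtp(b_j/N_j)$ does not $\mu$-fork over $M_0$, so \ref{symlemrmk} gives $\gtp(a_i b_j/M_0)=\gtp(ab_j/M_0)$, and then $\gtp(b_j/M)=\gtp(b/M)$ with $a\in M$ finishes it. Your attempted second step, applying \ref{symlemrmk} to deduce $\gtp(a_\beta b/M_0)=\gtp(ab/M_0)$, cannot work as stated: that lemma needs $a$ and $a_\beta$ together in a model over which $\gtp(b/\cdot)$ does not $\mu$-fork, but $a_\beta\notin M$.
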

\begin{proof}
By \ref{symcor}(2)$\Rightarrow$(0), ${\bf K}$ fails weak uniform $(\mu,\chi)$-symmetry. So there are $N,M_0,M\in K_\mu$ and elements $a,b$ such that 
\begin{itemize}
\item$a\in M-M_0$, $M_0<_uM$ and $M_0$ is $(\mu,\geq\chi)$-limit over $N$;
\item $\gtp(b/M)$ does not $\mu$-fork over $M_0$;
\item $\gtp(a/M_0)$ does not $\mu$-fork over $(N,M_0)$;
\item There is no $M^b\in K_\mu$ universal over $M_0$ containing $b$ such that $\gtp(a/M^b)$ does not $\mu$-fork over $(N,M_0)$.
\end{itemize}
Build $\langle a_\alpha,b_\alpha,N_\alpha,N_\alpha':\alpha<\lambda\rangle$ such that:
\begin{enumerate}
\item $N_\alpha,N_\alpha'\in K_{\mu+|\alpha|}$;
\item $b\in |N_0|$ and $N_0$ is universal over $M$;
\item $N_\alpha<_u N_\alpha'<_uN_{\alpha+1}$;
\item $a_\alpha\in|N_\alpha'|$ and $\gtp(a_\alpha/M_0)=\gtp(a/M_0)$;
\item $b_\alpha\in|N_{\alpha+1}|$ and $\gtp(b_\alpha/M)=\gtp(b/M)$;
\item $\gtp(a_\alpha/N_\alpha)$ does not $\mu$-fork over $(N,M_0)$;
\item $\gtp(b_\alpha/N_\alpha')$ does not $\mu$-fork over $M_0$.
\end{enumerate}
$N_0$ is specified in (2). We specify the successor step: suppose $N_\alpha$ has been constructed , by \ref{extnsp} there is $a_\alpha$ such that $\gtp(a_\alpha/N_\alpha)$ extends $\gtp(a/M_0)$ and does not $\mu$-fork over $(N,M_0)$. Build any $N_\alpha'$ universal over $N_\alpha$ containing $a_\alpha$. By \ref{gfe} again, there is $b_\alpha$ such that $\gtp(b_\alpha/N_\alpha')$ extends $\gtp(b/M)$ and does not $\mu$-fork over $M_0$. Build $N_{\alpha+1}$ universal over $N_\alpha'$ containing $b_\alpha$. Notice that stability is used to guarantee the existence of $N_\alpha,N_\alpha'$ and the extension of types.

After the construction, we have the following properties for $\alpha,\beta<\lambda$:
\renewcommand{\theenumi}{\alph{enumi}}
\begin{enumerate}
\item $\gtp(a_\alpha b/M_0)\neq\gtp(ab/M_0)$;
\item $\gtp(ab_\beta/M_0)=\gtp(ab/M_0)$;
\item If $\beta<\alpha$, $\gtp(ab/M_0)\neq\gtp(a_\alpha b_\beta/M_0)$;
\item If $\beta\geq\alpha$, $\gtp(ab/M_0)=\gtp(a_\alpha b_\beta/M_0)$.
\end{enumerate}
Suppose (a) is false. By invariance and the choice of $a,b,M_0,N$ there is no $M'\in K_\mu$ universal over $M_0$ containing $b$ such that $\gtp(a_\alpha/M')$ does not $\mu$-fork over $(N,M_0)$. This contradicts $M'\defeq N_\alpha$ and item (6) in the construction. (b) is true because of item (5) of the construction and $a\in|M|$. For (c), items (5), (6) and \ref{symlem} (with the exact same notations) imply $\gtp(a_\alpha b_\beta/M_0)=\gtp(a_\alpha b/M_0)$ which is not equal to $\gtp(ab/M_0)$ by (a). For (d), items (4), (7) and \ref{symlemrmk} imply $\gtp(a_\alpha b_\beta/M_0)=\gtp(ab_\beta/M_0)$ which is equal to $\gtp(ab/M_0)$ by (b).

To finish the proof, let $d$ enumerate $M_0$, and for $\alpha<\lambda$, $c_\alpha\defeq a_\alpha b_\alpha d$. By (c) and (d) above, $\langle c_\alpha:\alpha<\lambda\rangle$ witnesses the $\mu$-order property of length $\lambda$.
\end{proof}
\begin{remark}
When proving (d), we used \ref{symlemrmk} which requires $\gtp(b_\beta/N_\beta')$ nonforking over $M_0$, and this is from extending $\gtp(b/M)$ nonforking over $M_0$. This called for the failure of weak uniform $(\mu,\chi)$-symmetry instead of just $(\mu,\chi)$-symmetry. (In the original proof, they claimed the same for (c) in place of (d), which should be a typo.)
\end{remark}
\begin{ques}
Is it possible to weaken the stability assumption in \ref{sympf}?
\end{ques}
\begin{fact}\mylabel{symfact}{Fact \thetheorem}
For any infinite cardinal $\lambda$, $h(\lambda)\defeq\beth_{(2^{\lambda})^+}$. When we write the $\mu$-stable, we mean stability of tuples of length $\mu$.
\begin{enumerate}
\item \cite[Claim 4.6]{sh394} If ${\bf K}$ does not have the $\mu$-order property, then there is $\lambda<h(\mu)$ such that ${\bf K}$ does not have the $\mu$-order property of length $\lambda$.
\item \cite[Fact 5.13]{BGKV} If ${\bf K}$ is $\mu$-stable (in some cardinal $\geq\mu$), then it does not have the $\mu$-order property.
\item If ${\bf K}$ is stable in some $\lambda=\lambda^\mu$, then ${\bf K}$ is $\mu$-stable in $\lambda$.
\item \cite[Corollary 6.4]{GV06b} If ${\bf K}$ is stable and tame in $\mu$ (these are in \ref{assum1}), then it is stable in all $\lambda=\lambda^\mu$. In particular it is stable in $2^\mu$.
\item For some $\lambda<h(\mu)$, ${\bf K}$ does not have the $\mu$-order property of length $\lambda$.
\end{enumerate}
\end{fact}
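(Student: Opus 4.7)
The statement is a composite Fact in which items (1), (2), and (4) are quoted verbatim from the cited sources, so the plan is to justify item (3) and then assemble items (1)--(4) into item (5).

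For item (3), I would use a standard counting argument in stable AECs. Let $M$ be of cardinality $\lambda$ with $\lambda = \lambda^\mu$; in particular $\mu \leq \lambda$. Stability in $\lambda$ together with amalgamation guarantees a $(\lambda,\omega)$-limit model $N$ over $M$, which is universal over $M$ in the sense that every $M' \in K_\lambda$ with $M \leq M'$ embeds into $N$ over $M$. Any Galois type of length $\mu$ over $M$ is realized by some $\mu$-tuple $\bar b$ in an extension $M' \geq M$, and we may take $|M'| = |M| + \mu = \lambda$; universality then transports $\bar b$ into $N$, so every such type is realized inside $N$. Counting $\mu$-tuples from $|N|$ gives at most $|N|^\mu = \lambda^\mu = \lambda$ distinct types, which is precisely $\mu$-stability in $\lambda$.

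For item (5), I would chain the pieces together. By \ref{assum1} the class ${\bf K}$ is stable in $\mu$ and $\mu$-tame, so item (4) upgrades this to stability in $2^\mu$. Since $(2^\mu)^\mu = 2^\mu$, item (3) promotes that to $\mu$-stability in $2^\mu$. Item (2) then rules out the $\mu$-order property outright, and item (1) furnishes some $\lambda < h(\mu)$ for which ${\bf K}$ does not have the $\mu$-order property of length $\lambda$.

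The only step with genuine content is (3), and even there the work is packaged into the existence and universality of a $(\lambda,\omega)$-limit model; the rest reduces to the cardinal arithmetic $\lambda^\mu = \lambda$. The remaining bookkeeping in (5) is a straightforward assembly of already-cited facts, so I do not expect any surprises beyond making sure each invocation is applied in the correct cardinal.
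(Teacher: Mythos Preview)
Your proposal is correct and matches the paper's approach essentially line for line: the paper proves (5) by exactly the chain (4)$\Rightarrow$(3)$\Rightarrow$(2)$\Rightarrow$(1) you give, and for (3) it simply cites Boney's result \cite[Theorem 3.1]{bon3.1}, whose content is precisely your universal-model-plus-counting argument. There is nothing to add.
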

\begin{proof}
For (1) and (2), see also \cite[Proposition 3.4]{leung1} for a proof sketch. (3) is an immediate corollary of \cite[Theorem 3.1]{bon3.1}, see \cite[Theorem 2.1]{leung1} for a proof. 
We show (5): by (4) ${\bf K}$ is stable in $2^\mu$. By (3) it is $\mu$-stable in $2^\mu$. Combining with (2) and (1) gives the conclusion.
\end{proof}
\begin{corollary}\mylabel{symcor2}{Corollary \thetheorem}
There is $\lambda<h(\mu)$ such that if ${\bf K}$ is stable in $[\mu,\lambda)$, then 
\begin{enumerate}
\item ${\bf K}$ has $(\mu,\chi)$-symmetry;
\item the frame in \ref{gfcor} satisfies symmetry.
\end{enumerate}
\end{corollary}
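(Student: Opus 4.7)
The plan is to set $\lambda < h(\mu)$ using Fact \ref{symfact}(5) so that ${\bf K}$ lacks the $\mu$-order property of length $\lambda$. Under the stability hypothesis, part (1) follows by contrapositive from Proposition \ref{sympf}: if $(\mu,\chi)$-symmetry failed, then ${\bf K}$ would have the $\mu$-order property of length $\lambda$, contradicting the choice of $\lambda$.

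For part (2), I plan to deduce frame symmetry from the equivalent nonuniform $(\mu,\chi)$-symmetry (Corollary \ref{symcor}), noting that the ``moving element'' lives inside the larger model in Definition \ref{symhier} but outside it in the frame formulation, so a role-swap is required. The frame hypotheses are $M_0\leq_{\bf K_1}N$ (both $(\mu,\geq\chi)$-limit under $\leq_u$), $b\in|N|$, $\gtp(a/N)$ nonforking over $M_0$, and $\gtp(b/M_0)$ nonforking over $M_0$ (automatic by \ref{gfexi}). If $b\in|M_0|$, any $(\mu,\geq\chi)$-limit $N_a$ over $M_0$ containing $a$ works, so I will assume $b\in|N|\setminus|M_0|$.

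I would then apply nonuniform $(\mu,\chi)$-symmetry with its ``$a$'' equal to our $b$, its ``$b$'' equal to our $a$, its ``$M$'' equal to our $N$, its ``$M_0$'' equal to our $M_0$, and its ``$N$'' any base over which $M_0$ is $(\mu,\geq\chi)$-limit (exists since $M_0\in{\bf K_1}$). The two hypotheses translate cleanly: ``$\gtp(b/M)$ nonsplitting over $M_0$'' becomes ``$\gtp(a/N)$ nonsplitting over $M_0$'', which follows from nonforking via monotonicity of nonsplitting (\ref{monspl}); ``$\gtp(a/M_0)$ nonforking over $M_0$'' becomes ``$\gtp(b/M_0)$ nonforking over $M_0$'', which is automatic. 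The conclusion yields $M^a$ universal over $M_0$, containing $a$, with $\gtp(b/M^a)$ nonforking over $M_0$.

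Finally, I would upgrade $M^a$ to a $(\mu,\geq\chi)$-limit $N_a\geq_{\bf K_1} M_0$ containing $a$ with $\gtp(b/N_a)$ still nonforking. Pick any $(\mu,\geq\chi)$-limit $\tilde N$ over $M^a$, use \ref{gfe} to obtain a nonforking extension $q\in\gs(\tilde N)$ of $\gtp(b/M^a)$, realize $q$ by some $b^*$, and choose $f\in\oop{Aut}_{M^a}(\mn)$ sending $b^*$ to $b$ (exists since $\gtp(b^*/M^a)=\gtp(b/M^a)$). Take $N_a\defeq f[\tilde N]$: it is a $(\mu,\geq\chi)$-limit over $M_0$ containing $a$ (since $a\in M^a$ is fixed by $f$), and invariance of nonforking yields $\gtp(b/N_a)$ nonforking over $M_0$. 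The main obstacle is the careful matching of variables between $(\mu,\chi)$-symmetry (element inside the model) and frame symmetry (element outside), together with the automorphism-based upgrade from ``universal over $M_0$'' to ``$(\mu,\geq\chi)$-limit over $M_0$''; part (1) is essentially a one-line contrapositive application of \ref{sympf}.
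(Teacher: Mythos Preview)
Your proof is correct and follows essentially the paper's approach. Part (1) is identical. For part (2), the paper invokes weak nonuniform $(\mu,\chi)$-symmetry via \ref{unisymeq} rather than nonuniform $(\mu,\chi)$-symmetry via \ref{symcor}; using the weak nonuniform version lets the hypothesis ``$\gtp(a/N)$ does not $\mu$-fork over $M_0$'' match the frame hypothesis directly, without your extra step through \ref{monspl}. Both routes work.

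Your argument is in fact more careful than the paper's in one respect: the conclusion of (weak) nonuniform $(\mu,\chi)$-symmetry only produces $M^a$ \emph{universal} over $M_0$, whereas frame symmetry requires $N_a\geq_{\bf K_1}M_0$, i.e.\ $N_a$ must be a $(\mu,\geq\chi)$-limit. The paper's proof elides this point, simply asserting that weak nonuniform $(\mu,\chi)$-symmetry yields frame symmetry. Your automorphism-based upgrade (extend via \ref{gfe}, realize, conjugate) fills this gap cleanly.
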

\begin{proof}
\begin{enumerate}
\item By \ref{symfact}(5), there is $\lambda<h(\mu)$ such that ${\bf K}$ does not have the $\mu$-order property of length $\lambda$. By the contrapositive of \ref{sympf}, ${\bf K}$ has $(\mu,\chi)$-symmetry.
\item By (1) and \ref{unisymeq}, ${\bf K}$ has weak nonuniform $(\mu,\chi)$-symmetry. Compared to symmetry in a good frame, weak nonuniform $(\mu,\chi)$-symmetry has the extra assumption that $\gtp(a/M_0)$ does not $\mu$-fork over $M_0$, but this is always true by \ref{gfexi}.
\end{enumerate}
\end{proof}
\begin{remark}
From the proof of \ref{symcor2}(2), we see that if the frame in \ref{gfcor} (which is defined for $(\mu,\geq\chi)$-limits) has symmetry, then weak nonuniform $(\mu,\chi)$-symmetry, and hence all the other ones in \ref{symcor} hold.
\end{remark}
\section{Symmetry and saturated models}\label{ufsec6}
As mentioned in the previous section, \cite[Corollary 1.4]{vv} deduced symmetry from superstability and obtained the uniqueness of limit models. It is natural to localize such argument, which was partially done in 
\begin{fact}\cite[Theorem 20]{bvulm}\mylabel{ulmfact}{Fact \thetheorem}
Assume ${\bf K}$ has $(\mu,\chi)$-symmetry (together with \ref{assum1}). Then it has the uniqueness of $(\mu,\geq\chi)$-limit models: let $M_0,M_1,M_2\in K_\mu$. If both $M_1$ and $M_2$ are $(\mu,\geq\chi)$-limit over $M_0$, then $M_1\cong_{M_0}M_2$.
\end{fact}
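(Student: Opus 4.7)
The plan is to reduce the problem to comparing two $(\mu,\theta)$-limit models of a common regular length $\theta\in[\chi,\mu^+)$, and then finish by the standard back-and-forth argument which requires no symmetry at all. Write $M_1$ as a $(\mu,\theta_1)$-limit and $M_2$ as a $(\mu,\theta_2)$-limit over $M_0$, for regular $\theta_1,\theta_2\geq\chi$. Without loss of generality, assume $\theta_1\leq\theta_2$. The whole task is to show that $M_2$ is also a $(\mu,\theta_1)$-limit over $M_0$; once this is known, a chain-matching back-and-forth between two $(\mu,\theta_1)$-limit chains over $M_0$ produces the desired isomorphism $M_1\cong_{M_0}M_2$.

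To show $M_2$ is $(\mu,\theta_1)$-limit over $M_0$, first resolve $M_2$ as a union of a u-increasing continuous chain of length $\theta_2$ starting at $M_0$, and thread a u-increasing continuous subchain of length $\theta_1$ through it whose union is $\leq$-cofinal in $M_2$ (this is possible by a cofinality argument together with stability in $\mu$). Enrich this skeleton into a tower $\bar{T}=\langle M^\alpha,a_\alpha,N_\alpha:\alpha<\theta_1\rangle$ over $M_0$, using existence of $\mu$-nonsplitting (\ref{eeuprop}) to pick the $N_\alpha$ at each step. By the standard density argument for reduced towers from tower analysis (see \cite{van16a} and the localization in \cite{bvulm}), we may replace $\bar{T}$ by a reduced tower $\bar{T}^*$ over $M_0$ of the same length $\theta_1$ whose union remains cofinal in $M_2$.

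The crucial step is to invoke the equivalence \cite[Theorem 18, Proposition 19]{bvulm} that was already cited in the proof of \ref{uniwkeq}: under \ref{assum1}, $(\mu,\chi)$-symmetry is equivalent to continuity of reduced towers of length $\geq\chi$. Since $\theta_1\geq\chi$ and $\bar{T}^*$ is reduced, the hypothesis of $(\mu,\chi)$-symmetry forces $\bar{T}^*$ to be continuous, which is exactly the statement that its union is a $(\mu,\theta_1)$-limit over $M_0$. Because that union is cofinal in $M_2$, one concludes that $M_2$ itself is $(\mu,\theta_1)$-limit over $M_0$, and the back-and-forth then finishes. The main obstacle is the reduction to a reduced tower that still sits cofinally inside $M_2$; this is the part of tower analysis that is genuinely delicate, but it is precisely what is packaged into the cited equivalence, so $(\mu,\chi)$-symmetry does all the heavy lifting.
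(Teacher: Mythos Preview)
There is a genuine gap in your reduction step. You assume $\theta_1\leq\theta_2$ (both regular) and then claim to ``thread a u-increasing continuous subchain of length $\theta_1$'' through the $\theta_2$-chain resolving $M_2$, cofinal in $M_2$. This is impossible when $\theta_1<\theta_2$: since $\theta_2$ is regular, every subset of the index set $\theta_2$ of size $\theta_1$ is bounded, so no subchain of length $\theta_1$ can have union equal to $M_2$. No amount of stability in $\mu$ helps here; the obstruction is purely combinatorial.

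There is a second gap in your use of reduced towers. Density of reduced towers says every tower has a reduced \emph{extension}, and extending a tower enlarges its constituent models. After passing to the reduced extension $\bar{T}^*$, the union will in general strictly contain $M_2$, not sit cofinally inside it. Continuity of $\bar{T}^*$ then tells you its union is a $(\mu,\theta_1)$-limit over $M_0$, but that union need no longer be $M_2$. The actual argument in \cite{bvulm} (following \cite{GVV,van16a}) does not attempt to keep the tower inside $M_2$. Instead one builds a single reduced continuous tower whose index set has an appropriate product structure (so that its top admits resolutions of cofinality $\theta_1$ and of cofinality $\theta_2$ simultaneously), and then shows this common model is isomorphic over $M_0$ to both $M_1$ and $M_2$ via back-and-forth. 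You have correctly identified the key input---the equivalence between $(\mu,\chi)$-symmetry and continuity of reduced towers of length $\geq\chi$---but its application requires this two-dimensional tower construction rather than a cofinality-threading shortcut.
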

In the original proof of the above fact, they did not assume tameness. However, we will need tameness when we remove the symmetry assumption (see also the discussion before \ref{symlem}). 
\begin{corollary}\mylabel{unicor}{Corollary \thetheorem}
There is $\lambda<h(\mu)$ such that if ${\bf K}$ is stable in $[\mu,\lambda)$,  then
\begin{enumerate}
\item ${\bf K}$ has the uniqueness of $(\mu,\geq\chi)$-limit models.
\item if also $\mu>\ls$, any $(\mu,\geq\chi)$-limit model is saturated.
\end{enumerate}
\end{corollary}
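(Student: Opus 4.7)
The plan is to handle the two clauses separately, with (1) being an immediate combination of two earlier results and (2) being a transfer-of-saturation argument leveraging (1). For (1), I would apply \ref{symcor2}(1) to produce $\lambda<h(\mu)$ such that stability in $[\mu,\lambda)$ forces $(\mu,\chi)$-symmetry, and then invoke \ref{ulmfact}: under the global \ref{assum1} this converts $(\mu,\chi)$-symmetry directly into the uniqueness of $(\mu,\geq\chi)$-limit models. The $\lambda$ produced here will serve as the witness for both clauses.

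For (2), the strategy is to exhibit one $(\mu,\geq\chi)$-limit model that is manifestly $\mu$-saturated and then use (1) to transport that saturation to an arbitrary such limit. Given a $(\mu,\geq\chi)$-limit $M$ with witnessing base $M_0\in K_\mu$, I would invoke stability in $\mu$ (part of \ref{assum1}) to build a u-increasing and continuous chain $\langle M'_i:i\leq\mu\rangle\subseteq K_\mu$ starting from $M'_0=M_0$, producing $M'\defeq M'_\mu$, which is a $(\mu,\mu)$-limit and in particular a $(\mu,\geq\chi)$-limit over $M_0$ since $\mu\geq\chi$. To see $M'$ is $\mu$-saturated, take $N\leq M'$ with $\nr{N}<\mu$ and $p\in\gs(N)$; regularity of $\mu$ combined with continuity of the resolution and the coherence axiom of AECs yields $i<\mu$ with $N\leq M'_i$, and universality of $M'_{i+1}$ over $M'_i$ realizes any extension of $p$ to $\gs(M'_i)$ inside $M'_{i+1}\leq M'$. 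Applying (1) to $M$ and $M'$, both $(\mu,\geq\chi)$-limit over $M_0$, gives $M\cong_{M_0}M'$, so $M$ inherits the saturation of $M'$.

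The main obstacle is the implicit use of the regularity of $\mu$: the saturation of $M'$ rests on $\cf(\mu)=\mu$ exceeding $\nr{N}$ for every $N\leq M'$ of size $<\mu$, so the argument as phrased requires $\mu$ regular; this is consistent with the context of \ref{chi} and the neighboring main theorems. A secondary minor point to verify is that the base $M_0$ taken as the witnessing base of $M$ can simultaneously serve as the base of the constructed $M'$ (which is immediate from the construction), so that (1) applies to the pair $(M,M')$ over a common $M_0$.
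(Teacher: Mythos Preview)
Your argument for (1) is exactly the paper's: invoke \ref{symcor2}(1) to obtain $(\mu,\chi)$-symmetry from stability in $[\mu,\lambda)$, then feed this into \ref{ulmfact}. For (2) in the regular case your route is also essentially the paper's: the paper simply says any $(\mu,\geq\chi)$-limit is isomorphic (via (1)) to a $(\mu,\mu)$-limit, which is saturated; you spell out why a $(\mu,\mu)$-limit is $\mu$-saturated and arrange the common base $M_0$ explicitly, but the content is the same.

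The genuine gap is the singular case of (2). You flag regularity as an ``obstacle'' and then set it aside by appealing to the main theorems, but the corollary as stated does not assume $\mu$ regular, and more importantly the singular case is actually used later (e.g.\ in the proof of \ref{s19prop}, where one applies \ref{unicor}(2) at a cardinal $\xi\geq\mu^+$ that need not be regular). Your construction of a $(\mu,\mu)$-limit $M'$ breaks down when $\mu$ is singular: a chain of length $\mu$ in $K_\mu$ need not have union in $K_\mu$, and even if it does, the cofinality argument for saturation fails.

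The paper's fix for singular $\mu$ is short and worth internalizing: given a $(\mu,\geq\chi)$-limit $M$ and any regular $\delta<\mu$, note that $\delta+\chi=\max(\delta,\chi)$ is a regular cardinal in $[\chi,\mu^+)$, so by (1) $M$ is isomorphic to a $(\mu,\delta+\chi)$-limit, which is $(\delta+\chi)$-saturated and in particular $\delta$-saturated. Since $\delta<\mu$ was arbitrary regular, $M$ is $\mu$-saturated. This avoids ever needing a chain of length $\mu$.
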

\begin{proof}
\begin{enumerate}
\item By \ref{symcor2}(1), ${\bf K}$ has $(\mu,\chi)$-symmetry. Apply \ref{ulmfact}.
\item Suppose $\mu$ is regular. Since $\chi\leq\mu$, any $(\mu,\geq\chi)$-limit is isomorphic to a $(\mu,\mu)$-limit, which is saturated. 
 Suppose $\mu$ is singular. Let $M$ be a $(\mu,\geq\chi)$-limit model. We show that it is $\delta$-saturated for any regular $\delta<\mu$. Since $\delta+\chi$ is a regular cardinal in $[\chi,\mu^+)$, $M$ is also $(\mu, \delta+\chi)$-limit, which implies it is $(\delta+\chi)$-saturated.
\end{enumerate}
\end{proof}
Before stating a remark, we quote a fact in order to compare Vasey's results with ours (but we will not use that fact in our paper). Continuity of $\mu$-nonsplitting in \ref{assum1} is not needed.
\begin{fact}\cite[Theorems 5.15]{bv2}\mylabel{bvfact}{Fact \thetheorem}
Let $\chi_0\geq2^\mu$ be such that ${\bf K}$ does not have the $\mu$-order property of length $\chi_0^+$, define $\chi_1\defeq(2^{2^{\chi_0}})^{+3}$, and let $\xi\geq\chi_1$. If ${\bf K}$ is stable in unboundedly many cardinals $<\xi$, then any increasing chain of $\xi$-saturated models of length $\geq\chi$ is $\xi$-saturated.
\end{fact}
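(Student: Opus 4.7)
The plan is to reduce $\xi$-saturation of the union $M_\delta \defeq \bigcup_{i<\delta} M_i$ to realizing every Galois type of domain size $<\xi$ over $|M_\delta|$, via the machinery of averages for Galois Morley sequences. Fix $A \subseteq |M_\delta|$ with $|A| < \xi$ and $p \in \gs(A)$; the task is to realize $p$ inside $M_\delta$.

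First I would invoke local character of $\mu$-nonsplitting (available since the chain has length $\geq \chi$, after passing to a cofinal subsequence if needed). This furnishes some $i_0 < \delta$ and $N \in K_\mu$ with $N \leq M_{i_0}$ such that $p$ does not $\mu$-split over $N$, so the problem reduces to exhibiting a realization of the unique nonsplitting extension of $p \restriction N$ to $|M_\delta|$ somewhere in $M_\delta$.

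Next I would bring in the averages apparatus. The hypothesis that ${\bf K}$ lacks the $\mu$-order property of length $\chi_0^+$ ensures that every sufficiently long Galois Morley-like sequence admits a well-defined average type over arbitrary sets. The triple-exponent bound $\chi_1 = (2^{2^{\chi_0}})^{+3}$ is chosen precisely so that Erd\H{o}s--Rado style extractions produce, inside any model of size $\geq \chi_1$ that is $\xi$-saturated, a Morley sequence over $N$ whose average equals the target extension of $p\restriction N$. Stability in unboundedly many cardinals $<\xi$ is then used to build this sequence level-by-level through the chain, placing the $j$-th term inside some $M_{j'}$ using its $\xi$-saturation.

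The hard part will be gluing these level-by-level choices into a sequence that actually lives in $M_\delta$ and witnesses $p$ globally. I would handle this by using continuity of $\mu$-nonsplitting together with weak uniqueness (\ref{eeuprop}) to show the approximating partial types cohere, so that the Morley sequence fits inside $M_\delta$; its average then coincides with $p$ by uniqueness of nonsplitting extension and is realized by some element appearing in the construction. The main obstacle is the scenario where $A$ is genuinely cofinal in the chain, so no single $M_i$ contains it: in that case one cannot reduce to realizing $p$ inside a fixed $M_i$, and the whole point of the averages machinery, paired with the chosen size of $\chi_1$, is to convert a global type over $A$ into a local one over the small base $N$ that can be pushed through the saturations of the $M_j$.
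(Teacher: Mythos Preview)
The paper does not give a proof of this statement at all: it is quoted as a \emph{Fact} from \cite{bv2} purely for comparison purposes, and the surrounding text even says ``we will not use that fact in our paper.'' So there is nothing in the paper to compare your proposal against.

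That said, two remarks on your sketch. First, the paper explicitly notes, right before stating \ref{bvfact}, that ``Continuity of $\mu$-nonsplitting in \ref{assum1} is not needed'' for this fact. Your proposal, however, leans on continuity of $\mu$-nonsplitting and on \ref{eeuprop} for the gluing step, so even if your outline could be made to work it would be proving a weaker result than what is being cited. Second, your opening move (``invoke local character of $\mu$-nonsplitting'') is applied to $p \in \gs(A)$ with $A$ a bare set cofinal in the chain; nonsplitting and its local character are formulated for types over models in a u-increasing chain, so you would first need to manufacture such a chain below $A$ inside $M_\delta$, and it is not clear this can be done uniformly when $|A|$ is large relative to $\mu$ but below $\xi$.

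Your instinct that the no-order-property hypothesis feeds an averages-and-extraction argument, and that the triple-successor bound $\chi_1$ is calibrated to make Erd\H{o}s--Rado style combinatorics go through, matches the actual strategy of \cite{bv2}. But the real proof there passes through the Galois-Morleyization to obtain a syntactic notion of type and then develops a full theory of averages of Morley sequences in that syntactic setting; your sketch gestures at this machinery without engaging the genuine difficulty, namely that Galois types have no formulas and so ``average type'' has to be \emph{defined} and shown to behave well before any of your steps make sense.
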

\begin{remark}
We assumed enough stability to get a \emph{local} result: the same $\mu$ was considered throughout. In contrast, \cite[Theorems 6.3, 11.7]{s19} are \emph{eventual}: \ref{bvfact} was heavily used. Some of the hypotheses there require unboundedly many ($H_1$-closed) stability cardinals.
\end{remark}
Now we turn to an AEC version of Harnik's Theorem. \cite[Lemma 11.9]{s19} improved \cite[Theorem 1]{van16b} and showed that:
\begin{fact}\mylabel{s19result}{Fact \thetheorem}
Let ${\bf K}$ be $\mu$-tame with a monster model. Let $\xi\geq\mu^+$. Suppose
\begin{enumerate}
\item ${\bf K}$ is stable in $\mu$ and $\xi$;
\item $\langle M_i:i<\delta\rangle$ is an increasing chain of $\xi$-saturated models;
\item $\cf(\delta)\geq\chi$;
\item $(\xi,\delta)$-limit models are saturated,
\end{enumerate}
then $\bigcup_{i<\delta}M_i$ is $\xi$-saturated.
\end{fact}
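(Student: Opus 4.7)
To show that $M_\delta:=\bigcup_{i<\delta}M_i$ is $\xi$-saturated, fix $M'\leq M_\delta$ with $\nr{M'}<\xi$ and a type $p\in\gs(M')$; the goal is to realize $p$ inside $M_\delta$. After passing to a cofinal subchain (which does not change $M_\delta$), we may assume $\delta=\cf(\delta)$. The strategy is to construct, inside $M_\delta$, a $(\xi,\delta)$-limit model $N_\delta$ containing $M'$; by hypothesis (4), $N_\delta$ is then $\xi$-saturated, and since $\nr{M'}<\xi$ the type $p$ is realized in $N_\delta\subseteq M_\delta$.

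The construction is by induction on $i\leq\delta$, producing a continuous $\leq$-increasing chain $\langle N_i:i\leq\delta\rangle\subseteq K_\xi$ with (a) $N_i\leq M_i$, (b) $M'\cap M_i\subseteq N_i$, (c) $N_i$ is $\xi$-saturated, and (d) $N_{i+1}$ is universal over $N_i$ in $K_\xi$. At the base, take $N_0\leq M_0$ a $\xi$-saturated submodel of size $\xi$ containing $M'\cap M_0$; at a successor step, take $N_{i+1}\leq M_{i+1}$ a $\xi$-saturated submodel of size $\xi$ containing $N_i\cup(M'\cap M_{i+1})$. Each such $N_i$ is extracted from the ambient $\xi$-saturated $M_i$ by a standard internal $\xi$-chain argument, realizing in turn the (at most $\xi$ many, by stability in $\xi$) Galois types over subsets of size $<\xi$. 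Property (d) follows from (c) because a $\xi$-saturated model of size $\xi$ is saturated in its own cardinality, hence model-homogeneous and universal over any submodel of size $\leq\xi$. At a limit $j$, set $N_j:=\bigcup_{i<j}N_i\in K_\xi$ (the size is $\xi$ since $j<\xi^+$), and $N_j\leq M_j$ follows from the AEC axioms applied to the increasing chain $\langle N_i:i<j\rangle$ inside $M_j$.

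Once the chain is built, $N_\delta$ is a $(\xi,\delta)$-limit over $N_0$, so hypothesis (4) gives that $N_\delta$ is $\xi$-saturated. Every element of $M'$ lies in some $M_i$ (because $M'\leq M_\delta=\bigcup_iM_i$) and therefore in $N_i$ by (b); hence $M'\subseteq N_\delta$, and $M'\leq N_\delta$ by the AEC axioms. By $\xi$-saturation of $N_\delta$ and $\nr{M'}<\xi$, $p$ is realized in $N_\delta\leq M_\delta$. The main technical obstacle is the extraction of $\xi$-saturated submodels of size $\xi$ inside the $\xi$-saturated $M_i$ containing prescribed data of size $<\xi$: this is what consumes stability in $\xi$ and requires the internal chain construction, and one must then argue carefully that the resulting saturated model of size $\xi$ is model-homogeneous enough to yield universality over $N_i$ as in (d). The cofinality hypothesis $\cf(\delta)\geq\chi$ does not appear explicitly in this plan, but it is absorbed into hypothesis (4), which in Vasey's derivation rests on the $\chi$-local character of $\mu$-nonforking together with $\mu$-tameness.
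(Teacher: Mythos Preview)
Your argument has a genuine gap at step (d). You claim that a $\xi$-saturated model $N_{i+1}$ of size $\xi$ is universal over any submodel of size $\leq\xi$, but this is false: saturation in cardinality $\xi$ yields $\xi$-model-homogeneity, which only lets you embed extensions over submodels of size \emph{strictly less than} $\xi$. Since $\nr{N_i}=\xi$, you cannot conclude $N_{i+1}>_u N_i$. Concretely, nothing in your construction prevents $N_{i+1}=N_i$ (a saturated model is certainly not universal over itself), and in general a back-and-forth attempt to embed some $N'\geq N_i$ of size $\xi$ into $N_{i+1}$ over $N_i$ would require realizing types over sets of size $\xi$, i.e.\ $\xi^+$-saturation, which you do not have.

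This is not a repairable detail: the entire point of the statement is that one \emph{cannot} directly build a $(\xi,\delta)$-limit chain inside $M_\delta$ using only $\xi$-saturation of the $M_i$'s. Vasey's proof (and the paper's adaptation in \ref{s19prop}) circumvents this by building a directed system $\langle M_i^*,f_{i,j}\rangle$ with $M_i^*\leq M_i$ in $K_\xi$, where the connecting maps $f_{i,i+1}$ are \emph{not} inclusions and are chosen so that $f_{i,i+1}[M_i^*]<_u M_{i+1}^*$ (this uses $\xi$-model-homogeneity of $M_{i+1}$ to embed the size-$\xi$ amalgam, which is fine since one only needs to fix the size-$<\xi$ piece $N_i$). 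The direct limit $M_\delta^*$ is then a genuine $(\xi,\delta)$-limit, hence saturated by (4), but it is \emph{not} a submodel of $M_\delta$. The real work---and the place where $\cf(\delta)\geq\chi$, $\mu$-tameness, and stability in $\mu$ are all used---is a nonforking argument: one extends $p$ to $\gs(M_\delta)$, applies $\chi$-local character of $\mu$-nonforking to locate a base $M_0$, finds a realization $b_\delta\in M_\delta^*$, pulls it back along the directed system to some $b\in M_i\leq M_\delta$, and uses uniqueness of $\mu$-nonforking to verify that $b$ realizes the right restriction of $p$. Your remark that $\cf(\delta)\geq\chi$ is ``absorbed into hypothesis (4)'' is therefore a symptom of the gap: in the correct proof this hypothesis is used directly and essentially.
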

We remove the assumption of (4) by assuming more stability and continuity of nonsplitting. Our proof is based on \cite[Lemma 11.9]{s19} which have some omissions. For comparison, we write down all the assumptions. 
\begin{proposition}\mylabel{s19prop}{Proposition \thetheorem}
Let ${\bf K}$ be an AEC with a monster model. Assume ${\bf K}$ is $\mu$-tame, stable in $\mu$ and has $\chi$-local charcter of $\mu$-nonsplitting. Let $\xi\geq\mu^+$. There is $\lambda<h(\xi)$ such that if
\begin{enumerate}
\item ${\bf K}$ is stable in $[\xi,\lambda)$,
\item $\langle M_i:i<\delta\rangle$ is an increasing chain of $\xi$-saturated models;
\item $\cf(\delta)\geq\chi$;
\item Continuity of $\mu$-nonsplitting and of $\xi$-nonsplitting holds,
\end{enumerate}
then $\bigcup_{i<\delta}M_i$ is $\xi$-saturated. 
\end{proposition}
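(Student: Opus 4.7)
The strategy is to reduce to Fact \ref{s19result}. The prerequisites of that fact which come for free are: ${\bf K}$ has a monster model and is $\mu$-tame by \ref{assum1}, it is stable in $\mu$ by \ref{assum1} and in $\xi$ by hypothesis (1), $\xi\geq\mu^+$, the chain $\langle M_i:i<\delta\rangle$ is given, and $\cf(\delta)\geq\chi$. Only condition (4) of \ref{s19result}, namely that $(\xi,\delta)$-limit models are saturated, still requires proof.

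To verify it, I would apply Corollary \ref{unicor}(2) with $\xi$ playing the role of $\mu$. Its prerequisites are (a) an instance of Assumption \ref{assum1} at $\xi$ with some minimum local character cardinal $\chi_\xi$ of $\xi$-nonsplitting, and (b) stability in a window $[\xi,\lambda_0)$ with $\lambda_0<h(\xi)$. For (b), hypothesis (1) provides exactly such a window. For (a): $\xi>\ls$ since $\xi\geq\mu^+$; $\xi$-tameness follows from $\mu$-tameness and $\mu\leq\xi$; stability in $\xi$ is hypothesis (1); continuity of $\xi$-nonsplitting is hypothesis (4); and a local character cardinal $\chi_\xi\leq\xi$ exists by \ref{394lem} and \ref{bgvvlem} applied at $\xi$ (for $\xi$ regular; otherwise argue as in \ref{chi}).

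The main obstacle is to establish $\chi_\xi\leq\chi$ rather than just $\chi_\xi\leq\xi$, so that $\cf(\delta)\geq\chi$ is enough to make each $(\xi,\delta)$-limit a $(\xi,\geq\chi_\xi)$-limit. For this I plan to use $\mu$-tameness together with the $\chi$-local character of $\mu$-nonsplitting from \ref{assum1}: given a u-increasing continuous chain $\langle N_i:i\leq\chi\rangle\subseteq K_\xi$ and $p\in\gs(N_\chi)$, resolve it by a u-increasing continuous chain $\langle M_i:i\leq\chi\rangle\subseteq K_\mu$ with $M_i\leq N_i$, using stability in $\mu$ to preserve universality and a bookkeeping argument to ensure every potential $\xi$-splitting witness for $p$ has a $\mu$-sized avatar in the resolution via \ref{tamespl}. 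Then $\chi$-local character of $\mu$-nonsplitting applied to the resolution yields some $j<\chi$ for which $p\restriction\bigcup_i M_i$ does not $\mu$-split over $M_j$, and monotonicity together with \ref{tamespl} and $\mu$-tameness promote this to $p$ not $\xi$-splitting over $N_{j+1}$. This is the upward transfer of local character indicated in \ref{chi}, and it is where continuity of both $\mu$- and $\xi$-nonsplitting (and $\mu$-tameness) are used essentially.

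Once $\chi_\xi\leq\chi$ is in hand, Corollary \ref{unicor}(2) applied at $\xi$ says that every $(\xi,\geq\chi_\xi)$-limit model is saturated. Since $\cf(\delta)\geq\chi\geq\chi_\xi$, each $(\xi,\delta)$-limit is in particular a $(\xi,\geq\chi_\xi)$-limit and is therefore saturated. This verifies condition (4) of \ref{s19result}, and the conclusion $\bigcup_{i<\delta}M_i$ is $\xi$-saturated follows.
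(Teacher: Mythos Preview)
Your overall strategy is correct and is in fact more modular than the paper's. The paper does not invoke \ref{s19result} as a black box; it remarks that the original proof in \cite{s19} ``has some omissions'' and therefore rewrites the entire argument (the directed-system construction, the local monster $\tilde M$, and the final uniqueness computation), using along the way exactly the ingredient you isolate: that $(\xi,\geq\chi)$-limits are saturated, which comes from \ref{unicor}(2) applied at $\xi$ after the local-character transfer. So your reduction ``verify condition (4), then quote \ref{s19result}'' is a legitimate shortcut that the paper deliberately chose not to take.

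One point in your sketch needs tightening. Your local-character transfer argument concludes that $p\restriction\bigcup_i M_i$ does not $\mu$-split over some $M_j$ and then asserts this promotes to $p$ not $\xi$-splitting over $N_{j+1}$. As written this does not follow: the restriction $p\restriction M_\chi$ lives over a model of size $\mu$, and its $\mu$-nonsplitting says nothing directly about the behaviour of $p\in\gs(N_\chi)$ over $N_{j+1}\in K_\xi$. The correct version runs by contradiction (assume $p$ $\xi$-splits over every $N_i$, pull down $\mu$-sized witnesses by tameness, and arrange the bookkeeping so that $p\restriction M_{i+1}$ $\mu$-splits over $M_i$ for every $i$, contradicting $\chi$-local character); this is exactly the shape of the proof of \ref{gfl}. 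The paper packages this transfer as \ref{chitrans}, whose proof goes through $\mu$-nonforking and \ref{gfl} and is cleaner than a direct resolution. Citing \ref{chitrans} would close your argument immediately.
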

Before proving the proposition, we need to justify that the local character $\chi$ (\ref{chi}), which was defined for $K_\mu$, also applies to $K_\xi$. In other words, we need to show that $K_\xi$ has local character of nonsplitting (at most) $\chi$. (Vasey usually cited this fact as \cite[Section 4]{s3}, by which he should mean an adaptation of \cite[Lemma 4.11]{s3}.)
\begin{lemma}[Local character transfer]\mylabel{chitrans}{Lemma \thetheorem}
If ${\bf K}$ is stable in some $\xi\geq\mu$, then it has $\chi$-local character of $\xi$-nonsplitting.
\end{lemma}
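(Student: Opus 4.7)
The plan is to deduce $\chi$-local character of $\xi$-nonsplitting from $\chi$-local character of $\mu$-nonsplitting (which holds under Assumption \ref{assum1}) by routing through $\mu$-nonforking. First I would prove a tameness bridge lemma: if $p\in\gs(M)$ does not $\mu$-fork over some $M'\leq M$ with $M'\in K_{\geq\mu}$, witnessed by $N\in K_\mu$ with $N<_uM'$ and $p$ not $\mu$-splitting over $N$, then $p$ does not $\xi$-split over $M'$. Indeed, any putative $\xi$-splitting witness $f:R^a\cong_{M'}R^b$ with $R^a,R^b\in K_\xi$ cuts down via $\mu$-tameness to a $\mu$-sized $S^b\leq R^b$ containing $N$ with $f(p)\restriction S^b\neq p\restriction S^b$; then $S^a\defeq f^{-1}[S^b]$ also contains $N$ (as $f$ fixes $N\leq M'$), yielding a $\mu$-splitting witness of $p$ over $N$, contradiction.

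Given the bridge, it suffices to find $i<\delta$ with $p$ not $\mu$-forking over $M_i$, for any regular $\delta\geq\chi$ and u-increasing continuous $\langle M_i:i\leq\delta\rangle\subseteq K_\xi$. I rely on the key preliminary that whenever $N\in K_\mu$ with $N\leq M_i$, then $N<_uM_{i+1}$ (with $M_{i+1}$ universal over $N$ at size $\xi$): given any $\xi$-sized $N'\geq N$, amalgamate $N'$ with $M_i$ over $N$ and embed into $M_{i+1}$ using universality of $M_{i+1}$ over $M_i$ in $K_\xi$. If $\delta\geq\mu^+$, existence of $\mu$-nonsplitting (Proposition \ref{eeuprop}(1)) yields $N\in K_\mu$, $N\leq M_\delta$, with $p$ not $\mu$-splitting over $N$; since $\nr{N}=\mu<\delta=\cf(\delta)$, $N\leq M_j$ for some $j<\delta$, and then $(N,M_{j+1})$ witnesses that $p$ does not $\mu$-fork over $M_{j+1}$.

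When $\chi\leq\delta\leq\mu$, I would adapt the construction in the proof of Proposition \ref{gfl}. Assuming for contradiction that $p$ $\mu$-forks over every $M_i$, build in parallel $\langle N_i:i\leq\delta\rangle\subseteq K_\mu$ u-increasing continuous with $N_i\leq M_i$ (using the preliminary to embed $K_\mu$-universal extensions of $N_i$ inside $M_{i+1}$ over $N_i$), and $\langle N_i':i\leq\delta\rangle\subseteq K_\mu$ increasing continuous with $N_i\leq N_i'\leq M_\delta$, subject to the coherence condition $\bigcup_{j\leq i}(|N_j'|\cap|M_{i+1}|)\subseteq|N_{i+1}|$ and, at each successor $i+1$, $p\restriction N_{i+1}'$ $\mu$-splits over $N_i$. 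The successor step is possible because $p$ $\mu$-forking over $M_{i+1}$ combined with $N_i<_uM_{i+1}$ forces $p$ to $\mu$-split over $N_i$; the $\mu$-sized witnesses live in $M_\delta$ and can be placed inside $N_{i+1}'$. The coherence condition then forces $N_i'\leq N_\delta\defeq\bigcup_jN_j$ for every $i<\delta$ (note $N_\delta\in K_\mu$ since $\delta\leq\mu$), so $p\restriction N_\delta$ $\mu$-splits over $N_j$ for every $j<\delta$, contradicting $\chi$-local character of $\mu$-nonsplitting applied to the chain $\langle N_j:j\leq\delta\rangle$. The main obstacle is the bookkeeping of the two parallel chains — verifying that the coherence condition indeed forces $N_i'\leq N_\delta$, while simultaneously ensuring $N_{i+1}\leq M_{i+1}$ is u-over $N_i$ in $K_\mu$ via the mixed-size universality argument.
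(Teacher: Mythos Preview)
Your proposal is correct and follows essentially the same route as the paper: reduce $\xi$-nonsplitting local character to $\mu$-nonforking local character via a tameness bridge. The main difference is that you re-prove the local character of $\mu$-nonforking for u-chains in $K_\xi$ from scratch, whereas the paper simply cites Proposition~\ref{gfl}, which was already stated and proved for chains in $K_{\geq\mu}$ (not just $K_\mu$); your two cases $\delta\geq\mu^+$ and $\chi\leq\delta\leq\mu$ duplicate that earlier argument. Once you invoke \ref{gfl} to get some $i<\delta$ with $p$ not $\mu$-forking over $M_i$, the bridge is a one-liner: the witness $N<_uM_i$ has $p$ not $\mu$-splitting over $N$, so if $p$ $\xi$-split over $M_i$ it would $\xi$-split over $N$, and by $\mu$-tameness (Proposition~\ref{tamespl}) it would $\mu$-split over $N$, contradiction.
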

\begin{proof}
Let $\langle M_i:i\leq\delta\rangle$ be u-increasing and continuous in $K_\xi$, $p\in\gs(M_\delta)$. By \ref{gfl}, there is $i<\delta$ such that $p$ does not $\mu$-fork over $M_{i}$. By definition of nonforking, there is $N<_uM_i$ of size $\mu$ such that $p$ does not $\mu$-split over $N$. Suppose $p$ $\xi$-splits over $M_{i}$ then it also $\xi$-splits over $N$. By $\mu$-tameness, it $\mu$-splits over $N$, contradiction.
\end{proof}
\begin{proof}[Proof of \ref{s19prop}]
Let $\delta\geq\chi$ be regular. If $\delta\geq\xi$ we can use a cofinality argument. So we assume $\delta<\xi$. Let $M_\delta\defeq\bigcup_{i<\delta}M_i$ and $N\in K_{<\xi}$, $N\leq M_\delta$, $p\in\gs(N)$. Without loss of generality, we may assume for $i\leq\delta$, $M_i\in K_\xi$: given a saturated $M^*\in K_{\geq\xi^+}$ and some $\tilde{N}\leq M^*$ of size $\leq\xi$, we can close $\tilde{N}$ into a $(\xi,\chi)$-limit $N^*$. By $\xi$-model-homogeneity of $M^*$, we may assume $N^*\leq M^*$. By \ref{chitrans} and \ref{unicor}(2), any $(\xi,\geq\chi)$-limits are saturated, so $N^*$ is saturated. Therefore we can recursively shrink each $M_i$ to a saturated model in $K_\xi$ while still containing the same intersection with $\tilde{N}$.

Extend $p$ to a type in $\gs(M_\delta)$. By \ref{gflfact}, there is $i<\delta$ such that $p$ does not $\mu$-fork over $M_i$. By reindexing assume $i=0$ and let $M_0^0\in K_\mu$ witness the nonforking. Obtain $N_0\in K_\mu$ such that $M_0^0<_uN_0\leq M_0$. Define $\mu'\defeq\mu+\delta$, we build $\langle N_i:1\leq i\leq\delta\rangle$ increasing and continuous in $K_{\mu'}$ such that $N_0\leq N_1\leq N\leq N_\delta$ and for $i\leq\delta$, $N_i\leq M_i$. 
Now we construct
\begin{enumerate}
\item $\langle M_i^*,f_{i,j}:i\leq j<\delta\rangle$ an increasing and continuous directed system;
\item For $i<\delta$, $M_i^*\in K_\xi$, $N_i\leq M_i^*\leq M_i$;
\item For $i<\delta$, $f_{i,i+1}:M_i^*\xrightarrow[N_i]{}M_{i+1}^*$;
\item $M_0^*\defeq M_0$. For $i<\delta$, $f_{i,i+1}[M_i^*]<_uM_{i+1}^*$.
\end{enumerate}

\begin{center}
\begin{tikzcd}
K_\xi  & M_0 \arrow[r]                          & M_1 \arrow[r]                                         & \cdots \arrow[r] & M_\delta \arrow[r]  & \tilde{M}            \\
K_\xi  & M_0^* \arrow[r, "{f_{0,1}}"] \arrow[u] & M_1^* \arrow[u] \arrow[rrr, "{f_{1,\delta}}", dotted] &                  &                     & M_\delta^* \arrow[u] \\
K_\mu' &                                        & N_1 \arrow[u] \arrow[r]                               & \cdots \arrow[r] & N_\delta \arrow[ru] &                      \\
K_\mu  & M_0^0<_u N_0 \arrow[uu] \arrow[ru]     &                                                       &                  &                     &                     
\end{tikzcd}
\end{center}
At limit stage $i<\delta$, take direct limit $M_i^*$ which contains $N_i$. Since $\nr{N_i}<\xi$ and $M_i$ is model-homogeneous, we may assume $M_i^*$ is inside $M_i$. Suppose $M_i^*$ is constructed for some $i<\delta$, obtain the amalgam $M_{i+1}^{**}$ of $M_i^*$ and $N_{i+1}$ over $N_i$. Since $\nr{N_{i+1}}<\xi$ and $M_{i+1}$ is model-homogeneous, we may embed the amalgam into $M_{i+1}$. Call the image of the amalgam $M_{i+1}^*$. After the construction, take one more direct limit to obtain $(M_\delta^*,f_{i,\delta})_{i<\delta}$ (but this time we do not know if $M_\delta^*\leq M_\delta$). By item (4) above, we have that $M_\delta^*$ is a $(\xi,\delta)$-limit, hence saturated.

We will work in a local monster model, namely we find a saturated $\tilde{M}\in K_\xi$ such that \renewcommand{\theenumi}{\alph{enumi}}
\begin{enumerate}
\item $\tilde{M}$ contains $M_\delta$ and $M_\delta^*$;
\item For $i<\delta$, $f_{i,\delta}$ can be extended to $f_{i,\delta}^*\in\oop{Aut}(\tilde{M})$;
\item For $i<\delta$, $f_{i,\delta}^*[N_\delta]\leq M_\delta^*$.
\end{enumerate}
(c) is possible because $M_\delta^*$ is universal over $f_{i,\delta}[M_i^*]$. Finally, we define $N^*\leq M_\delta^*$ of size $\mu'$ containing $\bigcup_{i<\delta}f_{i,\delta}^*[N_\delta]$. By model-homogeneity of $M_\delta^*$, we build $M^{**}\in K_\xi$ saturated such that $N^*\leq M^{**}<_uM_\delta^*$.

By \ref{gfe}, extend $p$ to $q\in\gs(\tilde{M})$ nonforking over $N_0$ (here we need $N_0\in K_\mu$ or else we have to assume more stability). Since $M_\delta^*>_uM^{**}$, we can find $b_\delta\in M_\delta^*$ such that $b_\delta\vDash q\restriction M^{**}$. Since $M_\delta^*$ is a direct limit of the $M_i^*$'s, there is $i<\delta$ such that $f_{i,\delta}(b)=b_\delta$. As $b\in M_i^*\subseteq M_i\leq M_\delta$, it suffices to show that $b\vDash q\restriction (f_{i,\delta}^*)^{-1}[M^{**}]$, because $N\leq N_\delta\leq(f_{i,\delta}^*)^{-1}[N^*]\leq(f_{i,\delta}^*)^{-1}[M^{**}]$. In the following diagram, dotted arrows refer to $\leq$ or $<_u$ between models, while the dashed equal sign is our goal.

\begin{center}
\begin{tikzcd}
                                                              & q\in \gs(\tilde{M}) &                                                                   &                                                                                                        p\in \gs(M_\delta) \arrow[ll]              \\
q\restriction M_\delta^* \arrow[ru]                           & b_\delta\in M_\delta^* \arrow[u, dotted]                & b\in M_i^* \arrow[l, "{f_{i,\delta}^*}"]                          &                                                                                                                                               \\
q\restriction M^{**} \arrow[u] \arrow[r, Rightarrow, no head] & \gtp(b_\delta/M^{**}) \arrow[u, "u", dotted]                & {\gtp(b/(f_{i,\delta}^*)^{-1}[M^{**}])} \arrow[l, "{f_{i,\delta}^*}"] & {q\restriction (f_{i,\delta}^*)^{-1}[M^{**}]} \arrow[l, Rightarrow, no head, dashed] \arrow[lluu, bend right=18]          \\
q\restriction N_0 \arrow[u] \arrow[r, Rightarrow, no head]    & \gtp(b_\delta/N_0) \arrow[r, Rightarrow, no head] \arrow[u] & \gtp(b/N_0)                                                           &                                                                                                        p\restriction N \arrow[uuu, bend right=79] \arrow[u]
\end{tikzcd}
\end{center} 

Since $q\restriction M^{**}=\gtp(b_\delta/M^{**})$ does not $\mu$-fork over $N_0$ and $f_{i,\delta}^*$ fixes $N_i\geq N_0$, by invariance $\gtp(b/(f_{i,\delta}^*)^{-1}[M^{**}])$ does not $\mu$-fork $N_0$. By monotonicity, $q$ and hence $q\restriction (f_{i,\delta}^*)^{-1}[M^{**}]$ does not $\mu$-fork over $N_0$. By invariance again, $\gtp(b/N_0)=\gtp(b_\delta/N_0)=q\restriction N_0$. By \ref{gfuup}, $q\restriction (f_{i,\delta}^*)^{-1}[M^{**}]=\gtp(b/(f_{i,\delta}^*)^{-1}[M^{**}])$ as desired.
\end{proof}
\begin{remark}
\begin{enumerate}
\item \mylabel{comparermk}{Remark \thetheorem}In \ref{s19prop}, the assumption of stability in $[\xi,\lambda)$ is to guarantee local symmetry from no $\xi$-order property of length $\lambda$. We can relax the stability assumption if we have the stronger assumption of no $\xi$-order property. Namely, if ${\bf K}$ does not have $\xi$-order property of length $\zeta$ where $\zeta>\xi$, then we can simply assume stability in $[\xi,\zeta)$.
\item
We compare our approach with Vasey's. To satisfy hypothesis (4) in \ref{s19result}, he used \ref{ulmfact} which requires $(\xi,\chi)$-symmetry and continuity of nonsplitting \cite[Theorem 11.11(1)]{s19}. Meanwhile he obtained the equivalence of $(\xi,\chi)$-symmetry $\Leftrightarrow$ the increasing union of saturated models of length $\geq\chi$ in $K_{\xi^+}$ is saturated (see \ref{satsymfact}). By \ref{bvfact}, the latter is true for large enough $\xi$. In short, he raised the cardinal threshold while we assumed more stability. More curiously, both our stability assumption and his cardinal threshold are linked to no order property.

A comparison table can be found below. For $\xi\geq\mu$, we abbreviate the increasing union of saturated models of length $\geq\chi$ in $K_{\xi}$ is saturated by ``Union($\xi$)''.
\end{enumerate}
\end{remark}
\begin{table}[h]
\centering
\begin{tabular}{|l||l|}
\hline
Our approach&Vasey's approach  \\ \hline
For $\xi\geq\mu^+$ and & For large enough $\xi$, \\\hline
Enough stability \big($[\mu,h(\xi))$ suffices\big) & $\Rightarrow$ Union($\xi^+$) (\ref{bvfact}) \\ \hline
$\Rightarrow$$(\xi,\chi)$-symmetry (\ref{symcor2}(1))&$\Rightarrow(\xi,\chi)$-symmetry (\ref{satsymfact}) \\ \hline
$\Rightarrow$Saturation of $(\xi,\geq\chi)$-limits & $\Rightarrow$Saturation of $(\xi,\geq\chi)$-limits\\ 
(\ref{unicor}(2))& (\ref{ulmfact})\\\hline
$\Rightarrow$Union($\xi$) (\ref{s19prop})&$\Rightarrow$Union($\xi$) (\ref{s19result}) \\ \hline
\end{tabular}
\end{table}
\begin{obs}
The $[\xi,\lambda)$ stability assumption in \ref{s19prop} can be replaced by $(\xi,\chi)$-symmetry, because we can directly apply \ref{ulmfact} instead of using extra stability to invoke \ref{unicor}. This applies to other results in the paper.
\end{obs}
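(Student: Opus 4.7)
The plan is to re-examine the proof of Proposition \ref{s19prop} and pinpoint exactly where stability in $[\xi,\lambda)$ is actually needed; I expect it to enter at a single point, namely when one concludes that $(\xi,\geq\chi)$-limit models are saturated via \ref{unicor}(2). In that derivation, \ref{symcor2}(1) is invoked precisely to extract $(\xi,\chi)$-symmetry from the interval stability, and this symmetry is then fed into \ref{ulmfact} to obtain uniqueness of $(\xi,\geq\chi)$-limits; saturation follows by resolving such a limit as a $(\xi,\mu')$-limit for some regular $\mu'\in[\chi,\xi^+)$.

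Under the alternative hypothesis that $(\xi,\chi)$-symmetry is assumed outright, the detour through \ref{symcor2}(1) becomes superfluous: \ref{ulmfact} applies directly at cardinality $\xi$, yielding uniqueness of $(\xi,\geq\chi)$-limits, and saturation is read off by the same ``resolve as a $(\xi,\mu')$-limit'' argument used in \ref{unicor}(2). Every other step of the proof of \ref{s19prop} -- the construction of the directed system $\langle M_i^*,f_{i,j}\rangle$, the embedding into a local monster $\tilde M$, the extension of $p$ to a nonforking type $q\in\gs(\tilde M)$, and the final invariance and weak-uniqueness chase -- uses only tameness, continuity of $\mu$- and $\xi$-nonsplitting, stability in $\mu$ and in $\xi$, and the saturation of $(\xi,\geq\chi)$-limits, so it transfers verbatim.

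One bookkeeping caveat must be flagged: the original hypothesis of stability in $[\xi,\lambda)$ silently supplied stability in $\xi$ itself, which is needed to apply \ref{chitrans} and to extend $\mu$-nonforking over models of cardinality $\xi$. When this interval hypothesis is replaced by $(\xi,\chi)$-symmetry, stability in $\xi$ must therefore be retained as a separate assumption. The main ``difficulty'' is really just the routine check that no other step of the proof of \ref{s19prop} covertly depends on stability strictly above $\xi$; once this is confirmed, the substitution is purely formal. The same rewriting applies to any other result in the paper whose stability hypothesis is used only to derive local symmetry through \ref{symcor2}(1), most notably \ref{unicor} and the saturation-transfer corollaries of Section \ref{ufsec6}.
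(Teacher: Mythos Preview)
Your proposal is correct and matches the paper's own reasoning, which is entirely contained in the statement of the observation itself (no separate proof is given). Your additional bookkeeping remark that stability in $\xi$ must still be retained as a standalone hypothesis is a useful clarification that the paper leaves implicit.
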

We now recover two known results with different proofs. The original proof for \cite[Proposition 10.10]{s6} is extremely abstract so we supplement a direct argument. (Here we already assumed a monster model which implies no maximal models everywhere. Alternatively, one can adapt the proof of \cite[Theorem 7.1]{bonext} without using symmetry to transfer no maximal models upward.) On the other hand, since we have generalized the arguments in \cite{vv}, we can specialize them to $\chi=\al$ and recover \cite[Corollary 6.10]{vv} (see below). In their approach, \cite[Theorem 22]{van16b} was cited for the successor case of $\lambda$ and the limit case was proven by inductive hypothesis. We provide a uniform argument to both cases for closure under chains, and fill in the computation of the L\"{o}wenheim-Skolem number for the successor case, which they glossed over.

The following facts do not require continuity of nonsplitting.
\begin{fact}
\begin{enumerate}
\item\mylabel{supfact}{Fact \thetheorem} \cite[Theorem 1]{bkv} Let $\xi\geq\mu$. If ${\bf K}$ is stable in $\xi$, then it is also stable in $\xi^{+n}$ for all $n<\omega$.
\item\cite[Theorem 5.5]{s3} Let $\xi_0\geq\mu$ while $\delta$ be regular, $\langle \xi_i:i<\delta\rangle$ be strictly increasing stability cardinals. If ${\bf K}$ has $\delta$-local character of $\xi_0$-nonsplitting, then $\sup_{i<\delta}\xi_i$ is also a stability cardinal. In particular, if ${\bf K}$ is $\xi$-superstable for some $\xi\geq\mu$, then it is stable in all $\lambda\geq\xi$. 
\end{enumerate}
\end{fact}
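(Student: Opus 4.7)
The plan is to prove items (1) and (2) by a uniform counting-of-types argument and to derive the ``in particular'' clause by transfinite iteration of the two. For item (1), it suffices to show that stability in $\xi$ implies stability in $\xi^+$; induction then yields $\xi^{+n}$ for all $n<\omega$. Fix $M\in K_{\xi^+}$ and resolve $M=\bigcup_{i<\xi^+}M_i$ as a continuous $\leq$-increasing chain in $K_\xi$ with $M_{i+1}$ universal over $M_i$ at every $i$ (possible by stability in $\xi$). For each $p\in\gs(M)$, apply the $\xi$-weak local character of $\xi$-nonsplitting supplied by \ref{394lem} to the universal chain $\langle M_i\rangle$ to obtain some $i(p)<\xi^+$ with $p\restriction M_{i(p)+1}$ not $\xi$-splitting over $M_{i(p)}$. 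By extension (\ref{extnsp}) and weak uniqueness (\ref{eeuprop}(2)) of $\xi$-nonsplitting, $p$ is the unique nonsplitting extension of $p\restriction M_{i(p)+1}$ over $M$, so $p$ is determined by the pair $(i(p),p\restriction M_{i(p)+1})$; counting gives $|\gs(M)|\leq\sum_{i<\xi^+}|\gs(M_{i+1})|\leq\xi^+\cdot\xi=\xi^+$.

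Item (2) runs on the same template. Let $\lambda\defeq\sup_{i<\delta}\xi_i$, take $N\in K_\lambda$, and resolve $N=\bigcup_{i<\delta}N_i$ with $|N_i|=\xi_i$ and $N_{i+1}$ universal over $N_i$ (using stability at each $\xi_i$). First transfer $\delta$-local character of $\xi_0$-nonsplitting to $\delta$-local character of $\xi_i$-nonsplitting for every $i$ via the tameness argument of \ref{chitrans}. Then for each $p\in\gs(N)$, applying the transferred local character to $\langle N_j\rangle_{j<\delta}$ yields $i(p)<\delta$ with $p$ not $\xi_{i(p)}$-splitting over $N_{i(p)}$; weak uniqueness again determines $p$ from $p\restriction N_{i(p)+1}$, and $|\gs(N)|\leq\sum_{i<\delta}\xi_{i+1}=\lambda$. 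For the ``in particular'' clause, $\xi$-superstability supplies $\al$-local character of $\xi$-nonsplitting, which is preserved upward, so one climbs by transfinite induction: successor stages via (1), limit stages via (2) applied with $\delta=\cf(\lambda)$.

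The main obstacle is bookkeeping around the extension and uniqueness properties at the relevant cardinals. In item (1) we need that the unique nonsplitting extension of $p\restriction M_{i(p)+1}$ to all of $M$ actually exists; this is provided by \ref{extnsp} but requires stability in $\xi$, which we have by hypothesis. In item (2) the analogous step needs stability in $[\xi_{i(p)},\lambda)$, which is exactly the hypothesis that each $\xi_j$ is a stability cardinal. A subtler issue is transferring local character of $\xi_0$-nonsplitting to local character of $\xi_i$-nonsplitting without assuming universality of the $N_i$'s over $\xi_0$-sized submodels; here $\mu$-tameness from \ref{assum1} is essential, exactly as in the proof of \ref{chitrans}.
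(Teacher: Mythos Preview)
The paper records this statement as a Fact, citing \cite{bkv} and \cite{s3} without supplying a proof, so there is no in-paper argument to compare against. Your overall strategy---count types by pinning each $p$ to a pair $(i,p\restriction M_{i+1})$ via nonsplitting---is the right one, but both items have gaps in the pinning step.

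For item (1), weak local character from \ref{394lem} only gives you that $p\restriction M_{i(p)+1}$ does not $\xi$-split over $M_{i(p)}$; it says nothing about the full type $p$. To invoke weak uniqueness (\ref{eeuprop}(2)) and conclude that $p$ is determined by $(i(p),p\restriction M_{i(p)+1})$, you need $p$ itself not to $\xi$-split over $M_{i(p)}$, and you have not established this (you would need continuity of $\xi$-nonsplitting, which is not assumed---the paper even notes that this Fact does not require continuity). The clean fix is to bypass weak local character entirely: use existence of nonsplitting (\ref{eeuprop}(1)) to find $N_0\leq M$ of size $\mu$ with $p$ not $\mu$-splitting over $N_0$; since $\cf(\xi^+)=\xi^+>\mu$, absorb $N_0\leq M_i$ for some $i$, and then monotonicity gives $p$ not $\mu$-splitting over $M_i$. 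Now weak uniqueness applies directly.

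For item (2), the gap is more serious. You write that you apply the ``transferred local character'' to the chain $\langle N_j\rangle_{j<\delta}$, but $\delta$-local character of $\xi_i$-nonsplitting is a statement about u-increasing chains \emph{inside} $K_{\xi_i}$, whereas your $N_j$'s have strictly increasing cardinalities $\xi_j$, so no version of local character applies to this chain as written. Moreover, the transfer itself is not as routine as you suggest: \ref{chitrans} transfers the global $\chi$-local character from $\mu$ to $\xi$ via $\mu$-nonforking, which relies on \ref{gfl} and hence on continuity of $\mu$-nonsplitting; you are trying to transfer a possibly different $\delta$ from $\xi_0$ to $\xi_i$, and the Fact is supposed to hold without continuity. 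The correct mechanism (as in the proof of \ref{gfl}) is to suppose $p$ $\xi_0$-splits over every $N_i$, then recursively build an auxiliary u-increasing continuous chain $\langle N_i':i\leq\delta\rangle\subseteq K_{\xi_0}$ with $N_i'\leq N_i$ capturing the splitting witnesses, so that $p\restriction N_\delta'$ still $\xi_0$-splits over each $N_i'$; this contradicts $\delta$-local character of $\xi_0$-nonsplitting applied to a chain that genuinely lives in $K_{\xi_0}$. Once you have $p$ not $\xi_0$-splitting over some $N_{i(p)}$, the rest of your counting goes through.
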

\begin{corollary}
\begin{enumerate}
\item \mylabel{chaincor}{Corollary \thetheorem}\emph{\cite[Proposition 10.10]{s6}} Let $\xi\geq\mu$. If ${\bf K}$ is $\xi$-superstable, then it is superstable in all $\zeta\geq\xi$.
\item \emph{\cite[Corollary 6.10]{vv}} Let ${\bf K}$ be $\mu$-superstable and $\xi\geq\mu^+$, then ${\bf K^{\xi\text{-}sat}}$ the class of $\xi$-saturated models in ${\bf K}$ forms an AEC with L\"{o}wenheim-Skolem number $\xi$.
\end{enumerate}
\end{corollary}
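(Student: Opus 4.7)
For part (1), the plan is to combine two ingredients. First, the $\aleph_0$-local character component of $\xi$-superstability feeds into \ref{supfact}(2) to upgrade stability from $\xi$ to every $\zeta \geq \xi$. Second, I would apply \ref{chitrans} with $\xi$ playing the role of the ambient base cardinal $\mu$: the relevant hypotheses of \ref{assum1} hold at $\xi$ since $\mu$-tameness implies $\xi$-tameness for $\xi \geq \mu$, $\xi$-superstability together with \ref{weaker}(1) yields continuity of $\xi$-nonsplitting, and stability in $\xi$ is given. Because $\xi$-superstability means the minimum local-character cardinal at base $\xi$ is $\aleph_0$, \ref{chitrans} then delivers $\aleph_0$-local character of $\zeta$-nonsplitting. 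Combined with stability in $\zeta$, this establishes $\zeta$-superstability.

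For part (2), I would first invoke (1) to obtain $\xi$-superstability, hence stability in every $\lambda \geq \xi$ and $\aleph_0$-local character of $\lambda$-nonsplitting, together with continuity of $\lambda$-nonsplitting by \ref{weaker}(1). The AEC axioms for ${\bf K^{\xi\text{-}sat}}$ other than closure under increasing chains — coherence, transitivity of the ordering, the L\"owenheim–Skolem axiom relative to ${\bf K}$, etc. — descend from ${\bf K}$ once the class is nonempty and we know $\leq_{\bf K}$ restricts sensibly. The heart of the matter is the chain axiom: for a $\leq_{\bf K}$-increasing chain $\langle M_i : i < \delta\rangle$ of $\xi$-saturated models, $\bigcup_{i<\delta} M_i$ must be $\xi$-saturated. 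Here I would apply \ref{s19prop} uniformly to both successor and limit $\delta$: taking $\chi = \aleph_0$ makes the cofinality hypothesis $\cf(\delta) \geq \aleph_0$ automatic, stability in $[\xi,\lambda)$ is free from $\xi$-superstability via \ref{supfact}(2), and continuity of $\mu$- and $\xi$-nonsplitting is provided by superstability. This replaces the split case analysis of \cite[Theorem 22]{van16b} with one argument.

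For the L\"owenheim–Skolem number, given $A \subseteq |M|$ with $|A| \leq \xi$ and $M \in {\bf K^{\xi\text{-}sat}}$, I would build a $\leq_{\bf K}$-increasing chain $\langle N_\alpha : \alpha < \xi\rangle$ of size-$\xi$ submodels of $M$ with $A \subseteq N_0$, choosing $N_{\alpha+1}$ so that every Galois type over $N_\alpha$ is realized (there are at most $\xi$ such types by stability in $\xi$, each realizable inside $M$ by $\xi$-saturation). Setting $N \defeq \bigcup_{\alpha<\xi} N_\alpha$, a cofinality argument shows that every size-$<\xi$ submodel of $N$ lies inside some $N_\alpha$, so types over it are realized in $N_{\alpha+1} \leq N$; closure of ${\bf K^{\xi\text{-}sat}}$ under chains (just established) then places $N$ in ${\bf K^{\xi\text{-}sat}}$.

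The main obstacle is precisely the closure under chains, which is where \ref{s19prop} carries the weight; the L\"owenheim–Skolem computation is mostly bookkeeping, though one must handle singular $\xi$ by cofinality cutoffs carefully to keep submodels of size $<\xi$ inside a single $N_\alpha$.
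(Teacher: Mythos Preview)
Your approach to part (1) and to closure under chains in part (2) matches the paper's exactly. The L\"owenheim--Skolem computation, however, has two issues.

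First, a minor slip: you claim each type over $N_\alpha$ is realized in $M$ by $\xi$-saturation, but $|N_\alpha|=\xi$, and $\xi$-saturation only guarantees realization of types over submodels of size strictly less than $\xi$. The fix (which is what the paper does) is to realize in $N_{\alpha+1}$ only those types over $N_\alpha$ that happen to be realized in $M$; by stability in $\xi+|A|$ there are at most $\xi+|A|$ of them, and this suffices because any type over a small $N^*\leq N_\alpha$ extends to the $N_\alpha$-type of its $M$-realization. Relatedly, your closing remark that closure of ${\bf K^{\xi\text{-}sat}}$ under chains places $N$ in ${\bf K^{\xi\text{-}sat}}$ is misplaced: the $N_\alpha$'s you build are not themselves $\xi$-saturated, so chain closure does not apply; the cofinality argument alone does the work here. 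Also note the L\"owenheim--Skolem axiom requires arbitrary $A$, not just $|A|\leq\xi$, so you should work in $K_{\xi+|A|}$ throughout.

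Second, and more substantively, the singular case is a genuine gap. If $\xi$ is singular, your proposed chain of length $\xi$ with a cofinality cutoff does \emph{not} work: a submodel of $N=\bigcup_{\alpha<\xi}N_\alpha$ of size $\kappa$ with $\cf(\xi)\leq\kappa<\xi$ can be cofinal in the chain and hence need not lie inside any single $N_\alpha$. The paper resolves this by induction on $\xi$: write $\xi=\sup_{i<\cf(\xi)}\xi_i$ with the $\xi_i$ regular in $[\mu^+,\xi)$, use the inductive hypothesis $\oop{LS}({\bf K^{\xi_i\text{-}sat}})=\xi_i$ to build an increasing continuous chain $\langle N_i:i\leq\cf(\xi)\rangle$ inside $M$ with $N_i$ $\xi_i$-saturated of size $\xi_i+|A|$, and then observe that the union is $\xi_i$-saturated for every $i$ (by closure of each ${\bf K^{\xi_i\text{-}sat}}$ under chains, already established), hence $\xi$-saturated.
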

\begin{proof}
\begin{enumerate}
\item Combine \ref{supfact}(2) and \ref{chitrans}.
\item By (1) and \ref{weaker}, we have continuity of $\xi$-nonsplitting and stability in $[\xi,\infty)$. By \ref{s19prop}, ${\bf K^{\xi\text{-}sat}}$ is closed under chains. We show that the L\"{o}wenheim-Skolem number is $\xi$: let $A$ be a subset of a $\xi$-saturated model $M$. We need to find a $\xi$-saturated $N\leq M$ of size $\xi+|A|$ containing $A$. 

Consider the case where $\xi$ is regular : then we construct $\langle N_i:i\leq\xi\rangle$ increasing and continuous such that for $1\leq i<\xi$,
\begin{itemize}
\item $N_0$ contains $A$;
\item $N_i\in K_{\xi+|A|}$ is $\xi$-saturated;
\item If $N^*\leq N_i$ is of size less than $\xi$, then $N_{i+1}$ realizes all types over $N^*$.
\end{itemize}
The construction is possible by stability in $\xi+|A|$ (implied by $\mu$-superstability): $M$ is $\xi$-saturated so it has witnesses to all types over $N^*$, but those types can be extended to be over $N_i\in K_{\xi+|A|}$. By stability we can restrict to $(\xi+|A|)$-many witnesses that work for all such $N^*$. Now $N_\xi\leq M$ is $\xi$-saturated by a cofinality argument. Also, it has size $\xi+|A|$. 

For the singular case, write $\xi=\bigcup_{i<\cf(\xi)}\xi_i$ where the $\xi_i$'s form an increasing chain of regular cardinals with $\mu^+\leq\xi_i<\xi$. By the inductive hypothesis that $\oop{LS}({\bf K^{\xi_i\text{-}sat}})=\xi_i$, we can build $\langle N_i:i\leq\cf(\xi)\rangle$ increasing and continuous such that $N_0$ contains $A$, $N_i\in K_{\xi_i+|A|}$ is $\xi_i$-saturated. Since each ${\bf K^{\xi_i\text{-}sat}}$ is closed under chains, $N_\xi$ is $\xi$-saturated and has size $\xi+|A|$.

\end{enumerate}
\end{proof}

It is natural to ask if there are converses to our results. In particular what are the sufficient conditions to ${\bf K}$ having the $\chi$-local character in $K_\xi$ for some $\xi\geq\mu$. \cite[Lemma 4.12]{s19} gave one useful criterion which we adapt below. The original statement did not cover the case $\delta=\xi$ below and such omission affects the rest of his results. In particular \cite[Theorem 4.11]{s19} should only apply to singular $\mu$ there.  Our result covers regular cardinals because we assume stability and continuity of nonsplitting. Only in \cite[Section 11]{s19} did he start to assume continuity of nonsplitting and in \cite[Theorem 12.1]{s19} did he take care of the regular case by under extra assumptions.

We state the full assumptions in the following proposition.
\begin{proposition}\mylabel{localfact}{Proposition \thetheorem}
Let $\mu\geq\ls$. Suppose ${\bf K}$ has a monster model, is $\mu$-tame and stable in some $\xi\geq\mu^+$. Let $\delta<\xi^+$ be regular, $\langle M_i:i\leq\delta\rangle$ be u-increasing and continuous in $K_\xi$ and $p\in\gs(M_\delta)$. There is $i<\delta$ such that $p$ does not $\xi$-split over $M_i$ if one of the following holds:
\begin{enumerate}
\item $\delta=\xi$ (so $\xi$ is regular), ${\bf K}$ has continuity of $\xi$-nonsplitting;
\item $\delta<\xi$ and $M_\delta$ is $(\mu+\delta)^+$-saturated.
\end{enumerate}
\end{proposition}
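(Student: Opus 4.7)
For case (1), I adapt the combination of Lemmas \ref{394lem} and \ref{bgvvlem} to the cardinal $\xi$. The proof of Lemma \ref{394lem} uses only stability in the relevant cardinal and $AP$, so stability in $\xi$ yields some $\chi^*\leq\xi$ witnessing weak $\chi^*$-local character of $\xi$-nonsplitting. The proof of Lemma \ref{bgvvlem} requires the cardinal to be regular and continuity of nonsplitting in that cardinal, both of which hold here; thus $\chi^*$-local character of $\xi$-nonsplitting follows. Since $\delta=\xi\geq\chi^*$ is regular, applying this local character to $\langle M_i:i\leq\delta\rangle$ and $p$ produces the required $i<\delta$.

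For case (2), the central ingredient is the following tameness transfer: if $N_0\leq M$ with $N_0\in K_\mu$, $M\in K_\xi$, and $p\in\gs(M')$ for some $M'\geq M$ in $K_{\geq\xi}$ does not $\mu$-split over $N_0$, then $p$ does not $\xi$-split over $M$. I would prove this by contrapositive: given a $\xi$-splitting witness $g:N^1\cong_M N^2$ for $p$, $\mu$-tameness (as used in \ref{tamespl}) produces some $\tilde{N}^2\leq N^2$ of size $\mu$ which we may enlarge to contain $N_0$, with $g(p)\restriction\tilde{N}^2\neq p\restriction\tilde{N}^2$. Since $g$ fixes $M\supseteq N_0$, the preimage $\tilde{N}^1\defeq g^{-1}[\tilde{N}^2]$ also contains $N_0$, and $g\restriction\tilde{N}^1:\tilde{N}^1\cong_{N_0}\tilde{N}^2$ is then a $\mu$-splitting witness for $p$ over $N_0$, contradicting the hypothesis.

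With this reduction, the task is to produce $N_0\leq M_i$ of size $\mu$ over which $p$ does not $\mu$-split. By \ref{eeuprop}(1), such an $N_0\leq M_\delta$ exists. If $\cf(\delta)=\delta>\mu$, the regularity of $\delta$ places $N_0$ inside some $M_j$ with $j<\delta$, and the tameness transfer gives that $p$ does not $\xi$-split over $M_j$. In the remaining range $\delta\leq\mu$, we have $(\mu+\delta)^+=\mu^+$, so $M_\delta$ is $\mu^+$-saturated; by \ref{gfexi}, $p$ does not $\mu$-fork over $M_\delta$. I would then use the $\mu^+$-model-homogeneity of $M_\delta$ in combination with the $\xi$-universality $M_1>_u M_0$ to transfer the nonsplitting witness to a copy $N_0'\leq M_1$. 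The transfer invokes an automorphism of the monster realizing $N_0\cup M_0\hookrightarrow M_1$ over $M_0$, and uniqueness of $\mu$-nonforking over $\mu^+$-saturated bases (\ref{gfu}) identifies the automorphic image of $p$ with $p$ on the relevant data, so that $p$ does not $\mu$-split over $N_0'\leq M_1$; the tameness transfer then finishes with $i=1$.

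The main obstacle is the sub-case $\delta\leq\mu$ of (2): cofinality alone fails to place $N_0$ inside some $M_i$, and one must exploit $\mu^+$-saturation to relocate the nonsplitting witness while ensuring that nonsplitting of $p$ (not merely of its automorphic image) over the new base is preserved. This rests on uniqueness of $\mu$-nonforking over $\mu^+$-saturated models; it mirrors the saturation-based technique of Vasey's original argument, while our case (1) extends the statement to $\delta=\xi$ via the added hypothesis of continuity of $\xi$-nonsplitting.
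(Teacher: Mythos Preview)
Your treatment of case~(1) matches the paper's: it simply invokes \ref{394prop} with $\xi$ in place of $\mu$.

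For case~(2), however, your approach diverges from the paper's and contains a genuine gap. The paper argues directly by contradiction: assuming $p$ $\xi$-splits over every $M_i$, one takes splitting witnesses $f_i:N_i^1\cong_{M_i}N_i^2$, uses $\mu$-tameness to shrink to $M_i^1,M_i^2$ of size $\mu$ with $f_i(p\restriction M_i^1)\neq p\restriction M_i^2$, collects all these into a single $N\leq M_\delta$ of size $\mu+\delta$, and uses $(\mu+\delta)^+$-saturation of $M_\delta$ to find $b\in|M_\delta|$ realizing $p\restriction N$. Then $b\in|M_i|$ for some $i<\delta$, and since $f_i$ fixes $M_i\ni b$ one computes $f_i(p\restriction M_i^1)=\gtp(b/M_i^2)=p\restriction M_i^2$, a contradiction. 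This argument uses only $\mu$-tameness and the saturation hypothesis.

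Your approach instead appeals to \ref{eeuprop}(1), \ref{gfexi}, and \ref{gfu}. Note first that the proposition explicitly states its own hypotheses and does \emph{not} assume stability in $\mu$ or continuity of $\mu$-nonsplitting; these lemmas all sit under \ref{assum1}, so you are importing assumptions not available here. More seriously, even granting those assumptions, the sub-case $\delta\leq\mu$ does not close. Your automorphism $F$ fixes $M_0$ and sends $N_0$ to $N_0'\leq M_1$; by invariance $F(p)$ does not $\mu$-split over $N_0'$. But to conclude that $p$ itself does not $\mu$-split over $N_0'$ via uniqueness of $\mu$-nonforking, you would need both $p$ and $F(p)$ (restricted to a common domain) to not $\mu$-fork over a common base on which they agree. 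You have $F(p)\restriction M_0=p\restriction M_0$, yet you do not know that either type fails to $\mu$-fork over $M_0$: the witness $N_0$ for $p$ lies in $M_\delta$, not below $M_0$, and $N_0'$ lies in $M_1$, not below $M_0$. No $\mu^+$-saturated base presents itself over which the hypotheses of \ref{gfu} or \ref{gfuup} are met. The paper's direct argument sidesteps this entirely by never needing to relocate a nonsplitting witness.
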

\begin{proof}
The first case is by \ref{394prop} (with $\xi$ in place of $\mu$). We consider the second case $\delta<\xi$. Suppose the conclusion is false, then for $i<\delta$, there exist
\begin{enumerate}
\item $N_i^1,N_i^2\in K_\xi$ with $M_i\leq N_i^1,N_i^2\leq M_\delta$;
\item $f_i:N_i^1\cong_{M_i}{}N_i^2$ with $f_i(p\restriction N_i^1)\neq p\restriction N_i^2$;
\item $M_i^1\leq N_i^1$ and $M_i^2\leq N_i^2$ such that $f_i[M_i^1]\cong M_i^2$ and $f_i(p\restriction M_i^1)\neq p\restriction M_i^2$.
\end{enumerate}
Let $N\leq M_\delta$ of size $\mu+\delta$ containing $M_i^1$ and $M_i^2$ for all $i<\delta$. Since $M_\delta$ is $(\mu+\delta)^+$-saturated, there is $b\in|M_\delta|$ realizing $p\restriction N$. Then there is $i<\delta$ such that $b\in|M_i|$. Since $f_i$ fixes $M_i$, it also fixes $b$. Thus $$f_i(p\restriction M_i^1)=\gtp(f_i(b)/M_i^2)=\gtp(b/M_i^2)=p\restriction M_i^2,$$ contradicting item (3) above.
\end{proof}
\begin{corollary}\mylabel{localcor}{Corollary \thetheorem}
Suppose $\xi\geq\mu^+$ and $\delta<\xi^+$ be regular. If ${\bf K}$ is stable in $\xi$, has continuity of $\xi$-nonsplitting and has unique $(\xi,\geq\delta)$-limit models, then it has $\delta$-local character in $K_\xi$. If in addition $K_\xi$ has unique limit models, then it is $\xi$-superstable.
\end{corollary}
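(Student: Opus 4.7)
The plan is to reduce the corollary to Proposition~\ref{localfact} by verifying, in each case, that the relevant hypothesis about $M_\delta$ holds. Fix $\langle M_i:i\leq\delta\rangle\subseteq K_\xi$ $u$-increasing and continuous and $p\in\gs(M_\delta)$; we seek some $i<\delta$ such that $p$ does not $\xi$-split over $M_i$.

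I would split on whether $\delta=\xi$ or $\delta<\xi$ (both are possible since $\delta<\xi^+$ is regular). If $\delta=\xi$, then $\xi$ is itself regular, and \ref{localfact}(1) applies directly from the assumed continuity of $\xi$-nonsplitting and stability in $\xi$; the uniqueness hypothesis is not needed for this case. If $\delta<\xi$, set $\alpha\defeq(\mu+\delta)^+$. Since $\mu<\xi$ and $|\delta|<\xi$, we have $\mu+\delta<\xi$ and hence $\alpha\leq\xi<\xi^+$, so by stability in $\xi$ we may construct a $(\xi,\alpha)$-limit $N$ over $M_0$. Because $\alpha>\delta$, both $N$ and $M_\delta$ are $(\xi,\geq\delta)$-limit over $M_0$, and the uniqueness hypothesis gives $M_\delta\cong_{M_0}N$. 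A standard argument, analogous to the one used in \ref{unicor}(2), shows that a $(\xi,\alpha)$-limit for regular $\alpha\leq\xi$ is $\alpha$-saturated: any subset of size $<\alpha$ sits inside some $M_i$ by regularity of $\alpha$, and the universal extension $M_{i+1}>_uM_i$ realizes every type over $M_i$. Hence $M_\delta$ is $(\mu+\delta)^+$-saturated, and \ref{localfact}(2) produces the desired $i<\delta$.

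For the second statement, uniqueness of limit models in $K_\xi$ trivially implies uniqueness of $(\xi,\geq\al)$-limit models. Since $\al<\xi^+$, the first statement applied with $\delta=\al$ yields $\al$-local character of $\xi$-nonsplitting, which together with stability in $\xi$ is by definition $\xi$-superstability.

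The main technical point to be careful about is verifying $(\mu+\delta)^+$-saturation of the auxiliary $(\xi,\alpha)$-limit when $\xi$ is singular (so that $\alpha$ may be strictly between $\mu$ and $\xi$); one must check that $\alpha$-saturation of $(\xi,\alpha)$-limits for regular $\alpha\leq\xi$ goes through without additional hypotheses beyond stability in $\xi$ and a monster model. Once this is explicit, every other step is a direct invocation of earlier material.
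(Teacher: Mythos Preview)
Your proposal is correct and follows essentially the same approach as the paper: both use uniqueness of long limits to verify the saturation hypothesis of \ref{localfact}(2), then invoke that proposition (with \ref{localfact}(1) handling the boundary case $\delta=\xi$). The paper is terser---it simply cites ``the proof of \ref{unicor}(2)'' to conclude that $M_{\delta'}$ is fully saturated rather than just $(\mu+\delta)^+$-saturated as you verify---and it explicitly quantifies over all regular $\delta'\geq\delta$, as required by the definition of $\delta$-local character, whereas you treat only chains of length exactly $\delta$; your argument extends to arbitrary regular $\delta'\in[\delta,\xi^+)$ without change, so this last point is cosmetic.
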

\begin{proof}
Let $\delta'\geq\delta$ be regular and $\langle M_i:i\leq\delta'\rangle\subseteq K_\xi$ be u-increasing and continuous, $p\in\gs(M_{\delta'})$. By the proof of \ref{unicor}(2), $M_{\delta'}$ is saturated. By \ref{localfact}, there is $i<\delta'$ such that $p$ does not $\xi$-split over $M_i$.
\end{proof}
\begin{remark}
As before, our result is local. \cite[Theorem 3.18]{GV} proved a similar result which is eventual: they managed to guarantee superstability after $\beth_\omega(\chi_0)$ where ${\bf K}$ has no order property of length $\chi_0$.
\end{remark}
Vasey \cite[Fact 11.6]{s19} also made another observation that connects saturated models and symmetry. In the original statement, he omitted writing continuity of nonsplitting in the hypothesis and did not give a proof sketch, so we give more details here  (\ref{assum1} applies). As in the discussion before \ref{musymdef}, we consider the tail of regular cardinals $\delta'\geq\delta$ in place of a fixed $\delta'=\delta$ to match our notations.
\begin{fact}\mylabel{satsymfact}{Fact \thetheorem}
Let $\delta<\mu^+$ be regular. If for any $\delta'\in[\delta,\mu^+)$ regular, any $\langle M_i:i<\delta'\rangle$ increasing chain of saturated models in $K_{\mu^+}$ has a saturated union, then ${\bf K}$ has $(\mu,\delta)$-symmetry.
\end{fact}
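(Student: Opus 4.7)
The plan is to argue the contrapositive: assume $(\mu,\delta)$-symmetry fails, and exhibit some regular $\delta'\in[\delta,\mu^+)$ together with an increasing chain of saturated models in $K_{\mu^+}$ of length $\delta'$ whose union is not $\mu^+$-saturated. Conceptually this is a localization (to length $\delta$ instead of $\mu^+$) and upward transfer (from $\mu$ to $\mu^+$) of VanDieren's equivalence between $\mu$-symmetry and continuity of reduced towers: we read off a failed reduced tower from the failure of symmetry, then blow each model up to a saturated model in $K_{\mu^+}$ to get the desired chain.

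So first I fix witnesses $(a,b,N,M_0,M)$ to the failure of $(\mu,\delta)$-symmetry as in \ref{musymdef}. Taking $\delta':=\delta$ (the argument is the same if we need to pass to a larger regular $\delta'$), I build in $K_\mu$ an u-increasing continuous sequence $\langle M_\alpha:\alpha<\delta'\rangle$ starting with $M$, realizations $b_\alpha\in M_{\alpha+1}$ of $\gtp(b/M)$ via nonforking extensions (using \ref{gfe} and stability in $\mu$), and at each stage a copy $a_\alpha$ of $\gtp(a/M_0)$ nonsplitting over $N$, threaded as in the proof of \ref{sympf}. The failure of symmetry for $(a,b,N,M_0,M)$ forces this construction to produce, at each stage, a small model $N^*\in K_\mu$ containing $N$ and all $b_\alpha$'s such that $\gtp(a/N^*)$ cannot be realized inside $M_{\delta'}:=\bigcup_{\alpha<\delta'}M_\alpha$ without producing a legitimate symmetry witness $M^b$, which by hypothesis does not exist. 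Next, using stability in $\mu$, I coherently enlarge each $M_\alpha$ to a $(\mu^+,\mu^+)$-limit model $M^*_\alpha\in K_{\mu^+}$ (hence saturated) while preserving the chain structure; the resulting $\langle M^*_\alpha:\alpha<\delta'\rangle$ is an increasing chain of saturated models in $K_{\mu^+}$.

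It remains to show that $M^*_{\delta'}:=\bigcup_{\alpha<\delta'}M^*_\alpha$ fails to realize $\gtp(a/N^*)$, contradicting saturation. Suppose $c\in M^*_{\delta'}$ realized this type; then $c\in M^*_{\alpha^*}$ for some $\alpha^*<\delta'$. Using $\mu$-tameness (\ref{assum1}(3)), weak uniqueness of nonsplitting (\ref{eeuprop}(2)), and the monotonicity/continuity apparatus of \ref{monspl} and \ref{congeqmu}, the realization $c$ could be traced back to produce a $\mu$-sized universal extension $M^b$ of $M_0$ containing $b_{\alpha^*}$ (hence a conjugate copy of $b$) with $\gtp(c/M^b)=\gtp(a/M^b)$ not $\mu$-splitting over $N$, directly contradicting the chosen failure of symmetry. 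Thus $M^*_{\delta'}$ is not saturated, completing the contrapositive.

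The main obstacle is managing the translation between the $\mu$-level obstruction (a nonsplitting/nonforking statement about $a$, $b$ and the small witness $N$) and a genuine non-realization of a type over a $\mu$-sized submodel of $M^*_{\delta'}$. Concretely, one must choreograph the tower so that the $b_\alpha$'s collectively encode the universal extension needed in \ref{musymdef}, and so that any hypothetical realization $c\in M^*_{\delta'}$ can be forced (via tameness and weak uniqueness) to behave like $a$ at the $\mu$-level; this is the same tower-bookkeeping that underlies \cite[Theorem 3]{van16a} and \cite[Lemma 11.6]{s19}, and handling it carefully — especially at successor versus limit stages of the blow-up — is the crux of the argument.
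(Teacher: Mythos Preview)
The paper's proof takes a completely different route: it does not attempt a direct construction but simply cites two established results. First, a localization of \cite[Theorem 2]{van16a} shows that the saturated-union hypothesis (for lengths $\geq\delta$) forces every \emph{reduced} tower to be continuous at $\geq\delta$; second, \cite[Proposition 19]{bvulm} converts continuity of reduced towers at $\geq\delta$ into $(\mu,\delta)$-symmetry. The reduced-tower machinery is the essential intermediate, and the paper treats it as a black box.

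Your direct contrapositive attempts to bypass reduced towers, and this is where the gap lies. The step ``a realization $c\in M^*_{\alpha^*}$ of $\gtp(a/N^*)$ can be traced back to a symmetry witness $M^b$'' is not justified. The failure of $(\mu,\delta)$-symmetry is for the specific configuration $(a,b,N,M_0,M)$, and is only violated by producing an $M^b$ for an \emph{automorphic copy} of this entire configuration. You arrange $\gtp(c/M_0)=\gtp(a/M_0)$ and $\gtp(b_\beta/M)=\gtp(b/M)$ separately, but that does not give an automorphism sending $(a,b,N,M_0,M)$ to a tuple involving $c$ and $b_\beta$; in particular you have no control over $\gtp(c/M^b)$ beyond its restriction to $N^*$, and no candidate for the ``$M$'' in the new configuration. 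Moreover, nothing in your chain forces the union to omit the nonsplitting extension of $\gtp(a/M_0)$: without a minimality condition on the tower, each $M^*_\alpha$ could already contain such a realization. This minimality is exactly what the \emph{reduced} property supplies in VanDieren's argument --- it guarantees the tower cannot be shrunk, so an element in the gap of a discontinuous reduced tower genuinely witnesses non-saturation of the blown-up union. Your final paragraph acknowledges that the tower bookkeeping of \cite{van16a} and \cite[Lemma 11.6]{s19} is the crux; the paper's point is that one should invoke it rather than try to reinvent it inline.
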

\begin{proof}
In \cite[Theorem 2]{van16a}, it was shown that if the above fact holds for any $\delta<\mu^+$, then any reduced tower is continuous at all $\delta<\mu^+$. We can localize this argument to show that if the above fact holds for a specific $\delta<\mu^+$, then any reduced tower is continuous at $\geq\delta$.  By \cite[Proposition 19]{bvulm}, ${\bf K}$ has $(\mu,\delta)$-symmetry.
\end{proof}
\begin{corollary}\mylabel{uufact}{Corollary \thetheorem}
Let $\delta<\mu^+$ be regular. If for any $\delta'\in[\delta,\mu^+)$ regular, any $\langle M_i:i<\delta'\rangle$ increasing chain of saturated models in $K_{\mu^+}$ has a saturated union, then ${\bf K}$ has uniqueness of $(\mu,\geq\delta)$-limit models.
\end{corollary}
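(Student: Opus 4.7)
The proof is essentially a two-step composition of facts already established in the section. The plan is to apply Fact \ref{satsymfact} to convert the hypothesis on saturated unions into the symmetry statement $(\mu,\delta)$-symmetry, and then invoke (the natural $\delta$-version of) Fact \ref{ulmfact} to pass from $(\mu,\delta)$-symmetry to uniqueness of $(\mu,\geq\delta)$-limit models. Note that Fact \ref{ulmfact} is stated for $\chi$, but the underlying result \cite[Theorem 20]{bvulm} is proved for a general regular $\delta<\mu^+$: from $(\mu,\delta)$-symmetry one concludes uniqueness of $(\mu,\geq\delta)$-limits. In our situation we simply run that argument with $\delta$ in place of $\chi$.

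More concretely, suppose the hypothesis holds for the fixed regular $\delta<\mu^+$. Then by Fact \ref{satsymfact} we immediately obtain $(\mu,\delta)$-symmetry for $\mu$-nonsplitting. Applying the general version of \cite[Theorem 20]{bvulm} (with $\delta$ replacing $\chi$) under Assumption \ref{assum1}, we get that any two $(\mu,\geq\delta)$-limit models over a common base $M_0\in K_\mu$ are isomorphic over $M_0$, which is exactly the desired uniqueness statement.

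The only non-routine point is verifying that the arguments of \cite{bvulm} and of Fact \ref{satsymfact} really are local in $\delta$: Fact \ref{satsymfact} itself was proved above by localizing \cite[Theorem 2]{van16a} to reduced towers that are continuous at cofinalities $\geq\delta$, so the same localization of \cite[Proposition 19, Theorem 20]{bvulm} (using $(\mu,\geq\delta)$-limits everywhere in place of $(\mu,\geq\chi)$-limits) yields the uniqueness conclusion. No new ideas are needed, and in particular we do not need to compare $\delta$ with the specific local character cardinal $\chi$ fixed in Definition \ref{chi}.
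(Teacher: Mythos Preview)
Your proposal is correct and follows exactly the paper's own proof, which simply says ``Combine \ref{satsymfact} and \ref{ulmfact}.'' Your additional remark that \ref{ulmfact} (i.e., \cite[Theorem 20]{bvulm}) is really stated and proved for a general regular $\delta$ rather than just $\chi$ is a helpful clarification of something the paper leaves implicit.
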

\begin{proof}
Combine \ref{satsymfact} and \ref{ulmfact}.
\end{proof}
\begin{ques}
Is there an analog of \ref{satsymfact} and \ref{uufact} where ``$\mu^+$'' is replaced by a general $\xi\geq\mu^+$?
\end{ques}

We look at superlimits and solvability before ending this section. The following localizes \cite[Definition 2.1]{ss}, which is more natural than \cite[Definition 6.2]{s19}.
\begin{definition}\mylabel{suplimdef}{Definition \thetheorem}
Let $\xi\geq\mu$. $M\in K_\xi$ is a \emph{$\chi$-superlimit} if $M$ is universal in $K_\xi$, not maximal, and for any regular $\delta$ with $\chi\leq\delta<\xi^+$, $\langle M_i:i<\delta\rangle$ increasing such that $M_i\cong M$ for all $i<\delta$, then $\bigcup_{i<\delta}M_i\cong M$. $M$ is called a \emph{superlimit} if it is a $\al$-superlimit.
\end{definition}
\begin{proposition}\mylabel{supexist}{Proposition \thetheorem}
Let ${\bf K}$ have continuity of $\xi$-nonsplitting for some $\xi\geq\mu^+$. There is $\lambda<h(\xi)$ such that if ${\bf K}$ is stable in $[\xi,\lambda)$, then it has a saturated $\chi$-superlimit in $K_\xi$.
\end{proposition}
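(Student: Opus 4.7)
The plan is to take $M$ to be a saturated model in $K_\xi$ (which exists because ${\bf K}$ is stable in $\xi$ and $\xi\geq\mu^+>\ls$) and then verify the three clauses of \ref{suplimdef}.

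First I will choose $\lambda<h(\xi)$ as furnished by \ref{s19prop} applied at the cardinal $\xi$; the standing hypotheses of \ref{s19prop} (existence of a monster model, $\mu$-tameness, stability in $\mu$, $\chi$-local character and continuity of $\mu$-nonsplitting) are all in \ref{assum1}, and continuity of $\xi$-nonsplitting is given in the statement. Assuming stability in $[\xi,\lambda)$, \ref{s19prop} then guarantees that any increasing chain of $\xi$-saturated models of cofinality $\geq\chi$ has a $\xi$-saturated union.

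With this in hand, I verify the three clauses. Universality of $M$ in $K_\xi$ is immediate from saturation, and $M$ is not maximal by $NMM$ in \ref{assum1}. For the closure-under-chains clause, let $\delta$ be regular with $\chi\leq\delta<\xi^+$ and let $\langle M_i:i<\delta\rangle$ be increasing with $M_i\cong M$ for all $i<\delta$. Each $M_i$ is saturated of size $\xi$, hence $\xi$-saturated, so by the choice of $\lambda$ above, $M_\delta\defeq\bigcup_{i<\delta}M_i$ is $\xi$-saturated. Since $|M_\delta|=\xi$, $M_\delta$ is actually saturated, and uniqueness of saturated models in $K_\xi$ yields $M_\delta\cong M$, as required.

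The substantive work is already packaged inside \ref{s19prop}, so no real obstacle is expected; the only items to check are that the parameters for \ref{s19prop} line up, that a $\xi$-saturated model of size $\xi$ is saturated (immediate from the definition), and that saturated models of a fixed cardinality in $K_\xi$ are isomorphic.
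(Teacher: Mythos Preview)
Your proposal is correct and follows essentially the same route as the paper: both produce a saturated $M\in K_\xi$, invoke \ref{s19prop} for the chain-closure clause, and finish via uniqueness of saturated models. One small imprecision: existence of a saturated model in $K_\xi$ does not follow from stability in $\xi$ alone when $\xi$ is singular; the paper takes $M$ to be a $(\xi,\geq\chi)$-limit and uses \ref{unicor}(2) together with \ref{chitrans} (under the same stability-in-$[\xi,\lambda)$ hypothesis) to see that it is saturated.
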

\begin{proof}
By \ref{unicor}(2) and \ref{chitrans}, any $(\xi,\geq\chi)$-limit $M$ is saturated (hence universal in $K_\xi$). Let $\delta$ be regular, $\chi\leq\delta<\xi^+$, $\langle M_i:i<\delta\rangle$ increasing such that $M_i\cong M$ for all $i<\delta$. Then all $M_i$ are saturated in $K_\xi$. By \ref{s19prop}, $\bigcup_{i<\delta}M_i$ is also saturated, hence isomorphic to $M$.
\end{proof}
\begin{remark}
The specific $\chi$-superlimit built above is saturated. Under the same assumptions, it is true for all $\chi$-superlimits (\ref{supsat}).
\end{remark}

The following connects superlimit models with \emph{solvability} (see \cite[Definition 2.17]{GV} for a definition).
\begin{fact}\cite[Lemma 2.19]{GV}\mylabel{solvfact}{Fact \thetheorem}
Let $\lambda\geq\xi$. The following are equivalent:
\begin{enumerate}
\item ${\bf K}$ is $(\lambda,\xi)$-solvable.
\item There exists an AEC ${\bf K'}$ in ${\oop{L}({\bf K'})}\supseteq\llk$ such that $\oop{LS}({\bf K'})\leq\xi$, ${\bf K'}$ has arbitarily large models and for any $M\in K'_\lambda$, $M\restriction \llk$ is a superlimit in ${\bf K}$.
\end{enumerate}
\end{fact}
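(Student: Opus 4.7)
The plan is to prove both directions by leveraging the standard connection between solvability and Ehrenfeucht--Mostowski constructions; the key data is an EM-blueprint $\Phi$ of vocabulary size $\leq\xi$ whose vocabulary $\oop{L}(\Phi)$ extends $\llk$ and whose Skolem hull over a linear order reproduces the desired superlimit behaviour in ${\bf K}$.

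For $(1)\Rightarrow(2)$, I would start with an EM-blueprint $\Phi$ witnessing $(\lambda,\xi)$-solvability, so $|\oop{L}(\Phi)|\leq\xi$ and for every linear order $I$ with $|I|=\lambda$ the model $EM_{\llk}(I,\Phi)$ is a superlimit in $K_\lambda$. Define ${\bf K'}$ to be the class of $\oop{L}(\Phi)$-structures of the form $EM(I,\Phi)$ for $I$ any linear order, equipped with the $\oop{L}(\Phi)$-substructure relation induced by embeddings of the index linear orders. Routine verification (using that $\Phi$ is a blueprint) shows ${\bf K'}$ satisfies the AEC axioms, has L\"owenheim--Skolem number $\leq\xi$ (the vocabulary size), and has arbitrarily large models (take arbitrarily long $I$). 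For $M=EM(I,\Phi)\in K'_\lambda$ we have $|I|=\lambda$, so $M\restriction\llk$ is a superlimit in ${\bf K}$ by construction.

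For $(2)\Rightarrow(1)$, given ${\bf K'}$ as in (2), I would invoke Shelah's Presentation Theorem together with Morley's method for AECs (combined with the Hanf-number machinery; this is the AEC analogue of the first-order Ehrenfeucht--Mostowski theorem, available because ${\bf K'}$ has $\oop{LS}({\bf K'})\leq\xi$ and arbitrarily large models) to extract an EM-blueprint $\Phi$ for ${\bf K'}$ in a vocabulary of size $\leq\xi$ extending $\oop{L}({\bf K'})\supseteq\llk$. For any linear order $I$ with $|I|=\lambda$ the model $EM(I,\Phi)$ lies in $K'_\lambda$ (using $|\oop{L}(\Phi)|\leq\xi\leq\lambda$), hence $EM_{\llk}(I,\Phi)=EM(I,\Phi)\restriction\llk$ is a superlimit of cardinality $\lambda$ in ${\bf K}$. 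This is exactly the statement that ${\bf K}$ is $(\lambda,\xi)$-solvable via the blueprint $\Phi\restriction\llk$.

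The main obstacle is bookkeeping rather than conceptual: in the forward direction one must carefully define the $\leq_{{\bf K'}}$-relation so that the AEC coherence and Tarski--Vaught-style axioms hold, which is why one typically works with the class of EM-models indexed by linear orders and their natural embeddings; in the backward direction the key is to ensure the Hanf-number hypothesis for producing EM-blueprints in AECs is met, which is automatic from arbitrarily large models plus $\oop{LS}({\bf K'})\leq\xi$. Once these technicalities are in place, the equivalence is essentially by unwinding the definitions.
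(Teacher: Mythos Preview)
The paper does not supply its own proof of this statement; it is quoted as a known fact from \cite[Lemma 2.19]{GV} and used as a black box. Your sketch follows the standard route taken there: the equivalence is mediated by the EM-blueprint definition of solvability, with Shelah's Presentation Theorem producing the blueprint in the $(2)\Rightarrow(1)$ direction.

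Two small corrections. In $(2)\Rightarrow(1)$, the blueprint witnessing $(\lambda,\xi)$-solvability is $\Phi$ itself in the expanded vocabulary, not $\Phi\restriction\llk$: the definition only asks that the $\llk$-\emph{reduct} $EM_{\llk}(I,\Phi)$ be a superlimit, so no restriction of the blueprint is needed (and indeed restricting would lose the Skolem functions). In $(1)\Rightarrow(2)$, your proposed ${\bf K'}$ consisting of exactly the models $EM(I,\Phi)$ ordered by linear-order embeddings is the right idea, but verifying the AEC axioms (especially coherence and the L\"owenheim--Skolem axiom) requires a bit of care; the cleaner formulation is to take ${\bf K'}$ to be the class of $\oop{L}(\Phi)$-substructures of such EM-models closed under the Skolem functions, which is how the argument is typically packaged.
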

In \cite[Theorem 4.9]{GV}, they showed that $(\lambda,\xi)$-solvability is \emph{eventually} (in $\lambda$) equivalent to other criteria of superstability (modulo a jump of $\beth_{\omega+2}$). Also, $\lambda$ is required to be greater than $\xi$. We propose that a better formulation of superstability which has $\lambda=\xi$. The case $\lambda>\xi$ should be a stronger condition because it allows downward transfer (see \cite[Corollary 5.1]{s17} for more development on this). Our result proceeds with a series of lemmas.

The next lemma generalizes \cite[Fact 2.8(5)]{GV} (which is based on \cite{dru}).
\begin{lemma}\mylabel{chainprop}{Lemma \thetheorem}
Let $\xi\geq\mu^+$ and $M$ be a saturated model in $K_\xi$. $M$ is a $\chi$-superlimit iff for any regular $\delta$ with $\chi\leq\delta<\xi^+$, any increasing chain of saturated models in $K_\xi$ of length $\delta$ has a saturated union.
\end{lemma}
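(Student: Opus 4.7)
The plan is to prove both directions directly using the uniqueness of saturated models in $K_\xi$, which holds under our global assumptions because we have a monster model and stability in $\mu$ transferring upward.

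For the forward direction, I would assume $M$ is a saturated $\chi$-superlimit and let $\langle N_i : i < \delta \rangle$ be any increasing chain of saturated models in $K_\xi$ with $\delta$ regular and $\chi \leq \delta < \xi^+$. Since saturated models of cardinality $\xi$ are unique up to isomorphism, each $N_i \cong M$. Applying the superlimit clause of \ref{suplimdef} gives $\bigcup_{i<\delta} N_i \cong M$, which is saturated.

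For the backward direction, I would assume the chain condition and verify each clause of \ref{suplimdef} for $M$. Universality in $K_\xi$ is immediate from $M$ being saturated and $\xi \geq \mu^+ > \ls$; non-maximality comes from $NMM$ in \ref{assum1}. For the main clause, fix regular $\delta \in [\chi, \xi^+)$ and let $\langle M_i : i < \delta \rangle$ be an increasing chain with each $M_i \cong M$. Each $M_i$ is then saturated in $K_\xi$, and since $|\bigcup_{i<\delta} M_i| \leq \xi \cdot \delta = \xi$ and the union contains a copy of $M \in K_\xi$, we have $\bigcup_{i<\delta} M_i \in K_\xi$. The hypothesis ensures this union is saturated, and by uniqueness of saturated models in $K_\xi$ we conclude $\bigcup_{i<\delta} M_i \cong M$.

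I do not expect a real obstacle here; the lemma is essentially a repackaging of ``saturated $=$ the unique isomorphism type of the saturated model in $K_\xi$.'' The only subtlety worth writing out explicitly is that saturated models of a fixed cardinality are indeed unique up to isomorphism in our setting (which follows from $AP+JEP$ and a standard back-and-forth), and the trivial cardinality bookkeeping to ensure the union lies in $K_\xi$ rather than $K_{\xi^+}$.
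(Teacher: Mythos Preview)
Your proposal is correct and is exactly the approach the paper takes; the paper's own proof is a one-liner (``Immediate from the definition of a $\chi$-superlimit'') and you have simply spelled out the details, namely the uniqueness of saturated models in $K_\xi$ and the cardinality check that $\delta<\xi^+$ keeps the union in $K_\xi$. One minor remark: you do not need stability transferring upward for uniqueness of saturated models---$AP$ and $JEP$ alone give it via the standard back-and-forth, as you note later.
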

\begin{proof}
Immediate from the definition of a $\chi$-superlimit. Notice that we need $\delta<\xi^+$ to make sure that the chain of saturated models have a union in $K_\xi$.
\end{proof}

The following lemma generalizes \cite[Theorem 2.3.11]{dru}.
\begin{lemma}\mylabel{supsat}{Lemma \thetheorem}
Let $\xi>\ls$. If $M$ is a $\chi$-superlimit in $K_\xi$, then $M$ is saturated.
\end{lemma}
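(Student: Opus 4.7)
The plan is to show $M$ is $\delta$-saturated for every regular $\delta\in[\chi,\xi^+)$; since every $\theta<\xi$ admits such a $\delta>\theta$ (namely $\delta=\max(\theta^+,\chi)$, using $\xi>\ls\geq\chi$), this yields the desired $\xi$-saturation of $M$.

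Fix a regular $\delta\in[\chi,\xi^+)$. I would construct an increasing chain $\langle M_i:i<\delta\rangle$ in $K_\xi$ with $M_0=M$, with each $M_i\cong M$, and with $M_{i+1}$ universal over $M_i$ at every successor stage. For the successor step, first pick a universal extension $N$ of $M_i$ in $K_\xi$ (available from stability in $\xi$), embed $N$ into $M$ via the universality of $M$ in $K_\xi$, and then use an automorphism of $\mathfrak{C}$ together with $AP$ to turn this into an extension $M_{i+1}\supseteq M_i$ with $M_{i+1}\cong M$ that absorbs an $M_i$-fixing image of $N$; this guarantees $M_{i+1}$ is universal over $M_i$. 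At a limit stage $i<\delta$, form $M_i^{-}:=\bigcup_{j<i}M_j$ and apply the same universality/$AP$ trick to absorb $M_i^{-}$ into a fresh isomorphic copy of $M$, obtaining $M_i\supseteq M_i^{-}$ with $M_i\cong M$; crucially, the $\chi$-superlimit definition only requires each chain member to be isomorphic to $M$ and imposes no continuity condition, so this reset at limit stages is permitted.

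With the chain in hand, the $\chi$-superlimit property, applied to the regular $\delta\geq\chi$ and the chain of isomorphic copies of $M$, yields $M_\delta:=\bigcup_{i<\delta}M_i\cong M$. To verify $M_\delta$ is $\delta$-saturated, take $N^*\leq M_\delta$ with $\nr{N^*}<\delta$; regularity of $\delta$ gives $N^*\leq M_i$ for some $i<\delta$, and universality of $M_{i+1}$ over $M_i\supseteq N^*$ ensures every type over $N^*$ is realized in $M_{i+1}\subseteq M_\delta$. Since $\delta$-saturation is isomorphism-invariant and $M\cong M_\delta$, $M$ itself is $\delta$-saturated, completing the proof upon varying $\delta$.

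The main obstacle is the limit-stage construction: a naively continuous chain would take unions that need not be isomorphic to $M$, breaking the hypothesis of the superlimit property. The workaround---absorbing the current union into a new copy of $M$ via universality of $M$ in $K_\xi$ together with $AP$---keeps the chain increasing while preserving the invariant $M_i\cong M$ throughout, and this is the point at which the universality-in-$K_\xi$ clause of the superlimit definition plays its essential role.
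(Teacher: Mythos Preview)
Your proof is correct and follows essentially the same approach as the paper's: build an increasing chain $\langle M_i:i<\delta\rangle$ of copies of $M$ with universal extensions interleaved, handle limit stages by absorbing the running union into a fresh copy of $M$ via universality, then invoke the $\chi$-superlimit property to conclude $\bigcup_{i<\delta}M_i\cong M$ is a $(\xi,\delta)$-limit, hence $\delta$-saturated. The paper phrases the conclusion as ``$M$ is a $(\xi,\delta)$-limit for every regular $\delta\in[\chi,\xi^+)$'' and then appeals to the argument of Corollary~6.3(2), but this is exactly your final cofinality computation.

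One small correction: your justification ``$\xi>\ls\geq\chi$'' for $\chi\leq\xi$ is not warranted, since the global assumptions only give $\chi\leq\mu$ (not $\chi\leq\ls$). The paper's proof tacitly uses $\chi\leq\xi$ as well; in every application of the lemma one has $\xi\geq\mu^+>\chi$, so this is harmless in context, but the inequality you wrote is the wrong one.
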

\begin{proof}
We show that $M$ is a $(\xi,\delta)$-limit for regular $\delta\in[\chi,\xi^+)$. If done, the argument in \ref{unicor}(2) shows that it is saturated. Construct $\langle M_i,N_i:i<\delta\rangle$ in $K_\xi$ such that $M_0\defeq M\cong M_i<_uN_i<M_{i+1}$ for $i<\delta$. Suppose $N_i$ is constructed, by universality $N_i$ embeds inside $M$ so we can build $M_{i+1}$, an isomorphic copy of $M$ over $N_i$. To construct $M_i$ for limit $i$, we embed the union of previous $N_i$ inside $M$ and repeat the above process. By the property of a $\chi$-superlimit, $M\cong\bigcup_{i<\delta}M_i=\bigcup_{i<\delta}N_i$ which is a $(\xi,\delta)$-limit.
\end{proof}
\begin{proposition}
\mylabel{supsolv}{Proposition \thetheorem}
If $\mu>\ls$ and ${\bf K}$ is $(<\mu)$-tame, then it is $\mu$-superstable iff it is $(\mu^+,\mu^+)$-solvable.
\end{proposition}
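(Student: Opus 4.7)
The plan is to prove each direction of the equivalence, using the existence of a saturated superlimit in $K_{\mu^+}$ as the bridge between $\mu$-superstability and $(\mu^+,\mu^+)$-solvability.

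For the forward direction, assume $\mu$-superstability, so $\chi = \aleph_0$. By \ref{chaincor}(1), ${\bf K}$ is $\zeta$-superstable for every $\zeta \geq \mu$; in particular it is $\mu^+$-superstable, whence by \ref{weaker}(1) applied at $\mu^+$ (its remaining hypotheses are inherited: $\mu^+$-tameness from $\mu$-tameness by monotonicity, stability in $\mu^+$ from \ref{supfact}(1)), continuity of $\mu^+$-nonsplitting holds, and by \ref{supfact}(2) stability holds in every $\lambda \geq \mu$. Applying \ref{supexist} at $\xi = \mu^+$ then supplies a saturated $\aleph_0$-superlimit, i.e., a superlimit, $M^* \in K_{\mu^+}$. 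To derive $(\mu^+, \mu^+)$-solvability via \ref{solvfact}(2)$\Rightarrow$(1), I will use Shelah's presentation theorem to produce an EM-template $\Phi$ proper for linear orders whose vocabulary has size at most $\oop{LS}({\bf K}) + \aleph_0 \leq \mu^+$; using the universality and uniqueness of the saturated model in $K_{\mu^+}$, I refine $\Phi$ (by performing the EM-construction over $M^*$) so that $\oop{EM}_{\llk}(I, \Phi) \cong M^*$ for every linear order $I$ with $|I| = \mu^+$, and take ${\bf K'}$ to be the class of such EM-models in the expanded vocabulary.

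For the reverse direction, assume $(\mu^+, \mu^+)$-solvability. \ref{solvfact} yields a superlimit $M^* \in K_{\mu^+}$, and \ref{supsat} (applicable since $\mu^+ > \ls$) upgrades it to saturation. \ref{chainprop} then ensures that every $\omega$-chain of saturated models in $K_{\mu^+}$ has a saturated union. Now apply \ref{localfact}(2) with $\xi = \mu^+$ and $\delta = \omega$: the required $\mu$-tameness follows from $(<\mu)$-tameness, stability in $\mu^+$ from \ref{supfact}(1), and the union $M_\omega$ of any $\omega$-chain of saturated models in $K_{\mu^+}$ is $\mu^+$-saturated by the previous sentence. The conclusion is $\aleph_0$-local character of $\mu^+$-nonsplitting.

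The main obstacle, and the step where $(<\mu)$-tameness is essential, is transferring this local character back down to $\mu$. Given any u-increasing continuous chain $\langle M_i : i \leq \omega \rangle$ in $K_\mu$ and $p \in \gs(M_\omega)$, I plan to build in the monster a parallel u-increasing continuous chain $\langle N_i : i \leq \omega \rangle$ in $K_{\mu^+}$ with $M_i \leq N_i$, arranged via strong homogeneity so that $|N_i| \cap |M_\omega| = |M_i|$ for every $i$. Extending $p$ via \ref{gfe} to $q \in \gs(N_\omega)$ and applying the $\aleph_0$-local character of $\mu^+$-nonsplitting produces $i < \omega$ with $q$ not $\mu^+$-splitting over $N_i$. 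A would-be $\mu$-splitting witness of $p$ over $M_{i+1}$, given by $K_1, K_2 \in K_\mu$ with $M_{i+1} \leq K_j \leq M_\omega$ and $f : K_1 \cong_{M_{i+1}} K_2$, satisfies $K_j \cap N_i \subseteq M_\omega \cap N_i = M_i \subseteq M_{i+1}$, so $f$ is compatible with $\oop{id}_{N_i}$ on their overlap; amalgamation extends $f$ to $\tilde{f}$ fixing $N_i$ on a $\mu^+$-sized amalgam containing $K_j \cup N_i$, and $\tilde{f}$ witnesses $\mu^+$-splitting of $q$ over $N_i$, a contradiction. Hence $p$ does not $\mu$-split over $M_{i+1}$, giving $\aleph_0$-local character of $\mu$-nonsplitting and, with stability in $\mu$ from \ref{assum1}, $\mu$-superstability.
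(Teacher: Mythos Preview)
Your forward direction is correct but more laborious than necessary. The paper avoids any EM-template work: it takes ${\bf K'}$ to be the class of $\mu^+$-saturated models in the same language, notes via \ref{chaincor}(2) that this is an AEC with $\oop{LS}({\bf K'})=\mu^+$ and arbitrarily large models, and observes that every $M\in K'_{\mu^+}$ is saturated hence (by \ref{chainprop}) a superlimit. This verifies \ref{solvfact}(2) directly.

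Your backward direction has a genuine gap in the transfer step. Knowing that $f:K_1\cong_{M_{i+1}}K_2$ and $\oop{id}_{N_i}$ agree on the \emph{set} $K_1\cap N_i$ does not, in an AEC, allow you to amalgamate them into an isomorphism $\tilde f$ fixing $N_i$. The isomorphism $f$ only says $\gtp(K_1/M_{i+1})=\gtp(K_2/M_{i+1})$; nothing forces $\gtp(K_1/N_i)=\gtp(K_2/N_i)$, so there need not be any automorphism of the monster fixing $N_i$ and carrying $K_1$ to $K_2$. Even granting an abstract amalgam, you would still need the resulting $\mu^+$-sized models $\tilde K_j$ to sit as $\lk$-submodels of $N_\omega$ with $N_i\leq\tilde K_j\leq N_\omega$ to witness $\mu^+$-splitting of $q$; your construction gives no control over that. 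You also declare that $(<\mu)$-tameness is essential here but never invoke it; as written, the argument does not use it at all. A smaller issue: to apply \ref{localfact}(2) to $\langle N_i\rangle$ you need $N_\omega$ to be $\mu^+$-saturated, so you must build the $N_i$ saturated (which you do not say) and use the chain-closure you obtained; your earlier blanket claim of ``$\aleph_0$-local character of $\mu^+$-nonsplitting'' is an overstatement.

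The paper's backward route avoids this transfer entirely. From the saturated superlimit in $K_{\mu^+}$ and \ref{chainprop} it gets closure of saturated chains in $K_{\mu^+}$, then applies \ref{uufact} (hence \ref{satsymfact} and tower analysis) to obtain uniqueness of limit models \emph{in $K_\mu$ directly}. Now run the proof of \ref{localcor} at level $\mu$ (not $\mu^+$): uniqueness makes every $(\mu,\omega)$-limit saturated, and \ref{localfact}(2) applies with $\xi=\mu$ once $(<\mu)$-tameness is used to shrink the splitting witnesses $M_i^1,M_i^2$ below size $\mu$, so that the auxiliary model $N$ there has size $<\mu$ and $\mu$-saturation of $M_\delta$ suffices. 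This is precisely where $(<\mu)$-tameness is needed, and it yields $\aleph_0$-local character of $\mu$-nonsplitting without any cross-cardinal transfer.
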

\begin{proof}
Suppose ${\bf K}$ is $\mu$-superstable. By \ref{supsat} with $\xi=\mu^+$, superlimits in $K_\xi$ are saturated. By \ref{chaincor}(2), $\xi$-saturated models are closed under chains. By \ref{chainprop}, saturated models in $K_\xi$ are superlimits. Therefore, saturated models and superlimits coincide in $K_\xi$. By \ref{solvfact}, we can define $\oop{L}({\bf K'})\defeq\llk$ and ${\bf K'}$ to be the class of $\xi$-saturated models. By \ref{chaincor}(2) again, it is an AEC with $\oop{LS}({\bf K'})=\xi$.

Suppose ${\bf K}$ is $(\mu^+,\mu^+)$-solvable. By \ref{supsat} there is a saturated superlimit in $K_{\mu^+}$, which witnesses the union of saturated models in $K_{\mu^+}$ is $\mu^+$-saturated. By \ref{uufact}, it has uniqueness of limit models in $K_\mu$. By $(<\mu)$-tameness and the proof of \ref{localcor} (replace ``$\xi$'' there by $\mu$ and ``$\mu^+$'' there by ${\ls}^+$), it is $\mu$-superstable.
\end{proof}
\begin{remark}
One might want to generalize the argument to strictly stable AECs. In that case the statement of \ref{solvfact}(2) should naturally be for a $\chi$-AEC instead of an AEC, but we do not know how to prove that saturated models are closed under $\chi$-directed systems (a similar obstacle is in \cite[Remark 2.3(4)]{muaec}). On top of that, the equivalence in \ref{solvfact} is not clear in that case because we do not have a first-order presentation theorem on $\chi$-AECs to extract an Ehrenfeucht-Mostowski blueprint (but we do have a $(<\mu)$-ary presentation theorem, see \cite[Theorem 3.2]{muaec} or \cite[Theorem 5.6]{leung2}).
\end{remark}
\section{Stability in a tail and U-rank}\label{ufsec7}
In this section we look at two characterizations of superstability. For convenience we follow \cite[Section 4]{s19} to define some cardinals:
\begin{definition}\mylabel{chap5ques2}{Definition \thetheorem}
\begin{enumerate}
\item $\lambda({\bf K})$ stands for the first stability cardinal above $\ls$. 
\item $\chi({\bf K})$ stands for the least regular cardinal $\delta$ such that ${\bf K}$ has $\delta$-local character of $\xi$-nonsplitting for some stability cardinal $\xi\geq\ls$. 
\item $\lambda'({\bf K})$ stands for the minimum stability cardinal $\xi$ such that for any stability cardinal $\xi'\geq\xi$, ${\bf K}$ has $\chi({\bf K})$-local character of $\xi'$-nonsplitting.
\end{enumerate}
\end{definition}
\begin{obs}
\begin{enumerate}
\item By \ref{assum1}, $\lambda({\bf K})\leq\mu$. 
\item By \ref{chi} (see also the remark after it), $\chi({\bf K})\leq\chi$. 
\item By \ref{chitrans}, we can equivalently define $\lambda'({\bf K})$ as the minimum stability cardinal $\xi$ such that ${\bf K}$ has $\chi({\bf K})$-local character of $\xi$-nonsplitting. 
\item ${\bf K}$ is eventually superstable ($\xi$-superstable for large enough $\xi$) iff $\chi({\bf K})=\al$.
\end{enumerate}
\end{obs}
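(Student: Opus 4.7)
The observation consists of four short items, each meant to be a quick corollary of the definitions together with facts already established in Sections 3 and 6. My plan is to verify them in order, with essentially no new machinery required.

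For (1) and (2), the arguments are unfoldings of definitions. Item (1) just says that since Assumption \ref{assum1}(2) asserts stability in $\mu\geq\ls$, the cardinal $\mu$ is itself a witness to $\lambda({\bf K})$, so $\lambda({\bf K})\leq\mu$. Item (2) is analogous: Definition \ref{chi} fixes $\chi$ to be a (the minimum) local character cardinal of $\mu$-nonsplitting, and $\mu$ is a stability cardinal $\geq\ls$. Hence the pair $(\chi,\mu)$ witnesses the condition defining $\chi({\bf K})$, so $\chi({\bf K})\leq\chi$.

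For (3), the forward direction of the equivalence is immediate: plugging $\xi'=\xi$ shows that the cardinal in the original definition automatically satisfies the weaker condition in (3). For the converse, suppose $\xi$ is a stability cardinal such that ${\bf K}$ has $\chi({\bf K})$-local character of $\xi$-nonsplitting, and let $\xi'\geq\xi$ be any stability cardinal. I would apply Lemma \ref{chitrans} with $\xi$ in place of $\mu$, which is legitimate because $\mu$-tameness upgrades to $\xi$-tameness and continuity of $\xi$-nonsplitting transfers up from continuity of $\mu$-nonsplitting (as in \ref{congeqmu}). The lemma then yields $\chi({\bf K})$-local character of $\xi'$-nonsplitting, which is what we wanted.

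For (4), the forward direction is a direct consequence of definitions: if ${\bf K}$ is $\xi$-superstable for some $\xi$ then it has $\al$-local character of $\xi$-nonsplitting, so $\chi({\bf K})\leq\al$, and regularity forces $\chi({\bf K})=\al$. For the backward direction, suppose $\chi({\bf K})=\al$. By item (3) (or directly by the original definition of $\lambda'({\bf K})$), for any stability cardinal $\xi\geq\lambda'({\bf K})$, ${\bf K}$ has $\al$-local character of $\xi$-nonsplitting; together with stability in $\xi$, this is precisely $\xi$-superstability. Then I would invoke Corollary \ref{chaincor}(1) to propagate superstability to every $\zeta\geq\xi$, giving eventual superstability. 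No step here is subtle; the one place where I would be slightly careful is in (3), making sure the hypotheses of \ref{chitrans} genuinely transfer from $\mu$ to the larger stability cardinal $\xi$, but this is a routine check using the global assumptions in \ref{assum1}.
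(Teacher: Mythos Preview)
Your arguments for items (1), (2), and (4) are fine and match the paper's intention (the paper gives no proof beyond the inline citations, so your expanded reasoning is the natural one).

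For item (3), your overall plan---run \ref{chitrans} with $\xi$ in the role of $\mu$---is exactly right, but one of your justifications is incorrect. You write that ``continuity of $\xi$-nonsplitting transfers up from continuity of $\mu$-nonsplitting (as in \ref{congeqmu})''. That is not what \ref{congeqmu} says: it extends continuity of \emph{$\mu$-nonsplitting} from chains in $K_\mu$ to chains in $K_{\geq\mu}$, but the base of the splitting is still of size $\mu$. It does not convert continuity of $\mu$-nonsplitting into continuity of $\xi$-nonsplitting for $\xi>\mu$; indeed, whether such a transfer holds is listed as an open question later in the section (\ref{lkques}(2)).

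Fortunately the fix is that continuity is not needed at all. Trace through the proof of \ref{chitrans}: it invokes \ref{gfl}, and the proof of \ref{gfl} uses only existence of nonsplitting (from stability, \ref{eeuprop}(1)) together with $\chi$-local character of $\mu$-nonsplitting; continuity of nonsplitting is never called. So to apply \ref{chitrans} with $\xi$ playing the role of $\mu$, the hypotheses you actually need are stability in $\xi$, $\xi$-tameness, and $\chi({\bf K})$-local character of $\xi$-nonsplitting---all of which you have (the last by assumption, the second from $\mu$-tameness provided $\xi\geq\mu$). Thus your conclusion stands, but drop the appeal to \ref{congeqmu} and instead note that the proof of \ref{chitrans} does not require continuity.
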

Currently we do not have a nice bound of $\lambda'({\bf K})$ so the cardinal threshold might be very high if we invoke $\lambda'({\bf K})$ or $\chi({\bf K})$. Vasey built upon \cite{sh394} and spent several sections to derive:
\begin{fact}\cite[Theorem 11.3(2)]{s19}\mylabel{hanffact}{Fact \thetheorem}
Suppose ${\bf K}$ has continuity of $\xi$-nonsplitting for all stability cardinal $\xi$, then $\lambda'({\bf K})<h(\lambda({\bf K}))$. 
\end{fact}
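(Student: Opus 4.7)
The plan is to find a stability cardinal $\xi < h(\lambda({\bf K}))$ witnessing $\chi({\bf K})$-local character of $\xi$-nonsplitting, via three stages: (i) bound $\chi({\bf K})$ above by roughly $\lambda({\bf K})$ using stability at $\lambda({\bf K})$ and the hypothesized continuity of $\lambda({\bf K})$-nonsplitting; (ii) generate stability cardinals cofinally in $[\lambda({\bf K}), h(\lambda({\bf K})))$ via the no-$\lambda({\bf K})$-order-property bound; (iii) transfer local character upward to one such stability cardinal using Lemma \ref{chitrans}.

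Stage (i). Write $\lambda_0\defeq\lambda({\bf K})$. An adaptation of Lemma \ref{394lem} applied at $\lambda_0$ in place of $\mu$ (the proof only uses stability at the ambient cardinal) produces a regular $\chi_0\leq\lambda_0$ with weak $\chi_0$-local character of $\lambda_0$-nonsplitting. The hypothesized continuity of $\lambda_0$-nonsplitting, together with Lemma \ref{bgvvlem}, upgrades this to $\chi_0$-local character of $\lambda_0$-nonsplitting (if $\lambda_0$ is singular, first ascend to a regular stability cardinal $\lambda_1\in[\lambda_0,\lambda_0^{+\omega})$ via Fact \ref{supfact}(1) and run the same argument at $\lambda_1$). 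Hence $\chi({\bf K})\leq\chi_0\leq\lambda_0$.

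Stage (ii). Since ${\bf K}$ is stable in $\lambda_0$, Fact \ref{symfact}(2) says it lacks the $\lambda_0$-order property, and Fact \ref{symfact}(1) bounds the failure length by some $\lambda^*<h(\lambda_0)$. Iterating Facts \ref{symfact}(3)--(4) via \cite[Theorem 3.1]{bon3.1} yields stability in every $\xi<h(\lambda_0)$ satisfying $\xi=\xi^{\lambda_0}$; such $\xi$ are cofinal in $h(\lambda_0)$. Fix one, $\xi_1<h(\lambda_0)$, with $\xi_1\geq\max(\mu,\lambda_1)$.

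Stage (iii). Apply Lemma \ref{chitrans} with $\lambda_1$ in place of $\mu$ ($\mu$-tameness passes to $\lambda_1$-tameness for $\lambda_1\geq\mu$, and otherwise one first transfers through the original $\mu$): this gives $\chi_0$-local character of $\xi_1$-nonsplitting. By the definition of $\chi({\bf K})$, $\chi({\bf K})\leq\chi_0$. If a witness $\xi^*$ of $\chi({\bf K})$-local character satisfies $\xi^*\leq\xi_1$, then Lemma \ref{chitrans} transfers $\chi({\bf K})$-local character up to $\xi_1$, giving $\lambda'({\bf K})\leq\xi_1<h(\lambda_0)$. If instead $\xi^*>\xi_1$ but still $\xi^*<h(\lambda_0)$, use the cofinality provided by Stage (ii) to reselect $\xi_1$ with $\xi_1\geq\xi^*$ and reduce to the previous case. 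To rule out $\xi^*\geq h(\lambda_0)$, apply Stage (i) at $\xi_1$ in place of $\lambda_0$ (using continuity of $\xi_1$-nonsplitting, by hypothesis): this produces $\chi_{\xi_1}\leq\xi_1$ with $\chi_{\xi_1}$-local character of $\xi_1$-nonsplitting. By minimality, $\chi({\bf K})\leq\chi_{\xi_1}$, and combined with the upward transfer from $\xi^*$ (whenever $\xi^*\leq\xi_1$) we conclude $\chi({\bf K})=\chi_{\xi_1}$, witnessed at $\xi_1<h(\lambda_0)$. In all cases, $\lambda'({\bf K})\leq\xi_1<h(\lambda_0)$.

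The main obstacle is the coordination in Stage (iii): the global minimum $\chi({\bf K})$ is \emph{a priori} achieved at an arbitrary stability cardinal $\xi^*$, and Lemma \ref{chitrans} only transfers local character upward. The resolution leverages two features: (a) Stage (i) already produces the concrete bound $\chi_0$-local character at $\lambda_1\leq\lambda_0^{+\omega}$, which transfers to all stability cardinals $\geq\lambda_1$, bounding the \emph{min} local character at $\xi_1$ by $\chi_0$; (b) Stage (ii) gives full flexibility in choosing $\xi_1$ anywhere cofinally below $h(\lambda_0)$, so any potentially problematic $\xi^*$ less than $h(\lambda_0)$ can be absorbed, while $\xi^*\geq h(\lambda_0)$ is ruled out by the second application of Stage (i) at $\xi_1$. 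Getting the bookkeeping between $\chi_0$, $\chi_{\xi_1}$, and $\chi({\bf K})$ exactly right is the delicate part of the proof.
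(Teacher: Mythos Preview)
The paper does not prove this statement; it is quoted as a black box from \cite[Theorem 11.3(2)]{s19}, and the text explicitly says Vasey ``spent several sections'' deriving it. So there is no in-paper proof to compare against, but your argument should still be assessed on its own.

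There is a genuine gap in Stage~(iii), precisely in the case $\xi^*\geq h(\lambda_0)$. You write that applying Stage~(i) at $\xi_1$ yields some $\chi_{\xi_1}$ with $\chi_{\xi_1}$-local character of $\xi_1$-nonsplitting, and then claim ``combined with the upward transfer from $\xi^*$ (whenever $\xi^*\leq\xi_1$) we conclude $\chi({\bf K})=\chi_{\xi_1}$.'' But you are in the case $\xi^*\geq h(\lambda_0)>\xi_1$, so the clause $\xi^*\leq\xi_1$ is false and that transfer is unavailable. All you have is $\chi({\bf K})\leq\chi_{\xi_1}$, which does not give the required equality; you need $\chi({\bf K})$-local character at $\xi_1$, not merely $\chi_{\xi_1}$-local character.

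The underlying obstacle is that the local-character transfer (Lemma~\ref{chitrans}) goes only upward in the stability cardinal, so the sequence of minimal local characters is non-increasing in $\xi$. Nothing in your toolkit prevents this sequence from being constant on $[\lambda_0,h(\lambda_0))$ at the value $\chi_0$ and then dropping strictly to $\chi({\bf K})<\chi_0$ only at some $\xi^*\geq h(\lambda_0)$. Ruling that out is exactly the nontrivial content of Vasey's result, and it requires more than the combinatorics of Lemmas~\ref{394lem}, \ref{bgvvlem}, and~\ref{chitrans}; Vasey's argument passes through the order-property analysis and the machinery developed in \cite{sh394}. Your Stages~(i) and~(ii) are fine and do establish $\chi({\bf K})\leq\lambda_0$ together with a supply of stability cardinals below $h(\lambda_0)$, but Stage~(iii) as written is circular.
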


We can now state Vasey's characterization that superstability is equivalent to stability in a tail of cardinals. Since continuity of $\mu$-nonsplitting is not assumed there, item (1) only holds for singular $\xi$. Also, the original formulation wrote $\lambda'({\bf K})$ instead of $(\lambda'({\bf K}))^+$ but the proof did not go through.
\begin{fact}\mylabel{splsupfact}{Fact \thetheorem}
Let ${\bf K}$ be $\ls$-tame with a monster model.
\begin{enumerate}
\item \cite[Corollary 4.14]{s19} Let $\chi_1$ as in \ref{bvfact}, $\xi\geq(\lambda'({\bf K}))^++\chi_1$ be singular, ${\bf K}$ be stable in unboundedly many cardinal $<\xi$. ${\bf K}$ is stable in $\xi$ iff $\cf(\xi)\geq\chi({\bf K})$.
\item \cite[Corollary 4.24]{s19} $\chi({\bf K})=\al$ iff ${\bf K}$ is stable in a tail of cardinals.
\end{enumerate}
\end{fact}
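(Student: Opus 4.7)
My plan is to derive both items from \ref{supfact}(2) together with the Boney--Vasey closure result \ref{bvfact}, with item (2) essentially reduced to item (1) applied at a singular cardinal of cofinality $\aleph_0$.

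The backward direction of item (1) (\emph{$\cf(\xi)\geq\chi({\bf K})$ implies stability in $\xi$}) I would handle by a direct invocation of \ref{supfact}(2). By definition of $\lambda'({\bf K})$, ${\bf K}$ has $\chi({\bf K})$-local character of $\lambda'({\bf K})$-nonsplitting, so in particular $\cf(\xi)$-local character, since $\cf(\xi)\geq\chi({\bf K})$ is regular. Using the unbounded stability hypothesis below $\xi$, fix a strictly increasing cofinal sequence $\langle \xi_i : i<\cf(\xi)\rangle$ of stability cardinals with $\xi_0\geq\lambda'({\bf K})$. Then \ref{supfact}(2) forces $\xi=\sup_{i<\cf(\xi)}\xi_i$ to itself be a stability cardinal.

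The forward direction of item (1) is the main obstacle. Assume $\xi$ is stable while $\cf(\xi)<\chi({\bf K})$; I aim to produce $\cf(\xi)$-local character of $\lambda'({\bf K})$-nonsplitting, contradicting the minimality of $\chi({\bf K})$. Since $\xi\geq\chi_1$ and stability holds unboundedly below $\xi$, \ref{bvfact} applies: increasing chains of $\xi$-saturated models of length $\geq\chi({\bf K})$ are $\xi$-saturated, so in particular a saturated $M^*\in K_\xi$ exists and can be resolved as $M^*=\bigcup_{i<\cf(\xi)}M^*_i$ where each $M^*_i$ is $\eta_i$-saturated for a suitable increasing sequence $\langle\eta_i\rangle$ of stability cardinals cofinal in $\xi$. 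Given a u-chain $\langle N_i:i\leq\cf(\xi)\rangle\subseteq K_{\lambda'({\bf K})}$ and $p\in\gs(N_{\cf(\xi)})$, I would embed the chain so that $N_i\leq M^*_i$, extend $p$ to $\gs(M^*)$, and realize it by some $b\in M^*$ via saturation. The $\cf(\xi)$-cofinal structure forces $b\in M^*_i$ for some $i<\cf(\xi)$; then the argument of \ref{localfact}(2), transported back to $K_{\lambda'({\bf K})}$ via $\ls$-tameness and invariance under the would-be splitting isomorphisms, shows $p$ does not $\lambda'({\bf K})$-split over $N_i$. The delicate part is synchronizing the resolution of $M^*$ with the original small u-chain so that the absorbing element $b$ actually pins down non-splitting at the base cardinal $\lambda'({\bf K})$, rather than merely at $\xi$.

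Item (2) is then routine. Forward: $\chi({\bf K})=\aleph_0$ means some stability cardinal $\zeta$ admits $\aleph_0$-local character of $\zeta$-nonsplitting, i.e.\ ${\bf K}$ is $\zeta$-superstable; the ``in particular'' clause of \ref{supfact}(2) yields stability in every $\lambda\geq\zeta$. Backward: assuming stability from some threshold $\zeta_0$ onward, pick a singular $\xi\geq(\lambda'({\bf K}))^++\chi_1+\zeta_0$ of cofinality $\aleph_0$ (for instance $\zeta_0^{+\omega}$ after shifting past the bound). Stability in $\xi$ and cofinally below $\xi$ both hold, so item (1) forces $\aleph_0=\cf(\xi)\geq\chi({\bf K})$, whence $\chi({\bf K})=\aleph_0$.
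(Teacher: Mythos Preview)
The paper does not prove this statement at all: it is stated as a \emph{Fact} with explicit citations to \cite[Corollary 4.14, Corollary 4.24]{s19}, and immediately afterward the paper moves on to its own local analog (\ref{cfcal}). So there is no ``paper's own proof'' to compare against; you have attempted to reconstruct a proof of a result the paper merely quotes.

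On the substance of your proposal: the backward direction of (1) and both directions of (2) are fine and essentially the natural arguments. The forward direction of (1), however, is where the real content lies, and your sketch has a genuine gap. You want to deduce $\cf(\xi)$-local character of $\lambda'({\bf K})$-nonsplitting from the existence of a saturated $M^*\in K_\xi$, by embedding a u-chain $\langle N_i\rangle\subseteq K_{\lambda'({\bf K})}$ into a resolution $\langle M^*_i\rangle$ of $M^*$ and then running the realization argument of \ref{localfact}(2). But \ref{localfact}(2) produces $\xi$-nonsplitting over a model in $K_\xi$, not $\lambda'({\bf K})$-nonsplitting over $N_i\in K_{\lambda'({\bf K})}$; your phrase ``transported back to $K_{\lambda'({\bf K})}$ via $\ls$-tameness and invariance'' is precisely the step that needs an argument, and you have not supplied one. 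Tameness lets you detect splitting by small witnesses, but it does not by itself let you conclude nonsplitting over the small base $N_i$ from nonsplitting over the large $M^*_i$. You also need that $b\in M^*_i$ is fixed by the relevant isomorphism, which requires $b\in N_i$ rather than merely $b\in M^*_i$ --- and there is no reason the realizing element should land in the small submodel. In Vasey's original argument this forward direction is handled differently (via his analysis of the stability spectrum and $\chi({\bf K})$), and your attempted shortcut through \ref{localfact}(2) does not go through as written.
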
 
We prove a simpler and local analog to \ref{splsupfact}. Rather than looking at the whole tail of cardinals (more accurately the class of singular cardinals with all possible cofinalities) after a potentially high threshold, we directly look for the next $\omega+1$ many cardinals of $\mu$ and verify that ${\bf K}$ has enough stability, continuity of nonsplitting and symmetry in those cardinals. Symmetry will be guaranteed by more stability.

\begin{proposition}\mylabel{cfcal}{Proposition \thetheorem}
There is $\lambda<h(\mu^{+\omega})$ such that if ${\bf K}$ is stable in $[\mu,\lambda)$ and has continuity of $\mu^{+\omega}$-nonsplitting, then it is $\mu^{+\omega}$-superstable.
\end{proposition}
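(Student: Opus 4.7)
The plan is to establish $\mu^{+\omega}$-superstability, i.e., stability at $\mu^{+\omega}$ and $\aleph_0$-local character of $\mu^{+\omega}$-nonsplitting. First I would pick $\lambda<h(\mu^{+\omega})$ with $\lambda>\mu^{+\omega}$ and with the interval $[\mu^{+\omega},\lambda)$ wide enough to apply \ref{symcor2} and \ref{unicor} at $\mu^{+\omega}$; such $\lambda$ is furnished by \ref{symfact}(5) applied at $\mu^{+\omega}$. Stability at $\mu^{+\omega}$ is then immediate from the hypothesis.

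Next, I would verify that \ref{assum1} holds with $\mu^{+\omega}$ replacing $\mu$: stability at $\mu^{+\omega}$ and continuity of $\mu^{+\omega}$-nonsplitting are hypotheses, $\mu^{+\omega}$-tameness follows from $\mu$-tameness, and \ref{chitrans} gives $\chi(\mu^{+\omega})\leq\chi\leq\mu<\mu^{+\omega}$. Applying \ref{symcor2}(1) and \ref{unicor}(1), (2) at $\mu^{+\omega}$ yields $(\mu^{+\omega},\chi(\mu^{+\omega}))$-symmetry, the uniqueness of $(\mu^{+\omega},\geq\chi(\mu^{+\omega}))$-limit models, and their $\mu^{+\omega}$-saturation.

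The main step and the main obstacle is to push the local character at $\mu^{+\omega}$ from $\chi(\mu^{+\omega})$ down to $\aleph_0$; once this is done, \ref{localcor} completes the proof. The crucial feature to exploit is $\cf(\mu^{+\omega})=\omega$: given a u-increasing continuous chain $\langle M_i:i\leq\omega\rangle\subseteq K_{\mu^{+\omega}}$ with $p\in\gs(M_\omega)$, the goal is to show that $M_\omega$ is $\mu^+$-saturated, and then \ref{localfact}(2) with $\delta=\omega<\mu^{+\omega}$ produces an $i<\omega$ over which $p$ does not $\mu^{+\omega}$-split. To secure this $\mu^+$-saturation, I would run the Sections \ref{ufsec5}--\ref{ufsec6} machinery at $\mu^+$ in parallel (the hard sub-step is deriving continuity of $\mu^+$-nonsplitting, which I would try to transfer from continuity at $\mu$ using $\mu$-tameness and the weak transitivity of \ref{eeuprop}(4)), thread through each $M_i$ a diagonal cofinal subchain of $\mu^+$-saturated submodels, and invoke \ref{s19prop} at $\xi=\mu^+$ to conclude that the union of the diagonal chain is $\mu^+$-saturated; hence so is $M_\omega$. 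Equivalently, one may first establish $\mu^{+n}$-superstability for some $n\geq1$ by the same strategy and appeal to \ref{chaincor}(1) to transfer up to $\mu^{+\omega}$.
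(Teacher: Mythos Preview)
Your setup through the second paragraph is correct and matches the paper: you rightly work at $\mu^{+\omega}$, verify \ref{assum1} there via \ref{chitrans}, and apply \ref{unicor}(2) to conclude that $(\mu^{+\omega},\geq\chi)$-limits are saturated.

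The gap is in your ``main step.'' You propose to show that an arbitrary $(\mu^{+\omega},\omega)$-limit $M_\omega$ is $\mu^+$-saturated by running the Section~\ref{ufsec5}--\ref{ufsec6} machinery at $\mu^+$, in particular invoking \ref{s19prop} with $\xi=\mu^+$. But hypothesis~(4) of \ref{s19prop} requires continuity of $\mu^+$-nonsplitting, which is \emph{not} among the hypotheses of the proposition and cannot be derived from continuity at $\mu$ or at $\mu^{+\omega}$ by any result in the paper; indeed, the paper poses exactly this transfer as an open question (\ref{lkques}(2)). Your sketch of obtaining it from $\mu$-tameness and weak transitivity does not work: \ref{eeuprop}(4) concerns a single type over a fixed universal triple and says nothing about moving continuity across cardinals. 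Your fallback of first establishing $\mu^{+n}$-superstability runs into the same wall. A secondary issue: \ref{s19prop} also requires the chain to have cofinality $\geq\chi$, and a diagonal chain threaded through an $\omega$-indexed family need not achieve this when $\chi>\aleph_0$.

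The paper's argument sidesteps all of this with a direct, elementary move that uses only stability in $[\mu,\mu^{+\omega}]$. Take the saturated model $M\in K_{\mu^{+\omega}}$ furnished by \ref{unicor}(2) and resolve it as a u-increasing union $M=\bigcup_{n<\omega} M_n$ with $M_n\in K_{\mu^{+n}}$ (possible by stability at each $\mu^{+n}$ and model-homogeneity of $M$). Separately build a $(\mu^{+\omega},\omega)$-limit $N_\omega=\bigcup_n N_n$ with $M_0\leq N_0$. A standard back-and-forth between the two $\omega$-towers (each level universal over its predecessor) yields $M\cong_{M_0}N_\omega$. By uniqueness of limit models of the same cofinality, \emph{every} $(\mu^{+\omega},\omega)$-limit is saturated, in particular $(\mu+\omega)^+$-saturated, so \ref{localfact}(2) with $\xi=\mu^{+\omega}$ and $\delta=\omega$ applies immediately. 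No continuity of nonsplitting at any intermediate cardinal is needed.
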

\begin{proof}
Obtain $\lambda$ from \ref{unicor}(2) and suppose ${\bf K}$ is stable in $[\mu,\lambda)$ and has continuity of $\mu^{+\omega}$. The conclusion of \ref{unicor}(2) (which uses stability in $\mu^{+\omega}$ and continuity of $\mu^{+\omega}$-nonsplitting) gives a saturated model $M$ of size $\mu^{+\omega}$. We show that is a $(\mu^{+\omega},\omega)$-limit: by stability in $[\mu,\mu^{+\omega})$, build $\langle M_n:n\leq\omega\rangle\subseteq K_{<\mu^{+\omega}}$ u-increasing and continuous such that for $n<\omega$, $M_n\in K_{\mu^{+n}}$ and $M_\omega=M$. On the other hand, by stability in $\mu^{+\omega}$, build $\langle N_i:i\leq\omega\rangle\subseteq K_{\mu^{+\omega}}$ u-increasing and continuous such that $M_0\leq N_0$. By a back-and-forth argument, $M\cong_{M_0}N_\omega$ and the latter is a $(\mu^{+\omega},\omega)$-limit. By uniqueness of limit models of the same cofinality, any $(\mu^{+\omega},\omega)$-limit is saturated.

By \ref{localfact}(2) where $\xi=\mu^{+\omega}$, $\delta=\al$, ${\bf K}$ has $\al$-local character of $\mu^{+\omega}$-nonsplitting. Together with stability in $\mu^{+\omega}$, we know that ${\bf K}$ is superstable in $\mu^{+\omega}$.
\end{proof}
We state a more general form of the above proposition:
\begin{corollary}\mylabel{cfcalcor}{Corollary \thetheorem}
Let $\delta$ be a regular cardinal. There is $\lambda<h(\mu^{+\delta})$ such that if ${\bf K}$ is stable in $[\mu,\lambda)$ and has continuity of $\mu^{+\delta}$-nonsplitting, then it has $\delta$-local character of $\mu^{+\delta}$-nonsplitting. Stability in $[\mu,\lambda)$ can be replaced by stability in $[\mu^{+\delta},\lambda)$ and unboundedly many cardinals below $\mu^{+\delta}$.
\end{corollary}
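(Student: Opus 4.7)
The plan is to mirror the proof of Proposition \ref{cfcal}, replacing $\omega$ by the given regular cardinal $\delta$ throughout. First I apply Corollary \ref{unicor}(2) with $\mu^{+\delta}$ in place of $\mu$: $\mu$-tameness upgrades to $\mu^{+\delta}$-tameness, stability in $[\mu^{+\delta},\lambda)$ is available under either variant of the hypothesis, continuity of $\mu^{+\delta}$-nonsplitting is assumed, and the local character $\chi$ from Assumption \ref{assum1} transfers to $\mu^{+\delta}$-nonsplitting by Lemma \ref{chitrans}. This gives $\lambda<h(\mu^{+\delta})$ together with a saturated model $M\in K_{\mu^{+\delta}}$.

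Next I show that every $(\mu^{+\delta},\delta)$-limit is saturated, following the back-and-forth step of \ref{cfcal}. Since $\delta$ is a regular infinite cardinal, $\cf(\mu^{+\delta})=\delta$, so I pick a strictly increasing sequence $\langle\mu_i:i<\delta\rangle$ of stability cardinals cofinal in $\mu^{+\delta}$: under the main hypothesis simply $\mu_i=\mu^{+i}$, and under the variant clause the unbounded supply of stability cardinals below $\mu^{+\delta}$ suffices (this is the only place in the argument where that clause is used in lieu of full $[\mu,\lambda)$-stability). Using stability at each $\mu_i$, I build a u-increasing continuous chain $\langle M_i:i\leq\delta\rangle\subseteq K_{<\mu^{+\delta}}$ with $M_i\in K_{\mu_i}$ for $i<\delta$ and $M_\delta=M$. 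Using stability in $\mu^{+\delta}$, I separately build a u-increasing continuous chain $\langle N_i:i\leq\delta\rangle\subseteq K_{\mu^{+\delta}}$ with $M_0\leq N_0$; then $N_\delta$ is by construction a $(\mu^{+\delta},\delta)$-limit. The same back-and-forth as in the proof of \ref{cfcal} yields $M\cong_{M_0}N_\delta$, so $N_\delta$ is saturated, and uniqueness of limit models of the same cofinality propagates saturation to every $(\mu^{+\delta},\delta)$-limit.

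For the final step, let $\langle P_i:i\leq\delta'\rangle\subseteq K_{\mu^{+\delta}}$ be u-increasing and continuous with $\cf(\delta')\geq\delta$, and $p\in\gs(P_{\delta'})$. A cofinality reduction brings us to $\delta'=\delta<\mu^{+\delta}$, in which case $P_\delta$ is a $(\mu^{+\delta},\delta)$-limit and therefore saturated, in particular $(\mu+\delta)^+$-saturated. Proposition \ref{localfact}(2) applied with $\xi=\mu^{+\delta}$ then produces $i<\delta$ such that $p$ does not $\mu^{+\delta}$-split over $P_i$, giving $\delta$-local character of $\mu^{+\delta}$-nonsplitting.

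The main point of care is the handling of the limit stages of the chain $\langle M_i\rangle$ when $\delta$ is uncountable: at each limit $\alpha<\delta$ the union has size $\sup_{i<\alpha}\mu_i$, which need not itself belong to the chosen set of stability cardinals, so the next u-extension must land in $K_{\mu_\alpha}$ where $\mu_\alpha$ is the next chosen stability cardinal above the union. This is a routine reindexing, but it is the only place where the variant hypothesis requires slightly more attention than in the $\delta=\omega$ case treated in \ref{cfcal}.
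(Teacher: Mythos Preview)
Your proposal is correct and follows essentially the same route as the paper: the paper's proof is simply ``replace $\omega$ by $\delta$ in \ref{cfcal}; notice that unboundedly many stability cardinals below $\mu^{+\delta}$ are sufficient to build $\langle M_i:i<\delta\rangle\subseteq K_{<\mu^{+\delta}}$ u-increasing,'' and you have carried this out in full detail, including the handling of the variant hypothesis and the care needed at limit stages of the chain $\langle M_i\rangle$. One small remark: your phrase ``a cofinality reduction brings us to $\delta'=\delta$'' is slightly imprecise (it only brings you to $\delta'$ regular), but the paper's own invocation of \ref{localfact}(2) in \ref{cfcal} is at the same level of brevity, and the gap between regular $\delta'\geq\delta$ and the single case $\delta'=\delta$ is closed by the upward closure of weak local character together with continuity of $\mu^{+\delta}$-nonsplitting (as in \ref{bgvvlem}), or alternatively by \ref{unicor}(2) for $\delta'\geq\chi$.
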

\begin{proof}
Replace ``$\omega$'' by $\delta$ in \ref{cfcal}. Notice that unboundedly stability many cardinals below $\mu^{+\delta}$ are sufficient to build $\langle M_i:i<\delta\rangle\subseteq K_{<\mu^{+\delta}}$ u-increasing.
\end{proof}
\begin{remark}
\begin{enumerate}
\item \mylabel{cfcalrmk}{Remark \thetheorem}A missing case of \ref{cfcal} is perhaps the regular cardinal $\al$. In \cite[Theorem 2]{bkv}, it was shown that if ${\bf K}$ has $\omega$-locality, $\aleph_0$-tameness and stability in $\al$, then ${\bf K}$ is stable everywhere. The original proof used a tree argument of height $\omega$. We provide an alternative proof using our general tools: by $\omega$-locality and \ref{weaker}(2), ${\bf K}$ has continuity of $\al$-nonsplitting. By \ref{394prop}, ${\bf K}$ has $\al$-local character of $\al$-nonsplitting. By \ref{chaincor}(1), it is (super)stable everywhere.
\item Our proof strategy of \ref{cfcal} is similar to that of \cite[Theorem 4.11]{s19} but we use different tools. Both assume stability in $\mu^{+\omega}$ and unboundedly many cardinals in $\mu^{+\omega}$. To obtain a saturated model, Vasey raised the threshold of $\mu$ so that the union of $\mu^{+n}$-saturated models is $\mu^{+n}$-saturated (see \ref{bvfact}). Then he used \cite[Theorem 4.13]{s19} that models in $K_{\mu^{+\omega}}$ can be closed to a $\mu^{+n}$-saturated model. These two give a saturated model in $K_{\mu^{+\omega}}$. In contrast, we bypass such gap by using the uniqueness of long enough limit models in $K_{\mu^{+\omega}}$, this immediately gives us a saturated model in $K_{\mu^{+\omega}}$. After that, Vasey and our approaches converge: the saturated model is a $(\mu^{+\omega},\omega)$-limit and \ref{localfact} gives $\al$-local character of $\mu^{+\omega}$-nonsplitting.
\end{enumerate}
\end{remark}

\begin{ques}
\begin{enumerate}
\item \mylabel{lkques}{Question \thetheorem} Perhaps under extra assumptions, is it possible to obtain a tighter bound of $\lambda'({\bf K})$ in terms of $\lambda({\bf K})$ than in \ref{hanffact}?
\item Let $\xi_1,\xi_2$ be stability cardinals. Is there any relationship between continuity of $\xi_1$-nonsplitting and continuity of $\xi_2$-nonsplitting? Similarly, can one say anything about continuity of $\xi_1$-nonsplitting if for unboundedly many stability cardinal $\xi<\xi_1$, ${\bf K}$ has continuity of $\xi$-nonsplitting? A positive answer might help improve \ref{cfcal}.
\end{enumerate}
\end{ques}

In \cite[Section 7]{BG}, Boney and Grossberg developed a $U$-rank for an independence relation over types of arbitrary length. Until \ref{BGfact2}, we specify that we only need an independence relation over 1-types for the proofs to go through.
\begin{definition}\cite[Definition 7.2]{BG}
Let ${\bf K}$ have a monster model and an independence relation over types of length one. $U$ is a class function that maps each Galois type (of length one) in the monster model to an ordinal or $\infty$, such that for any $M\in K$, $p\in\gs(M)$, 
\begin{enumerate}
\item $U(p)\geq0$;
\item For limit ordinal $\alpha$, $U(p)\geq\alpha$ iff $U(p)\geq\beta$ for all $\beta<\alpha$;
\item For an ordinal $\beta$, $U(p)\geq\beta+1$ iff there is $M'\geq M$, $\nr{M'}=\nr{M}$ and $p'\in\gs(M')$ such that $p'$ is a forking (in the sense of the given independence relation) extension of $p$ and $U(p')\geq\beta$;
\item For an ordinal $\alpha$, $U(p)=\alpha$ iff $U(p)\geq\alpha$ but $U(p)\not\geq\alpha+1$;
\item $U(p)=\infty$ iff $U(p)\geq\alpha$ for all ordinals $\alpha$.
\end{enumerate}
\end{definition}
Through a series of lemmas, they managed to obtain the following fact (\ref{assum1} is not needed). 
\begin{fact}\cite[Theorem 7.9]{BG}\mylabel{BGfact}{Fact \thetheorem}
Let ${\bf K}$ have a monster model and an independence relation over types of length one. Suppose the independence relation satisfies invariance and monotonicity.
Let $M\in K$ and $p\in\gs(M)$. The following are equivalent:
\begin{enumerate}
\item  $U(p)=\infty$;
\item There is $\langle p_n:n<\omega\rangle$ such that $p_0=p$ and for $n<\omega$, the domain of $p_n$ has size $\nr{M}$, and $p_{n+1}$ is a forking extension of $p_n$. 
\end{enumerate}
\end{fact}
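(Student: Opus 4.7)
The plan is to prove the two directions separately, with the bulk of the work in (1)$\Rightarrow$(2).

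For (2)$\Rightarrow$(1), I would argue by contradiction. Suppose $\langle p_n:n<\omega\rangle$ exists and $U(p_0)=\alpha_0<\infty$. The decisive observation is a direct consequence of clause (3) of the $U$-rank definition: if $p_{n+1}$ is a forking extension of $p_n$ and $U(p_{n+1})\geq\beta$, then taking $\beta:=U(p_{n+1})$ yields $U(p_n)\geq U(p_{n+1})+1$. Inductively, if $U(p_n)$ is an ordinal then so is $U(p_{n+1})$, and $U(p_n)>U(p_{n+1})$. Starting from $U(p_0)=\alpha_0$, this produces an infinite strictly descending sequence $\alpha_0>\alpha_1>\cdots$ in the ordinals, a contradiction. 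Hence $U(p_0)=\infty$.

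For (1)$\Rightarrow$(2), the plan is to iterate the following key lemma: if $M\in K$, $p\in\gs(M)$ and $U(p)=\infty$, then there exists $M'\geq M$ with $\nr{M'}=\nr{M}$ and a forking extension $p'\in\gs(M')$ of $p$ such that $U(p')=\infty$. Once the lemma is in hand, recursively set $p_0:=p$ and, given $p_n\in\gs(M_n)$ with $\nr{M_n}=\nr{M}$ and $U(p_n)=\infty$, apply the lemma to obtain $p_{n+1}$. To prove the lemma, argue by contradiction: assume every forking extension $p'$ of $p$ over a model of size $\nr{M}$ satisfies $U(p')<\infty$. Using invariance, $U(p')$ depends only on the isomorphism type of $(M',p')$ over $M$, and there are only set-many such isomorphism types (bounded, say, by $2^{\nr{M}+|\llk|}$ in the monster model). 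Hence the collection
\[
\Delta:=\{U(p'):p'\text{ is a forking extension of }p\text{ with }\nr{\operatorname{dom}(p')}=\nr{M}\}
\]
is a set of ordinals, and $\gamma:=\sup\Delta$ is an ordinal. Clause (3) of the definition then forbids $U(p)\geq\gamma+2$, since that would require a forking extension of $U$-rank $\geq\gamma+1\notin\Delta$. Thus $U(p)\leq\gamma+1<\infty$, contradicting $U(p)=\infty$.

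The main obstacle is the set-theoretic bookkeeping in the lemma: turning the a priori class of forking extensions into a set so that $\sup\Delta$ makes sense. Invariance is exactly what allows this reduction, and monotonicity is used implicitly to ensure that $U$-rank behaves coherently under the restriction to representatives inside the monster model. The remaining steps — the descending-ordinals contradiction for (2)$\Rightarrow$(1) and the recursive construction for (1)$\Rightarrow$(2) — are then routine.
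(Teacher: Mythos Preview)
Your proposal is correct and close in spirit to the paper's argument, but the packaging differs in two places.

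For (2)$\Rightarrow$(1), the paper argues directly: by induction on $\alpha$ one shows $U(p_n)\geq\alpha$ for every $n<\omega$ simultaneously (the successor step uses that $p_{n+1}$ is a forking extension of $p_n$ with $U(p_{n+1})\geq\alpha$ to get $U(p_n)\geq\alpha+1$). Your descending-ordinals contradiction is the contrapositive of the same idea and is equally valid.

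For (1)$\Rightarrow$(2), the paper isolates a uniform boundedness lemma: for each $\lambda\geq\ls$ there is $\alpha_\lambda<(2^\lambda)^+$ such that for any $M\in K_\lambda$ and $p\in\gs(M)$, $U(p)\geq\alpha_\lambda$ implies $U(p)=\infty$. This is proved by showing there are no gaps in the $U$-rank (if $\beta<U(q)<\infty$ then $q$ has a forking extension of rank exactly $\beta$), and the no-gap argument itself invokes (2)$\Rightarrow$(1). One then iterates: from $U(p_n)=\infty\geq\alpha_\lambda+1$ get a forking extension $p_{n+1}$ with $U(p_{n+1})\geq\alpha_\lambda$, hence $U(p_{n+1})=\infty$. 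Your argument instead bounds, for a fixed $p$, the supremum $\gamma$ of ranks of its forking extensions and derives $U(p)\leq\gamma+1$. Both use invariance in the same way (to turn a class of ranks into a set); the paper's version yields a reusable uniform bound, while yours is more direct for the purpose at hand. One minor point: your remark that monotonicity is ``used implicitly'' is vague---in fact your argument as written does not seem to need monotonicity, whereas the paper's no-gap argument does use it to ensure the iterated forking extensions remain forking over the original base.
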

The original proof proceeds with a lemma followed by the theorem statement. Since the proof of the lemma omitted some details, and that the lemma and the theorem made reference to each other, we straighten the proof as follows:
\begin{lemma}\mylabel{BGlem1}{Lemma \thetheorem}
(2)$\Rightarrow$(1) holds in \ref{BGfact}.
\end{lemma}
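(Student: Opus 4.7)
The plan is to prove $(2)\Rightarrow(1)$ by a direct transfinite induction on ordinals $\alpha$, showing that the following uniform statement holds: for every model $N\in K$ and every $q\in\gs(N)$ that admits a sequence $\langle q_n:n<\omega\rangle$ as in (2) (with $q_0=q$, all $q_n$ having domain of size $\nr{N}$, and each $q_{n+1}$ a forking extension of $q_n$), we have $U(q)\geq\alpha$. Once this is established for every $\alpha$, clause (5) of the definition of $U$ gives $U(p)=\infty$.

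For the induction, the base case $\alpha=0$ is clause (1) of the definition. For a limit $\alpha$, clause (2) reduces the claim to the inductive hypothesis at each $\beta<\alpha$. The key case is the successor $\alpha=\beta+1$. Given $q$ with a witnessing sequence $\langle q_n:n<\omega\rangle$, observe that the tail $\langle q_{n+1}:n<\omega\rangle$ is again a sequence of the required shape starting from $q_1$: every $q_{n+1}$ has domain of size $\nr{N}=\nr{\oop{dom}(q_1)}$ (since all domains in the original sequence are of size $\nr{N}$), and each step remains forking. The inductive hypothesis applied to $q_1$ therefore yields $U(q_1)\geq\beta$. Since $q_1$ is a forking extension of $q=q_0$ over a model $N_1\geq N$ with $\nr{N_1}=\nr{N}$, clause (3) of the $U$-rank definition delivers $U(q)\geq\beta+1=\alpha$, completing the inductive step.

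Only invariance and monotonicity of the independence relation, together with the formal clauses (1)--(5) of the $U$-rank definition, are used; no structural assumption beyond those hypothesized in Fact~\ref{BGfact} is needed, and in particular \ref{assum1} plays no role here. I do not expect any serious obstacle: the argument is just ordinal induction combined with the ``shift by one'' trick that turns an infinite forking chain from $q$ into an infinite forking chain from $q_1$. The only point requiring a small verification is that the domain of $q_1$ indeed has the same cardinality as that of $q_0$, which is built into hypothesis (2).
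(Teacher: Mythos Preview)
Your proof is correct and is essentially the same as the paper's. The paper fixes the chain $\langle p_n:n<\omega\rangle$ and proves by induction on $\alpha$ that $U(p_n)\geq\alpha$ for \emph{all} $n<\omega$ simultaneously, whereas you phrase the inductive statement as ranging over all $q$ admitting such a chain and apply the inductive hypothesis to $q_1$ via the shifted tail; these are two packagings of the identical ``shift by one'' argument.
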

\begin{proof}
By induction on each ordinal $\alpha$, we show that for each $\alpha$, for each $n<\omega$, $U(p_n)\geq\alpha$. The base case $\alpha=0$ is by the definition of $U$. The limit case follows from the inductive hypothesis. Suppose we have proven the case $\alpha$, then for each $n<\omega$, inductive hypothesis gives $U(p_{n+1})\geq \alpha$. By the definition of $U$, $U(p_n)\geq\alpha+1$.
\end{proof}
\begin{lemma}\mylabel{BGlem2}{Lemma \thetheorem}
Let ${\bf K}$ have a monster model and an independence relation over types of length one. Suppose the independence relation satisfies invariance and monotonicity. Let $\lambda\geq\ls$. There is an ordinal $\alpha_\lambda<(2^\lambda)^+$ such that for $M\in K_\lambda$, $p\in\gs(M)$, if $U(p)\geq\alpha_\lambda$ then $U(p)=\infty$.
\end{lemma}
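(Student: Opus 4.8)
The plan is to run the standard ``the $U$-rank is bounded by the number of types'' argument in the Galois-type setting. First I would note that $U$ is invariant: it is defined by transfinite recursion purely from the given independence relation together with $\gtp$ and model cardinality, each of which is invariant (the independence relation by hypothesis), so an induction on the defining clauses yields $U(f(p))=U(p)$ for every isomorphism $f$. This permits working with isomorphism types of pairs $(M,p)$ where $M\in K_\lambda$ and $p\in\gs(M)$; call the collection of these $\mathcal{P}_\lambda$. The counting input is $\card{\mathcal{P}_\lambda}\le 2^\lambda$: there are at most $2^\lambda$ models in $K_\lambda$ up to isomorphism (the language has size $\le\ls\le\lambda$, so there are $\le 2^\lambda$ structures on a fixed set of size $\lambda$), and $\card{\gs(M)}\le 2^\lambda$ for each $M\in K_\lambda$ (a Galois type over $M$ is coded by the isomorphism type over $M$ of a one-point extension of size $\lambda$). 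By invariance, $U$ induces a well-defined map $\bar U\colon\mathcal{P}_\lambda\to\mathrm{Ord}\cup\{\infty\}$.

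Next, for each ordinal $\alpha$ I would set $S_\alpha\defeq\{x\in\mathcal{P}_\lambda:\bar U(x)\ge\alpha\text{ or }\bar U(x)=\infty\}$, so $S_0=\mathcal{P}_\lambda$, the sequence $\langle S_\alpha\rangle$ is $\subseteq$-decreasing, and it is continuous at limit stages. The crucial step is the claim that if $S_\alpha=S_{\alpha+1}$ for some $\alpha$, then $\bar U(x)=\infty$ for every $x\in S_\alpha$. Given a representative $(M_0,p_0)$ of such an $x$, I would construct an $\omega$-chain of forking extensions: if $[(M_n,p_n)]\in S_\alpha=S_{\alpha+1}$ then $U(p_n)\ge\alpha+1$, so by clause (3) of the definition of $U$ there are $M_{n+1}\ge M_n$ with $\nr{M_{n+1}}=\nr{M_n}=\lambda$ and a forking extension $p_{n+1}\in\gs(M_{n+1})$ of $p_n$ with $U(p_{n+1})\ge\alpha$, whence $[(M_{n+1},p_{n+1})]\in S_\alpha=S_{\alpha+1}$ and the recursion continues. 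The chain $\langle p_n:n<\omega\rangle$ witnesses clause (2) of \ref{BGfact}, so \ref{BGlem1} gives $U(p_0)=\infty$, proving the claim. It follows that once $\langle S_\alpha\rangle$ fails to strictly decrease it is constant, equal to $\{x\in\mathcal{P}_\lambda:\bar U(x)=\infty\}$.

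Finally I would let $\alpha_\lambda$ be the least ordinal with $S_{\alpha_\lambda}=S_{\alpha_\lambda+1}$. Then $\langle S_\alpha:\alpha\le\alpha_\lambda\rangle$ strictly decreases below $\alpha_\lambda$, so choosing an element of $S_\beta\setminus S_{\beta+1}$ for each $\beta$ with $\beta+1\le\alpha_\lambda$ produces pairwise distinct elements of $\mathcal{P}_\lambda$; hence $\alpha_\lambda$ has at most $2^\lambda$ successor ordinals below it, forcing $\alpha_\lambda<(2^\lambda)^+$. For $M\in K_\lambda$ and $p\in\gs(M)$ with $U(p)\ge\alpha_\lambda$ we get $[(M,p)]\in S_{\alpha_\lambda}=\{x:\bar U(x)=\infty\}$, so $U(p)=\infty$, which is the assertion. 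The only genuinely delicate points are verifying that $\mathcal{P}_\lambda$ is a set of size at most $2^\lambda$ (the Galois-type counting bound) and the elementary ordinal observation that a strictly decreasing chain of subsets of a set of size $\kappa$ has length less than $\kappa^+$; neither is hard, but both require a little care, and the argument leans on \ref{BGlem1}, which is available here since it was established independently above.
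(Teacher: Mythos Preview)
Your proof is correct and follows the same underlying idea as the paper's: bound the number of possible $U$-ranks by $2^\lambda$ via invariance, and use \ref{BGlem1} to show that once the rank is ``too large'' it must be $\infty$.

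The execution differs slightly. You run a decreasing-filtration argument on the sets $S_\alpha=\{x:\bar U(x)\ge\alpha\}$ and take $\alpha_\lambda$ to be the first stabilization point; the paper instead proves a \emph{no-gap} statement (every ordinal below a realized finite $U$-rank is itself realized as the $U$-rank of a forking extension) and then observes that the downward-closed set of realized finite ranks is an ordinal of cardinality at most $2^\lambda$. The paper's route uses monotonicity explicitly (to propagate ``no forking extension of rank exactly $\beta$'' along the $\omega$-chain), whereas your stabilization hypothesis $S_\alpha=S_{\alpha+1}$ is global and so the inductive step goes through without appealing to monotonicity. Conversely, the paper's no-gap lemma is a slightly sharper intermediate statement. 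Either way the content is the same and both arguments lean on \ref{BGlem1} at the key point.
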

\begin{proof}
By invariance, there are at most $2^\lambda$ many $U$-ranks of types over models of size $\lambda$. It suffices to show that there is no gap in the $U$-rank: if $\beta$ is an ordinal, $N\in K_\lambda$, $q\in\gs(N)$ with $\beta<U(q)<\infty$, then there is a forking extension $q'$ of $q$ (with domain of size $\lambda$) such that $U(q')=\beta$. Otherwise pick a counterexample $q\in\gs(N)$. Since $U(q)\geq\beta+1$, there is a forking extension $q_1$ of $q$ such that $U(q_1)\geq\beta$. As $U(q_1)$ cannot be $\beta$, $U(q_1)\geq\beta+1$. Using monotonicity of forking, we can inductively build $\langle q_n:n<\omega\rangle$ with $q_0\defeq q$ and for $n<\omega$, $q_{n+1}$ is a forking extension of $q_n$. By \ref{BGlem1}, $U(q_0)=U(q)=\infty$, contradicting the assumption on $U(q)$.
\end{proof}
\begin{lemma}\mylabel{BGlem3}{Lemma \thetheorem}
Let ${\bf K}$ have a monster model and an independence relation over types of length one. Suppose the independence relation satisfies invariance and monotonicity.
Then (1)$\Rightarrow$(2) in \ref{BGfact} holds.
\end{lemma}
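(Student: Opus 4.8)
The plan is to produce the sequence $\langle p_n:n<\omega\rangle$ by a straightforward recursion, carrying along the invariant that each $p_n$ is a Galois type of length one over a model $M_n\in K_\lambda$, where $\lambda\defeq\nr{M}$, with $p_0=p$, $p_{n+1}$ a forking extension of $p_n$, and crucially $U(p_n)=\infty$. The base case is immediate: set $M_0\defeq M$, $p_0\defeq p$, and the invariant holds by the hypothesis (1) of \ref{BGfact}.

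For the recursion step, I would fix the ordinal $\alpha_\lambda<(2^\lambda)^+$ supplied by \ref{BGlem2} and argue as follows. Given $p_n\in\gs(M_n)$ with $M_n\in K_\lambda$ and $U(p_n)=\infty$, clause (5) of the definition of the $U$-rank gives $U(p_n)\geq\alpha_\lambda+1$, so by clause (3) there are $M_{n+1}\geq M_n$ with $\nr{M_{n+1}}=\nr{M_n}=\lambda$ and a forking extension $p_{n+1}\in\gs(M_{n+1})$ of $p_n$ with $U(p_{n+1})\geq\alpha_\lambda$. Now I invoke \ref{BGlem2}: since $M_{n+1}\in K_\lambda$ and $U(p_{n+1})\geq\alpha_\lambda$, we get $U(p_{n+1})=\infty$, so the invariant is maintained. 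Iterating through all $n<\omega$ yields a sequence witnessing condition (2) of \ref{BGfact}, since the domains all have size $\lambda=\nr{M}$ and consecutive types are forking extensions.

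The entire content of the argument is concentrated in \ref{BGlem2}, i.e.\ in the fact that the $U$-rank has no gaps below $\infty$ among types over models of a fixed size: without it, one application of clause (3) might only guarantee a forking extension of some rank below $\alpha_\lambda$, after which the recursion could stall. Once that ``no gap'' fact is in hand, the step above feeds $U(p_{n+1})\geq\alpha_\lambda$ back in to recover $U(p_{n+1})=\infty$, and the only remaining points to check are the bookkeeping ones (the models stay in $K_\lambda$, the extensions are genuinely forking, the sequence is honestly $\omega$-indexed), all of which are routine and use only invariance and monotonicity of the independence relation. I do not anticipate any real obstacle beyond correctly setting up the recursion and citing \ref{BGlem2}.
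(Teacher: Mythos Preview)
Your proposal is correct and is essentially identical to the paper's own proof: both fix $\alpha_\lambda$ from \ref{BGlem2}, carry the invariant $U(p_n)=\infty$, use $U(p_n)\geq\alpha_\lambda+1$ to extract a forking extension $p_{n+1}$ with $U(p_{n+1})\geq\alpha_\lambda$, and then reapply \ref{BGlem2} to recover $U(p_{n+1})=\infty$.
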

\begin{proof}
Let $\lambda=\nr{M}$, $\alpha_\lambda$ as in \ref{BGlem2} and $p_0\defeq p$. Define $\langle p_n:n<\omega\rangle$ inductively such that $U(p_n)=\infty$. The base case is by assumption on $p$. Suppose $p_n$ is constructed with $U(p_n)=\infty$, then in particular $U(p_n)\geq\alpha_\lambda+1$. By definition of $U$, there is a forking extension $p_{n+1}$ of $p_n$ (with domain of size $\lambda$) such that $U(p_{n+1})\geq\alpha_\lambda$. By \ref{BGlem2} again, $U(p_{n+1})=\infty$.
\end{proof}
\begin{proof}[Proof of \ref{BGfact}]
Combine \ref{BGlem1} and \ref{BGlem3}.
\end{proof}
We have now arrived at an alternative characterization of superstability. At the end of \cite[Section 6]{GV}, they suggested the use of coheir and show that superstability implies bounded $U$-rank. Since we cannot verify the claim, we use instead $\mu$-nonforking as the independence relation to characterize superstability as bounded $U$-rank for limit models in $K_\mu$.
\begin{corollary}\mylabel{urankcor}{Corollary \thetheorem}
Under \ref{assum1}, restrict $\mu$-nonforking to limit models in $K_\mu$ ordered by $\leq_u$. Then ${\bf K}$ is $\mu$-superstable iff $U(p)<\infty$ for all $p\in\gs(M)$ and limit model $M\in K_\mu$.
\end{corollary}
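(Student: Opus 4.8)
For the plan I would separate the two implications, using Fact~\ref{BGfact} as the bridge between boundedness of the $U$-rank and the nonexistence of an infinite chain of $\mu$-forking extensions of a $1$-type, together with the observation that under \ref{assum1} (which gives stability in $\mu$) being $\mu$-superstable is exactly the statement that $\al$ is a local character cardinal of $\mu$-nonsplitting, i.e.\ $\chi=\al$.

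For the forward direction, assume ${\bf K}$ is $\mu$-superstable, so $\chi=\al$, and suppose towards a contradiction that $U(p)=\infty$ for some $p\in\gs(M)$ with $M$ a limit model in $K_\mu$. Since $\mu$-nonforking restricted to limit models ordered by $\leq_u$ is invariant and monotone, Fact~\ref{BGfact} (in the direction \ref{BGlem3}) produces $\langle p_n:n<\omega\rangle$ with $p_0=p$, $p_n\in\gs(M_n)$, $\nr{M_n}=\mu$, $M_0=M$, $M_n<_uM_{n+1}$, and each $p_{n+1}$ a $\mu$-forking extension of $p_n$; since $M_0$ is a limit model, each $M_n$ is a limit model. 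Using \cite[Proposition 5.2]{bonext} (as in the remark following \ref{condef}) I may assume the $p_n$ form a coherent sequence, so they have a direct limit $p_\omega\in\gs(M_\omega)$ with $M_\omega\defeq\bigcup_{n<\omega}M_n$, and $\langle M_n:n\leq\omega\rangle$ is u-increasing and continuous with $M_\omega$ a limit model. By local character $\al$ of $\mu$-nonforking (\ref{gfl}, valid since $\chi=\al$) there is $n<\omega$ with $p_\omega$ not $\mu$-forking over $M_n$; by monotonicity $p_{n+1}=p_\omega\restriction M_{n+1}$ does not $\mu$-fork over $M_n$, contradicting that $p_{n+1}$ is a $\mu$-forking extension of $p_n$. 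Hence $U(p)<\infty$.

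For the converse, assume $U(p)<\infty$ for all $p\in\gs(M)$ with $M$ a limit model in $K_\mu$, and suppose ${\bf K}$ is not $\mu$-superstable. As ${\bf K}$ is stable in $\mu$ by \ref{assum1}, $\al$ is not a local character cardinal of $\mu$-nonsplitting, so $\chi>\al$; by \ref{bgvvlem} together with the minimality of $\chi$ and continuity of $\mu$-nonsplitting (see the remark following \ref{chi}), $\al$ is not even a weak local character cardinal. So there are a u-increasing and continuous $\langle M_i:i\leq\delta\rangle\subseteq K_\mu$ and $p\in\gs(M_\delta)$ with $p\restriction M_{i+1}$ $\mu$-splitting over $M_i$ for all $i<\delta$; restricting $p$ to the $\omega$-th model of this chain I may take $\delta=\omega$, and after replacing $M_0$ by a $(\mu,\omega)$-limit sitting inside $M_1$ and shifting the chain by one (monotonicity of nonsplitting recovers the splitting at the new bottom) I may assume every $M_n$ is a limit model in $K_\mu$. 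Put $N_n\defeq M_{n+1}$ and $q_n\defeq p\restriction M_{n+1}$ for $n<\omega$: then $N_n<_uN_{n+1}$, each $N_n$ is a limit model, and $q_{n+1}$ extends $q_n$. Moreover $q_{n+1}=p\restriction M_{n+2}$ $\mu$-forks over $N_n=M_{n+1}$: if some $M'<_uM_{n+1}$ in $K_\mu$ witnessed that $q_{n+1}$ does not $\mu$-split over $M'$, then monotonicity of nonsplitting (\ref{monspl}) would force $q_{n+1}$ not to $\mu$-split over $M_{n+1}$, contrary to the choice of the chain. Thus $\langle q_n:n<\omega\rangle$ is an infinite chain of $\mu$-forking extensions, whence $U(q_0)=\infty$ by Fact~\ref{BGfact} (in the direction \ref{BGlem1}); but $q_0\in\gs(M_1)$ with $M_1$ a limit model in $K_\mu$, contradicting the hypothesis. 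Therefore ${\bf K}$ is $\mu$-superstable.

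The step I expect to be the main obstacle is in the converse: turning the failure of the $\al$-(weak) local character of $\mu$-nonsplitting into a genuine infinite chain of $\mu$-\emph{forking} extensions whose bases all lie in the class over which $U$ is considered (limit models in $K_\mu$), so that the $U$-rank hypothesis applies. The crucial point is that $\mu$-splitting of $q_{n+1}$ over the link $M_{n+1}$ already upgrades to $\mu$-forking over it via monotonicity of nonsplitting, which rules out every potential witnessing submodel below $M_{n+1}$; the remaining points—ensuring all the $M_n$ are limit models (the shift above) and, in the forward direction, the passage to the direct limit $p_\omega$ handled by \cite[Proposition 5.2]{bonext}—are routine given the machinery already in place.
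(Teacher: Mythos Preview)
Your forward direction is correct and matches the paper's approach (invoking \ref{gfl} after passing to the direct limit $p_\omega$; the paper phrases this as the equivalence via \ref{BGfact} and then cites \ref{gfl}).

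In the converse, the key observation---that $p\restriction M_{n+2}$ $\mu$-splitting over $M_{n+1}$ already forces $p\restriction M_{n+2}$ to $\mu$-fork over $M_{n+1}$, via monotonicity of nonsplitting---is correct and is the heart of the matter. However, your step ``after replacing $M_0$ by a $(\mu,\omega)$-limit sitting inside $M_1$ and shifting the chain by one \dots\ I may assume every $M_n$ is a limit model'' does not do what you claim: replacing $M_0$ and re-indexing leaves $M_1, M_2, \ldots$ unchanged, and being universal over a predecessor (even a limit-model predecessor) does not make a model a limit model. Since the $U$-rank here is computed with respect to $\mu$-nonforking \emph{restricted to limit models ordered by $\leq_u$}, the forking chain you build does not lie in the domain of that independence relation, so \ref{BGfact} is not applicable. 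You correctly flagged this as the main obstacle, but the proposed fix does not resolve it.

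A repair: work from the \emph{strong} failure of $\al$-local character (so $p\in\gs(M_\omega)$ $\mu$-splits over every $M_n$), and for each $n$ use $M_{n+1}>_u M_n$ to find a $(\mu,\omega)$-limit $M_n'$ over $M_n$ with $M_n\leq M_n'\leq M_{n+1}$. Then $\langle M_n':n<\omega\rangle$ is u-increasing with union $M_\omega$, each $M_n'$ is a limit model, and since $M_n'\leq M_{n+1}$ while $p$ $\mu$-splits over $M_{n+1}$, monotonicity (\ref{monspl}) gives that $p$ $\mu$-splits---hence $\mu$-forks, by your own argument---over every $M_n'$. Now run the \ref{bgvvlem}-style extraction using continuity of nonforking (\ref{gfc}) on this new chain to obtain the infinite forking chain over limit models. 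The paper packages the converse into \ref{gfe}, \ref{gfu} and \ref{getloc} rather than arguing directly as you do; your route is fine once this interleaving step is inserted.
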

\begin{proof}
By \ref{BGfact}, we need to show $\mu$-superstability is equivalent to the negation of criterion (2) there. By continuity of $\mu$-nonforking (\ref{gfc}) and the proof of \ref{bgvvlem}, it suffices to prove that $\mu$-superstability is equivalent to $\mu$-nonforking having local character $\al$ (under $AP$ it is always possible to extend an omega-chain of types). 
The forward direction is given by \ref{gfl} and the backward direction is given by \ref{gfe}, \ref{gfu} and \ref{getloc}.
\end{proof}

We look at one more result of $U$-rank, which shows the equivalence of being a nonforking extension and having the same $U$-rank (\ref{BGfact2}). The extra assumption of $\ls$-witness property for singletons was pointed out by \cite[Lemma 8.8]{GM} to allow the proof of monotonicity of $U$-rank \cite[Lemma 7.3]{BG} to go through. We will adapt their definition of $\ls$-witness property for singletons because our nonforking is originally defined for model-domains while their independence relations assume set-domains (another approach is perhaps to work in the closure (\ref{clodef}) of nonforking, but we will not pursue it here).  
\begin{definition}
\begin{enumerate}
\item Let $\lambda$ be a cardinal. An independence relation $\fork$ has the $\lambda$-witness property if the following holds: let $a$ be a singleton and $M,N\in K$. If for any $M'$ with $M\leq M'\leq N$, $\nr{M'}\leq\nr{M}+\lambda$, we have $\fk{a}{M}{M'}$, then $\fk{a}{M}{N}$.
\item An independence relation satisfies left transitivity if the following holds: let $A$ be a set, $M_0\leq M_1\leq N$ with $\fk{A}{M_1}{N}$ and $\fk{M_1}{M_0}{N}$, then $\fk{A}{M_0}{N}$.
\end{enumerate}
\end{definition}
\begin{fact}\cite[Theorem 7.7]{BG}\mylabel{BGfact2}{Fact \thetheorem}
Let ${\bf K}$ have a monster model and an independence relation over types of arbitrary length. Suppose the independence relation satisfies: invariance, monotonicity, left transitivity, existence, extension, uniqueness, symmetry and $\ls$-witness property for singletons.  For any $p\in\gs(M_0)$, any $q\in\gs(M_1)$ extending $p$ such that both $U(p),U(q)<\infty$, then 
$$U(p)=U(q)\Leftrightarrow q\text{ is a nonforking extension of }p$$
\end{fact}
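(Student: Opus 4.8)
The final statement to prove is Fact~\ref{BGfact2} (\cite[Theorem 7.7]{BG}): under the listed hypotheses on a (long-type) independence relation with finite $U$-ranks, $U(p)=U(q)$ iff $q$ is a nonforking extension of $p$.

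\medskip

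\textbf{The plan.} The backward direction is the ``nonforking extensions preserve $U$-rank'' half, and I would prove $U(p)=U(q)$ by showing both $U(q)\leq U(p)$ and $U(q)\geq U(p)$. The inequality $U(q)\leq U(p)$ is just monotonicity of the $U$-rank (the fact that it is well-defined and decreases along extensions, which in turn relies on invariance and monotonicity of the independence relation together with the $\ls$-witness property for singletons, as noted by \cite[Lemma 8.8]{GM} and \cite[Lemma 7.3]{BG}): any extension, forking or not, has $U$-rank no bigger. For the reverse inequality $U(q)\geq U(p)$, I would argue by induction on ordinals $\alpha$ that $U(p)\geq\alpha$ implies $U(q)\geq\alpha$, using the characterization of $U\geq\beta+1$ in terms of the existence of a forking extension of the same-size domain: given a forking extension $p'$ of $p$ over some $M_0'\geq M_0$ of size $\nr{M_0}$ with $U(p')\geq\beta$, one pushes $p'$ up past $M_1$ — amalgamate $M_0'$ and $M_1$ over $M_0$, extend $q$ (via the extension property) to a type over a model containing this amalgam, and use left transitivity plus monotonicity to check that the resulting extension of $q$ still forks over $M_1$; the inductive hypothesis (applied at the smaller size, or an appeal to finiteness of ranks) gives that this forking extension of $q$ has $U$-rank $\geq\beta$, whence $U(q)\geq\beta+1$. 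Here the nonforking of $q$ over $M_0$, together with uniqueness, is what guarantees the amalgamated picture is coherent and that forking is genuinely transmitted.

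\medskip

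\textbf{The forward direction.} Suppose $U(p)=U(q)$; I want to show $q$ does not fork over $M_0$. Suppose toward a contradiction that $q$ \emph{does} fork over $M_0$. By the extension property, extend $p$ to a \emph{nonforking} extension $q'\in\gs(M_1)$. By the backward direction already proved, $U(q')=U(p)=U(q)$. Now $q$ and $q'$ are two types over the same model $M_1$, both restricting to $p$ over $M_0$, with $q$ forking over $M_0$ and $q'$ not. The idea is that a forking extension must drop the $U$-rank strictly — i.e. $U(q)<U(q')$ — which contradicts $U(q)=U(q')$. To see the strict drop: since $q$ forks over $M_0$ and $U(q)<\infty$, one builds from $q$ a finite forking chain witnessing, via the $U\geq\beta+1$ clause and an application of \ref{BGfact} (or directly its proof), that $U(p)=U(q'|_{M_0})$ counts the forking extension $q$ as contributing at least one more level, so $U(p)\geq U(q)+1$; combined with $U(q)=U(p)$ this is absurd. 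The cleanest route is: because $q$ forks over $M_0$ with finite rank, $U(q)\geq\alpha$ for exactly the ordinals $\alpha\leq U(q)$, and then $U(p)\geq U(q)+1$ directly from the definition of $U(p)\geq\beta+1$ applied with $\beta = U(q)$ and the forking extension $q$ of $p$.

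\medskip

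\textbf{Main obstacle.} I expect the delicate point to be the inductive step of $U(q)\geq U(p)$ in the backward direction: transporting a forking extension $p'$ of $p$ across $M_1$ and certifying that the transported type still forks over $M_1$ (not merely over $M_0$). This is exactly where left transitivity and the symmetry/uniqueness hypotheses are needed, and where the $\ls$-witness property for singletons is used to keep everything at the right cardinality so that the inductive hypothesis applies. Getting the amalgamation square and the bookkeeping of domains right — and ruling out that the transported type accidentally becomes nonforking over $M_1$ — is the technical heart; the rest is the standard $U$-rank ordinal induction as in \ref{BGlem1}.
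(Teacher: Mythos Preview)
Your overall architecture matches the paper's: the forward direction is immediate from the definition of $U$ (with the $\ls$-witness property handling the cardinality mismatch between $\nr{M_0}$ and $\nr{M_1}$, which you should make explicit), and the backward direction is an ordinal induction showing $U(p)\geq\alpha\Leftrightarrow U(q)\geq\alpha$. The direction $U(q)\geq\alpha+1\Rightarrow U(p)\geq\alpha+1$ is indeed handled by monotonicity of $U$-rank (this is where \cite[Lemma 8.8]{GM} is needed). The substantive content lies in $U(p)\geq\alpha+1\Rightarrow U(q)\geq\alpha+1$, and here your sketch has a genuine gap.

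You write ``extend $q$ (via the extension property) to a type over a model containing this amalgam, and use left transitivity plus monotonicity to check that the resulting extension of $q$ still forks over $M_1$.'' But the extension property gives a \emph{nonforking} extension $q^*$ of $q$; by transitivity with $q$ nonforking over $M_0$, such a $q^*$ does not fork over $M_0$, hence not over $M_1$ either---so it cannot witness $U(q)\geq\beta+1$. What you actually need is some $q^*\in\gs(M_3)$ with $M_3\geq M_1$, $\nr{M_3}=\nr{M_1}$, satisfying simultaneously: (i) $q^*$ extends $q$; and (ii) $q^*$ is a \emph{nonforking} extension of (an isomorphic copy of) $p'$, so that the inductive hypothesis applied to the pair $(p',q^*)$ gives $U(q^*)\geq\beta$. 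Once you have (i) and (ii), forking of $q^*$ over $M_1$ is automatic: if $q^*$ did not fork over $M_1$, then by (i) and transitivity $q^*$ would not fork over $M_0$, so by monotonicity $p'$ would not fork over $M_0$, contradiction. Thus the obstacle you name---``ruling out that the transported type accidentally becomes nonforking over $M_1$''---is actually the easy step. The hard step is arranging (i) and (ii) together: a naive amalgamation of realizations gives a common extension of $q$ and $p'$, but nothing forces it to be nonforking over $\oop{dom}(p')$.

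This is precisely the content of \ref{BGrewrite} in the paper: one uses symmetry on ``$q$ does not fork over $M_0$'' to obtain $\bar{M_0}\ni d$ (with $d\vDash q$) such that $M_1$ is independent from $\bar{M_0}$ over $M_0$; then one moves $M_0'$ by an automorphism over $M_0$ so that $d\vDash p'$, and uses existence plus extension (for long types) to place the new copy $M_2^*$ of $M_0'$ independent from $\bar{M_1}\geq M_1,\bar{M_0}$ over $\bar{M_0}$. \ref{BGrewrite} then combines these two independences, via left transitivity and symmetry in the minimal closure of the relation, to output $M_3\geq M_1,M_2^*$ with $\gtp(d/M_3)$ nonforking over $M_2^*$. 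This $q^*\defeq\gtp(d/M_3)$ satisfies (i) and (ii). Your sketch correctly flags that symmetry and left transitivity are the key tools, but you have not said how they are deployed, and your one-line construction does not produce the required object.
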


We notice a gap in \cite[Lemma 7.6]{BG} which \ref{BGfact2} depends on (readers can skip to \ref{BGvarfact} if they simply use \ref{BGfact2} as a blackbox; we will also give an alternative proof that does not depend on the lemma). As usual, their definition of independence relations assume that the domain contains the base: if we write $\fk{A}{M}{N}$, we assume $M\leq N$. In the proof of \cite[Lemma 7.6]{BG}, they applied monotonicity to obtain $\fk{N_2c}{\bar{N_0}}{N_1}$. However, $\bar{N_0}\not\leq N_1$ because $c\in\bar{N_0}-N_1$ might happen. We will rewrite the proof in \ref{BGrewrite} using the idea of a closure of an independence relation, and drawing results from \cite{BGKV}.
\begin{definition}\cite[Definition 3.4]{BGKV}\mylabel{clodef}{Definition \thetheorem}
$\bar{\fork}$ is a closure of an independence relation $\fork$ if it satisfies the following properties:
\begin{enumerate}
\item $\bar{\fork}$ is defined on triples of the form $(A,M,B)$ where $M\in K$, $A$ and $B$ are sets of elements. We allow $M\not\subseteq B$. 
\item Invariance: if $f\in\oop{Aut}(\mn)$ and $\fkc{A}{M}{B}$, then $\fkc{f[A]}{f[M]}{f[B]}$;
\item Monotonicity: if $\fkc{A}{M}{B}$, $A'\subseteq A$, $B'\subseteq B$, then $\fkc{A'}{M}{B'}$;
\item Base monotonicity: if $\fkc{A}{M}{B}$ and $M\leq M'\subseteq M\cup B$, then $\fkc{A}{M'}{B}$.
\end{enumerate}
The minimal closure of $\fork$ (which is the smallest closure of $\fork$) is defined by: $\fkc{A}{M}{C}$ iff there is $N\geq M$, $N\supseteq C$ such that $\fk{A}{M}{N}$.
\end{definition}
We quote the following lemma without proof.
\begin{lemma}\emph{\cite[Lemmas 5.1, 5.3, 5.4]{BGKV}}\mylabel{closlem}{Lemma \thetheorem}
Let $\fork$ be an independence relation for types of arbitrary length, $\bar{\fork}$ be the minimal closure of $\fork$. 
\begin{enumerate}
\item $\fork$ has symmetry iff $\bar{\fork}$ has symmetry.
\item Suppose $\fork$ has extension. Then $\fork$ has left transitivity iff $\bar{\fork}$ does.
\item $\fork$ has extension iff $\bar{\fork}$ has extension.
\end{enumerate}
\end{lemma}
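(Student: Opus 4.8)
The plan rests on a single bridging observation: on triples $(A,M,N)$ whose right-hand side $N$ is a \emph{model} with $M\le N$, the relations $\fork$ and $\bar\fork$ coincide. Indeed $\fk{A}{M}{N}$ gives $\fkc{A}{M}{|N|}$ at once by taking $N$ itself as the witnessing model in Definition~\ref{clodef}; conversely, if $\hat N\ge M$ with $|N|\subseteq|\hat N|$ witnesses $\fkc{A}{M}{|N|}$, then $N\le\hat N$ inside $\mn$, so monotonicity of $\fork$ yields $\fk{A}{M}{N}$. Once this is available, each ``$\bar\fork\Rightarrow\fork$'' direction is essentially immediate, because the hypotheses and the conclusions of $\fork$-symmetry, $\fork$-extension and $\fork$-left transitivity all refer only to such model triples: they are obtained by specializing the corresponding $\bar\fork$-statement to model triples and translating back via the observation.

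For the ``$\fork\Rightarrow\bar\fork$'' directions I would use a uniform recipe: given a $\bar\fork$-configuration over arbitrary sets, fatten every set $C$ occurring on the right to a model $N\ge M\cup C$ (possible since $M\in K$ and $C$ lies in $\mn$), apply the known $\fork$-property to the resulting model triples, and push the conclusion back down to $C$ by monotonicity of $\bar\fork$ on the right (clause~(3) of Definition~\ref{clodef}). For item~(3): from $\fkc{A}{M}{C}$ pick $N\ge M\cup C$ with $\fk{A}{M}{N}$ and a model $N^{+}\ge N$ with $C'\subseteq|N^{+}|$; extension of $\fork$ applied to $N\le N^{+}$ gives $A'$ with $\gtp(A'/N)=\gtp(A/N)$ and $\fk{A'}{M}{N^{+}}$, whence $\gtp(A'/C)=\gtp(A/C)$ and $\fkc{A'}{M}{C'}$. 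For item~(1): from a witness $\fk{A}{M}{N}$ for $\fkc{A}{M}{C}$, symmetry of $\fork$ returns a model $N_{A}\ni A$ over $M$ carrying the reversed independence; monotonicity of $\fork$ restricts the right side to $C\subseteq|N|$ and monotonicity of $\bar\fork$ restricts $N_{A}$ to $A$, giving $\fkc{C}{M}{A}$, and the same argument run backwards on model triples recovers symmetry of $\fork$.

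The delicate part, and where the hypothesis ``$\fork$ has extension'' in item~(2) is really used, is the ``$\fork\Rightarrow\bar\fork$'' direction of left transitivity. There one starts from $\fkc{A}{M_1}{C}$ and $\fkc{M_1}{M_0}{C}$ with two \emph{different} witnessing models, $N\supseteq M_1\cup C$ for the first and $N'\supseteq M_0\cup C$ for the second, whereas left transitivity of $\fork$ needs both independences over one common right-hand model. The plan is to synchronise them: using extension of $\fork$ (equivalently, extension of $\bar\fork$, already proved in item~(3)) realise the type of $M_1$ over a model large enough to carry $N$, transport back by an automorphism fixing the appropriate base, and so obtain $\fk{M_1}{M_0}{N}$ over the very $N$ that witnesses $\fk{A}{M_1}{N}$; left transitivity of $\fork$ then delivers $\fk{A}{M_0}{N}$, i.e.\ $\fkc{A}{M_0}{C}$ after restriction. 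Keeping track of which enlargements preserve $\gtp(M_1/C)$, and of base monotonicity when $M_1$ lies outside $C$, is the main obstacle I expect; the reverse implication for~(2) is again free from the bridging observation, since left transitivity of $\fork$ is a statement about model triples only.
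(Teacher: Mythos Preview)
The paper does not prove this lemma at all: immediately before the statement it reads ``We quote the following lemma without proof,'' and the result is imported wholesale from \cite[Lemmas 5.1, 5.3, 5.4]{BGKV}. So there is nothing to compare your argument against within this paper.

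That said, your sketch is essentially the right shape and tracks the proofs in \cite{BGKV}. The bridging observation (that $\fork$ and $\bar\fork$ agree on triples whose right-hand side is a model containing the base) is exactly the lever used there, and your recipe of fattening sets to models, applying the $\fork$-property, and restricting back via monotonicity is the standard mechanism. Your identification of item~(2) as the subtle point is correct: synchronising the two witnessing models for $\fkc{A}{M_1}{C}$ and $\fkc{M_1}{M_0}{C}$ is precisely where extension is consumed, and your plan of extending and transporting by an automorphism is the intended one. One small wording issue in your symmetry paragraph: after $\fork$-symmetry produces $\fk{N}{M}{N_A}$, both restrictions (of $N$ down to $C$ on the left and of $N_A$ down to $A$ on the right) are applications of monotonicity of $\bar\fork$, not of $\fork$; the phrase ``monotonicity of $\fork$ restricts the right side to $C\subseteq|N|$'' is garbled, though the underlying step is fine.
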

\begin{proposition}\mylabel{BGrewrite}{Proposition \thetheorem}
Under the same hypothesis as \ref{BGfact}, let $N_0\leq N_1\leq \bar{N_1}$; $N_0\leq \bar{N_0}\leq \bar{N_1}$; $N_0\leq N_2$; $c\in|\bar{N_0}|$. If 
$$\fk{N_1}{N_0}{\bar{N_0}}\text{ and }\fk{N_2}{\bar{N_0}}{\bar{N_1}}$$
then there is some $N_3$ extending both $N_1$ and $N_2$ such that 
$$\fk{c}{N_2}{N_3}.$$
\end{proposition}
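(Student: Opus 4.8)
The plan is to mirror the proof of \cite[Lemma 7.6]{BG} almost verbatim, but to carry every auxiliary independence assertion through the minimal closure $\bar{\fork}$ of $\fork$ (Definition \ref{clodef}) instead of through $\fork$ itself. The reason is precisely the gap flagged above: the original argument applies monotonicity to produce a statement of the form ``$\fk{N_2 c}{\bar{N_0}}{N_1}$'', which is not a legitimate $\fork$-statement because $\bar{N_0}\not\leq N_1$ as soon as $c\in|\bar{N_0}|\setminus|N_1|$; but $\fkc{N_2 c}{\bar{N_0}}{N_1}$ \emph{is} legitimate, since $\bar{\fork}$ is defined on triples whose base need not be contained in the right-hand side, and the monotonicity of $\bar{\fork}$ carries no such restriction. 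Throughout, $\fork$ is assumed to have invariance, monotonicity, left transitivity, existence, extension, uniqueness, symmetry and the $\ls$-witness property for singletons, as in \ref{BGfact2}.

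First I would record the bookkeeping facts about $\bar{\fork}$. Since $\fork$ has extension, Lemma \ref{closlem} gives that $\bar{\fork}$ inherits symmetry, left transitivity and extension; by Definition \ref{clodef} it also has (unrestricted) monotonicity and base monotonicity; and $\fork\subseteq\bar{\fork}$. In the other direction, from $\fkc{A}{M}{C}$ one recovers a model $N\geq M$ with $|C|\subseteq|N|$ and $\fk{A}{M}{N}$, and --- working inside $\mn$ and using the coherence axiom --- if $C$ already contains the universe of a submodel $N_1\leq\mn$, then the recovered $N$ satisfies $N_1\leq N$. These two translations, $\fork$-to-$\bar{\fork}$ and $\bar{\fork}$-to-$\fork$, are the only places where one leaves the closure.

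Then I would rerun the BG argument inside $\bar{\fork}$. Reading the hypotheses as $\fkc{N_1}{N_0}{\bar{N_0}}$ and $\fkc{N_2}{\bar{N_0}}{\bar{N_1}}$, the steps are: restrict the second one by monotonicity so that its right-hand side becomes $|N_1|\cup\{c\}$ (this uses $|N_1|\subseteq|\bar{N_1}|$ and $c\in|\bar{N_0}|\subseteq|\bar{N_1}|$) --- this is exactly the step that is illegitimate for $\fork$; use symmetry of $\bar{\fork}$ to move $N_1$ and $c$ to the left; feed in the first hypothesis via left transitivity along the base chain $N_0\leq\bar{N_0}$; and finally apply base monotonicity along $N_0\leq N_2$ --- which is where the hypothesis $N_0\leq N_2$ is consumed --- keeping $|N_2|$ in the right-hand side so that the base may legitimately be raised to $N_2$, followed by one last application of monotonicity. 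The outcome is $\fkc{c}{N_2}{|N_1|}$. Unwinding the minimal closure produces a model $N_3\geq N_2$ with $|N_1|\subseteq|N_3|$, hence $N_1\leq N_3$ by coherence in $\mn$, and $\fk{c}{N_2}{N_3}$, which is the desired conclusion.

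I expect the main obstacle to be organizational rather than conceptual: at each step one must be scrupulous about which triple is a genuine $\fork$-statement and which is only a $\bar{\fork}$-statement, about the direction in which base monotonicity (upward on the base) and left transitivity (downward on the base) move things, and about the fact that left transitivity for $\bar{\fork}$ is available only because $\fork$ has extension. A secondary point to watch is that the right-hand side of the relevant closure triple must be kept large enough to contain $|N_2|$ before base monotonicity is invoked, and that the final model $N_3$ extracted at the end genuinely extends $N_1$ as a ${\bf K}$-substructure rather than merely set-theoretically, which is the role of coherence inside the monster model.
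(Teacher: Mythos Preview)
Your overall strategy---pass to the minimal closure $\bar{\fork}$, redo the Boney--Grossberg argument there, and unwind at the end---is exactly the paper's approach. However, the specific sequence of moves you describe does not go through as written, for two reasons.

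First, the left-transitivity step fails. After your symmetry step you have $\fkc{|N_1|\cup\{c\}}{\bar{N_0}}{N_2}$, and to drop the base to $N_0$ you need $\fkc{\bar{N_0}}{N_0}{N_2}$ with the \emph{same} right-hand side $N_2$. But the first hypothesis only yields (after symmetry) $\fkc{\bar{N_0}}{N_0}{N_1}$; there is no assumed relation between $N_1$ and $N_2$ that lets you trade one right-hand side for the other. Second, even granting that step, after base monotonicity you would have $\fkc{|N_1|\cup\{c\}}{N_2}{N_2}$, and no amount of (downward) monotonicity will put $|N_1|$ back on the right to reach your stated target $\fkc{c}{N_2}{|N_1|}$.

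The paper avoids both problems by keeping $N_1$ as the common right-hand side throughout the closure portion. Concretely: it first applies symmetry of $\fork$ (not $\bar{\fork}$) twice to $\fk{N_2}{\bar{N_0}}{\bar{N_1}}$ to produce a genuine model $\bar{N_2}\geq\bar{N_0},N_2$ containing $c$ with $\fk{\bar{N_2}}{\bar{N_0}}{\bar{N_1}}$, hence $\fkc{\bar{N_2}}{\bar{N_0}}{N_1}$. Now both this and $\fkc{\bar{N_0}}{N_0}{N_1}$ share the right-hand side $N_1$, so left transitivity in $\bar{\fork}$ gives $\fkc{N_2c}{N_0}{N_1}$. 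One then symmetrizes in $\bar{\fork}$, unwinds to a model $N_3'\geq N_2$ containing $c$, and finishes with base monotonicity and symmetry in $\fork$. If you prefer to stay in $\bar{\fork}$ longer, an equivalent finish is: from $\fkc{N_2c}{N_0}{N_1}$ symmetrize, apply base monotonicity along $N_0\leq N_2$ (now legitimate since $|N_2|$ sits in the right-hand side $N_2c$), symmetrize again to obtain $\fkc{c}{N_2}{N_1}$, and only then unwind---which recovers exactly your intended endpoint.
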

\begin{proof}We write $\bar{\fork}$ to mean the minmal closure of the given independence relation $\fork$. By symmetry twice on $\fk{N_2}{\bar{N_0}}{\bar{N_1}}$, there is $\bar{N_2}$ containing $c$ and extending $\bar{N_0},N_2$ such that $\fk{\bar{N_2}}{\bar{N_0}}{\bar{N_1}}$. By definition of the minimal closure, $$\fkc{\bar{N_2}}{\bar{N_0}}{{N_1}}.$$ On the other hand, by symmetry (and monotonicity) on $\fk{N_1}{N_0}{\bar{N_0}}$, $\fk{\bar{N_0}}{N_0}{N_1}$. Then $\fkc{\bar{N_0}}{N_0}{N_1}$. Applying \ref{closlem}(2) to the last two closure independence, we have $\fkc{N_2c}{N_0}{N_1}$. By \ref{closlem}(1), there is $N_3'\geq N_2$ and containing $c$ such that $\fkc{N_1}{N_0}{N_3'}$. By definition of the minimal closure, $\fk{N_1}{N_0}{N_3'}$. (Here we return to the original proof.) By base monotonicity, $\fk{N_1}{N_2}{N_3'}$. By symmetry, there is $N_3$ extending $N_1$ and $N_2$ such that $\fk{N_3'}{N_2}{N_3}$. By monotonicity, $\fk{c}{N_2}{N_3}$ as desired.
\end{proof}

Back to \ref{BGfact2}, we would like to know if there are any examples of independence relations that satisfy its hypotheses. The approach in \cite{BG} is to consider coheir \cite[Definition 3.2]{BG}, assuming tameness, shortness, no weak order property and that coheir satisfies extension. More developments of coheir can be found in \cite{s6} but the framework there is too abstract to handle. 

Another natural candidate is $\mu$-nonforking. One obstacle is that the hypotheses in \ref{BGfact2} require the independence relation to be over types of arbitrary length, while we have defined it for singletons only. Another obstacles is that if we extend our frame to longer types, we might not necessarily guarantee type-fullness (existence holds for all nonalgebraic types), so we cannot invoke \ref{BGfact2}. To resolve these, we use the following fact to extend our frame to types of arbitrary length, while acknowledging that the new frame might not be type-full. Then we give an alternative proof to \ref{BGfact2} that does not use existence.

We state the full assumptions of the following facts.
\begin{fact}\mylabel{BGvarfact}{Fact \thetheorem}
Let ${\bf K}$ have a monster model, $\lambda\geq\ls$.
\begin{enumerate}
\item {\cite[Theorem 1.1]{bv}} Suppose ${\bf K}$ is $\lambda$-tame and there is a good $(\geq\lambda)$-frame perhaps except the symmetry property. Then the frame can be extended to a (perhaps non-type-full) good frame for types of arbitrary length and satisfying symmetry.
\item {\cite[Lemma 5.9]{BGKV}} Let $\fork$ be an independence relation for types of arbitrary length. Suppose $\fork$ satisfies symmetry and right transitivity, then it satisfies left transitivity. 
\end{enumerate}
\end{fact}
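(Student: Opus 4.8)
The two items are quoted verbatim from \cite{bv} and \cite{BGKV}; they are logically independent, so I sketch a plan for each. For item~(1) the plan is to take the length-one nonforking relation of the given good $(\geq\lambda)$-frame and extend it to tuples of arbitrary length, using $\lambda$-tameness throughout. First I would establish symmetry at length one: given $M\leq N$ with $\gtp(b/M)$ and $\gtp(a/N)$ both nonforking over $M$, use extension to get a nonforking $\gtp(b'/N)$ realizing $\gtp(b/M)$, enclose $b'$ in a model $N_{b'}\geq M$, and combine uniqueness, local character and $\lambda$-tameness (to reduce to $\lambda$-sized submodels) to check that $\gtp(a/N_{b'})$ is nonforking over $M$; this is the standard derivation of symmetry in the tame monster-model setting. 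Next I would extend nonforking coordinatewise: for $\bar a=\langle a_i:i<\ell\rangle$ and $M\leq N$, declare $\gtp(\bar a/N)$ nonforking over $M$ iff for every $i<\ell$ one has $\gtp(a_i/N_i)$ nonforking over $M_i$, where $N_i,M_i$ are any models enclosing $N\cup\{a_j:j<i\}$ and $M\cup\{a_j:j<i\}$. The first thing to verify is that this does not depend on the choice of $N_i,M_i$: this follows from uniqueness of the length-one frame together with $\lambda$-tameness, and for $\ell\geq\lambda$ from the fact that a type is determined by its length-$\lambda$ subtypes. Then I would run through the frame axioms: invariance and monotonicity are immediate; existence, extension and uniqueness reduce to the length-one statements applied along the coordinates; local character and continuity are coordinatewise with a $\cf$-bookkeeping; transitivity follows from left transitivity of the extended frame, which is item~(2) applied to it (the extended frame has symmetry and right transitivity); and symmetry for long types is obtained by induction on $\ell$ from length-one symmetry and left transitivity. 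Finally the ``perhaps non-type-full'' clause is unavoidable: existence holds for tuples enumerating models, but when the base is a proper submodel there is no reason for every nonalgebraic long type over it to be nonforking over that base, so only a possibly non-type-full good frame is claimed.

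For item~(2) the plan is a short symmetry--transitivity chase. Given a set $A$ and models $M_0\leq M_1\leq N$ with $\fk{A}{M_1}{N}$ and $\fk{M_1}{M_0}{N}$, I would apply symmetry to flip both statements to $\fk{N}{M_1}{A}$ and $\fk{N}{M_0}{M_1}$, combine them along $M_0\leq M_1$ by right transitivity to obtain $\fk{N}{M_0}{A}$, and flip back by symmetry to conclude $\fk{A}{M_0}{N}$. The only point requiring care is the set-versus-model bookkeeping in the middle term, for which one passes to the minimal closure $\bar{\fork}$ (available by \ref{clodef}--\ref{closlem}), where the set-sided statements are legitimate and monotonicity lets one descend from a model enclosing $M_1\cup A$ back to $A$.

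The main obstacle lies entirely in item~(1). Deriving symmetry even at length one from the remaining axioms already takes some care, and upgrading it to arbitrary length forces two things to be checked together: the coordinatewise definition must be genuinely independent of how intermediate sets are enclosed in models, and the long-type symmetry must be bootstrapped by an induction that leans on left transitivity --- so the extended frame has to be developed in enough detail that left transitivity is available. The delicate interaction is between $\lambda$-tameness, which is what reduces long types to length-$\lambda$ pieces, and the local character of the frame, which is what the coordinatewise construction consumes; reconciling these is precisely the work carried out in \cite{bv}. Item~(2), by contrast, is routine once the set-versus-model bookkeeping is absorbed into the closure formalism of \ref{clodef}--\ref{closlem}.
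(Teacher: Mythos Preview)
The paper does not prove this statement: it is recorded as a \emph{Fact} with citations to \cite{bv} and \cite{BGKV}, and the only additional content in the paper is the remark immediately following, which says item~(1) ``is achieved by \emph{independent sequences}'' and warns that a naive coordinatewise reduction to length-one types does not work without extra hypotheses such as shortness.

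Your sketch for item~(2) is essentially the standard symmetry--transitivity chase and is correct; the passage to the minimal closure to handle set-sided arguments is exactly what \cite{BGKV} does. One small point: you invoke right transitivity for $\bar{\fork}$, but \ref{closlem} as stated in the paper only records that symmetry, extension, and left transitivity transfer to the closure; you would need to check (or cite) that right transitivity transfers as well, which is straightforward from the definition of the minimal closure.

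Your sketch for item~(1) has a real gap. The definition you give --- each $a_i$ nonforking over a model enclosing $M\cup\{a_j:j<i\}$ inside a model enclosing $N\cup\{a_j:j<i\}$ --- is indeed the independent-sequences idea from \cite{bv}, so you are on the right track. But the assertion that well-definedness ``follows from uniqueness of the length-one frame together with $\lambda$-tameness'' hides the bulk of the work: one must show the notion is independent of the choice of the enclosing models $M_i,N_i$, and this requires a careful back-and-forth using symmetry and transitivity of the length-one frame, not just uniqueness. More seriously, your plan to first derive length-one symmetry directly from the other axioms (``use extension \ldots\ combine uniqueness, local character and $\lambda$-tameness'') is not a known argument and is unlikely to succeed as stated; in \cite{bv} symmetry is obtained for the \emph{extended} long-type relation via the machinery of independent sequences, and length-one symmetry then falls out as a special case. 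So the logical order is reversed from what you propose, and the step you label ``standard'' is in fact the nontrivial contribution of \cite{bv}.
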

\begin{remark}
\begin{enumerate}
\item \ref{BGvarfact}(1) is achieved by \emph{independent sequences}. If we simply build nonforking from nonsplitting for longer types, then some of the results in this paper do not generalize (for example stability of $\mu$-types in $K_\mu$ immediately fails). One would need extra assumptions (say shortness) and to build the frame in higher cardinals. See also \cite[Appendix A]{s8}.
\item Another known approach to get a type-full frame for longer types is via Shelah's NF. Vasey \cite[Sections 11, 12]{s6} showed that with shortness (which we do not assume in this paper), one can extend a nice enough frame by NF, which is type-full.
\end{enumerate}
\end{remark}
Under $\mu$-superstability, we can derive an independence relation that satisfies all the hypotheses of \ref{BGfact2} except for existence for longer types. We will use \ref{assum1}.
\begin{proposition}\mylabel{BGfactprop}{Proposition \thetheorem}
Let ${\bf K}$ be $\mu$-superstable. Let ${\bf K'}$ be the AEC of the limit models in $K_{\geq\mu}$ ordered by $\leq_u$. Then $\mu$-nonforking restricted to ${\bf K'}$ can be extended to a (perhaps non-type-full) good frame for types of arbitrary length. Also it satisfies left transitivity and $\mu$-witness property for singletons.
\end{proposition}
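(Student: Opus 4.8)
The plan is to obtain the extended frame from \ref{BGvarfact}(1) and then to read the two extra properties off the machinery already developed. First I would record what $\mu$-superstability buys: by definition it has $\al$-local character of $\mu$-nonsplitting, so $\chi=\al$; and by \ref{chaincor}(1) (with \ref{supfact}(2)) ${\bf K}$ is superstable, hence stable, in every $\zeta\geq\mu$. Consequently the $(\geq\mu,\geq\chi)$-limits are exactly all limit models in $K_{\geq\mu}$, and, using \ref{chaincor}(2) together with the behaviour of limit models under universal extensions, the class ${\bf K'}$ of limit models in $K_{\geq\mu}$ ordered by $\leq_u$ is an AEC with $\oop{LS}({\bf K'})=\mu$; the only axiom that takes work is closure under $\leq_u$-increasing chains, immediate within a fixed cardinal from the definition of a limit model and, across cardinals, a consequence of \ref{chaincor}(2), and I would not belabour the rest.

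Combining \ref{gfcor} with the extension remark \ref{gfcorrmk}(2) and stability in every $\zeta\geq\mu$, $\mu$-nonforking is a good $(\geq\mu)$-frame over ${\bf K'}$, possibly except symmetry, with local character exactly $\chi=\al$ — the value a good frame requires, and what \cite[Theorem 1.1]{bv} expects. Since ${\bf K}$ is $\mu$-tame (in \ref{assum1}), \ref{BGvarfact}(1) then yields a (perhaps non-type-full) good frame $\fork$ for types of arbitrary length over ${\bf K'}$, now with symmetry, whose restriction to $1$-types is $\mu$-nonforking; this is the first assertion. Left transitivity follows at once: $\fork$ has transitivity (i.e.\ right transitivity) as part of being a good frame and symmetry from the previous step, so \ref{BGvarfact}(2) applies.

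For the $\mu$-witness property for singletons, since $\fork$ agrees with $\mu$-nonforking on $1$-types and $\nr{M}\geq\mu$ forces $\nr{M}+\mu=\nr{M}$, it suffices to prove: for a singleton $a$ and $M\leq_u N$ in ${\bf K'}$, if $\gtp(a/M')$ does not $\mu$-fork over $M$ for every $M'$ with $M\leq M'\leq N$ and $\nr{M'}=\nr{M}$, then $\gtp(a/N)$ does not $\mu$-fork over $M$. I would argue via extension and uniqueness: $\gtp(a/M)$ does not $\mu$-fork over $M$ by \ref{gfexi}, and since ${\bf K}$ is stable in $[\nr{M},\nr{N}]$, \ref{gfe} gives $q\supseteq\gtp(a/M)$ in $\gs(N)$ not $\mu$-forking over $M$. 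If $q\neq\gtp(a/N)$, then $\mu$-tameness produces $N^*\leq N$ of size $\mu$ with $q\restriction N^*\neq\gtp(a/N^*)$; absorbing $N^*$ into some $M'$ with $M\leq M'\leq N$, $\nr{M'}=\nr{M}$ (possible by L\"{o}wenheim--Skolem since $\nr{N^*}=\mu\leq\nr{M}$) yields distinct types $q\restriction M'\neq\gtp(a/M')$ over $M'$ that both do not $\mu$-fork over $M$ (the first by monotonicity, the second by hypothesis) and agree on $M$, contradicting \ref{gfuup}. Hence $q=\gtp(a/N)$, so $\gtp(a/N)$ does not $\mu$-fork over $M$.

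The only genuine input is \ref{BGvarfact}(1); on our side the point needing care is that ${\bf K'}$ is an AEC (so the cited extension theorem applies), and the quantifier in the witness property — the latter is harmless because $\mu$-nonforking of $1$-types is defined over all of $K_{\geq\mu}$ in \ref{nfdef}, so the intermediate $M'$ above need not itself be a limit model.
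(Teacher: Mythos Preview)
Your argument for extending the frame and for left transitivity is the same as the paper's, and is fine. The gap is in the $\mu$-witness property.

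The witness property you must verify is a statement about the frame on ${\bf K'}$, whose ordering is $\leq_u$. Its hypothesis says: for every $M'\in K'$ with $M\leq_u M'\leq_u N$ and $\nr{M'}=\nr{M}$, $\gtp(a/M')$ does not $\mu$-fork over $M$. You instead claim it suffices to prove the implication whose hypothesis quantifies over all $M'$ with $M\leq_{\bf K} M'\leq_{\bf K} N$; that is a \emph{stronger} hypothesis, hence the implication you prove is \emph{weaker} than what is required. Concretely, the $M'$ you produce by L\"owenheim--Skolem containing $N^*\cup|M|$ need not be a limit model and need not satisfy $M\leq_u M'$, so the hypothesis of the witness property tells you nothing about $\gtp(a/M')$. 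Your closing remark that ``$M'$ need not itself be a limit model'' misdiagnoses the issue: the point is not whether $\mu$-nonforking makes sense for such $M'$ (it does, by \ref{nfdef}), but whether the hypothesis you are allowed to assume applies to it (it does not).

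The fix is easy and essentially forces you back to the paper's argument: when $\nr{N}>\nr{M}$ (the only nontrivial case), $N$ is saturated under $\mu$-superstability, so after absorbing $N^*$ and $M$ into some $M''\leq N$ of size $\nr{M}$ you can find a limit $M'\in K'_{\nr{M}}$ with $M\leq_u M'\leq_u N$ containing $M''$. Then your extension-and-uniqueness line goes through. The paper takes a slightly different route here: it uses existence of $\mu$-nonsplitting (\ref{eeuprop}(1)) to find $N'\in K_\mu$ with $p$ not $\mu$-splitting over $N'$, uses saturation of $N$ to produce $N''\in K'_{\nr{M}}$ with $N'<_u N''$ and $M\leq_u N''\leq_u N$, and then applies transitivity (\ref{gft}) rather than uniqueness. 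Both arguments rely on the same key point you omitted, namely that saturation of $N$ lets you realize the intermediate model inside $K'$ with the $\leq_u$ ordering.
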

\begin{proof}
By \ref{gfcor} and \ref{gfcorrmk}(2), $\mu$-nonforking restricted to ${\bf K'}$ forms a good ($\geq\mu$)-frame perhaps except symmetry (it actually satisfies symmetry by \ref{symcor2}(2) but we do not need this result here). ${\bf K'}$ is also $\mu$-tame because ${\bf K}$ is $\mu$-tame under \ref{assum1} and we can extend a model in $K_\mu$ to a limit model which is in $K'$. By \ref{BGvarfact}(1), $\mu$-nonforking can be extended to a good $(\geq\mu)$-frame for types of arbitrary length. 

Since the extended frame enjoys symmetry and right transitivity, by \ref{BGvarfact}(2) it satisfies left transitivity. We check the $\mu$-witness property for singletons: let $M\leq_uN$ both in $K'$, $p\in\gs(N)$. Suppose for any $M'$ with $M\leq_u M'\leq_u N$, $\nr{M'}\leq\nr{M}+\mu=\nr{M}$, we have $p\restriction M'$ does not $\mu$-fork over $M$. We need to show that $p$ does not $\mu$-fork over $M$. Without loss of generality assume $\nr{N}>\nr{M}$. By existence of $\mu$-nonsplitting (\ref{eeuprop}), there is $N'\in K_\mu$, $N'\leq N$ such that $p$ does not $\mu$-split over $N'$. As $N$ is saturated (replace ``$\mu$'' by $\nr{N}$ in \ref{unicor}(2)), we can obtain $N''\in K'_{\nr{M}}$ such that $N'<_uN''<_uN$ and $M\leq_uN''$. By definition $p$ does not $\mu$-fork over $N''$. Since $p\restriction N''$ does not $\mu$-fork over $M$ by assumption, \ref{gft} guaratees that $p$ does not $\mu$-fork over $M$.
\end{proof}

For comparison purposes, we reproduce the original proof of \ref{BGfact2} that uses existence for longer types. Then we give an alternative proof that bypasses it, so that we can utilize the frame in \ref{BGfactprop}.
\begin{proof}[Original proof of \ref{BGfact2}]
The forward direction is by definition of $U$-rank. For the backward direction, we show that for any ordinal $\alpha$, $U(p)\geq\alpha$ iff $U(q)\geq\alpha$. It suffices to consider the successor case: if $U(q)\geq\alpha+1$, then it has a forking extension $q'\in\gs(M_2)$ of rank $\geq\alpha$, with $\nr{M_2}=\nr{M_1}$. By monotonicity of nonforking, $q'$ is also a forking extension of $p$. However, $\nr{M}$ might not be the same as $\nr{M_2}$ (this was pointed out by \cite{GM}). We claim that there must be some $p'\in\gs(M')$ such that 
\begin{itemize}
\item $\nr{M'}=\nr{M}$;
\item $p\leq p'\leq q'$; and
\item $p'$ is a forking extension of $p$.
\end{itemize} Otherwise, every such $p'$ satisfying the first two requirements must be a nonforking extension of $p$. By $\ls$-witness property, $q'$ is also a nonforking extension of $p$, contradiction. Since $U(q')\geq\alpha$, by inductive hypothesis $U(p')\geq\alpha$, and hence $U(p)\geq\alpha+1$. 

If $U(p)\geq\alpha+1$, by definition there is $p'\in\gs(M_2)$ such that $\nr{M_2}=\nr{M}$ and $p'$ is a forking extension of $p$ of rank $\geq\alpha$. We claim that we can choose $p'$ and $M_2$ so that there is $q'\in\gs(M_3)$ with
\begin{itemize}
\item $q'$ extends $p$ and $p'$;
\item $M_3$ extends $M_1$ and $M_2$;
\item $q'$ is a nonforking extension of $p'$.
\end{itemize}
Assume that such $p'$ and $M_2$ are chosen, we show that $q'$ is a forking extension of $q$: otherwise by transitivity, $q'$ is a nonforking extension of $p$, and by monotonicity $p'$ is also a nonforking extension of $p$, contradiction. Now $q'$ is a nonforking extension of $p'$, so by inductive hypothesis $U(q')=U(p')\geq\alpha$. On the other hand, $q'$ is a forking extension of $q$, so by definition $U(q)\geq U(q')+1\geq\alpha+1$ as desired.

It remains to guarantee such $p'$ and $M_2$ above exist. Let $d$ realizes $q$ and $d'$ realizes $p'$. Since both $p'$ and $q$ extends $p$, there is $f\in\oop{Aut}_{M_0}(\mn)$ such that $f(d')=d$. Since $\gtp(d/M_1)$ does not fork over $M_0$, by symmetry there is $\bar{M_0}$ containing $M_0$ and $d$ such that $\gtp(M_1/\bar{M_0})$ does not fork over $M_0$. Let $\bar{M_1}$ extends both $\bar{M_0}$ and $M_1$ (possible because we work in $\mn$). By \emph{existence} $\gtp(f[M_2]/\bar{M_0})$ does not fork over $\bar{M_0}$. By extension there is $M_2^*$ such that $\gtp(M_2^*/\bar{M_1})$ does not fork over $\bar{M_0}$ and $\gtp(M_2^*/\bar{M_0})=\gtp(f[M_2]/\bar{M_0})$. Hence there is $g\in\oop{Aut}_{\bar{M_0}}(\mn)$ with $g[f[M_2]]=M_2^*$. We now invoke \ref{BGrewrite} where we substitute $N_0,N_1,\bar{N_0},\bar{N_1},N_2,c$ by $M_0,M_1,\bar{M_0},\bar{M_1},M_2^*,d$ respectively. Then we obtain some $M_3$ extending $M_1$ and $M_2^*$ such that $\gtp(d/M_3)$ does not fork over $M_2^*$. $p'\defeq\gtp(d/M_2^*)$ satisfies the requirements.
\end{proof}
\begin{proof}[Alternative proof of \ref{BGfact2}]
In the original proof, the only place that uses existence for longer types is to guarantee $\gtp(f[M_2]/\bar{M_0})$ does not fork over $\bar{M_0}$. Pick any $M_4\leq\mn$ that extends both $f[M_2]$ and $M_1$. We will work in the minimal closure of the independence relation and use \ref{closlem}. From the original proof, we have obtained $\gtp(M_1/\bar{M_0})$ does not fork over $M_0$. By monotonicity $\gtp(M_1/\bar{M_0})$ does not fork over $\bar{M_0}$. By symmetry (for the minimal clsoure), $\gtp(\bar{M_0}/M_1)$ does not fork over $\bar{M_0}$. By extension (see \cite[Definition 3.5]{BGKV}), there is $M^*$ and $f\in\oop{Aut}_{\bar{M_0}M_1}(\mn)$ such that $\gtp(M^*/M_4)$ does not fork over $\bar{M_0}$ and $f[\bar{M_0}]=M^*$. Since $f$ fixes $\bar{M_0}$, $M^*=\bar{M_0}$. Therefore, $\gtp(\bar{M_0}/M_4)$ does not fork over $\bar{M_0}$. By monotonicity, $\gtp(\bar{M_0}/f[M_2])$ does not fork over $\bar{M_0}$. Symmetry gives the desired result.
\end{proof}
\begin{corollary}
Let ${\bf K}$ be $\mu$-superstable and ${\bf K'}$ be the AEC of the limit models in $K_{\geq\mu}$ ordered by $\leq_u$. Let $\fork$ be the extended frame from \ref{BGfactprop} and define the $U$-rank for $\fork$. For any $M<_uM_1\in K'$, $p\in\gs(M)$, any $q\in\gs(M_1)$ extending $p$ such that both $U(p),U(q)<\infty$, then 
$$U(p)=U(q)\Leftrightarrow q\text{ is a nonforking extension of }p$$
\end{corollary}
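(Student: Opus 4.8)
The plan is to apply Fact~\ref{BGfact2} to the independence relation $\fork$ supplied by Proposition~\ref{BGfactprop}, but via the \emph{alternative} proof of that fact rather than the original one. By Proposition~\ref{BGfactprop}, since ${\bf K}$ is $\mu$-superstable, $\mu$-nonforking restricted to ${\bf K'}$ extends to a good $(\geq\mu)$-frame $\fork$ for types of arbitrary length; this frame has invariance, monotonicity, extension, uniqueness and symmetry by construction, and in addition left transitivity and the $\mu$-witness property for singletons by that proposition. Since every model of ${\bf K'}$ has size $\geq\mu$, the L\"owenheim--Skolem cardinal governing $\fork$ is $\mu$, so the $\mu$-witness property is exactly the ``$\ls$-witness property for singletons'' demanded in Fact~\ref{BGfact2}.

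The one hypothesis of Fact~\ref{BGfact2} not available to us is the existence property for types of arbitrary length: the extended frame is only guaranteed to be a (possibly non-type-full) good frame. This is precisely the hypothesis that the alternative proof of Fact~\ref{BGfact2} is designed to avoid: that argument isolates the lone appeal to existence --- used solely to obtain $\gtp(f[M_2]/\bar{M_0})$ nonforking over $\bar{M_0}$ --- and replaces it by a computation inside the minimal closure $\bar{\fork}$, invoking \ref{closlem}(1),(3) for symmetry and extension of the closure. So I would run the alternative proof of Fact~\ref{BGfact2} verbatim with the present $\fork$, which needs only invariance, monotonicity, left transitivity, extension, uniqueness, symmetry and the $\mu$-witness property --- all verified above --- together with \ref{BGrewrite}, itself established under just these hypotheses.

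Given this, the conclusion follows at once: for $M<_uM_1$ in $K'$ (so $M\leq_{{\bf K'}}M_1$), $p\in\gs(M)$ and $q\in\gs(M_1)$ extending $p$ with $U(p),U(q)<\infty$, Fact~\ref{BGfact2} yields $U(p)=U(q)$ iff $q$ is a nonforking extension of $p$ in the sense of $\fork$; since $\fork$ restricted to $1$-types is $\mu$-nonforking, this is the asserted equivalence. The main obstacle here is bookkeeping rather than a new idea: one must check carefully that the alternative proof of Fact~\ref{BGfact2} uses type-fullness nowhere else --- in particular that each remaining appeal to ``extension'' in it is to the extension property of $\bar{\fork}$ licensed by \ref{closlem}(3), and that symmetry and left transitivity enter only through the closure, where \ref{closlem}(1),(2) make them available --- so that the non-type-full frame of Proposition~\ref{BGfactprop} genuinely suffices.
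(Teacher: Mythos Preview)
Your proposal is correct and follows exactly the paper's approach: combine \ref{BGfactprop} with \ref{BGfact2}, using the alternative proof of the latter to bypass the existence-for-longer-types hypothesis. The paper's own proof is a one-line pointer to these two ingredients; your version simply unpacks the verification that the alternative proof's hypotheses are all met, which is entirely in line with what the paper intends.
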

\begin{proof}
Combine \ref{BGfact2} and \ref{BGfactprop}. The alternative proof of \ref{BGfact2} (given before \ref{BGfactprop}) shows that existence is not necessary.
\end{proof}

\section{The main theorems and applications}\label{ufsec8}
We summarize our results in two main theorems. The first one concerns stable AECs while the second one concerns superstable ones. Some of the following items allow $\mu\geq\ls$ but we assume $\mu>\ls$ for a uniform statement. The proofs will come after the main theorems.
\begin{mainthm}\mylabel{maint1}{Main Theorem \thetheorem}
Let ${\bf K}$ be an AEC with a monster model, $\mu>\ls$, $\delta\leq\mu$ both be regular. Suppose ${\bf K}$ is $\mu$-tame, stable in $\mu$ and has continuity of $\mu$-nonsplitting. The following statements are equivalent under extra assumptions specified after the list:
\begin{enumerate}
\item ${\bf K}$ has $\delta$-local character of $\mu$-nonsplitting;
\item There is a good frame over the skeleton of $(\mu,\geq\delta)$-limit models ordered by $\leq_u$, except for symmetry and local character $\delta$ in place of $\al$. In this case the frame is canonical;
\item ${\bf K}$ has uniqueness of $(\mu,\geq\delta)$-limit models;
\item For any increasing chain of $\mu^+$-saturated models, if the length of the chain has cofinality $\geq\delta$, then the union is also $\mu^+$-saturated;
\item $K_{\mu^+}$ has a $\delta$-superlimit.
\end{enumerate}
(1) and (2) are equivalent. If ${\bf K}$ is $(<\mu)$-tame, then (3) implies (1).  There is $\lambda_1<h(\mu)$ such that if ${\bf K}$ is stable in $[\mu,\lambda_1)$, then (1) implies (3). Given any $\zeta\geq\mu^+$, stability in $[\mu,\lambda_1)$ can be replaced by stability in $[\mu,\zeta)$ plus no $\mu$-order property of length $\zeta$.

There is $\lambda_2<h(\mu^+)$ such that if ${\bf K}$ is stable in $[\mu^+,\lambda_2)$ and has continuity of $\mu^+$-nonsplitting, then (1) implies (4). Given any $\zeta\geq\mu^{++}$, stability in $[\mu^+,\lambda_2)$ can be replaced by stability in $[\mu^+,\zeta)$ plus no $\mu^+$-order property of length $\zeta$. Always (4) and (5) are equivalent and they imply (3). 
\end{mainthm}
The following diagram summarizes the implications in \ref{maint1}. Labels on the arrows indicate the extra assumptions needed, in addition to a monster model, $\mu$-tameness, stability in $\mu$ and continuity of $\mu$-nonsplitting. As in the theorem statement, whenever we require stability in the form $[\xi,\lambda)$, we can replace it by stability in $[\xi,\zeta)$ plus no $\xi$-order property of length $\zeta$.
\begin{center}
\begin{tikzcd}
                          &                                                                                                                                & (3) \arrow[ldd, "(<\mu)\text{-tame}", near start] &    &                                   &     \\
                          &                                                                                                                               & &                                       &                                       &     \\
(2) \arrow[r, Leftrightarrow] & (1) \arrow[ruu, "{\text{stable in }[\mu,\lambda_1)}", bend left] \arrow[rrr,"{\text{stable in }[\mu,\lambda_1)}", "\text{cont. of }\mu^+\text{-nonsplitting}"'] &     &                                  & (4) \arrow[r, Leftrightarrow] \arrow[lluu, bend right] & (5)
\end{tikzcd}
\end{center}
\begin{mainthm}\mylabel{maint2}{Main Theorem \thetheorem}
Let ${\bf K}$ be an AEC with a monster model, $\mu>\ls$ be regular. Suppose ${\bf K}$ is $\mu$-tame, stable in $\mu$ and has continuity of $\mu$-nonsplitting. The following statements are equivalent modulo $(<\mu)$-tameness and a jump in cardinal (specified after the list):
\begin{enumerate}
\item ${\bf K}$ has $\al$-local character of $\mu$-nonsplitting;
\item There is a good frame over the limit models in $K_\mu$ ordered by $\leq_u$, except for symmetry. In this case the frame is canonical;
\item $K_\mu$ has uniqueness of limit models;
\item For any increasing chain of $\mu^+$-saturated models, the union of the chain is also $\mu^+$-saturated;
\item $K_{\mu^+}$ has a superlimit;
\item ${\bf K}$ is $(\mu^+,\mu^+)$-solvable;
\item ${\bf K}$ is stable in $\geq\mu$ and has continuity of $\mu^{+\omega}$-nonsplitting;
\item $U$-rank is bounded when $\mu$-nonforking is restricted to the limit models in $K_\mu$ ordered by $\leq_u$.
\end{enumerate}
(1), (2) and (8) are equivalent and each of them implies (3) and (4). If ${\bf K}$ is $(<\mu)$-tame, then (3) implies (1). Always (4) and (5) are equivalent and they imply (3). (1) implies (6) and (7) while (6) implies (4). (7) implies $(1)_{\mu^{+\omega}}$: ${\bf K}$ has $\al$-local character of $\mu^{+\omega}$-nonsplitting. 

\end{mainthm}
The jump in cardinal is due to the lack of a precise bound on $\lambda'({\bf K})$ in deducing (7)$\Rightarrow$(1) (see \ref{lkques}(1)). The following diagram summarizes the implications in \ref{maint2}. ``$\mu^{+\omega}$'' indicates the jump in cardinal.
\begin{center}
\begin{tikzcd}
                           &                                                                                  & (3) \arrow[ldd, "(<\mu)\text{-tame}"] &                                                   &     \\
(2) \arrow[rd, Leftrightarrow] &                                                                                  &                                       &                                                   &     \\
(8)                        & (1) \arrow[ruu, bend left] \arrow[rr] \arrow[l, Leftrightarrow] \arrow[rd] \arrow[d] &                                       & (4) \arrow[r, Leftrightarrow] \arrow[luu, bend right] & (5) \\
                           & (7) \arrow[u, "\mu^{+\omega}", bend left]                                        & (6) \arrow[ru]                        &                                                   &    
\end{tikzcd}
\end{center}
\begin{proof}[Proof of \ref{maint1}]
(1) and (2) are equivalent by \ref{gfcor} and \ref{getloc}. The canonicity of the frame is by \ref{canon}. Suppose (3) holds. Then the proof of \ref{unicor}(2) and \ref{localfact}(1) give (1).

Suppose (1) holds. Obtain $\lambda_1=\lambda$ from \ref{unicor} and take $\chi=\delta$. If ${\bf K}$ is stable in $[\mu,\lambda_1)$, then it has uniqueness of $(\mu,\geq\delta)$-limit models, so (3) holds. The alternative hypotheses of stability and no-order-property work because we can replace $\lambda$ in the proof of \ref{sympf} by $\zeta$. 

The direction of (1) to (4) is by \ref{s19prop}. The alternative hypotheses work because we can replace $\lambda$ in the proof of \ref{sympf} by $\zeta$. (4) and (5) are equivalent by \ref{chainprop} and \ref{supsat}. They imply (3) by \ref{uufact}.
\end{proof}

For the proof of \ref{maint2}, we show the additional directions and refer the readers to the proof of \ref{maint1} for the original directions. 

\begin{proof}[Proof of \ref{maint2}]
Compared to \ref{maint1}, we do not need the extra stability and continuity of nonsplitting assumptions because superstability already implies them (\ref{chaincor}(1) and \ref{weaker}(1)). (1) and (8) are equivalent by \ref{urankcor}. (1) implies (7) by \ref{chaincor}(1) while (1) implies (6) by the forward direction of \ref{supsolv}. (6) plus $(<\mu)$-tameness implies (4) by the proof of the backward direction of \ref{supsolv}.  (7) implies $(1)_{\mu^{+\omega}}$ by \ref{cfcal}.
\end{proof}
\begin{remark}\mylabel{ufrmk}{Remark \thetheorem}
In \cite[Corollary 5.5]{GV}, they did not assume continuity of nonsplitting and showed that: if item (4) in \ref{maint2} holds in some $\xi\geq\beth_\omega(\chi_0+\mu)$ (see \ref{bvfact} for the definition of $\chi_0$), then every limit model in $K_{\xi}$ is $\beth_\omega(\chi_0+\mu)$-saturated. This implies $\al$-local character of $\xi$-nonsplitting. Using \cite[Theorem 7.1]{bv}, there is a $\lambda<h(\xi)$ such that (3) holds with $\mu$ replaced by $\lambda$. From hindsight, the last argument can be improved by quoting \ref{chaincor}(3) instead and having $\lambda=\xi^{+}$. In comparison, our (4)$\Rightarrow$(3) allows (3) to still be in $K_\mu$ and does not have the high cardinal threshold.
\end{remark}
\begin{corollary}
Let $\xi>\ls$ and ${\bf K}$ have a monster model, continuity of $\xi$-nonsplitting and be $(<\xi)$-tame. Then the following are equivalent:
\begin{enumerate}
\item ${\bf K}$ has uniqueness of limit models in $K_\xi$: for any $M_0,M_1,M_2\in K_\xi$, if both $M_1$ and $M_2$ are limit over $M_0$, then $M_1\cong_{M_0}M_2$;
\item ${\bf K}$ has uniqueness of limit models without base in $K_\xi$: any limit models in $K_\xi$ are isomorphic.
\end{enumerate}
\end{corollary}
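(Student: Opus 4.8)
The plan is to show that, under the ambient hypotheses, each of (1) and (2) is equivalent to ${\bf K}$ being $\xi$-superstable; the equivalence $(1)\Leftrightarrow(2)$ then follows at once. First a harmless reduction: if $K_\xi$ contains no limit models, (1) and (2) hold vacuously, so assume some limit model exists in $K_\xi$. Then universal extensions of size $\xi$ exist in $K_\xi$, and since every Galois type over $N\in K_\xi$ is realized in an extension of $N$ of size $\xi$, a universal extension of $N$ realizes all of $\gs(N)$, whence $|\gs(N)|\leq\xi$ and ${\bf K}$ is stable in $\xi$. Now $(<\xi)$-tameness together with continuity of $\xi$-nonsplitting puts us in position to invoke the material of Sections \ref{ufsec4}--\ref{ufsec6} with $\xi$ in the role of $\mu$; recall that under $\xi$-superstability the local-character cardinal of $\xi$-nonsplitting is $\al$.

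For $(1)\Rightarrow\xi$-superstable I would quote Corollary \ref{localcor}: stability in $\xi$, continuity of $\xi$-nonsplitting, and uniqueness of limit models in $K_\xi$ together yield $\xi$-superstability. Conversely, if ${\bf K}$ is $\xi$-superstable then by \ref{chaincor}(1) it is stable in all $\zeta\geq\xi$, so \ref{unicor}(1) applies with $\mu:=\xi$; since every limit model has length $\geq\omega=\al$, this is exactly the uniqueness of $(\xi,\geq\al)$-limit models, i.e.\ (1). For $\xi$-superstable $\Rightarrow(2)$, \ref{unicor}(2) (using again the extra stability from \ref{chaincor}(1)) shows every $(\xi,\geq\al)$-limit --- hence every limit model in $K_\xi$ --- is saturated, and two saturated models of size $\xi$ are isomorphic by uniqueness of saturated models, so any two limit models in $K_\xi$ are isomorphic. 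Finally, for $(2)\Rightarrow\xi$-superstable, I would first promote (2) to the statement that every limit model in $K_\xi$ is saturated: for each regular $\theta$ with $\al\leq\theta<\xi$ a $(\xi,\theta)$-limit is $\theta$-saturated (by regularity a submodel of size $<\theta$ lies in one rung of its resolution, whose universal successor realizes all its types), so by (2) every limit model in $K_\xi$ is $\theta$-saturated for all such $\theta$, hence $\xi$-saturated by the argument in the singular case of \ref{unicor}(2) (the case $\xi$ regular being immediate from a $(\xi,\xi)$-limit). Then, given any u-increasing continuous $\langle M_i:i\leq\omega\rangle\subseteq K_\xi$ and $p\in\gs(M_\omega)$, the model $M_\omega$ is a $(\xi,\omega)$-limit, hence saturated, so Proposition \ref{localfact}(2) provides $i<\omega$ with $p$ not $\xi$-splitting over $M_i$; thus ${\bf K}$ has $\al$-local character of $\xi$-nonsplitting, which with stability in $\xi$ is $\xi$-superstability. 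Chaining the two equivalences gives $(1)\Leftrightarrow(2)$.

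The main obstacle is the direction $(2)\Rightarrow(1)$: a bare isomorphism between two limit models carries no information about a fixed base $M_0$, so the proof must pass through the base-free invariant $\xi$-superstability, and the crux is extracting $\xi$-superstability from (2) --- accomplished by upgrading (2) to ``every limit model in $K_\xi$ is saturated'' and feeding a saturated $(\xi,\omega)$-limit into the local-character criterion \ref{localfact}(2). The remaining ingredients (the vacuity/stability-in-$\xi$ reduction, the $\theta$-saturation of short limit models, uniqueness of saturated models, and the correct instantiation of the auxiliary cardinal below $\xi$ in \ref{localfact} and \ref{localcor}) are routine.
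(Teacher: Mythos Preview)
Your argument is correct and follows the same essential route as the paper: the nontrivial direction $(2)\Rightarrow(1)$ is obtained by passing through $\xi$-superstability, using that uniqueness (even without base) forces limit models to be saturated, whence \ref{localfact} yields $\aleph_0$-local character, and then superstability gives back uniqueness with base via \ref{unicor}. The only difference is that the paper dispatches $(1)\Rightarrow(2)$ in one line (it is immediate from $JEP$ together with the standard uniqueness of $(\xi,\alpha)$-limits of the same length), whereas you route that direction through superstability as well; this is correct but unnecessary.
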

\begin{proof}
The forward direction is immediate and only requires $JEP$. For the backward direction, the proof of (3)$\Rightarrow$(1) in \ref{maint2} goes through ($JEP$ is needed) and we have $\xi$-superstability. By (1)$\Rightarrow$(3) in \ref{maint2}, it has uniqueness of limit models in $K_\xi$.
\end{proof}
As applications, we present alternative proofs to the results in \cite{mmlimit} and \cite{ss} with stronger assumptions. In \cite{mmlimit}, limit models of abelian groups are studied. 
\begin{fact}
\begin{enumerate}
\item \mylabel{m1}{Fact \thetheorem}\cite[Definition 3.1, Fact 3.2]{mmlimit} Let ${\bf K}^{ab}$ be the class of abelian groups ordered by subgroup relation. Then ${\bf K}^{ab}$ is an AEC with $\oop{LS}({\bf K}^{ab})=\al$, has a monster model and is $(<\al)$-tame. 
\item \cite[Fact 3.3(2)]{mmlimit} ${\bf K}^{ab}$ is stable in all infinite cardinals.
\item \cite[Corollary 3.8]{mmlimit} ${\bf K}^{ab}$ has uniqueness of limit models in all infinite cardinals.
\end{enumerate}
\end{fact}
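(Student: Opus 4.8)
The plan is to deduce item (3) from the uniqueness-of-limit-models results obtained above, applied separately in each infinite cardinal, with the structural facts (1)--(2) and superstability as the only inputs. By \ref{m1}(1), ${\bf K}^{ab}$ has a monster model and is $(<\al)$-tame, hence $\mu$-tame for every $\mu\geq\al$; by \ref{m1}(2) it is stable in every infinite cardinal. The one hypothesis beyond those used in \cite{mmlimit} is $\al$-superstability of ${\bf K}^{ab}$: this holds for abelian groups, either directly or, since ${\bf K}^{ab}$ is $\al$-stable and $\aleph_0$-tame, via \ref{cfcalrmk}(1) from $\omega$-locality of Galois types. Granting it, \ref{weaker}(1) yields continuity of $\al$-nonsplitting, so ${\bf K}^{ab}$ satisfies \ref{assum1} at $\mu=\al$ (with $\chi=\al$, since $\al$-superstability is exactly $\al$-local character of $\al$-nonsplitting); then \ref{chaincor}(1) propagates superstability to every $\zeta\geq\al$, and \ref{weaker}(1) again propagates continuity of nonsplitting, so ${\bf K}^{ab}$ satisfies \ref{assum1} at every $\mu\geq\al$, always with $\chi=\al$.

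Now fix $\mu\geq\al$. Since ${\bf K}^{ab}$ is stable in every cardinal, it is in particular stable on the interval $[\mu,\lambda)$ for the bound $\lambda<h(\mu)$ produced by \ref{unicor}. Hence \ref{unicor}(1) applies: $K^{ab}_\mu$ has uniqueness of $(\mu,\geq\al)$-limit models. Because $\chi=\al$ and every limit model of $K^{ab}_\mu$ has limit length $\geq\omega$, this says precisely that any two limit models over a common base in $K^{ab}_\mu$ are isomorphic over that base; equivalently, when $\mu$ is regular one may read this off the implication $(1)\Rightarrow(3)$ of \ref{maint2}. Since $\mu\geq\al$ was arbitrary, ${\bf K}^{ab}$ has uniqueness of limit models in every infinite cardinal, which is item (3).

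The one point needing care is the bottom cardinal $\mu=\al=\oop{LS}({\bf K}^{ab})$, where the blanket assumption $\mu>\ls$ appearing in \ref{maint1} and \ref{maint2} is unavailable. But that assumption enters only through the saturation clause \ref{unicor}(2); the uniqueness statement \ref{unicor}(1), which is all we use (through \ref{symcor2}(1) and \ref{ulmfact}), relies only on the conditions of \ref{assum1}, and these we verified at $\mu=\al$ exactly as at larger $\mu$. So the argument at $\al$ is identical. The genuine obstacle in this approach is therefore not the abstract transfer machinery but the input step: establishing that ${\bf K}^{ab}$ is $\al$-superstable (equivalently $\omega$-local). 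Once that concrete fact about abelian groups is secured, checking the hypotheses of \ref{unicor}(1) at each $\mu\geq\al$ is routine, and item (3) follows.
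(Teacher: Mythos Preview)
Your proof is correct and matches the paper's ``second proof'' of \ref{m1}(3): derive $\omega$-locality from $(<\aleph_0)$-tameness, obtain $\aleph_0$-superstability via \ref{cfcalrmk}(1), propagate it upward by \ref{chaincor}(1), and conclude uniqueness everywhere by \ref{unicor}(1). The paper also records a ``first proof'' that goes through \ref{splsupfact} and \cite{vv} but only yields uniqueness above the Hanf number $\beth_{(2^{\aleph_0})^+}$; your argument (like the paper's second proof) avoids that threshold, and your remark that \ref{unicor}(1) does not require $\mu>\ls$ is exactly the point needed at $\mu=\aleph_0$.
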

In the original proof of \ref{m1}(3), an explicit algebraic expression of limit models was obtained, so that limit models of the same cardinality are isomorphic to each other. In \cite[Remark 3.9]{mmlimit}, it was remarked that \cite{s19} could be used to obtain uniqueness of limit models for high enough cardinals (above $\geq\beth_{(2^{\al})^+}$). We write down the exact argument using known results. Then we present another proof that covers lower cardinals using results in this paper (but not any algebraic description of limit models).
\begin{proof}[First proof of \ref{m1}(3)]
In \ref{splsupfact}(1), pick $\xi\geq(\lambda'({\bf K}))^++\chi_1$ with $\cf(\xi)=\al$. By \ref{m1}(2), ${\bf K}^{ab}$ is stable in $\xi$. So the conclusion of \ref{splsupfact}(1) gives superstability in $\geq\lambda'({\bf K}^{ab})$. By \cite[Corollary 1.4]{vv} (which combines \cite[Fact 2.16, Corollary 6.9]{vv}), ${\bf K}^{ab}$ has uniqueness of limit models in $K^{ab}_{\geq\lambda'({\bf K}^{ab})}$. Notice that by \ref{hanffact}, $\lambda'({\bf K}^{ab})<h(\lambda({\bf K}^{ab}))=h(\al)=\beth_{(2^{\al})^+}$, so we can guarantee uniqueness of limit models above $\beth_{(2^{\al})^+}$.
\end{proof}
\begin{proof}[Second proof of \ref{m1}(3)]
By \ref{m1}(1)(2), ${\bf K}^{ab}$ is stable in $\al$ and is $(<\al)$-tame. The latter implies $\omega$-locality. By \ref{weaker}(2), ${\bf K}^{ab}$ has continuity of $\al$-nonsplitting. By \ref{cfcalrmk}(1), it is superstable in $\geq\al$. By \ref{unicor}(1) (or simply \cite[Corollary 1.4]{vv}), it has uniqueness of limit models in all infinite cardinals.
\end{proof}

We turn to look at a strictly stable AEC. 
\begin{fact}
\begin{enumerate}
\item \mylabel{m2}{Fact \thetheorem} \cite[Definition 4.1, Facts 4.2, 4.5]{mmlimit} Let ${\bf K}^{tf}$ be the class of torsion-free abelian groups ordered by pure subgroup relation. Then ${\bf K}^{tf}$ is an AEC with $\oop{LS}({\bf K}^{tf})=\al$, has a monster model and is $(<\al)$-tame. 
\item \cite[Fact 4.7]{mmlimit} ${\bf K}^{tf}$ is stable in $\lambda$ iff $\lambda^{\al}=\lambda$. In particular ${\bf K}^{tf}$ is strictly stable.
\item \cite[Corollary 4.18]{mmlimit} Let $\lambda\geq\aleph_1$. ${\bf K}^{tf}$ has uniqueness of $(\lambda,\geq\aleph_1)$-limit models.
\item \cite[Theorem 4.22]{mmlimit} Let $\lambda\geq\al$. Any $(\lambda,\al)$-limit model in ${\bf K}^{tf}$ is not algebraically compact. 
\item \cite[Lemmas 4.10, 4.14]{mmlimit} Let $\lambda\geq\aleph_1$. Any $(\lambda,\geq\aleph_1)$-limit model in ${\bf K}^{tf}$ is algebraically compact. Any two algebraically compact limit models in $K^{tf}_\lambda$ are isomorphic.
\end{enumerate}
\end{fact}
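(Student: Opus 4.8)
The final statement \ref{m2} collects five facts from \cite{mmlimit}; of these I would quote items (1), (2), (4) and (5) verbatim as the algebraic/AEC-theoretic inputs, and the plan is to give an alternative proof of the uniqueness statement \ref{m2}(3) using the apparatus of the present paper, in the spirit of the second proof of \ref{m1}(3) but adapted to a strictly stable class. Fix $\lambda\geq\aleph_1$ with $\lambda^\al=\lambda$; when $\lambda^\al>\lambda$ the class ${\bf K}^{tf}$ carries no limit models of length $\geq\aleph_1$, so \ref{m2}(3) is vacuous there and one simply defers to \cite{mmlimit}. First I would check that ${\bf K}^{tf}$, with $\mu:=\lambda$, satisfies \ref{assum1}: $AP$, $JEP$, $NMM$ and $\lambda$-tameness come from \ref{m2}(1); stability in $\lambda$ from \ref{m2}(2); $(<\al)$-tameness forces $\omega$-locality of types, so continuity of $\lambda$-nonsplitting follows from \ref{weaker}(2); and for clause (5) I would take as an additional algebraic input that $\lambda$-nonsplitting in ${\bf K}^{tf}$ has $\aleph_1$-local character --- a reflection of the strict stability of ${\bf K}^{tf}$, implicit in \cite{mmlimit} and readable off the first-order theory of torsion-free abelian groups --- so that the minimum local character cardinal is $\chi=\aleph_1\leq\lambda$ (it cannot be $\aleph_0$, since ${\bf K}^{tf}$ is not $\lambda$-superstable). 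Under the Continuum Hypothesis this input is superfluous: there $\aleph_1$ is itself a stability cardinal, \ref{394prop} gives $\aleph_1$-local character of $\aleph_1$-nonsplitting, and \ref{chitrans} transfers it to every stability cardinal $\lambda\geq\aleph_1$.

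The crux is to obtain $(\lambda,\aleph_1)$-symmetry essentially for free, even though ${\bf K}^{tf}$ is stable only in the scattered cardinals $\kappa=\kappa^\al$ and so cannot supply the ``stability in $[\mu,\lambda_1)$'' hypothesis used in \ref{unicor} and \ref{symcor2}. The key observation is that \ref{sympf}, invoked with its length parameter equal to $\lambda^+=\mu^+$, requires only stability in $[\mu,\mu^+)=\{\mu\}$, i.e.\ in $\lambda$ alone: the tower built in its proof lies entirely in $K_{\mu+|\alpha|}=K_\lambda$ for $\alpha<\lambda^+$, so only universal extensions and type extensions inside $K_\lambda$ are used. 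Meanwhile ${\bf K}^{tf}$ is stable in $2^\lambda$ (as $(2^\lambda)^\al=2^\lambda$), hence $\lambda$-stable in $2^\lambda$ by \ref{symfact}(3), hence has no $\lambda$-order property by \ref{symfact}(2), and a fortiori none of length $\lambda^+$. By the contrapositive of \ref{sympf} with length $\lambda^+$, ${\bf K}^{tf}$ has $(\lambda,\aleph_1)$-symmetry.

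With $(\lambda,\aleph_1)$-symmetry secured, \ref{ulmfact} --- which needs only \ref{assum1} together with that symmetry --- yields uniqueness of $(\lambda,\geq\aleph_1)$-limit models, which is precisely \ref{m2}(3) for our $\lambda$. The step I expect to be the genuine obstacle is ensuring $\chi=\aleph_1$, i.e.\ the $\aleph_1$-local character of $\lambda$-nonsplitting: the frame machinery delivers uniqueness of $(\lambda,\geq\chi)$-limits but is silent on the value of $\chi$, and the only generic bound, \ref{394lem}, gives merely $\chi\leq$ the least $\chi_0$ with $2^{\chi_0}>\lambda$, which can strictly exceed $\aleph_1$ in the absence of cardinal-arithmetic hypotheses; outside the Continuum Hypothesis one therefore has to import this local character from the algebraic side. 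What the present paper contributes is the symmetry-to-uniqueness passage, which upgrades the base-free uniqueness implicit in \ref{m2}(5) --- any two algebraically compact limit models in $K^{tf}_\lambda$ are isomorphic --- to the over-$M_0$ conclusion of \ref{m2}(3), uniformly across all stability cardinals $\lambda\geq\aleph_1$.
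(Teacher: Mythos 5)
The statement \ref{m2} is a \emph{Fact}: the paper simply \emph{cites} all five items from \cite{mmlimit} and supplies no proof of its own. The paper's contribution is the \emph{conditional} proposition immediately after \ref{m2}, which re-derives a version of \ref{m2}(3) \emph{under CH and an additional no-order-property hypothesis}, and then openly marks that extra hypothesis as unresolved in the question ``Is it true that ${\bf K}^{tf}$ does not have $\aleph_1$-order property of length $\aleph_\omega$?'' Your proposal is thus already trying to prove more than the paper does, and the attempt has a genuine gap at precisely the point the paper went out of its way to flag.

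The gap is your step ``\dots hence has no $\lambda$-order property by \ref{symfact}(2), and a fortiori none of length $\lambda^+$.'' This inference is backwards. Having the $\lambda$-order property of length $\kappa$ gives it for all $\kappa'\leq\kappa$; consequently ``no $\lambda$-order property of length $\lambda^+$'' is a \emph{stronger} assertion than ``no $\lambda$-order property (unqualified),'' not a weaker one. The only generic bound available is \ref{symfact}(1), which from ``no $\lambda$-order property'' produces some length $\zeta<h(\lambda)=\beth_{(2^\lambda)^+}$ with no $\lambda$-order property of length $\zeta$. Nothing forces $\zeta\leq\lambda^+$, and if \ref{symfact}(1) could always be sharpened to $\zeta=\lambda^+$, it would not be stated as a nontrivial fact and the paper's final question about ${\bf K}^{tf}$ would be trivially answered. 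Your observation that invoking \ref{sympf} with length parameter $\lambda^+$ needs only stability at $\lambda$ is correct (for $\alpha<\lambda^+$ all the $N_\alpha,N_\alpha'$ live in $K_\lambda$), but it converts the needed no-order-property hypothesis into ``no $\lambda$-order property of length $\lambda^+$,'' which is if anything \emph{harder} to verify than the paper's ``length $\lambda^{+\omega}$'' version: you have traded a mild extra stability requirement (which \ref{supfact}(1) gives for free in ${\bf K}^{tf}$) for a shorter, hence more demanding, non-ordering bound, and you have not established it. Your other gap --- getting $\chi=\aleph_1$ outside CH --- you acknowledge and wall off behind ``additional algebraic input,'' so that part is at least honestly conditional; but the order-property step is presented as free and is not.
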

The original proof of the second part of \ref{m2}(3) uses an explicit algebraic expression of algebraically compact groups \cite[Fact 4.13]{mmlimit}. Using the results of this paper, we give a weaker version but without using any algebraic expression of algebraically compact groups.
\begin{proposition}
Assume CH. If for all stability cardinal $\lambda\geq\aleph_1$, ${\bf K}^{tf}$ does not have the $\lambda$-order property of length $\lambda^{+\omega}$, then for all such $\lambda$, it has uniqueness of $(\lambda,\geq\aleph_1)$-limit models.
\end{proposition}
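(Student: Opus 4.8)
The plan is to fix an arbitrary stability cardinal $\lambda\geq\aleph_1$ of ${\bf K}^{tf}$ and to derive the uniqueness of $(\lambda,\geq\aleph_1)$-limit models by running the machinery of this paper for ${\bf K}^{tf}$ at the base cardinal $\mu=\lambda$. First I would collect the structural inputs. By \ref{m2}(1), ${\bf K}^{tf}$ has a monster model and is $(<\aleph_0)$-tame; $(<\aleph_0)$-tameness implies $\omega$-locality of Galois types (as used in the second proof of \ref{m1}(3)), so \ref{weaker}(2) yields continuity of $\kappa$-nonsplitting for every stability cardinal $\kappa$ of ${\bf K}^{tf}$, while $(<\aleph_0)$-tameness also gives $\kappa$-tameness for all $\kappa\geq\aleph_0$. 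By \ref{m2}(2), $\kappa$ is a stability cardinal of ${\bf K}^{tf}$ iff $\kappa^{\aleph_0}=\kappa$; under CH we have $\aleph_1^{\aleph_0}=2^{\aleph_0}=\aleph_1$, so ${\bf K}^{tf}$ is stable in $\aleph_1$, and it is stable in $\lambda^{+n}$ for every $n<\omega$ (by \ref{supfact}(1), or directly, since $(\lambda^{+n})^{\aleph_0}=\lambda^{+n}$ follows from $\lambda^{\aleph_0}=\lambda$ by an easy induction using $\cf(\lambda^{+n})>\aleph_0$), hence throughout $[\lambda,\lambda^{+\omega})$.

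Next I would identify the local character cardinal. Since CH makes $\aleph_1$ a stability cardinal, \ref{assum1} holds for ${\bf K}^{tf}$ at the base $\mu=\aleph_1$: monster model, stability in $\aleph_1$, $\aleph_1$-tameness, continuity of $\aleph_1$-nonsplitting, and (as $\aleph_1$ is regular) the clause $\chi\leq\mu$ by \ref{394prop}. Writing $\chi$ for the minimum local character cardinal of $\aleph_1$-nonsplitting, so $\chi\leq\aleph_1$. Because ${\bf K}^{tf}$ is strictly stable (\ref{m2}(2)) it is not $\aleph_1$-superstable, since otherwise \ref{chaincor}(1) would force stability in every cardinal $\geq\aleph_1$, against \ref{m2}(2); hence $\chi>\aleph_0$ and therefore $\chi=\aleph_1$. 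Now \ref{chitrans}, applied with base $\mu=\aleph_1$ and $\xi=\lambda\geq\aleph_1$, shows that ${\bf K}^{tf}$ has $\aleph_1$-local character of $\lambda$-nonsplitting, and the same strict-stability argument shows the minimum local character cardinal of $\lambda$-nonsplitting is exactly $\aleph_1$. So \ref{assum1} holds for ${\bf K}^{tf}$ at the base $\mu=\lambda$ with the $\chi$ of \ref{chi} equal to $\aleph_1$.

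Finally I would invoke Main Theorem \ref{maint1} with $\mu=\lambda$ and $\delta=\aleph_1$. Its statement (1), namely $\aleph_1$-local character of $\lambda$-nonsplitting, holds by the previous paragraph, so it suffices to run the implication (1)$\Rightarrow$(3). As noted in \ref{maint1}, the interval-stability hypothesis there may be replaced, for any $\zeta\geq\mu^+$, by ``stable in $[\mu,\zeta)$ plus no $\mu$-order property of length $\zeta$''; taking $\zeta=\lambda^{+\omega}$, this is precisely what ${\bf K}^{tf}$ enjoys, being stable in $[\lambda,\lambda^{+\omega})$ and, by hypothesis, lacking the $\lambda$-order property of length $\lambda^{+\omega}$. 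Thus (1)$\Rightarrow$(3) delivers the uniqueness of $(\lambda,\geq\aleph_1)$-limit models. (One may also bypass \ref{maint1}: the contrapositive of \ref{sympf}, with $\mu=\lambda$ and cardinal parameter $\lambda^{+\omega}$, gives $(\lambda,\aleph_1)$-symmetry, and \ref{ulmfact} turns this into the same conclusion.) Since $\lambda$ was an arbitrary stability cardinal $\geq\aleph_1$, the proposition follows.

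I expect the main obstacle to be the uniform identification $\chi=\aleph_1$ of the local character of $\lambda$-nonsplitting for every stability cardinal $\lambda\geq\aleph_1$. This is what compels the CH hypothesis, so that \ref{assum1} can be anchored at $\mu=\aleph_1$, and what hinges on the chain $(<\aleph_0)$-tame $\Rightarrow$ $\omega$-local $\Rightarrow$ continuity of nonsplitting, continuity being exactly the ingredient that promotes the weak local character coming from stability (\ref{394lem}) to genuine local character (\ref{bgvvlem}, \ref{394prop}) before \ref{chitrans} carries it upward. Once $\chi=\aleph_1$ is in hand, matching the order-property hypothesis to the stability-replacement clause of \ref{maint1} is routine bookkeeping; the one thing worth noticing is that the choice $\zeta=\lambda^{+\omega}$ is essentially forced, since $[\lambda,\lambda^{+\omega})$ consists entirely of stability cardinals by \ref{supfact}(1), whereas wider intervals below $h(\lambda)$ contain non-stability cardinals.
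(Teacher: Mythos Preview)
Your proof is correct and follows essentially the same route as the paper: CH gives stability in $\aleph_1$, $(<\aleph_0)$-tameness gives $\omega$-locality hence continuity of nonsplitting, \ref{394prop} and \ref{chitrans} give $\aleph_1$-local character at every stability cardinal $\lambda$, and then stability in $[\lambda,\lambda^{+\omega})$ together with the no-order-property hypothesis yields $(\lambda,\aleph_1)$-symmetry and hence uniqueness. One caveat: \ref{maint1} as stated requires $\mu$ regular, which a stability cardinal $\lambda$ of ${\bf K}^{tf}$ need not be, but your parenthetical route via \ref{sympf} and \ref{ulmfact} --- which is exactly what the paper does through \ref{unicor}(1) and \ref{comparermk}(1) --- avoids this restriction; also, your argument that $\chi=\aleph_1$ exactly (rather than just $\chi\leq\aleph_1$) is correct but unnecessary for the conclusion.
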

\begin{proof}
By CH and \ref{m2}(2), ${\bf K}^{tf}$ is stable in $\aleph_1$. By \ref{m2}(1), ${\bf K}^{tf}$ is $(<\al)$-tame, hence it has $\omega$-locality. By \ref{weaker}(2), ${\bf K}^{tf}$ has continuity of $\aleph_1$-nonsplitting. \ref{394prop} and \ref{chitrans} give $\aleph_1$-local character of $\lambda$-nonsplitting for all stability cardinals $\lambda$. By \ref{supfact}(1), ${\bf K}^{tf}$ is stable in $[\lambda,\lambda^{+\omega})$. By \ref{unicor}(1) and \ref{comparermk}(1), ${\bf K}^{tf}$ has uniqueness of $(\lambda,\geq\aleph_1)$-limit models for all $\lambda\geq\aleph_1$.
\end{proof}
\begin{ques}
Is it true that ${\bf K}^{tf}$ does not have $\aleph_1$-order property of length $\aleph_\omega$? 
\end{ques}

For \ref{m2}(4), the original proof argued that uniqueness of limit models \emph{eventually} leads to superstability for large enough $\lambda$ (from an older result in \cite{GV}). Then a specific construction deals with small $\lambda$. In \cite[Remark 4.23]{mmlimit}, it was noted that \cite[Lemma 4.12]{s19} could deal with both cases of $\lambda$. We give a full proof here (the algebraic description of limit models is needed):
\begin{proof}[Proof of \ref{m2}(4)]
Let $\lambda\geq\al$ and $M$ be a $(\lambda,\al)$-limit model. Then ${\bf K}^{tf}$ is stable in $\lambda$ and by \ref{m2}(2) $\lambda>\al$. Suppose $M$ is algebraically compact, by \ref{m2}(5) and \ref{unicor}(2) $M$ is isomorphic to $(\lambda,\geq\aleph_1)$-limit models and is saturated. By \ref{localfact}(2) (where $\langle M_i:i\leq\al\rangle$ witnesses that $M$ is $(\lambda,\al)$-limit), $\al$-local character of $\lambda$-nonsplitting applies to $M$. Since $M$ is arbitrary, ${\bf K}^{tf}$ has $\al$-local character of $\lambda$-nonsplitting, which implies stability in $\geq\lambda$ by \ref{supfact}(2), contradicting \ref{m2}(2).
\end{proof}
\begin{remark}
\cite[Lemma 4.12]{s19} happened to work because we do not care about the case $\al$ (which is not stable) and we can always apply item (2) in \ref{localfact}.
\end{remark}

In \cite{ss}, $\aleph_0$-stable AECs with $\al$-$AP$, $\al$-$JEP$ and $\al$-$NMM$ were studied. They built a superlimit model in $\al$ by connecting limit models with sequentially homogeneous models \cite[Theorem 4.4]{ss}. Then they defined splitting over finite sets where types have countable domains and obtained finite character assuming categoricity in $\al$ \cite[Fact 5.3]{ss}. This allowed them to build a good $\al$-frame over models generated by the superlimit. These methods are absent in our paper because we studied AECs with a general $\ls$, and our splitting is defined for types over model-domains. 

In \cite[Corollary 5.9]{ss}, they showed the existence of a superlimit in $\aleph_1$ assuming weak $(<\al,\al)$-locality among other assumptions. We will strengthen the locality assumption to $\omega$-locality, and work in a monster model to give an alternative proof. This allows us to bypass the machineries in \cite{ss} that are sensitive to the cardinal $\al$, and the technical manipulation of symmetry in \cite[Section 3]{ss}. Also, our result extends to a general $\ls$.
\begin{proposition}
Let ${\bf K}$ is an $\al$-stable AEC with a monster model and has $\omega$-locality. Then there is a superlimit in $\aleph_1$. In general, let $\lambda\geq\ls$, and if ${\bf K}$ is stable in $\lambda$ instead of $\al$, then it has a superlimit in $\lambda^+$.
\end{proposition}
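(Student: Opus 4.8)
The plan is to take as the candidate superlimit the saturated model $M$ of cardinality $\lambda^{+}$, the $\aleph_0$-stable case being the instance $\lambda=\ls=\aleph_0$. Such an $M$ exists because ${\bf K}$ is stable in $\lambda$, it is a $(\lambda^{+},\lambda^{+})$-limit, it is universal in $K_{\lambda^{+}}$ by saturation, and it is not maximal by $NMM$; so the real content is to show $M$ is closed under increasing unions of isomorphic copies, i.e. that every increasing chain of saturated models in $K_{\lambda^{+}}$ whose length is a regular cardinal below $\lambda^{++}$ has a saturated union.

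First I would feed $\omega$-locality into the nonsplitting machinery. By \ref{supfact}(1), stability in $\lambda$ propagates to $\lambda^{+n}$, so $\lambda^{+}$ is a stability cardinal; and running the argument of \ref{weaker}(2) --- which only uses monotonicity together with the equal-cardinality instances of extension and weak uniqueness of nonsplitting, hence needs no tameness --- gives continuity of $\xi$-nonsplitting for every stability cardinal $\xi\ge\lambda$, in particular at $\lambda$ and $\lambda^{+}$. Then, since $\lambda^{+}$ is regular, \ref{394prop} (applied with $\lambda^{+}$ in place of $\mu$, via \ref{394lem} and \ref{bgvvlem}) produces a local character cardinal for $\lambda^{+}$-nonsplitting; when $\lambda=\aleph_0$ one applies \ref{394prop} directly at $\aleph_0$ and the resulting $\chi$ satisfies $\chi\le\aleph_0$, so $\chi=\aleph_0$ and ${\bf K}$ is $\aleph_0$-superstable, hence superstable in every cardinal by \ref{chaincor}(1). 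In particular ${\bf K}$ is then stable in the whole tail $[\lambda,\infty)$ with local character $\chi=\aleph_0$ throughout.

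With continuity of $\lambda^{+}$-nonsplitting and stability in $[\lambda,\lambda_0)$ in hand, where $\lambda_0<h(\lambda^{+})$ is the bound produced by \ref{supexist}, I would invoke \ref{supexist} with $\mu=\lambda$ and $\xi=\lambda^{+}$ to obtain a saturated $\chi$-superlimit in $K_{\lambda^{+}}$, which is the desired superlimit since $\chi=\aleph_0$; equivalently, I would quote \ref{s19prop} to see that chains of saturated models in $K_{\lambda^{+}}$ of regular length $<\lambda^{++}$ have saturated union, so that the saturated $M$ above is a superlimit. The main obstacle is the tameness gap: \ref{supexist} and \ref{s19prop} are stated under $\mu$-tameness, which is not among the present hypotheses, so one must check that every appeal to tameness in their proofs is among models of a single cardinality (or reducible to such along a continuous resolution) and is therefore subsumed by $\omega$-locality --- exactly the substitution already made in \ref{weaker}(2), and available for the uniqueness step \ref{gfu} by an induction along a resolution combined with $\omega$-locality. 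A secondary point to be explicit about is that for $\lambda>\aleph_0$ the local character cardinal furnished by \ref{394lem} is only the least $\chi$ with $2^{\chi}>\lambda$, so what is produced in general is a $\chi$-superlimit, coinciding with an honest superlimit precisely when ${\bf K}$ is superstable, which is automatic in the $\aleph_0$-stable case.
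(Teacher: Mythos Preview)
Your central worry—the ``tameness gap''—is not a gap at all. $\omega$-locality implies $\ls$-tameness (and hence $\lambda$-tameness for every $\lambda\geq\ls$): resolve any $N\in K_{>\ls}$ as a continuous increasing chain $\langle M_i:i<\|N\|\rangle$ of models in $K_{\ls}$; if $p\neq q$ in $\gs(N)$, $\omega$-locality gives some $i$ with $p\restriction M_i\neq q\restriction M_i$. This is exactly what the paper's proof records in the sentence ``Notice that $\omega$-locality implies $\ls$-tameness.'' So there is no need to audit the proofs of \ref{supexist}, \ref{s19prop}, or \ref{gfu} for tameness-free variants; tameness is simply available, and all of \ref{assum1} holds at $\mu=\lambda$ once you combine it with \ref{weaker}(2).

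Once tameness is in hand, your argument and the paper's coincide: the paper just packages your steps (continuity of nonsplitting from $\omega$-locality, $\aleph_0$-local character at $\aleph_0$ from \ref{394prop}, superstability transfer, then \ref{supexist}/\ref{s19prop}) as the single implication (1)$\Rightarrow$(5) of \ref{maint2}, applied at $\mu=\ls$ for the $\aleph_0$-stable case. Your final caveat is well taken: for general $\lambda>\aleph_0$ with only stability in $\lambda$ assumed, \ref{394prop} yields local character $\chi\leq\lambda$ rather than $\aleph_0$, so what the machinery actually delivers is a $\chi$-superlimit in $K_{\lambda^+}$ via \ref{maint1}(1)$\Rightarrow$(5), and the proposition's phrasing of the general case is imprecise on this point.
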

\begin{proof}
Apply \ref{maint2}(1)$\Rightarrow$(5) where $\mu=\ls$ (that direction does not require $\mu>\ls$). Notice that $\omega$-locality implies $\ls$-tameness.
\end{proof}
Tracing our proof, we require global assumptions of a monster model and $\omega$-locality in order to use our symmetry results, especially \ref{sympf}. We end this section with the following:
\begin{ques}
Instead of global assumptions like monster model and no-order-property, is it possible to obtain local symmetry properties in Section \ref{ufsec5} using more local assumptions?
\end{ques}
\bibliographystyle{alpha}
\bibliography{references}

{\small\setlength{\parindent}{0pt}
\textit{Email}: wangchil@andrew.cmu.edu

\textit{URL}: http://www.math.cmu.edu/$\sim$wangchil/

\textit{Address}: {Department of Mathematical Sciences, Carnegie Mellon University, Pittsburgh PA 15213, USA}
\end{document}